\numberwithin{equation}{section}
\newtheorem{theorem}{Theorem}[section]
\newtheorem{lemma}[theorem]{Lemma}
\newtheorem{proposition}[theorem]{Proposition}
\newtheorem{rem}[theorem]{Remark}
\newtheorem{definition}[theorem]{Definition}
\newtheorem{example}[theorem]{Example}
\newtheorem{assumption}[theorem]{Assumption}
\newcommand{\R}{\mathbb{R}}
\newcommand{\Z}{\mathbb{Z}}
\renewcommand{\tilde}{\widetilde}
\newcommand{\norm}[1]{\left\lVert#1\right\rVert}
\newcommand{\cA}{{\ensuremath{\mathcal A}} }
\newcommand{\cF}{{\ensuremath{\mathcal F}} }
\newcommand{\cP}{{\ensuremath{\mathcal P}} }
\newcommand{\cC}{{\ensuremath{\mathcal C}} }
\newcommand{\cL}{{\ensuremath{\mathcal L}} }
\newcommand{\cT}{{\ensuremath{\mathcal T}} }
\newcommand{\cU}{{\ensuremath{\mathcal U}} }
\newcommand{\cV}{{\ensuremath{\mathcal V}} }
\newcommand{\cG}{{\ensuremath{\mathcal G}} }
\newcommand{\bE}{{\ensuremath{\mathbf E}} }
\DeclareMathSymbol{\leqslant}{\mathalpha}{AMSa}{"36} 
\DeclareMathSymbol{\geqslant}{\mathalpha}{AMSa}{"3E} 
\DeclareMathSymbol{\eset}{\mathalpha}{AMSb}{"3F}     
\renewcommand{\leq}{\;\leqslant\;}                   
\renewcommand{\geq}{\;\geqslant\;}                   
\newcommand{\dd}{\,\text{\rm d}}             
\newcommand{\bbC}{{\ensuremath{\mathbb C}} }
\newcommand{\bbE}{{\ensuremath{\mathbb E}} }
\newcommand{\bbP}{{\ensuremath{\mathbb P}} }
\newcommand{\bbT}{{\ensuremath{\mathbb T}} }
\newcommand{\bbZ}{{\ensuremath{\mathbb Z}} }
\newcommand{\ga}{\alpha}
\newcommand{\gd}{\delta}
\newcommand{\gep}{\varepsilon}       
\newcommand{\gs}{\sigma}
\def\captionfont@{\footnotesize}
\def\captionheadfont@{\scshape}
\long\def\@makecaption#1#2{%
\vspace{2mm}
\setbox\@tempboxa\vbox{\color@setgroup
\advance\hsize-6pc\noindent
\captionfont@\captionheadfont@#1\@xp\@ifnotempty\@xp
{\@cdr#2\@nil}{.\captionfont@\upshape\enspace#2}%
\unskip\kern-6pc\par
\global\setbox\@ne\lastbox\color@endgroup}%
\ifhbox\@ne 
\setbox\@ne\hbox{\unhbox\@ne\unskip\unskip\unpenalty\unkern}%
\fi
\ifdim\wd\@tempboxa=\z@ 
\setbox\@ne\hbox to\columnwidth{\hss\kern-6pc\box\@ne\hss}%
\else 
\setbox\@ne\vbox{\unvbox\@tempboxa\parskip\z@skip
\noindent\unhbox\@ne\advance\hsize-6pc\par}%
\fi
\ifnum\@tempcnta<64 
\addvspace\abovecaptionskip
\moveright 3pc\box\@ne
\else 
\moveright 3pc\box\@ne
\nobreak
\vskip\belowcaptionskip
\fi
\relax
}
\def\writefig#1 #2 #3 {\rlap{\kern #1 truecm
\raise #2 truecm \hbox{#3}}}
\newcommand\RSloop{\@ifnextchar\bgroup\RSloopa\RSloopb}
\newcommand\RSloopa[1]{\bgroup\RSloop#1\relax\egroup\RSloop}
\newcommand\RSloopb[1]%
\newcommand\X{0}
\newcommand\RS[1]%
\newcommand\RSdef[1]{\expandafter\def\csname RS:#1\endcsname}
\newlength\RSu
\newlength\RSv
\renewcommand{\ij}{\RS{uD}}
\newcommand{\ijik}{\RS{ulr}}
\newcommand{\ijki}{\RS{umr}}
\newcommand{\ijjk}{\RS{uDD}}
\newcommand{\ikjk}{\RS{vZT}}
\newcommand{\lijik}{\RS{uDlr}}
\newcommand{\lijk}{\RS{uDDD}}
\def\<#1, #2>{\mathinner{\langle#1, \, #2\rangle}}
\title[CLT for empirical measures of diffusions on random graphs]{Central Limit Theorems for global and local empirical measures of diffusions on Erd\H{o}s-R\'enyi graphs}
\author{Fabio Coppini}
\address{Dipartimento di Matematica Ulisse Dini, Università degli Studi di Firenze, Viale Giovanni Battista Morgagni, 67/a, Florence 50134, Italy, \url{fabio.coppini@unifi.it}}
\author{Eric Lu\c{c}on}
\address{Université Paris Cité, CNRS, MAP5, F-75006 Paris \& FP2M, CNRS FR 2036, \url{eric.lucon@u-paris.fr}.
}
\author{Christophe Poquet}
\address{Université Claude Bernard Lyon 1, CNRS UMR 5208, Institut Camille Jordan, F-69622 Villeurbanne, France, \url{poquet@math.univ-lyon1.fr}}
\keywords{Mean-field systems, Erd\H{o}s-Rényi random graphs, interacting diffusions, central limit theorem, empirical measures, Grothendieck inequalities}
\subjclass[2010]{60F05,60H15,60K37,82C20}
\date{\today}
\begin{document}

\begin{abstract} 
We address the issue of the Central Limit Theorem for (both local and global) empirical measures of diffusions interacting on a possibly diluted Erd\H{o}s-R\'enyi graph. Special attention is given to the influence of initial condition (not necessarily i.i.d.) on the nature of the limiting fluctuations. We prove in particular that the fluctuations remain the same as in the mean-field framework when the initial condition is chosen independently from the graph. We give an example of non-universal fluctuations for carefully chosen initial data that depends on the graph. A crucial tool for the proof is the use of extensions of Grothendieck inequality.
\end{abstract}

\maketitle


\section{Introduction}
Fix $n=2, 3, \dots$ and let $\xi^{(n)}=\left( \xi^{(n)}_{ij} \right)_{ij=1, \dots, n}$ be a collection of independent and identically distributed Bernoulli random variables, with parameter $p_n \in (0,1]$ (i.e. $\xi^{(n)}$ defines an asymmetric Erd\H{o}s-Rényi random graph of parameter $p_n$). Let $T>0$ be a finite time horizon. We are interested in the empirical measure of a weakly interacting particle system where each particle is represented by a function on the 1-dimensional torus $\bbT:= \mathbb{ R}/2 \pi \mathbb{ Z}$. The population dynamics is defined by the following system of stochastic differential equations
\begin{equation}
\label{eq:wips}
\dd \theta^{i,n}_t = \frac{1}{n p_n}\sum_{j=1}^n \xi_{ij}^{(n)}\Gamma \left(\theta^{i,n}_t, \theta^{j,n}_t \right)\dd t+\dd B^i_t, \quad 0 < t \leq T, \quad i= 1, \dots, n,
\end{equation}
endowed with some initial condition $\left(\theta^{ i, n}_0\right)_{i=1,\ldots, n}$ and where $(B^i)_{i\geq 1}$ are independent and identically distributed standard Brownian motions. The dynamics of $ \theta^{ i,n}$ is influenced by the $\theta^{j,n}$ with $i$ and $j$ neighbors in the graph, through some regular function $\Gamma: \bbT\times\bbT\to \R$. The interaction in \eqref{eq:wips} is renormalized by the (uniform) expected degree $np_n$ of each vertex so that the interaction remains of order $1$ as $n\to\infty$.  We denote by $\mathbb{P}$ the joint law of the graph and the initial condition, and by $\mathbf{P}$ the law of the Brownian motions, so we will be working with $\bbP\otimes \mathbf{P}$. Moreover, we will denote by $\bbP_g$ and $\bbP_0$ the marginals of $ \mathbb{ P}$ of the graph and initial condition respectively.
\begin{rem}
One could very well consider the more general dynamics
\begin{equation}
\label{eq:wips_gen}
\dd \theta^{i,n}_t = F(\theta_{ t}^{ i, n}) {\rm d}t + \frac{1}{n p_n}\sum_{j=1}^n \xi_{ij}^{(n)}\Gamma \left(\theta^{i,n}_t, \theta^{j,n}_t \right)\dd t+\dd B^i_t, \quad 0 < t \leq T, \quad i= 1, \dots, n,
\end{equation}
where $F$ is a (bounded) smooth function on $ \mathbb{ T}$ modelling intrinsic dynamics for each particle. The result of the present paper remain obviously valid, up to the notational cost of adding e.g. some drift term $- \partial_{ \theta} \left[ \mu_{ t}(\theta) F(\theta)\right] $ in the nonlinear Fokker-Planck equation \eqref{eq:limit PDE}. We chose to restrict to $F\equiv 0$ for simplicity of exposition.
\end{rem}

An easy instance of \eqref{eq:wips} corresponds to the mean-field case, i.e. when $p_n \equiv 1$ so that $\xi^{(n)}_{ij} \equiv 1$ for all $i$ and $j$ (hence the graph of interaction is the complete graph). In such a case, the interaction in \eqref{eq:wips} is a functional of the empirical measure of $(\theta^{i,n})_{i=1,\dots,n}$ defined as
\begin{equation}
\label{eq:emp_meas}
\mu^n_t = \frac 1n \sum_{i=1}^n \delta_{\theta^{i,n}_t},\ t\in [0, T].
\end{equation}
The empirical measure is a (random) probability measure on $\bbT$. The behavior of $ \mu^{ n}$ as $ n\to\infty$ in the mean-field case is standard and particularly well-covered in the literature (see e.g. \cite{Sznitman1991,Gartner}): provided that $\mu^n_0$ converges to $\mu_0$, one can show that $ \mu^{ n}$ converges to the unique solution to the following non-linear Fokker-Planck equation
\begin{equation}\label{eq:limit PDE}
\partial_t \mu_t(\theta) = \frac12 \partial^2_{\theta}\mu_t(\theta)-\partial_\theta\left[\mu_t(\theta) (\Gamma*\mu_t)(\theta)\right], \quad 0 < t \leq T.
\end{equation}
In \eqref{eq:limit PDE}, the $*$ denotes the integration with respect to the second variable, i.e., $(\Gamma * \mu) (\cdot) = \int_\bbT \Gamma(\cdot, \theta) \mu (\dd \theta)$ for $\mu$ a probability measure on $\bbT$. The convergence to \eqref{eq:limit PDE} can be for instance considered in the space of probability measure on continuous functions on the torus, i.e., in $\cP(\cC([0,T], \bbT))$. In general, it depends on the topology where the sequence $(\mu^n)_{n\geq 1}$ is studied.

\medskip

A recent interest has been shown in the literature concerning extensions as \eqref{eq:wips} to generic graphs of interactions. Observe that when $(\xi_{ij}^{ (n)})$ is no longer constantly equal to $1$, the interaction is not a linear functional of the empirical measure \eqref{eq:emp_meas}, but rather of a collection of the local empirical measures defined by
\begin{equation}
\label{def:mu_nl}
\mu_{ t}^{ n,l}:= \frac{ 1}{ np_{ n}} \sum_{ i=1}^{ n} \xi_{li}^{ (n)} \delta_{ \theta_{t}^{ i, n}},\ t\in [0, T],\ l=1,\ldots, n.
\end{equation}
Contrary to the mean-field case where one easily obtains by Ito's formula a closed semimartingale decomposition of $ \mu^{ n}$ (e.g., \cite{neunzert81,Sznitman1991}), applying the same calculation in the general case (see, e.g., Lemma~\ref{lem:mu^n_t}) shows that $ \mu^{ n,l}$ depends itself on empirical measures involving higher order expansions within the graph structure: a whole hierarchy of empirical measures (indexed by local patterns in the graph) appear and the difficulty is to find a way to properly close this decomposition. However, if the graph $( \xi_{ij}^{ (n)})$ is sufficiently close to the complete graph (in a way to be precised), the behavior of the system \eqref{eq:wips} as $n\to\infty$ should be described as well by the same macroscopic limit \eqref{eq:limit PDE}. A rather informal question would be to understand how universal the mean-field framework is: how much can we perturb the complete graph of interaction and still conserve the same macroscopic properties as $n\to\infty$? This question has been addressed in details at the level of the law of large numbers: a series of recent papers \cite{Delattre2016,coppini2018law,Lucon2020,bayraktar2020graphon,BBW19} have shown that $ \mu^{ n}$ converges to $ \mu$ for a large class of graphs that includes the Erd\H{o}s-R\'enyi case. We refer to Section~\ref{ss:literature} for a more detailed discussion on these results.

Note that one may be interested in two distinct graph regimes: the \emph{dense case} when $p_{ n} \to p\in(0,1]$ as $n\to\infty$, i.e., the mean degree of each vertex remains proportional to the size of the population, and the \emph{diluted case} (or \emph{vanishing density degree}) when $ p_{ n}\to0$ as $n\to\infty$ and up to the sparse threshold $np_n \to c > 0$, where other phenomena are known to be in play \cite{Oliveira2020,Lacker2019}.
\subsection{Aim of this work}
The purpose of the present work is to address the universality of the mean-field framework at the level of fluctuations. We are interested in studying the limit of two objects: first of all, the \emph{global fluctuation process} $\eta^n$, given by
\begin{equation}
\label{def:fluctuation_process}
\eta^n_t := \sqrt{n} (\mu^n_t - \mu_t), \quad \text{for } 0 \leq t \leq T,
\end{equation}
i.e., the standard fluctuations of $\mu^n$ around the limit $\mu$. Note that $ \eta^{ n}$ not only depends on the randomness coming from the noise in \eqref{eq:wips}, but also on the graph $\xi^{(n)}$. We will consider the behavior of $ \eta^{ n}$ under the law $ \mathbf{ P}$ only, i.e., as a quenched object with respect to the graph sequence realisation. In addition, we will include the challenging case when the initial condition depends on the underlying graph. To the authors' knowledge, this issue has never been tackled. We refer to Section~\ref{ss:literature} for an overview on the existing literature.

\medskip

The second aim of this paper concerns the \emph{fluctuations of local empirical measures} around their limit. Recall the definition of the local empirical measure $ \mu^{ n, l}$ in \eqref{def:mu_nl}. This process is the empirical measure of particles at distance $1$ of vertex $l$ within the graph. Note that $ \mu^{ n,l}$ is not necessarily of mass $1$ (one only has $ \left\langle \mu^{ n,l}\, ,\, 1\right\rangle \xrightarrow[ n\to\infty]{}1$, $ \mathbb{ P}_{ g}$-a.s.), as \eqref{def:mu_nl} is not renormalised by the degree $d_{ n,l}:= \sum_{ j=1}^{ n} \xi_{lj}^{ (n)}$ of vertex $l$, but rather by its expectation $\mathbb{ E} \left[d_{ n,l}\right]=np_{ n}$. This choice of renormalisation turns out to be convenient for our fluctuation results, as it permits to focus only on the fluctuations that come from the dynamics and not to bother with intrinsic fluctuations in the vertex degrees in the graph. The influence of this last choice of renormalising constant on our results is discussed in details in the Appendix~\ref{sec:renorm}. It turns out that the natural limit for the local empirical measures \eqref{def:mu_nl} is also given by $ \mu$ solution to \eqref{eq:limit PDE}. Hence, the second aim of the paper will be to address the behavior of the joint fluctuation process
\begin{equation}
\label{def:zetas}
\zeta_{ t}^{ n}:= \left(\zeta_{ t}^{ n, 1}, \zeta_{ t}^{ n, 2}\right):= \left( \sqrt{ n p_{ n}} \left( \mu_{ t}^{ n, 1} - \mu_{ t}\right), \sqrt{ n p_{ n}} \left( \mu_{ t}^{ n, 2} - \mu_{ t}\right)\right),
\end{equation}
that is the joint fluctuation process for the local empirical measures around vertices $1$ and $2$. One point will be to understand how the limits of both $ \eta^{ n}$ and $ \zeta^{ n}$ may or may not depend on a specific realisation of the graph and, secondly, on the graph structure itself (in particular the fact that the graph may be dense of diluted).

\subsection{Strategy of proof}
To prove the convergence of fluctuations, we follow the classical trilogy of arguments: tightness of $\eta^n$, identification of the limiting values of  $\eta^n$ as solution to a suitable partial differential equation and uniqueness of the limit solution. The main tool to prove the first two steps is a semimartingale decomposition in a suitable Hilbert space of distributions (see, e.g., \cite{mitoma85, hitsudaMitoma86, Fernandez1997}). From this point of view, we closely follow the strategy proposed in \cite{Fernandez1997}. As already said, there is no possible way to obtain a closed semimartingale decomposition for the empirical processes $\mu^n$ and $\mu^{n,l}$ (and thus for $\eta^n$): we have supplementary terms that depend on expansions of higher order within the graph. Our strategy, and the notable exception compared to \cite{Fernandez1997}, is to pursue the expansion and to write the semimartingale decomposition of these higher order terms, until the remaining errors in the expansion of $\mu^n$ become lower than $1/\sqrt{n}$.  Although written in a separate way for clarity of exposition, the treatment of both global and local fluctuations (note that the local fluctuations are considered under a stricter set of hypotheses concerning the initial condition) follow the same main lines. The main argument to close this expansion (and the main contribution of the paper) concerns the use of generalised \emph{Grothendieck inequalities}, which enable us to decorrelate the dynamics from the proximity estimates between the graph $( \xi_{ij}^{ (n)})$ and the complete graph. More details on the use of Grothendieck inequalities are given in Section~\ref{sec:grothendieck}. 

\subsection{Notation}
\label{ss:notation}
The space of probability measures on a metric space $X$ is denoted by $\cP(X)$. As usual, we denote by $\cC([0, T], X)$ the space of continuous functions from $[0, T]$ into $X$ endowed with the supremum norm, this last one being denoted by $\norm{\cdot}_\infty$. For two probability measures $\mu$ and $\nu$ on a given metric space $X$, we denote by $d_{ BL}(\mu, \nu)$ their bounded Lipschitz distance, i.e.,
\begin{equation}
	\label{eq:dBL}
	d_{ BL}(\mu, \nu) = \sup \left\{ \int_X f \dd \mu - \int_X f \dd \nu\;,\; f\in BL(X) \right\},
\end{equation}
where $BL(X) :=\left\lbrace f: X \to \R: \; \left\Vert f \right\Vert_{ \infty}\leq 1,\ \left\Vert f \right\Vert_{ Lip}\leq 1\right\rbrace $ and $\norm{f}_{Lip}$ is the Liptschitz constant of $f$. In order to study the limit of both \eqref{def:fluctuation_process} and \eqref{def:zetas}, we will need to introduce several other weighted empirical processes, that all depend on the graph sequence $ \left(\xi_{ij}^{ (n)}\right)$. The first one is a weighted empirical measure $\hat{\nu}^n_t$ on $\bbT^2$, that is associated to the fluctuations arising from the graph sequence, i.e., 
\begin{equation}
\label{def:centered_graph_emp}
\hat{\nu}^n_t = \frac 1{n^2} \sum_{i, j=1}^n \hat{\xi}^{(n)}_{ij}  \delta_{\theta^{i,n}_t} \otimes \delta_{\theta^{j,n}_t}, \qquad 0\leq t \leq T,
\end{equation}
as well as its rescaled counterpart
\begin{equation}
\hat{\eta}^n := \sqrt{n} \, \hat{\nu}^n.
\label{eq:hat_etan}
\end{equation}
In \eqref{def:centered_graph_emp} and \eqref{eq:hat_etan}, we used the notation
\begin{equation}
\label{eq:hatxi}
\hat{\xi}^{(n)}_{ij}:= \frac{ \xi^{(n)}_{ij}}{ p_{ n}} - 1,\ i,j = 1, \dots, n.
\end{equation}
A measure that will arise naturally in the study of local fluctuations is the following:
\begin{equation}
\label{def:mu_n12}
\mu_{ t}^{ n, 1,2}:= \frac{ 1}{ np_{ n}^2} \sum_{ i=1}^{ n} \xi_{1i}^{ n} \xi_{2 i}^{ n} \delta_{ \theta_{ t}^{ i, n}},
\end{equation}
whose role is to account for the presence/absence of the pattern $1 \leftarrow i \to 2$ in the graph, capturing a notion of connectivity between vertices $1$ and $2$.  Let $\< \cdot, \cdot >$ denote the duality bracket between function spaces and their dual spaces, observe that the mass $ \left\langle \mu_{ t}^{ n,1,2}\, ,\, 1\right\rangle$ tends also almost surely to $1$ as $n\to\infty$.

For studying the sequence $(\eta^n)_{n\geq 1}$ defined in \eqref{def:fluctuation_process}, we need a suitable space of distributions. In fact, quantities such as $\eta^n$ or $\hat{\nu}^n$ are not probability measures, having total measure close to 0, and require to be studied in a larger space. As already done in the literature \cite{Fernandez1997,hitsudaMitoma86,Bertini:2013aa, bechtold_coppini_2021}, we choose to work in a class of Hilbert spaces that include (as a continuous embedding) the probability measures. The canonical choice is given by the usual Sobolev Hilbert spaces $H^{ -r}(\mathbb{ T}^{ d}):= W^{-r, 2}(\mathbb{ T}^{ d})$ (with $r>0$), dual of $H^{ r}(\mathbb{ T}^{ d})$ space of test functions with derivatives up to order $r$ with finite moments of order $2$ \cite{adams_fournier_2003}. We define $\Vert h\Vert_{H^{-r}(\bbT)}$ (resp. $\Vert h\Vert_{H^{-r}(\bbT^{ 2})}$) as the norm on $ H^{ -r} ( \mathbb{ T})$ (resp. $ H^{ -r}(\mathbb{ T}^{ 2})$). To keep the notation concise, we will often write $\Vert h\Vert_{-r}$ for both these norms, whenever the context leaves no ambiguity on the fact that this notation concerns functional acting on test functions on $ \mathbb{ T}$ or $ \mathbb{ T}^{ 2}$. Sobolev inequalities, e.g.,  \cite{adams_fournier_2003}, state that
\begin{equation}
\label{eq:sobolev_ineq}
W^{r, 2} (\bbT^d) =H^r(\bbT^d) \subset \cC^{0, \alpha}(\bbT^d)
\end{equation}
for $\alpha\in(0,1]$ and $r = \frac d2 + \alpha$, where $\cC^{0, \alpha}(\bbT^d)$ is the space of continuous functions with $\alpha$-H\"{o}lder regularity. By duality, $\cP(\bbT^d)$ is continuously embedded into $H^{-r}(\bbT^d)$ for any $r > \frac d2$. This implies that any probability measure on $\bbT$ belongs to $H^{-r}(\bbT)$ for any $r> \frac 12$. For a given $t \in [0,T]$, the distribution $\hat{\nu}^n_t$ is an element of the Hilbert space $H^{-r}(\bbT^2)$ for $r > 1$. Recall also that one has the following Hilbert-Schmidt embeddings (see \cite{Fernandez1997} and \cite[\S 6]{adams_fournier_2003})
\begin{equation*}
	H^{ -j}(\mathbb{ T}^{ d}) \subset H^{ -(m+j)}(\mathbb{ T}^{ d}), \quad m> \frac{ d}{ 2} \text{ and } j\geq0.
\end{equation*}

In the Hilbert space $H^r(\mathbb{ T}^{ d})$, one can define the semigroup operator $S=(S_t)_{t\geq0}$ associated to the Laplacian operator, this last one being denoted by $\Delta$ (or $\partial^2_\theta$ in the one-dimensional case). It is well-known, e.g., \cite{henry}, that $S_\cdot$ is an analytic semigroup. For a given operator $\cL$ on some Hilbert space, we denote by $\cL^*$ its dual operator in the corresponding dual Hilbert space.

\section{Hypotheses and main results}
\subsection{General hypotheses}
\label{sec:hypotheses}
\begin{assumption}[Regularity of $ \Gamma$]
\label{ass:Gamma}
We assume that $ \Gamma$ is infinitely differentiable on $ \mathbb{ T}^{ 2}$ (and hence bounded with bounded derivatives). A careful reading of the proofs below shows that $ \Gamma$ being $ \mathcal{ C}^{ k}$ for a sufficiently large $k$ would be actually sufficient.
\end{assumption}
\begin{assumption}[Initial condition]
\label{ass:initial}
We suppose that the initial condition, that is the random variables $ \left(\theta_{ 0}^{ 1, n}, \ldots, \theta_{ 0}^{ n,n}\right)$, are chosen independently from the Brownian motions $(B^{ 1}, \ldots, B^{ n})$ (but not necessarily i.i.d. and they may depend on the graph), such that their empirical measure $ \mu_{ 0}^{ n}$ converges weakly to some $ \mu_{ 0}$ in the following way:
\begin{equation}
\label{eq:conv_mu0}
d_{ BL} \left(\mu_{ 0}^{ n}, \mu_{ 0}\right) \xrightarrow[ n\to\infty]{}0, \ \mathbb{ P}_{ 0}-a.s.
\end{equation}
\begin{rem}
\label{rem:conv_empmeas}
Note that since $ d_{ BL} \left(\mu_{ 0}^{ n}, \mu_{ 0}\right)\leq 2$, the convergence \eqref{eq:conv_mu0} actually implies 
\begin{equation}
\label{eq:conv_mu0_E}
\mathbb{ E}_{ 0}\left[ d_{ BL} \left(\mu_{ 0}^{ n}, \mu_{ 0}\right)^{ q}\right] \xrightarrow[ n\to\infty]{}0
\end{equation} for any $q\geq1$. It is possible to assume only \eqref{eq:conv_mu0_E} for some $q\geq1$ and in such a case \eqref{eq:conv_empmeas} below remains true up to a supplementary integration w.r.t. $ \mathbb{ E}_{ 0}$ and the results of the paper remain valid.
\end{rem}
\end{assumption}
\subsection{Laws of large numbers for global and local empirical measures}
Before presenting our main fluctuation results, let us state the following result (which may have an interest of its own) on the convergence of the empirical measures $\mu^n$, $\mu^{n,l}$ and $\mu^{n,1,2}$ defined respectively in \eqref{eq:emp_meas}, \eqref{def:mu_nl} and \eqref{def:mu_n12}. 
\begin{theorem}
\label{th:conv_empmeas}
Suppose here that Assumptions~\ref{ass:Gamma} and~\ref{ass:initial} are satisfied.
\begin{enumerate}
\item Convergence of the global empirical measure: under the dilution condition
\begin{equation}
\label{eq:dilution_cond_opt}
np_{ n}\to\infty
\end{equation}
the global empirical measure $ \mu^{ n}$ given in \eqref{eq:emp_meas} verifies, for all $q\geq 1$, 
\begin{equation}
\label{eq:conv_empmeas}
\mathbf{ E} \left[ \sup_{ s\leq T} d_{ BL} \left(\mu_{s}^{ n}, \mu_{ s}\right)^{ q}\right] \xrightarrow[ n\to\infty]{}0,\ \mathbb{ P}-a.s.
\end{equation}
\item Convergence of local empirical measures: if one supposes further that the initial condition is independent of the graph: 
\begin{enumerate}
\item if $np_{ n}^{ 3}\to \infty$ as $n\to\infty$, then for any fixed $l\geq 1$, the local empirical measure $ \mu^{ n,l}$ defined in \eqref{def:mu_nl} verifies, for all $q\geq 1$,
\begin{equation}
\label{eq:conv_munl}
\mathbf{ E} \left[ \sup_{ s\leq T} d_{ BL } \left( \mu_{ s}^{ n, l}, \mu_{ s}\right)^{ q}\right] \xrightarrow[ n\to\infty]{} 0, \ \mathbb{ P}-a.s.
\end{equation}
\item if $n p_n^5 \to \infty$ as $n\to\infty$, then the local empirical measure $ \mu^{ n,1,2}$ defined in \eqref{def:mu_n12} verifies for all $q\geq 1$,
\begin{equation}
\label{eq:conv_mun12}
\bE \left[ \sup_{s\leq T} d_{BL} (\mu_{ s}^{ n, 1, 2}, \mu_s)^q \right] \xrightarrow[ n\to\infty]{} 0, \quad \bbP-\text{a.s.}
\end{equation}
\end{enumerate}
\end{enumerate}
\end{theorem}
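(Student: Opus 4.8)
\emph{Proof strategy.} The plan is to couple \eqref{eq:wips} with the McKean--Vlasov dynamics and run a Gronwall argument, treating the three assertions by one scheme in which only the local structure of the objects changes. Since the solution $\mu$ of \eqref{eq:limit PDE} is deterministic, for each $i$ let $\bar\theta^{i,n}$ solve $\dd\bar\theta^{i,n}_t=(\Gamma*\mu_t)(\bar\theta^{i,n}_t)\dd t+\dd B^i_t$ with $\bar\theta^{i,n}_0=\theta^{i,n}_0$; as $\Gamma*\mu$ is bounded with bounded derivatives, $\bar\theta^{i,n}=\Psi(\theta^{i,n}_0,B^i)$ for a deterministic map $\Psi$ which, for every fixed Brownian path, is Lipschitz in the initial datum uniformly on $[0,T]$ with a deterministic constant, and $\mu_t$ is exactly the law of $\Psi(\theta_0,B)$ for $\theta_0\sim\mu_0$, by the fixed--point characterisation of \eqref{eq:limit PDE}. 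Put $\bar\mu^n_t:=\tfrac1n\sum_i\delta_{\bar\theta^{i,n}_t}$ and $R^n_t:=\tfrac1n\sum_i\sup_{s\le t}|\theta^{i,n}_s-\bar\theta^{i,n}_s|$, increments being computed on the universal cover $\bbR$ where they remain bounded because the drifts are. A preliminary step, essentially standard, is $\mathbf E[\sup_{t\le T}d_{BL}(\bar\mu^n_t,\mu_t)^q]\to0$, $\bbP_0$--a.s.: testing against $f\in BL(\bbT)$ and writing $F_t(\theta_0,\cdot):=f(\Psi_t(\theta_0,\cdot))$, split $\langle\bar\mu^n_t-\mu_t,f\rangle$ into $\tfrac1n\sum_i\big(F_t(\theta^{i,n}_0,B^i)-\mathbf E_BF_t(\theta^{i,n}_0,B)\big)$ --- a sum of terms i.i.d.\ and centred conditionally on the initial data, since by Assumption~\ref{ass:initial} the $B^i$ are i.i.d.\ and independent of the initial condition --- of order $n^{-1/2}$ in $L^q(\mathbf P)$ with the $\sup_{t,f}$ controlled by a Kolmogorov-type continuity estimate using the regularity of $\Gamma$ (Assumption~\ref{ass:Gamma}), plus $\tfrac1n\sum_i\mathbf E_BF_t(\theta^{i,n}_0,B)-\int\mathbf E_BF_t(\theta_0,B)\,\mu_0(\dd\theta_0)$, bounded uniformly in $t\le T$, $f\in BL$ by $C\,d_{BL}(\mu^n_0,\mu_0)\to0$ via \eqref{eq:conv_mu0}.

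For part~(1), subtracting the two systems, using the Lipschitz regularity of $\Gamma$ and of $\Gamma*\mu$, averaging over $i$ and absorbing the degree deviations through $\tfrac1{n^3p_n^2}\sum_i(d_{n,i}-np_n)^2\to0$, one gets
\begin{equation*}
R^n_t\;\le\;C\int_0^t R^n_s\,\dd s\;+\;\tfrac1n\sum_i\int_0^t|C^{(i)}_s|\,\dd s\;+\;\tfrac1n\sum_i\int_0^t|B^{(i)}_s|\,\dd s\;+\;o(1),
\end{equation*}
where the $o(1)$ is $\bbP_g$--a.s., $C^{(i)}_s:=(\Gamma*\bar\mu^n_s)(\bar\theta^{i,n}_s)-(\Gamma*\mu_s)(\bar\theta^{i,n}_s)$ with $|C^{(i)}_s|\le\norm{\Gamma}_{Lip}d_{BL}(\bar\mu^n_s,\mu_s)\to0$ by the preliminary step, and $B^{(i)}_s:=\tfrac1n\sum_j\hat\xi^{(n)}_{ij}\Gamma(\bar\theta^{i,n}_s,\bar\theta^{j,n}_s)$ is the graph--fluctuation term, which is the crux. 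The $\hat\xi^{(n)}_{ij}$ are centred, independent off the diagonal, of variance of order $p_n^{-1}$; decomposing $\Gamma(\bar\theta^{i,n}_s,\bar\theta^{j,n}_s)$ against its conditional mean $\mathbf E_{B^j}[\Gamma(\bar\theta^{i,n}_s,\bar\theta^{j,n}_s)\mid(\theta^{k,n}_0)_k,B^i]$, the centred part is conditionally independent across $j$ and yields order $(np_n)^{-1/2}$, while the remainder is $\le\norm{\Gamma}_\infty|d_{n,i}-np_n|/(np_n)$, which after averaging over $i$ is again controlled by $\tfrac1{n^3p_n^2}\sum_i(d_{n,i}-np_n)^2$, of $\bbP_g$--mean $O((np_n)^{-1})$ and tending to $0$ $\bbP_g$--a.s.\ exactly when $np_n\to\infty$; this is where \eqref{eq:dilution_cond_opt} is used, and the $\sup_s$ is inserted via a Hölder continuity estimate on $s\mapsto B^{(i)}_s$. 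Gronwall then gives \eqref{eq:conv_empmeas} since $d_{BL}(\mu^n_t,\bar\mu^n_t)\le R^n_t$, the $\bbP$--a.s.\ statement following from the $L^q(\mathbf P)$ bounds via Borel--Cantelli. Crucially this argument never needs the initial condition to be independent of the graph.

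For part~(2) the initial condition is independent of the graph, hence so is $\bar\theta^{i,n}$, and one writes $\mu^{n,l}_t-\mu_t=(\mu^n_t-\mu_t)+\tfrac1n\sum_i\hat\xi^{(n)}_{li}\delta_{\theta^{i,n}_t}$, so that, for $f\in BL(\bbT)$,
\begin{equation*}
\langle\mu^{n,l}_t-\mu_t,f\rangle=\langle\mu^n_t-\mu_t,f\rangle+\tfrac1n\sum_i\hat\xi^{(n)}_{li}f(\bar\theta^{i,n}_t)+\tfrac1n\sum_i\hat\xi^{(n)}_{li}\big(f(\theta^{i,n}_t)-f(\bar\theta^{i,n}_t)\big).
\end{equation*}
The first term is handled by part~(1); the second is $O((np_n)^{-1/2})$ since $\hat\xi^{(n)}_{li}$ is independent of $\bar\theta^{i,n}$, uniformly in $f$ via $\cP(\bbT)\subset H^{-r}(\bbT)$ and in $t$ via Kolmogorov. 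The genuinely new difficulty is the third term: $\theta^{i,n}_t-\bar\theta^{i,n}_t$, of size $(np_n)^{-1/2}$ by part~(1), is correlated with the row $(\hat\xi^{(n)}_{li})_i$ \emph{through the dynamics} of $\theta^{i,n}$, so it must be expanded one level further --- replacing it by $\int_0^t\tfrac1n\sum_j\hat\xi^{(n)}_{ij}\Gamma(\bar\theta^{i,n}_s,\bar\theta^{j,n}_s)\dd s$ plus higher order --- which produces sums of the form $\tfrac1{n^2}\sum_{i,j}\hat\xi^{(n)}_{li}\hat\xi^{(n)}_{ij}(\cdots)$ carrying an extra centred--graph factor. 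Running this re-expansion through the Gronwall loop and bookkeeping the resulting graph moments gives \eqref{eq:conv_munl} under $np_n^3\to\infty$. For $\mu^{n,1,2}_t$ the same scheme --- using $p_n^{-2}\xi^{(n)}_{1i}\xi^{(n)}_{2i}=1+\hat\xi^{(n)}_{1i}+\hat\xi^{(n)}_{2i}+\hat\xi^{(n)}_{1i}\hat\xi^{(n)}_{2i}$, so that two further centred--graph factors are carried --- yields \eqref{eq:conv_mun12} under $np_n^5\to\infty$; heuristically each additional level of locality costs a power $p_n^2$ in the dilution requirement, and we do not claim optimality of these exponents. In all cases one concludes by Gronwall and a Borel--Cantelli upgrade of the $L^q(\mathbf P)$ bounds.

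The main obstacle, throughout, is the control of the graph--fluctuation sums $\tfrac1n\sum_i\hat\xi^{(n)}_{\bullet i}(\cdots)$ when the factor $(\cdots)$ is \emph{not} independent of the graph: through the initial condition in part~(1), where the remedy is to split off the conditional Brownian mean and to control the purely graph--theoretic quantity $\tfrac1{n^3p_n^2}\sum_i(d_{n,i}-np_n)^2$; and through the dynamics in part~(2), where one must re-expand the interaction one (for $\mu^{n,l}$) or two (for $\mu^{n,1,2}$) levels deeper, each extra level forcing a more stringent dilution condition and thereby explaining the hierarchy $np_n\to\infty$, $np_n^3\to\infty$, $np_n^5\to\infty$. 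A secondary, purely technical, nuisance is inserting the $\sup_{t\le T}$ and passing from $L^q(\mathbf P)$ estimates to the $\bbP$--almost--sure statements, for which Kolmogorov-type continuity estimates (using Assumption~\ref{ass:Gamma}) and Borel--Cantelli arguments are the relevant tools.
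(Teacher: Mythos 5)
Your proof takes a genuinely different route from the paper's. You run a synchronous (Sznitman-type) coupling with the McKean--Vlasov flow and a Gronwall estimate on particle-wise differences $R^n_t$, while the paper works with a mild/semigroup formulation of the empirical measures themselves (via the propagator $P_{s,t}$ of the nonlinear flow), closes the graph-fluctuation term by Grothendieck's inequality (Proposition~\ref{prop:bound Cn}, Proposition~\ref{prop:concentration_SnT}) and the self-referential drift by Gronwall on $d_{BL}(\mu^n,\mu)$. In principle both strategies are viable, but the coupling route requires a decoupling device at exactly the place the paper's Grothendieck estimate was introduced, and that device is missing from your argument.

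Concretely, the gap is in your control of $B^{(i)}_s=\frac1n\sum_j\hat\xi^{(n)}_{ij}\Gamma(\bar\theta^{i,n}_s,\bar\theta^{j,n}_s)$ in Part~(1). You split off the $B^j$-conditional mean $h_j:=\mathbf{E}_{B^j}\big[\Gamma(\bar\theta^{i,n}_s,\bar\theta^{j,n}_s)\mid(\theta^{k,n}_0)_k,B^i\big]$ and assert that the remainder $\frac1n\sum_j\hat\xi^{(n)}_{ij}h_j$ is $\leq\norm{\Gamma}_\infty|d_{n,i}-np_n|/(np_n)$. That bound would be correct if $h_j$ did not depend on $j$, but it does, through $\theta^{j,n}_0$. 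Replacing it by the crude bound $\frac{\norm{\Gamma}_\infty}{n}\sum_j|\hat\xi^{(n)}_{ij}|$ gives only $O(1-p_n)$, which does not vanish in the diluted regime $p_n\to0$. To extract the needed $(np_n)^{-1/2}$ decay one must exploit cancellations in $\frac1n\sum_j\hat\xi^{(n)}_{ij}h_j$, and those require $\hat\xi^{(n)}_{ij}$ and $h_j$ to be decorrelated. Under the hypotheses of Part~(1), however, the initial data $\theta^{j,n}_0$ — and hence the $h_j$ — are allowed to depend on the graph, so this correlation cannot be assumed away. Your concluding remark that ``this argument never needs the initial condition to be independent of the graph'' is therefore not substantiated: the step on which it rests fails precisely on that class of initial conditions, which is the main point the theorem is claimed to cover. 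The paper circumvents this by bounding the graph term uniformly over all bounded arrays via the Grothendieck-type quantity $S_n^{\ij}$ of \eqref{eq:Sij} (after a Fourier expansion of $\Gamma$), a bound that is deterministic given the graph and hence immune to correlations with the initial condition; nothing analogous appears in your argument. The same difficulty resurfaces, and grows worse, in Part~(2): there the ``re-expansion one level deeper'' is only sketched, and the sums $\frac1{n^2}\sum_{i,j}\hat\xi^{(n)}_{li}\hat\xi^{(n)}_{ij}(\cdots)$ carry factors that are correlated with the graph through the dynamics, so the bookkeeping that is supposed to produce the dilution exponents $np_n^3$ and $np_n^5$ again requires a uniform-over-arrays estimate of the kind supplied by the $S_n^{\cT}$ quantities of Definition~\ref{def:Sn}.
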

The proof of Theorem~\ref{th:conv_empmeas} can be found in Appendix~\ref{sec:conv_empmeas}. In comparison with the existing literature, the convergence result \eqref{eq:conv_empmeas} generalises the previous results in two ways: we obtain the optimal dilution condition \eqref{eq:dilution_cond_opt} under the quenched set-up and more importantly, we allow for initial condition that possibly depend on the graph (not to mention that they need not be necessarily i.i.d.). We refer to Section~\ref{ss:literature} for a more detailed discussion on this matter. It is however likely that the dilution conditions required for the convergence of the local empirical measures may not be optimal.

\subsection{Global fluctuations}
\label{sec:global_fluct}

We now proceed with the main results concerning fluctuations. We address two issues: first, \emph{global fluctuations} (Section~\ref{sec:global_fluct}) that is the behavior as $n\to\infty$ of the global fluctuation process $ \eta^{ n}$ given in \eqref{def:fluctuation_process} as $n\to\infty$; second, \emph{local fluctuations} (Section~\ref{sec:local_fluct}) that is the joint convergence of the local fluctuation processes $ \left( \zeta^{ n, 1}, \zeta^{ 2, n}\right)$ given in \eqref{def:zetas}. In the rest of the paper, the following indices are fixed:
\begin{equation}
\label{eq:indices}
r_{ 0}> 3, \ r_{ 1}:= r_{ 0}+2,\  r_{ 2}:= r_{ 1}+2.
\end{equation}
Recall the definitions of $ \eta^{ n}$ in \eqref{def:fluctuation_process} and $ \hat{ \eta}^{ n}$ in \eqref{eq:hat_etan}. We first state our main hypotheses concerning the initial condition. We suppose in the following that either Assumption~\ref{ass:jointconv_0W_qu} or Assumption~\ref{ass:jointconv_0W_ann} is true.
\begin{assumption}[Quenched initial fluctuations]
\label{ass:jointconv_0W_qu}
We suppose that there exists $ \alpha\in(0, 1)$ such that the following estimates are true
\begin{align}
\sup_{ n} \mathbb{ E}_{ 0} \left( \left\Vert \eta^{ n}_{ 0} \right\Vert_{-r_{ 0}}^{ 1+ \alpha}\right)&< +\infty,\  \mathbb{ P}_{ g}\text{-a.s.}\label{eq:control_eta0_qu}\\
\sup_{ n} \mathbb{ E}_{ 0} \left( \left\Vert \hat{ \eta}^{ n}_{ 0} \right\Vert_{ -r_{ 0}}^{ 1+ \alpha}\right)&< +\infty,\  \mathbb{ P}_{ g}\text{-a.s.}\label{eq:control_hateta0_qu}
\end{align} 
Note that under \eqref{eq:control_eta0_qu} and \eqref{eq:control_hateta0_qu}, $\mathbb{ P}_{ g}$-a.s., $ \left(\eta_{ 0}^{ n}\right)$ and $ \left( \hat{ \eta}_{ 0}^{ n}\right)$ are tight in $ H^{ -r_{ 1}} \left(\mathbb{ T}\right)$ and $H^{ - r_{ 1}} \left(\mathbb{ T}^{ 2}\right)$ respectively. In addition, we require that we have, $\mathbb{ P}_{ g}$-a.s., the joint convergence in law (w.r.t. $ \mathbb{ P}_{ 0}$) of $ \left( \eta_{ 0}^{ n}, \hat{ \eta}_{ 0}^{ n}\right)$ in $ H^{ - r_{ 1}}\left(\mathbb{ T}\right) \otimes H^{ -r_{ 1}}(\mathbb{ T}^{ 2})$ towards some $ \left( \eta_{ 0}, \hat{ \eta}_{ 0}\right)\in  H^{ - r_{ 1}}\left(\mathbb{ T}\right) \otimes H^{ - r_{ 1}}(\mathbb{ T}^{ 2})$ as $n\to\infty$.
\end{assumption}
\begin{assumption}[Annealed initial fluctuations]
\label{ass:jointconv_0W_ann}
We suppose the same hypotheses as for Assumption~\ref{ass:jointconv_0W_qu}, with \eqref{eq:control_eta0_qu} and \eqref{eq:control_hateta0_qu} replaced by
\begin{align}
\sup_{ n}\mathbb{ E}_{ g} \mathbb{ E}_{ 0} \left( \left\Vert \eta^{ n}_{ 0} \right\Vert_{-r_{ 0}}^{ 1+ \alpha}\right)&< +\infty,\label{eq:control_eta0_ann}\\
\sup_{ n} \mathbb{ E}_{ g}\mathbb{ E}_{ 0} \left( \left\Vert \hat{ \eta}^{ n}_{ 0} \right\Vert_{ -r_{ 0}}^{ 1+ \alpha}\right)&< +\infty,\label{eq:control_hateta0_ann}
\end{align} 
and the joint convergence of $ \left( \eta_{ 0}^{ n}, \hat{ \eta}_{ 0}^{ n}\right)$ in law w.r.t. the joint law $ \mathbb{ P}$ of the initial condition and graph.
\end{assumption}

Let us now state the main result of this paper on global fluctuations, concerning the weak limit of the fluctuation processes $\eta^n$ and $\hat \eta^n$. Define first the following linear differential operators: for all test functions $f$ and $g$, $s\in [0, T]$ and $\nu \in H^{-r}$ with $r>1/2$,
\begin{align}
\Theta f(\theta_{ 1}, \theta_{ 2})&:= \Gamma \left( \theta_{ 1}, \theta_{ 2}\right) \partial_{ \theta}f( \theta_{ 1}),\label{eq:Theta}\\
\mathcal{ L}_{ \nu}^{(1)}f(\theta)&:= \frac{ 1}{ 2} \partial_{ \theta}^{ 2} f(\theta) + \left\langle \nu({\rm d} \theta^{ \prime})\, ,\, \Theta f(\theta, \theta') \right\rangle + \left\langle \nu({\rm d}\theta^{ \prime})\, ,\, \Theta f(\theta', \theta) \right\rangle,\label{eq:L1}\\
\mathcal{ L}_{ \nu}^{(2)}g(\theta_{ 1}, \theta_{ 2})&:= \frac{ 1}{ 2} \Delta g(\theta_1, \theta_2) + \left(\left\langle \nu ({\rm d}\theta^{ \prime})\, ,\, \Gamma (\theta_{ 1}, \theta^{ \prime})\right\rangle,  \left\langle \nu ({\rm d}\theta^{ \prime})\, ,\, \Gamma (\theta_{ 2}, \theta^{ \prime})\right\rangle \right) \cdot \nabla g(\theta_{ 1}, \theta_{ 2}).\label{eq:L2}
\end{align}
\begin{theorem}[General global fluctuations]
\label{th:limit eta and hat eta}
Recall the definition of $(r_{ 0}, r_{ 1}, r_{ 2})$ in \eqref{eq:indices} and that $\mu$ solves the Fokker-Planck equation \eqref{eq:limit PDE}. Suppose that Assumptions~\ref{ass:Gamma},~\ref{ass:initial} hold as well as either of Assumption~\ref{ass:jointconv_0W_qu} or~\ref{ass:jointconv_0W_ann}. Suppose finally that 
\begin{equation}
\label{eq:dilution_cond}
\lim_n n p_n^4=\infty.
\end{equation} 
Then $(\eta^n,\hat \eta^n)$ converges in law in $\mathcal{ C}([0,T],H^{-r_{ 1}}(\bbT) \otimes H^{ -r_{ 1}} \left(\mathbb{ T}^{ 2}\right))$ to $(\eta,\hat \eta)$, unique solution in $ \mathcal{ C} \left([0, T], H^{ -r_{ 2}}\left( \mathbb{ T}\right) \oplus H^{ -r_{ 2}}( \mathbb{ T}^{ 2})\right)$ to
\begin{equation}
\label{eq:limit_etas}
\begin{cases}
\eta_{ t}= \eta_{ 0} + \int_{ 0}^{t} \mathcal{ L}_{ \mu_s}^{ (1), \ast}\eta_{ s} {\rm d}s + \int_{ 0}^{t} \Theta^{ \ast} \hat{ \eta}_{ s} {\rm d}s + W_{ t},\\
\hat{ \eta}_{ t}=  \hat{ \eta}_{ 0} + \int_{ 0}^{t}\mathcal{ L}_{ \mu_s}^{(2), \ast}\hat{ \eta}_{ s} {\rm d}s,
\end{cases}
\end{equation}
where for any $r>2$, $(W_t)_{ t\in [0, T]}$ is a Gaussian process in $ \mathcal{ C} \left([0, T], H^{ -r}\right)$, independent of $( \eta_{ 0}, \hat{ \eta}_{ 0})$, with covariance
\begin{equation}
\label{eq:cov_Weta}
\mathbf{ E} \left[ W_{ s}(f_{ 1}) W_{ t}(f_{ 2})\right]= \int_{ 0}^{s} \left\langle \mu_{ u}\, ,\, \partial_{ \theta}f_{ 1} \partial_{ \theta}f_{ 2}\right\rangle {\rm d}u,\ f_{ 1}, f_{ 2} \in H^{ r}, \ 0\leq s \leq t\leq T.
\end{equation} 
In case Assumption~\ref{ass:jointconv_0W_qu} holds, the above convergence is almost sure w.r.t. the randomness of the graph (quenched convergence) whereas in case of Assumption~\ref{ass:jointconv_0W_ann}, the convergence is understood under the annealed law $ \mathbb{ P}\otimes \mathbf{ P}$.
\end{theorem}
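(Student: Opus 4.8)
The plan is to run the standard three-step programme for fluctuation limits—a semimartingale decomposition of $(\eta^n,\hat\eta^n)$ in a Hilbert space of distributions, tightness, and identification of limit points together with uniqueness of the limiting system—the only genuine departure being that the decomposition cannot be closed at the first level and has to be unfolded against the graph structure. \emph{Step 0: decompositions.} Applying Itô's formula to $f(\theta^{i,n}_t)$ with $f\in H^{r_1}(\bbT)$, averaging over $i$, subtracting the weak form of \eqref{eq:limit PDE}, and inserting $\xi^{(n)}_{ij}/p_n=1+\hat\xi^{(n)}_{ij}$ from \eqref{eq:hatxi}, one gets
\begin{equation*}
\langle\eta^n_t,f\rangle=\langle\eta^n_0,f\rangle+\int_0^t\langle\eta^n_s,\mathcal{L}^{(1)}_{\mu_s}f\rangle\,{\rm d}s+\int_0^t\langle\hat\eta^n_s,\Theta f\rangle\,{\rm d}s+M^{n,f}_t+R^{n,f}_t,
\end{equation*}
where $M^{n,f}_t=\tfrac1{\sqrt n}\sum_{i=1}^n\int_0^tf'(\theta^{i,n}_s)\,{\rm d}B^i_s$ has predictable bracket $\int_0^t\langle\mu^n_s,(f')^2\rangle\,{\rm d}s$ and $R^{n,f}_t=\tfrac1{\sqrt n}\int_0^t\langle\eta^n_s\otimes\eta^n_s,\Theta f\rangle\,{\rm d}s$ is quadratic in $\eta^n$. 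The coupling term $\int_0^t\langle\hat\eta^n_s,\Theta f\rangle\,{\rm d}s$ is exact but open, so I repeat the computation on $\hat\nu^n$, hence on $\hat\eta^n=\sqrt n\,\hat\nu^n$: the ``$1$''-part of the expansion produces the drift $\langle\hat\eta^n_s,\mathcal{L}^{(2)}_{\mu_s}g\rangle$ (after replacing $\mu^n_s$ by $\mu_s$, an error controlled through Theorem~\ref{th:conv_empmeas}), the ``$\hat\xi$''-part produces a higher-order graph term schematically of the form $\tfrac{\sqrt n}{n^3}\sum_{i,j,k}\hat\xi^{(n)}_{ij}\hat\xi^{(n)}_{ik}\Phi(\theta^{i,n}_s,\theta^{j,n}_s,\theta^{k,n}_s)$, plus a martingale $\hat M^{n,g}$ with bracket $O((np_n)^{-1})$. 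Unlike \cite{Fernandez1997} I do not iterate indefinitely: one further differentiation shows that the remaining graph contributions, once scaled by $\sqrt n$, are $o(1)$ as soon as $np_n^4\to\infty$, giving
\begin{equation*}
\langle\hat\eta^n_t,g\rangle=\langle\hat\eta^n_0,g\rangle+\int_0^t\langle\hat\eta^n_s,\mathcal{L}^{(2)}_{\mu_s}g\rangle\,{\rm d}s+\hat M^{n,g}_t+\hat R^{n,g}_t
\end{equation*}
with $\hat M^{n,g},\hat R^{n,g}\to0$ uniformly on $[0,T]$.

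\emph{Step 1: the core estimate, and the main obstacle.} All the difficulty lies in showing that $R^{n,f}$, $\hat M^{n,g}$, $\hat R^{n,g}$ vanish; the martingale brackets and the law-of-large-numbers errors are then routine (via Theorem~\ref{th:conv_empmeas} and Hölder against the moment bounds of Step 2) once one controls the purely graph-theoretic sums $\tfrac1{n^2}\sum_{i,j}\hat\xi^{(n)}_{ij}\Psi(\theta^{i,n}_s,\theta^{j,n}_s)$ and their triple analogues. These cannot be treated by conditioning, since $\theta^{i,n}$ depends on the entire realisation of the graph. The plan is to use extensions of the Grothendieck inequality: expanding a smooth $\Psi$ on $\bbT^2$ (resp. $\bbT^3$) in Fourier series represents it as a Hilbert-space inner product of uniformly bounded vector fields of single variables, and the inequality then bounds the sum by the Grothendieck constant times an injective/operator-type norm of the random array $(\hat\xi^{(n)}_{ij})$, times a constant depending only on $\Psi$---thereby decorrelating the graph fluctuations from the dynamics. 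Standard concentration estimates for these mean-zero, variance $\asymp p_n^{-1}$ entries then bound those norms, and keeping track of the powers of $p_n$ generated through the successive expansions produces exactly the dilution threshold $np_n^4\to\infty$. This is the content of Section~\ref{sec:grothendieck}, and it is where essentially all the work is.

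\emph{Step 2: tightness.} From the two decompositions, a Gronwall estimate in the $H^{-r_0}$ norm---using boundedness of $\Gamma$ and its derivatives, the Hilbert--Schmidt Sobolev embeddings recalled above, the bracket bounds, and the uniform initial controls \eqref{eq:control_eta0_qu}--\eqref{eq:control_hateta0_qu} (resp.\ \eqref{eq:control_eta0_ann}--\eqref{eq:control_hateta0_ann})---gives $\sup_n\mathbf{E}\big[\sup_{t\le T}(\|\eta^n_t\|_{-r_0}^{1+\alpha}+\|\hat\eta^n_t\|_{-r_0}^{1+\alpha})\big]<\infty$, $\mathbb{P}_g$-a.s.\ under Assumption~\ref{ass:jointconv_0W_qu} and in $\mathbb{P}$-mean under Assumption~\ref{ass:jointconv_0W_ann}; the exponent $1+\alpha<2$ is dictated by the hypotheses on the possibly graph-dependent initial data. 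Combined with the compact Hilbert--Schmidt embedding $H^{-r_0}\hookrightarrow H^{-r_1}$ (which holds since $r_1-r_0=2>d/2$ for $d\le2$) and a standard modulus-of-continuity-in-time estimate read off from the decompositions, this yields tightness of $(\eta^n,\hat\eta^n)$ in $\mathcal{C}([0,T],H^{-r_1}(\bbT)\otimes H^{-r_1}(\bbT^2))$ in Mitoma's sense.

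\emph{Step 3: identification, uniqueness, conclusion.} Along a convergent subsequence one passes to the limit in both decompositions: the drifts converge because $\nu\mapsto\mathcal{L}^{(i)}_\nu$ and $\Theta$ are continuous between the relevant Sobolev spaces and $\mu^n\to\mu$ (Theorem~\ref{th:conv_empmeas}); the remainders vanish (Step 1); $M^{n,\cdot}$, a continuous martingale whose bracket converges to $\int_0^\cdot\langle\mu_u,\cdot\rangle\,{\rm d}u$, converges by the martingale central limit theorem to a Gaussian process $W$ with covariance \eqref{eq:cov_Weta}, independent of $(\eta_0,\hat\eta_0)$ since the initial data is independent of the Brownian motions (Assumption~\ref{ass:initial}); and $\hat M^{n,\cdot}\to0$. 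Hence every limit point solves \eqref{eq:limit_etas}. Uniqueness in $\mathcal{C}([0,T],H^{-r_2}(\bbT)\oplus H^{-r_2}(\bbT^2))$ follows from the triangular linear structure: the $\hat\eta$-equation is autonomous, and in mild form $\hat\eta_t=S^*_t\hat\eta_0+\int_0^tS^*_{t-s}\big(\mathcal{L}^{(2)}_{\mu_s}-\tfrac12\Delta\big)^*\hat\eta_s\,{\rm d}s$ it is a linear Volterra equation solved by Gronwall in $H^{-r_2}(\bbT^2)$ (the two extra derivatives $r_2-r_1$ absorbing the loss from the first-order part); feeding the resulting $\hat\eta$ into the $\eta$-equation, which then carries the known inhomogeneity $\int_0^t\Theta^*\hat\eta_s\,{\rm d}s+W_t$, solves it the same way. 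Tightness together with uniqueness of the limit point gives convergence of the whole sequence; it is $\mathbb{P}_g$-a.s.\ under Assumption~\ref{ass:jointconv_0W_qu} and under $\mathbb{P}\otimes\mathbf{P}$ under Assumption~\ref{ass:jointconv_0W_ann}. As already stressed, the hard part is Step 1: the Grothendieck-type decorrelation estimates and the bookkeeping that turns them into the condition $np_n^4\to\infty$.
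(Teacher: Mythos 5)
Your proposal follows the same overall architecture as the paper—a semimartingale decomposition of $(\eta^n,\hat\eta^n)$ closed at the second order, Grothendieck-type estimates to neutralise the trilinear graph terms, mild-formulation moment bounds, Mitoma/Aldous tightness, identification of limit points, and uniqueness of the limiting linear system—and correctly pins the source of the threshold $np_n^4\to\infty$ on the $\sqrt n\,C^n$ term, so the core of the argument is sound and essentially the paper's. Two points are worth flagging, one being a genuine variation, one a small imprecision. First, your uniqueness argument is different from the paper's: you propose a mild (semigroup) Gronwall on the $\hat\eta$ and $\eta$ equations, exploiting the triangular structure, whereas the paper (Proposition~\ref{prop:unique_SPDE} and Appendix~\ref{sec:uniqueness}) represents the solution through the stochastic flow $U(t,s)$ of the first-order part of $\mathcal{L}^{(2)}$, and for $\eta$ splits $\mathcal{L}^{(1)}=L^{(1)}+K^{(1)}$ into a diffusion generator plus a nonlocal bounded perturbation and runs a Picard iteration against the flow $V(t,s)$ of $L^{(1)}$, following \cite{mitoma85,Jourdain1998}. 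Your route is more elementary and works (the loss from the first-order drift is of order one derivative and is absorbed by the $\,(t-s)^{-1/2}$ smoothing of the heat semigroup, so Gronwall-Henry closes the estimate in $H^{-r_1}$ directly, without needing the $r_2-r_1$ slack); the paper's flow approach yields an explicit representation formula for the solution, which is convenient for establishing uniqueness in law as well as pathwise. Second, a technical overstatement: you claim the Gronwall estimate yields $\sup_n\mathbf{E}\big[\sup_{t\le T}\|\eta^n_t\|_{-r_0}^{1+\alpha}\big]<\infty$, but the mild formulation only delivers $\sup_{t\le T}\mathbf{E}\big[\|\eta^n_t\|_{-r_0}^{1+\alpha}\big]<\infty$ (Propositions~\ref{prop:eta^n_tight}--\ref{prop:hat_eta_n_tight}); the supremum cannot be taken inside without a BDG-type argument that is unavailable here because the graph destroys exchangeability—the paper flags precisely this as the difference with \cite[Prop.~3.5, Prop.~4.3]{Fernandez1997}. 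This is not fatal: the weaker $\sup\mathbf{E}$-bound plus the Aldous modulus-of-continuity estimate (which you also invoke) suffices for Mitoma tightness, but you should not claim the stronger control.
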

A particular case of Theorem~\ref{th:limit eta and hat eta} concerns the case where the initial condition for the second order fluctuation process $ \hat{ \eta}_{ 0}^{ n}$ goes to $0$ as $n\to\infty$:
\begin{theorem}[Universal mean-field fluctuations]
\label{th: limit eta independent}
Suppose Assumptions~\ref{ass:Gamma},~\ref{ass:initial} and either Assumption~\ref{ass:jointconv_0W_qu} or~\ref{ass:jointconv_0W_ann} are true. Suppose in addition that the limit of $ \hat{ \eta}_{ 0}^{ n}$ given in Assumptions~\ref{ass:jointconv_0W_qu} or~\ref{ass:jointconv_0W_ann} is $ \hat{ \eta}_{ 0}\equiv 0$. Then the process $(\eta^n)$ converges in law as $n\to\infty$ in $\mathcal{ C}([0,T],H^{-r_{ 1}}(\bbT))$ to $\eta$, unique solution in $ \mathcal{ C} \left([0, T], H^{ -r_{ 2}} \left(\mathbb{ T}\right)\right)$ to 
\begin{equation}
\label{eq:limit_eta_only}
\eta_{ t}= \eta_{ 0} + \int_{ 0}^{t} \mathcal{ L}_{ \mu_s}^{ (1), \ast}\eta_{ s} {\rm d}s + W_{ t},
\end{equation}
with $ \eta_{ 0}$ independent of $W$. In case of Assumption~\ref{ass:jointconv_0W_qu}, the above convergence is almost-sure w.r.t. the randomness of the graph (quenched convergence) and in case of Assumption~\ref{ass:jointconv_0W_ann}, the convergence holds w.r.t. the annealed law $ \mathbb{ P} \otimes \mathbf{ P}$.
\end{theorem}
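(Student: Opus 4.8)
The plan is to derive this statement from Theorem~\ref{th:limit eta and hat eta} by specialising to the case of a vanishing second-order initial datum. Under the present hypotheses, Theorem~\ref{th:limit eta and hat eta} applies (the dilution condition \eqref{eq:dilution_cond} being understood) and provides the joint convergence in law of $(\eta^n,\hat\eta^n)$ in $\mathcal{C}([0,T],H^{-r_1}(\bbT)\otimes H^{-r_1}(\bbT^2))$ to the unique solution $(\eta,\hat\eta)$ of the system \eqref{eq:limit_etas}, whose second component has initial datum equal to the prescribed limit of $\hat\eta_0^n$, here $\hat\eta_0\equiv0$, and where $(\eta_0,\hat\eta_0)$ is independent of the Gaussian process $W$ of covariance \eqref{eq:cov_Weta}.

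First I would argue that $\hat\eta\equiv0$. The decisive observation is that the second line of \eqref{eq:limit_etas} is a \emph{closed}, homogeneous linear equation in $\hat\eta$ alone: with $\hat\eta_0\equiv0$ it reads $\hat\eta_t=\int_0^t\mathcal{L}^{(2),\ast}_{\mu_s}\hat\eta_s\,{\rm d}s$, whose transport coefficients $\theta\mapsto\langle\mu_s,\Gamma(\theta,\cdot)\rangle$ are bounded thanks to Assumption~\ref{ass:Gamma} (see \eqref{eq:L2}). Writing this in mild form through the analytic heat semigroup on $H^{-r_2}(\bbT^2)$ and running a Grönwall estimate in which the parabolic smoothing of the semigroup compensates the one-derivative loss of the transport term, one obtains $\Vert\hat\eta_t\Vert_{-r_2}\equiv0$ on $[0,T]$; this is exactly the uniqueness mechanism of Theorem~\ref{th:limit eta and hat eta}, restricted to the decoupled $\hat\eta$-equation. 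With $\hat\eta\equiv0$ the term $\int_0^t\Theta^\ast\hat\eta_s\,{\rm d}s$ in the first line of \eqref{eq:limit_etas} disappears, so $\eta$ solves \eqref{eq:limit_eta_only}, and $\eta_0$ is independent of $W$ as a consequence of the independence of $(\eta_0,\hat\eta_0)$ from $W$.

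It then remains to pass from $(\eta^n,\hat\eta^n)$ to $\eta^n$ and to record the uniqueness assertion. Since the coordinate projection $(x,y)\mapsto x$ from $\mathcal{C}([0,T],H^{-r_1}(\bbT)\otimes H^{-r_1}(\bbT^2))$ to $\mathcal{C}([0,T],H^{-r_1}(\bbT))$ is linear and continuous, the continuous mapping theorem gives $\eta^n\to\eta$ in law in $\mathcal{C}([0,T],H^{-r_1}(\bbT))$. For uniqueness in $\mathcal{C}([0,T],H^{-r_2}(\bbT))$: any solution $\eta$ of \eqref{eq:limit_eta_only} yields a solution $(\eta,0)$ of \eqref{eq:limit_etas} with null second-order datum, and conversely any solution of \eqref{eq:limit_etas} with $\hat\eta_0=0$ has $\hat\eta\equiv0$ by the previous step, hence a first coordinate solving \eqref{eq:limit_eta_only}; therefore uniqueness for \eqref{eq:limit_eta_only} is equivalent to uniqueness for the full system, which is part of Theorem~\ref{th:limit eta and hat eta}. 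I do not expect any genuine difficulty here: the whole content sits in Theorem~\ref{th:limit eta and hat eta}, and the only point calling for a line of proof rather than pure bookkeeping is the vanishing of $\hat\eta$, i.e. that the decoupled homogeneous equation for the second-order fluctuation with zero initial condition has only the trivial solution.
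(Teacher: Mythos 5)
Your proof is correct and follows the same route as the paper's: Theorem~\ref{th: limit eta independent} is derived from Theorem~\ref{th:limit eta and hat eta} by observing that the second equation of \eqref{eq:limit_etas} is a deterministic, closed, homogeneous linear equation for $\hat\eta$, so $\hat\eta_0\equiv 0$ forces $\hat\eta\equiv 0$ by uniqueness, after which projection onto the first coordinate gives the claim. Your explicit mention that the dilution condition \eqref{eq:dilution_cond} must be understood as part of the hypotheses is a fair observation, since the theorem's statement omits it while the proof invokes Theorem~\ref{th:limit eta and hat eta}, which requires it.
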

\begin{proof}[Proof of Theorem~\ref{th: limit eta independent}]
It suffices to note that the limiting dynamics of $ \hat{ \eta}$ in \eqref{eq:limit_etas} is deterministic and linear, so that if initially $ \hat{ \eta}_{ 0}\equiv 0$ one obtains by uniqueness that $ \hat{ \eta}_{ t} \equiv 0$ uniformly in $t\in[0, T]$. Hence, Theorem~\ref{th: limit eta independent} follows immediately from Theorem~\ref{th:limit eta and hat eta}.
\end{proof}
Observe that \eqref{eq:limit_eta_only} is nothing else than the limiting SPDE of the fluctuation process in the pure mean-field case $ p_{ n}\equiv 1$ that has been obtained in \cite{Fernandez1997}, under i.i.d. initial condition. In this case, Theorem~\ref{th: limit eta independent} is of course compatible with the result of \cite{Fernandez1997} as, when $ p_{ n}\equiv 1$, $ \hat{\eta}_{ 0}^{ n}$ is equally $0$ for all $n$ and $ \eta_{ 0}^{ n}$ converges to a Gaussian process so that Assumption~\ref{ass:jointconv_0W_qu} is trivially true. One can see Theorem~\ref{th: limit eta independent} as a universality result, valid beyond the mean-field case, under the dilution condition \eqref{eq:dilution_cond}: the system \eqref{eq:wips} conserves the same fluctuations as in the mean-field case, as long as one can verify Assumption~\ref{ass:jointconv_0W_qu} or~\ref{ass:jointconv_0W_ann} and $ \hat{ \eta}_{ 0}\equiv 0$.  It is likely that the dilution condition \eqref{eq:dilution_cond} may not to be optimal: the critical point on this matter is the concentration estimates on quantities $S_{ n}^{ \mathcal{ T}}$ given in Definition~\ref{def:Sn}. Any improvement in the rates of convergence found in Proposition~\ref{prop:concentration_SnT} would lead to corresponding improvements in \eqref{eq:dilution_cond}. The second direction in which Theorem~\ref{th: limit eta independent} generalises the Central Limit Theorem of \cite{Fernandez1997} is the following: a crucial observation is that the fluctuation result in \cite{Fernandez1997} was proven in the case where the initial datum $ \left(\theta_{ 0}^{ 1, n}, \ldots, \theta_{ 0}^{ n,n}\right)$ consists of i.i.d. random variables. This hypothesis, being perfectly reasonable in the pure mean-field context as a natural means to preserve exchangeability between particles, is not really relevant in the context of \eqref{eq:wips}, as exchangeability is lost, due to the presence of the graph. Anticipating on Section~\ref{sec:examples} (where sufficient conditions for the result are given), we indeed show that these universal fluctuations go well beyond the i.i.d. case as they remain valid as long as the initial condition is chosen independently on the graph.

In an opposite way, we also describe in Proposition~\ref{prop:fluct_Kur} an example of initial condition, depending on the graph structure, such that $\hat \eta^n_0$ has a non zero limit, and thus for which the limit fluctuations are completely described by \eqref{eq:limit_etas} and no longer by the mean field fluctuations \eqref{eq:limit_eta_only}.

\subsection{Local fluctuations}
\label{sec:local_fluct}
We now give our result concerning the local fluctuations (recall the definitions of the local fluctuation processes $ \zeta^{ i}_{ n}$, $i=1,2$ in \eqref{def:zetas}).  As global fluctuations compete with local fluctuations, the main result concerns the convergence of the joint fluctuation process $\left(\zeta^{ n, 1}, \zeta^{ n, 2},\eta^n\right)$. We place ourselves in the case of i.i.d. initial condition, independent on the graph. Anticipating on Proposition~\ref{prop:limit_hateta0_indep} and Example~\ref{ex:cas_iid}, we see that Theorem~\ref{th: limit eta independent} is true: the global fluctuations of $ \eta^{ n}$ are completely described in terms of \eqref{eq:limit_eta_only}.
\begin{theorem}
\label{th:local_fluct}
Suppose Assumption~\ref{ass:Gamma} and that $(\theta_{ 0}^{ 1, n},\ldots, \theta_{ 0}^{ n,n})$ are i.i.d. random variables with law $ \mu_{ 0}$, independent from the graph. Suppose that $\liminf_n n p_n^5 = \infty$ and denote by $p:= \lim_{ n\to\infty} p_{ n}\in [0, 1]$. Then, $\mathbb{P}_g$ almost surely, the joint fluctuation process $\left(\zeta^{ n, 1}, \zeta^{ n, 2},\eta^n\right)$ converges as $n\to\infty$ in $ \mathcal{ C} \left([0, T], \left(H^{ - r_{ 1}} \left(\mathbb{ T}\right)\right)^3\right)$ to $\left(\zeta^{1}, \zeta^{2}, \eta\right)$ solution in  $ \mathcal{ C} \left([0, T], \left(H^{ - r_{ 2}} \left(\mathbb{ T}\right)\right)^3\right)$ to the system
\begin{equation}
\label{eq:SDPEs_zetas_eta}
\begin{cases}
\begin{split}
\zeta_{ t}^{l} &= \zeta_{ 0}^{l} + \int_{ 0}^{t} \mathcal{ U}_{ s}^{\ast} \zeta_{ s}^{l}{\rm d}s+ \sqrt{ p}\int_{ 0}^{t} \mathcal{ V}_{ s}^{ \ast} \eta_{ s} {\rm d}s+ W_{ t}^{ l},\ l=1,2,\\
\eta_{ t}&= \eta_{ 0} + \int_{ 0}^{t} \mathcal{ L}_{ \mu_s}^{(1),\ast} \eta_{ s} {\rm d}s +  W_{ t}.
\end{split}
\end{cases}
\end{equation}
where 
\begin{align}
\mathcal{ U}_{ s}f \left(\theta\right)&=\frac{ 1}{ 2} \partial_{ \theta}^{ 2} f(\theta) + \partial_{ \theta}f \left(\theta\right)  \left\langle \mu_{ s}({\rm d}\theta^{ \prime})\, ,\, \Gamma \left(\theta, \theta^{ \prime}\right)\right\rangle, \label{eq:Us}\\
\mathcal{ V}_{ s}f(\theta)&=\left\langle \mu_{ s}({\rm d}\theta^{ \prime})\, ,\, \partial_{ \theta}f \left(\theta^{ \prime}\right) \Gamma \left(\theta^{ \prime}, \theta\right)\right\rangle, \label{eq:Vs}
\end{align}
for $(\zeta_{ 0}^{ 1}, \zeta_{ 0}^{ 2}, \eta_{ 0})$ a Gaussian process with explicit covariance given in \eqref{eq:cov_zetas_0} and $ \left(W_{ t}^{ 1}, W_{ t}^{ 2}, W_t\right)$ Gaussian process with explicit covariance given in \eqref{eq:cov_noise_12}, the initial condition $(\zeta_{ 0}^{ 1}, \zeta_{ 0}^{ 2}, \eta_{ 0})$ and the noise $(W^{ 1}, W^{ 2}, W)$ being independent.
\end{theorem}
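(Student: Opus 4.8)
The plan is to follow the usual trilogy — tightness, identification of limit points, uniqueness — carried out jointly on the triple $(\zeta^{n,1},\zeta^{n,2},\eta^n)$, reusing essentially all of the machinery developed for Theorem~\ref{th:limit eta and hat eta}. First I would note that, since $(\theta^{1,n}_0,\ldots,\theta^{n,n}_0)$ are i.i.d.\ with law $\mu_0$ and independent of the graph, Assumption~\ref{ass:initial} holds automatically, and by Proposition~\ref{prop:limit_hateta0_indep} and Example~\ref{ex:cas_iid} so does Assumption~\ref{ass:jointconv_0W_qu} with $\hat\eta_0\equiv0$; hence Theorem~\ref{th: limit eta independent} already identifies the limit of $\eta^n$, and the proof of Theorem~\ref{th:limit eta and hat eta} moreover gives $\hat\eta^n\to0$ in $\mathcal{C}([0,T],H^{-r_1}(\mathbb{T}^2))$. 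So the genuinely new content is the joint behaviour of the two local processes together with the explicit Gaussian structure of the limit. The starting point is the semimartingale decomposition of $\mu^{n,l}_t$ from Lemma~\ref{lem:mu^n_t}: writing $\xi^{(n)}_{ij}=p_n(1+\hat\xi^{(n)}_{ij})$ in the interaction term gives, for a smooth test function $f$,
\[
\langle\mu^{n,l}_t,f\rangle=\langle\mu^{n,l}_0,f\rangle+\int_0^t\langle\mu^{n,l}_s,\tfrac12\partial_\theta^2 f+\partial_\theta f\,(\Gamma*\mu^n_s)\rangle\,{\rm d}s+\int_0^t\langle\hat\nu^n_s,\Theta f\rangle\,{\rm d}s+R^{n,l}_t(f)+M^{n,l}_t(f),
\]
where $M^{n,l}_t(f)=\frac1{np_n}\sum_i\xi^{(n)}_{li}\int_0^t\partial_\theta f(\theta^{i,n}_s)\,{\rm d}B^i_s$ and $R^{n,l}$ collects remainders weighted by products $\hat\xi^{(n)}_{li}\hat\xi^{(n)}_{ij}$ (local patterns $l\to i\to j$), which do not close into a functional of $\mu^{n,l}$, $\mu^{n,1,2}$ and $\mu^n$ alone. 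After multiplying by $\sqrt{np_n}$, writing $\mu^n_s=\mu_s+\frac1{\sqrt n}\eta^n_s$ and $\mu^{n,l}_s=\mu_s+\frac1{\sqrt{np_n}}\zeta^{n,l}_s$, and using $\partial_s\mu_s=\mathcal{U}_s^\ast\mu_s$, the drift becomes $\mathcal{U}_s^\ast\zeta^{n,l}_s+\sqrt{p_n}\,\mathcal{V}_s^\ast\eta^n_s$ up to lower-order terms — which is exactly what produces the coupling constant $\sqrt p$ in \eqref{eq:SDPEs_zetas_eta}.

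The hard part will be closing this expansion, just as for Theorem~\ref{th:limit eta and hat eta}: the idea is to iterate the It\^o decomposition on the graph-weighted empirical measures hidden in $R^{n,l}$, writing their own semimartingale decompositions, until the residual is $o\big((np_n)^{-1/2}\big)$ in $H^{-r_0}$. The multilinear $\hat\xi^{(n)}$-weighted sums generated along the way, which entangle the diffusive dynamics with path patterns in the graph, I would control via the concentration estimates on the quantities $S_n^{\mathcal{T}}$ of Definition~\ref{def:Sn} (Proposition~\ref{prop:concentration_SnT}), themselves resting on the generalised Grothendieck inequalities of Section~\ref{sec:grothendieck} that decorrelate the graph proximity from the dynamics. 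This is precisely where the stronger dilution hypothesis $\liminf_n np_n^5=\infty$ comes in: one extra layer of expansion is needed relative to Theorem~\ref{th:limit eta and hat eta}, and the measure $\mu^{n,1,2}$ now appears (see the martingale brackets below), whose convergence to $\mu$ requires $np_n^5\to\infty$ by Theorem~\ref{th:conv_empmeas}(2)(b).

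Once the decomposition is in place, tightness of $(\zeta^{n,1},\zeta^{n,2},\eta^n)$ in $\mathcal{C}([0,T],(H^{-r_1}(\mathbb{T}))^3)$ should follow $\mathbb{P}_g$-a.s.\ in the standard way: uniform bounds $\sup_n\mathbf{E}\big[\sup_{t\leq T}\norm{\zeta^{n,l}_t}_{-r_0}^{1+\alpha}\big]<\infty$ from a Gronwall estimate on the drift and Burkholder--Davis--Gundy on $M^{n,l}$ (using the almost sure control of the vertex degrees and of the $\hat\xi^{(n)}$-sums), Kolmogorov/Aldous-type equicontinuity from the same decomposition, and the Hilbert--Schmidt embedding $H^{-r_0}\subset H^{-r_1}$ together with a Mitoma-type criterion. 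To identify limit points I would pass to the limit in the semimartingale decomposition: the drift converges since $\mu^n_s,\mu^{n,l}_s\to\mu_s$ uniformly in $s$ (Theorem~\ref{th:conv_empmeas}) and $\sqrt{p_n}\to\sqrt p$, the $\hat\nu^n$-term vanishes since $\hat\eta^n\to0$, and the rescaled martingale vector $\big(\sqrt{np_n}\,M^{n,1},\sqrt{np_n}\,M^{n,2},\sqrt n\,M^n\big)$ (with $M^n$ the analogous martingale part of $\langle\mu^n_\cdot,\cdot\rangle$) converges by a martingale functional central limit theorem to a Gaussian process $(W^1,W^2,W)$; its covariance is read off from the brackets, the crucial one being $\langle\sqrt{np_n}M^{n,1}(f),\sqrt{np_n}M^{n,2}(g)\rangle_t=p_n\int_0^t\langle\mu^{n,1,2}_s,\partial_\theta f\,\partial_\theta g\rangle\,{\rm d}s\to p\int_0^t\langle\mu_s,\partial_\theta f\,\partial_\theta g\rangle\,{\rm d}s$ (where $\mu^{n,1,2}\to\mu$ is used), alongside $\langle W^l(f),W^l(g)\rangle_t=\int_0^t\langle\mu_s,\partial_\theta f\,\partial_\theta g\rangle\,{\rm d}s$ and $\langle W^l(f),W(g)\rangle_t=\sqrt p\int_0^t\langle\mu_s,\partial_\theta f\,\partial_\theta g\rangle\,{\rm d}s$, which is \eqref{eq:cov_noise_12}.

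Finally I would identify the limit of the initial vector: from $\langle\zeta^{n,l}_0,f\rangle=\sqrt{p_n}\,\langle\eta^n_0,f\rangle+\frac{\sqrt{p_n}}{\sqrt n}\sum_i\hat\xi^{(n)}_{li}f(\theta^{i,n}_0)$, using that the $\hat\xi^{(n)}$ are centred, independent of the i.i.d.\ initial data and mutually independent across distinct rows $l$, a conditional central limit theorem (conditioning on $(\theta^{i,n}_0)_i$, whose empirical second moments converge by the law of large numbers) yields $\zeta^l_0=\sqrt p\,\eta_0+G^l$ with $G^1,G^2$ independent of one another and of $\eta_0$, each Gaussian with covariance $(1-p)\langle\mu_0,f_1f_2\rangle$; combined with the classical i.i.d.\ central limit theorem for $\eta^n_0$ this gives the covariance \eqref{eq:cov_zetas_0}, while independence of the limiting initial data from the limiting noise is inherited from the independence of the initial condition and the Brownian motions. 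Every limit point of $(\zeta^{n,1},\zeta^{n,2},\eta^n)$ then solves the linear system \eqref{eq:SDPEs_zetas_eta} in $\mathcal{C}([0,T],(H^{-r_2}(\mathbb{T}))^3)$, and well-posedness of this triangular linear system — a Gaussian forcing propagated through the analytic heat semigroup, solved by a fixed-point/Gronwall argument in the $H^{-r_2}$-scale exactly as for \eqref{eq:limit_etas} — pins down the limit and upgrades tightness to convergence of the whole sequence.
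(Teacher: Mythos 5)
Your proposal follows the same architecture as the paper's proof (Section~\ref{sec:proof_local_fluct}) and correctly identifies every key ingredient: the semimartingale decomposition of $\zeta^{n,l}$ with drift $\mathcal{U}_s^\ast\zeta^{n,l}_s + \sqrt{p_n}\,\mathcal{V}_s^{n,l,\ast}\eta^n_s$ (Proposition~\ref{prop:semimart_zetas}), the vanishing of the higher-order graph-weighted remainder (your $R^{n,l}$, the paper's $\varpi^{n,l}$ of \eqref{eq:hln}) via Grothendieck-type estimates on $S_n^{\lijik}(l)$ and $S_n^{\lijk}(l)$ from Proposition~\ref{prop:concentration_SnT} (which is exactly where $np_n^5\to\infty$ enters, as you note), the identification of the noise covariance through the bracket $p_n\int_0^t\langle\mu_s^{n,1,2},\partial_\theta f\,\partial_\theta g\rangle\,\mathrm{d}s$ and the LLN \eqref{eq:conv_mun12} for $\mu^{n,1,2}$ (Proposition~\ref{prop:ident_noise_zetas}), tightness in $(H^{-r_1})^3$, and uniqueness for the linear limit system.

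The one place you take a genuinely different, and arguably cleaner, route is the identification of the limit of the initial vector. The paper's Proposition~\ref{prop:ident_cond0} computes the joint characteristic function of $(\zeta^{n,1}_0,\zeta^{n,2}_0,\eta^n_0)$ directly, conditioning on the graph $\sigma$-field $\mathcal{F}_\xi$ and applying Lyapounov's CLT to the initial data. You instead split
\[
\langle\zeta^{n,l}_0,f\rangle=\sqrt{p_n}\,\langle\eta^n_0,f\rangle+\frac{\sqrt{p_n}}{\sqrt n}\sum_{i=1}^n\hat\xi^{(n)}_{li}\,f(\theta^{i,n}_0),
\]
condition on the initial data, and apply the CLT to the graph coordinates, getting $\zeta^l_0=\sqrt p\,\eta_0+G^l$ with $G^1\perp G^2\perp\eta_0$ and $\mathrm{Cov}(G^l(f_1),G^l(f_2))=(1-p)\langle\mu_0,f_1f_2\rangle$. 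One checks readily (using $\mathrm{Cov}_{\mu_0}(f,g)+\bar f\,\bar g=\langle\mu_0,fg\rangle$) that this reproduces \eqref{eq:cov_zetas_0}, and your decomposition makes the structure of that covariance transparent in a way the paper's direct characteristic-function calculation does not. The two approaches are equivalent because the graph and the initial data are independent; both appeal to the a.s.\ convergence (by LLN) of the empirical second moments that drive the conditional variance. Beyond that, the paper packages the vanishing of $\varpi^{n,l}$ as a self-contained Proposition~\ref{prop:varpi} (semimartingale decomposition for $\varpi^{n,l}$, tightness, uniqueness for the limit PDE with zero data), whereas you describe it as ``iterating the expansion until the residual is $o((np_n)^{-1/2})$''; this is the same mechanism, just with the convergence-to-zero realized through uniqueness rather than a direct norm estimate.
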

A closer look at the structure of covariance of both initial condition in \eqref{eq:cov_zetas_0} and noise in \eqref{eq:cov_noise_12} shows that in the diluted case $p=\lim_{ n\to\infty} p_{ n}=0$, the process $ \left(\zeta^{ 1}, \zeta^{ 2}, \eta\right)$ are mutually independent and each $ \zeta^{ l}$ ($l=1,2$) satisfy
\begin{equation*}
\zeta_{ t}^{l} = \zeta_{ 0}^{l} + \int_{ 0}^{t} \mathcal{ U}_{ s}^{\ast} \zeta_{ s}^{l}{\rm d}s+  W_{ t}^{ l},\ l=1,2.
\end{equation*}
In the dense case $p>0$, $ \zeta^{ 1}$ and $ \zeta^{ 2}$ are correlated in several ways: with a nontrivial correlation of their noise and initial condition, and through the coupling of the global fluctuation process $ \eta$. Theorem~\ref{th:local_fluct} is proven in Section~\ref{sec:proof_local_fluct}.

\subsection{Examples}
\label{sec:examples}
We give in this section examples of initial condition verifying Assumptions~\ref{ass:jointconv_0W_qu} or~\ref{ass:jointconv_0W_ann} as well as sufficient conditions for the universality hypothesis $ \hat{ \eta}_{ 0}\equiv 0$ of Theorem~\ref{th: limit eta independent}.
\subsubsection{Universal mean-field fluctuations}
The main important point of this paragraph is to note that the universality condition $ \hat{ \eta}_{ 0}\equiv 0$ of Theorem~\ref{th: limit eta independent} is true as long as one chooses the initial condition to be independent of the graph (but not necessarily i.i.d.!).
\begin{assumption}[Initial datum independent from the graph]
	\label{ass:theta0_indep_xi}
	Suppose that $ \mathbb{ P}= \mathbb{ P}_{ 0}\otimes \mathbb{ P}_{ g}$, i.e., the initial condition $(\theta^{i,n}_0)_{n\geq 0}$ is independent from the graph $( \xi^{n})_{n\geq 0}$. 
\end{assumption}

\begin{proposition}
	\label{prop:limit_hateta0_indep}
	Suppose Assumption~\ref{ass:theta0_indep_xi}. If $\liminf_n p_n^3 n^{1-\gep}=\infty$ for some $\gep\in (0,1)$, then, for all $r> 1$,
	\begin{equation}
		\label{eq:limit_hateta0_indep}
		\lim_{n\rightarrow\infty} \mathbb{ E}_{ 0} \left[\left\Vert \hat{ \eta}_{ 0}^{ n} \right\Vert_{-r}^{ 2}\right] =0, \quad \bbP_g-\text{a.s.}.
	\end{equation}
	In particular, \eqref{eq:control_hateta0_qu} is true for any $ \alpha\in (0, 1)$ and $ \hat{ \eta}_{ 0}\equiv 0$.
\end{proposition}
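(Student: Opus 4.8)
The plan is to estimate $\mathbb{E}_0\big[\Vert\hat\eta_0^n\Vert_{-r}^2\big]$ directly by expanding the $H^{-r}(\mathbb{T}^2)$ norm in a Fourier basis (equivalently, using the kernel representation of the Hilbert–Schmidt embedding $H^{-r}(\mathbb{T}^2)\hookrightarrow H^{-(r+s)}(\mathbb{T}^2)$). Recall that $\hat\eta_0^n=\sqrt n\,\hat\nu_0^n=\frac{1}{n^{3/2}}\sum_{i,j}\hat\xi^{(n)}_{ij}\,\delta_{\theta^{i,n}_0}\otimes\delta_{\theta^{j,n}_0}$. For any test function $\phi$ on $\mathbb{T}^2$ one has $\langle\hat\eta_0^n,\phi\rangle=\frac{1}{n^{3/2}}\sum_{i,j}\hat\xi^{(n)}_{ij}\phi(\theta^{i,n}_0,\theta^{j,n}_0)$, and since $r>1$, $\Vert\hat\eta_0^n\Vert_{-r}^2=\sum_{k\in\mathbb{Z}^2}(1+|k|^2)^{-r}\,|\langle\hat\eta_0^n,e_k\rangle|^2$ with $(e_k)$ the Fourier basis. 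Under Assumption~\ref{ass:theta0_indep_xi} the graph variables $(\hat\xi^{(n)}_{ij})$ and the initial positions $(\theta^{i,n}_0)$ are independent, so after taking $\mathbb{E}_0$ (which acts only through the positions — note that $\mathbb{P}_0$-a.s.\ no further averaging of the $\theta$'s is needed since they merely enter as bounded phases $|e_k(\theta^{i,n}_0,\theta^{j,n}_0)|=1$), the problem reduces to controlling the quadratic form $\frac{1}{n^3}\sum_{i,j,i',j'}\hat\xi^{(n)}_{ij}\hat\xi^{(n)}_{i'j'}e_k(\theta^{i,n}_0,\theta^{j,n}_0)\overline{e_k(\theta^{i',n}_0,\theta^{j',n}_0)}$, summed against the weights $(1+|k|^2)^{-r}$.

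Next I would split the $\mathbb{P}_g$-average (for the intermediate moment bound) versus the $\mathbb{P}_g$-a.s.\ statement. For the a.s.\ statement one cannot simply take $\mathbb{E}_g$; instead I would combine a first-moment computation with a Borel–Cantelli / concentration argument. Concretely: $\mathbb{E}_g\big[\mathbb{E}_0\Vert\hat\eta_0^n\Vert_{-r}^2\big]$ is handled by noting $\mathbb{E}_g[\hat\xi^{(n)}_{ij}\hat\xi^{(n)}_{i'j'}]=0$ unless $\{i,j\}=\{i',j'\}$ as unordered pairs of edge-endpoints — and since the graph is asymmetric (directed), the only non-vanishing contributions come from $(i',j')=(i,j)$, giving $\mathbb{E}_g[(\hat\xi^{(n)}_{ij})^2]=\frac{1-p_n}{p_n}\le\frac{1}{p_n}$. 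This yields $\mathbb{E}_g\mathbb{E}_0\Vert\hat\eta_0^n\Vert_{-r}^2\le\frac{1}{n^3}\sum_{k}(1+|k|^2)^{-r}\sum_{i,j}\frac{1}{p_n}=\frac{C_r}{n\,p_n}$, using $r>1$ so that $\sum_{k\in\mathbb{Z}^2}(1+|k|^2)^{-r}<\infty$. Since $np_n\to\infty$ under the hypotheses, this already gives convergence in expectation (annealed). To upgrade to the $\mathbb{P}_g$-a.s.\ statement with the sharper rate, I would estimate the variance of $\mathbb{E}_0\Vert\hat\eta_0^n\Vert_{-r}^2$ with respect to $\mathbb{P}_g$: the dominant fourth-order-in-$\hat\xi$ terms, after using independence of distinct edges and $|\hat\xi^{(n)}_{ij}|\le\frac1{p_n}$, contribute at order $n^{-6}\cdot n^3\cdot p_n^{-2}$ from the ``diagonal'' pairings plus $n^{-6}\cdot n^4\cdot p_n^{-2}\cdot(\text{small})$; one then shows that along a suitable subsequence (e.g.\ $n=\lfloor m^{1+\delta}\rfloor$, or directly using the hypothesis $\liminf_n p_n^3 n^{1-\gep}=\infty$) the sum $\sum_n \mathbb{P}_g\big(\mathbb{E}_0\Vert\hat\eta_0^n\Vert_{-r}^2>\epsilon\big)$ converges, so Borel–Cantelli gives the a.s.\ limit $0$, and monotonicity/interpolation between subsequence values fills the gaps. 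The role of the precise exponent $p_n^3 n^{1-\gep}$ is exactly to make this Borel–Cantelli series summable.

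The main obstacle is the $\mathbb{P}_g$-a.s.\ upgrade rather than the mean bound: one must carefully bound the $\mathbb{P}_g$-variance (or a higher $\mathbb{P}_g$-moment) of the random quantity $\mathbb{E}_0\Vert\hat\eta_0^n\Vert_{-r}^2$, keeping track of which quadruples $(i,j,i',j',i'',j'',\dots)$ of directed edges give non-negligible joint cumulants. Because $\hat\xi^{(n)}_{ij}$ is centered with $\mathbb{E}_g|\hat\xi^{(n)}_{ij}|^\ell\asymp p_n^{-(\ell-1)}$, the cost of each ``coincidence'' among edge indices is a factor $p_n^{-1}$ while each free index costs a factor $n$; balancing these and dividing by the overall $n^{-6}$ shows the variance is $O\big((np_n)^{-2}+(n p_n^3)^{-1}\big)$ type, and the hypothesis $\liminf_n p_n^3 n^{1-\gep}=\infty$ is precisely what makes $\sum_n (np_n^3)^{-1} n^{-\text{(something)}}$-type series summable so Borel–Cantelli applies. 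Once the a.s.\ bound \eqref{eq:limit_hateta0_indep} is established for every $r>1$, the final two claims are immediate: \eqref{eq:control_hateta0_qu} for any $\alpha\in(0,1)$ follows from $\mathbb{E}_0\Vert\hat\eta_0^n\Vert_{-r_0}^{1+\alpha}\le\big(\mathbb{E}_0\Vert\hat\eta_0^n\Vert_{-r_0}^2\big)^{(1+\alpha)/2}$ by Jensen and $r_0>3>1$, and $\hat\eta_0\equiv 0$ follows because convergence in $L^2(\mathbb{P}_0)$ of $\Vert\hat\eta_0^n\Vert_{-r_1}$ to $0$ (again $r_1>1$) forces the $H^{-r_1}(\mathbb{T}^2)$-weak limit $\hat\eta_0$ appearing in Assumption~\ref{ass:jointconv_0W_qu} to be the zero distribution.
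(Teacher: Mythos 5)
Your reduction to a quadratic form in the $\hat\xi$'s via the Fourier basis of $H^{-r}(\mathbb{T}^2)$ is exactly what the paper does (with the kernel bound $|H_{i,j,k,l}|\leq C_r:=\sum_{p,q}(1+p^2+q^2)^{-r}<\infty$, $r>1$), and the annealed first--moment estimate $\mathbb{E}_g\mathbb{E}_0\Vert\hat\eta_0^n\Vert_{-r}^2\leq C_r/(np_n)$ is correct. The final deductions ($\eqref{eq:control_hateta0_qu}$ via Jensen, $\hat\eta_0\equiv 0$ via $L^2$ convergence) are also fine.

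The gap is in the quenched upgrade. Your plan is to bound the $\mathbb{P}_g$--variance of $\mathbb{E}_0\Vert\hat\eta_0^n\Vert_{-r}^2$, apply Chebyshev, sum over $n$, and fill gaps ``by monotonicity/interpolation between subsequence values''. With a second--moment bound of size $(np_n^3)^{-1}$ and the hypothesis giving only $np_n^3\gtrsim n^{\gep}$ for some $\gep<1$, Chebyshev yields tails of size $O(n^{-\gep})$ which are \emph{not} summable, so Borel--Cantelli does not apply on the full sequence. And the subsequence fix does not close: the quantity $\mathbb{E}_0\Vert\hat\eta_0^n\Vert_{-r}^2$ is not monotone in $n$ (both the $1/n^3$ prefactor, the summation range and the array $\hat\xi^{(n)}$ change), so there is no interpolation argument to transfer a.s.\ convergence from a subsequence to all $n$. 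You gesture at ``a higher $\mathbb{P}_g$--moment'' as an alternative, which would in fact work, but you don't develop it, and a variance bound alone is insufficient.

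The paper sidesteps this by obtaining \emph{exponential} concentration, which is summable over all $n$ with no subsequence trick. Concretely, it splits $\mathbb{E}_0\Vert\hat\eta_0^n\Vert_{-r}^2$ into the diagonal contribution $n^{-3}\sum_{i,j}(\hat\xi^{(n)}_{ij})^2 H_{i,j,i,j}$ (a sum of \emph{independent} bounded terms, controlled by classical Bernstein) and the off--diagonal part (a degenerate second--order chaos in the $\hat\xi$'s, handled via the de la Pe\~na--Montgomery-Smith decoupling inequality to pass to two independent copies of the graph, then a Bernstein inequality for martingales). In both cases the deviation probabilities decay like $\exp(-cn^{2\delta})$ for any small $\delta>0$, and the role of $\liminf_n p_n^3 n^{1-\gep}=\infty$ is precisely to make the corresponding deviation thresholds $p_n^{-3/2}n^{-1/2+\delta}$ and $p_n^{-1/4}n^{-1/2+\delta}$ go to zero. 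If you want to repair your proof while keeping a moment--method flavor, estimating a $2k$-th $\mathbb{P}_g$--moment with $k>1/\gep$ would give summable Markov tails; otherwise, you should follow the paper's Bernstein/decoupling route.
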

Proof of Proposition~\ref{prop:limit_hateta0_indep} is given in Section~\ref{sec:proofs_examples}.
\subsubsection{Quenched mean-field fluctuations}
We give now illustrating examples of initial condition that is independent of the graph, hence particular cases of Assumption~\ref{ass:theta0_indep_xi}, for which the initial fluctuation process $ \eta_{ 0}^{ n}$ verifies Assumption~\ref{ass:jointconv_0W_qu}.
\begin{example}[The case of i.i.d. initial condition]
	\label{ex:cas_iid}
	Suppose that $(\theta_{ 0}^{ i, n})$ are i.i.d. with law $ \mu_{ 0}$ (and independent on the graph and on the Brownian motions $(B^{ i})$). Then Assumptions~\ref{ass:jointconv_0W_qu} is valid and the limit $ \eta_{ 0}$ is the Gaussian process with covariance 
	\begin{equation}
		\label{eq:cov_eta0_dense}
		C_{ \eta_{ 0}} (f_{ 1}, f_{ 2})= \int_{ \mathbb{ T}} \left( f_{ 1} - \int_{ \mathbb{ T}} f_{ 1} {\rm d}\mu_{ 0}\right)\left( f_{ 2} - \int_{ \mathbb{ T}} f_{ 2} {\rm d}\mu_{ 0}\right) {\rm d}\mu_{ 0}.
	\end{equation}
\end{example}
The following example shows that Assumption~\ref{ass:jointconv_0W_qu} is sufficiently weak to possibly include non necessarily i.i.d. initial condition. We do not try to give any sharp condition here, we refer to the references mentioned in Example~\ref{ex:beta_mixing} for details.
\begin{example}[The case of $ \alpha$-mixing sequence]
\label{ex:beta_mixing}
	On $ \left(\Omega, \mathcal{ A}, \mathbb{ P}_{ 0}\right)$, define $T: \Omega \mapsto \Omega$ a bijective bimeasurable transformation preserving $ \mathbb{ P}_{ 0}$. Let $ \mathcal{ M}_{ 0}$ a $ \sigma$-algebra of $ \mathcal{ A}$ such that $ \mathcal{ M}_{ 0}\subset T^{ -1} \left(\mathcal{ M}_{ 0}\right)$ and $ \theta_{ 0}^{ 0}$ be a $ \mathcal{ M}_{ 0}$-measurable random variable on $ \mathbb{ T}$. Define finally $ \theta_{0}^{ i}:= \theta_{ 0}^{ 0} \circ T^{ i}$ for $i\geq1$. Then applying \cite[(4.1) and Th.~2]{Dedecker2007}, supposing that 
	\begin{equation}
		\label{eq:sum_beta}
		\sum_{ k>0} \alpha \left( \mathcal{ M}_{ 0}, \sigma (X_{ k})\right)<\infty,
	\end{equation}
	(where $ \alpha( \mathcal{ A}, \mathcal{ B})$ is the Rosenblatt $ \alpha$-mixing coefficient between $ \mathcal{ A}$ and $ \mathcal{ B}$), we have that $ \eta_{ 0}^{ n}$ converges as $n\to\infty$ in $H^{ -1} \left(\mathbb{ T}\right)$ to a Gaussian process with explicit covariance. Moreover, under the same condition, applying \cite[Th.~1]{Dedecker2003} for $ \varphi(\eta):= \left\Vert \eta \right\Vert_{ -1}^{ 2}$, we obtain the uniform bound \eqref{eq:control_eta0_qu} for $ \alpha=1$, so that Assumption~\ref{ass:jointconv_0W_qu} is satisfied. In the context of Markov chains, condition \eqref{eq:sum_beta} is true as soon as the chain is \emph{ergodic of degree $2$} \cite{nummelin_1984} and includes geometrically ergodic Markov chains \cite{Chen_1999}.
\end{example}
\subsubsection{An example of non-universal fluctuations}
We construct in Section~\ref{ss:initial_cond_on_graph} an example of initial condition (depending on the graph sequence) such that the limit fluctuations are non-universal:
\begin{proposition}
	\label{prop:fluct_Kur}
	Take $ \Gamma(\theta, \theta^{ \prime})= -K\sin \left(\theta- \theta^{ \prime}\right)$ with $K>0$. For any graph sequence $ \left(\xi_{ij}^{ (n)}\right)$, there exists a choice of initial condition $(\theta_{ 0}^{ 1,n},\ldots, \theta_{ 0}^{ n,n})$ such that $(\eta_{ 0}^{ n}, \hat{ \eta}_{ 0}^{ n})$ satisfies Assumption~\ref{ass:jointconv_0W_ann} and converges in law (w.r.t. the annealed law $ \mathbb{ P}\otimes \mathbf{ P}$) to $(\eta_{ 0}, \hat{ \eta}_{ 0})$ with $ \eta_{ 0} = Z_{ 1} \delta_{ 0} + Z_{ 2} \delta_{ \frac{ \pi}{ 2}}$ (where $(Z_{ 1}, Z_{ 2}) \sim \mathcal{ N}(0, C)$ with the covariance matrix $C$ defined as $C= \begin{pmatrix}
		\frac{ 1}{ 4}& - \frac{ 1}{ 4}\\ - \frac{ 1}{ 4} & \frac{ 1}{ 4}
	\end{pmatrix}$ and $ \hat{ \eta}_{ 0}= \frac{ 1}{ 6 \sqrt{ \pi}} \left(- \delta_{ (0, 0)} + 2 \delta_{ \left(\frac{ \pi}{ 2}, 0\right)} - \delta_{ \left(\frac{ \pi}{ 2}, \frac{ \pi}{ 2}\right)}\right)$. In particular, $ \Gamma\ast \hat{ \eta}_{ 0}^{ n}$ converges to $ - \frac{ 1}{ 3 \sqrt{ 2 \pi}} \delta_{ \frac{ \pi}{ 2}} \not \equiv 0$, so that the limiting process $ \eta_{ t}$ is governed by \eqref{eq:limit_etas} and \emph{not} by the universal mean-field SPDE \eqref{eq:limit_eta_only}.
\end{proposition}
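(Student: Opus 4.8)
The plan is to construct, from the graph realization, an explicit initial configuration supported on the two points $0$ and $\pi/2$ of $\mathbb{T}$, to check that it meets Assumption~\ref{ass:jointconv_0W_ann}, to identify the annealed limits $(\eta_0,\hat{\eta}_0)$, and to conclude with Theorem~\ref{th:limit eta and hat eta}. Once $\hat{\eta}_0\not\equiv 0$ is established the non-universality is automatic: the (deterministic, linear) second equation of \eqref{eq:limit_etas} then has a non-trivial solution $\hat{\eta}_t$, which drives the first equation through the term $\int_0^t\Theta^\ast\hat{\eta}_s\,{\rm d}s$ (see \eqref{eq:Theta}), a term absent from \eqref{eq:limit_eta_only}. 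With $\Gamma=-K\sin$ one has $\Gamma(0,0)=\Gamma(\tfrac\pi2,\tfrac\pi2)=0$ while $\Gamma(\tfrac\pi2,0)=-K\neq 0$, so only the atom of $\hat{\eta}_0$ at $(\tfrac\pi2,0)$ contributes and $\Gamma\ast\hat{\eta}_0^n$ converges to a non-zero multiple of $\delta_{\pi/2}$.

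For the construction, put $\mu_0=\tfrac12(\delta_0+\delta_{\pi/2})$ and place every particle at $0$ or $\pi/2$, so the only freedom is the partition $\{1,\dots,n\}=A_n\sqcup B_n$, with $\eta_0^n=\sqrt n\big(\tfrac{|A_n|}{n}-\tfrac12\big)(\delta_0-\delta_{\pi/2})$ and $\hat{\eta}_0^n$ supported on the four atoms of $\{0,\pi/2\}^2$ with weights $T_n^{XY}:=n^{-3/2}\sum_{i\in X_n,\,j\in Y_n}\hat{\xi}^{(n)}_{ij}$, $X,Y\in\{A,B\}$. We build $A_n$ from the graph by attaching to each vertex $v$ a statistic $f_{n,v}$ depending on the edges between $v$ and a reference set, with the out-edges $v\to\cdot$ and the in-edges $\cdot\to v$ entering differently --- an asymmetry available because $\hat{\xi}^{(n)}_{ij}$ and $\hat{\xi}^{(n)}_{ji}$ are independent (recall \eqref{eq:hatxi}), and it is precisely what will distinguish the atoms $(\tfrac\pi2,0)$ and $(0,\tfrac\pi2)$ --- and by declaring $v\in B_n$ iff $f_{n,v}$ exceeds its empirical median. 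Two quantitative features are crucial. The reference set is taken of size $\asymp np_n/(1-p_n)$: this is exactly the scale at which the correlation between positions and graph, once multiplied by the $p_n^{-1}$ inside $\hat{\xi}^{(n)}$ and by the $n^{-3/2}$ in $\hat{\eta}^n=\sqrt n\,\hat{\nu}^n$ (see \eqref{eq:hat_etan}), settles to a non-vanishing, $p_n$-free limit (for a choice made independently of the graph one gets instead $\hat{\eta}_0^n\to 0$, cf.\ Proposition~\ref{prop:limit_hateta0_indep}). And the precise, necessarily non-affine shape of $f_{n,v}$ is reverse-engineered so that $T_n^{AA},T_n^{BB}\to -\tfrac1{6\sqrt\pi}$, $T_n^{BA}\to\tfrac1{3\sqrt\pi}$ and $T_n^{AB}\to 0$, whence $\hat{\eta}_0^n\to\hat{\eta}_0$.

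The verification then splits into three steps of increasing difficulty. The bounds \eqref{eq:control_eta0_ann}--\eqref{eq:control_hateta0_ann} are immediate, since $\eta_0^n,\hat{\eta}_0^n$ live on fixed finite atom sets and $\sqrt n(|A_n|/n-1/2)$ and each $T_n^{XY}$ have bounded second moments (for $T_n^{XY}$: mean $O(1)$ by design, variance $O((np_n)^{-1})$, using $np_n\to\infty$). The convergence $\eta_0^n\Rightarrow Z_1(\delta_0-\delta_{\pi/2})$ with $\mathrm{Var}(Z_1)=\tfrac14$ is a central limit theorem for $\sum_v\mathbf{1}\{v\in A_n\}$: the summands are built from essentially disjoint edge sets, hence asymptotically i.i.d.\ $\mathrm{Bernoulli}(\tfrac12)$ (the parameter $\tfrac12$ because $f_{n,v}$ is centered). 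The convergence $\hat{\eta}_0^n\to\hat{\eta}_0$ in probability reduces to showing that each $T_n^{XY}$ concentrates (variance $O((np_n)^{-1})\to 0$, by a second-moment bound using that distinct edges are independent) and that $\mathbf{E}[T_n^{XY}]$ tends to the announced constant; the latter is obtained by conditioning on a pivotal edge $\hat{\xi}^{(n)}_{ij}$, Taylor-expanding the membership indicators (approximately Gaussian thresholds, via a local CLT) about $\hat{\xi}^{(n)}_{ij}=0$, and summing over the $\Theta(n^{3/2})$ pairs $(i,j)$ for which that edge genuinely influences the positions of both $i$ and $j$. Assembling these and invoking Theorem~\ref{th:limit eta and hat eta} gives the result, with $\eta_t$ governed by \eqref{eq:limit_etas} and not by \eqref{eq:limit_eta_only}.

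The hard part is the design of $f_{n,v}$: the four limits of $(T_n^{AA},T_n^{AB},T_n^{BA},T_n^{BB})$ must be hit simultaneously, while $|A_n|\sim n/2$ is kept with the correct Gaussian variance, and uniformly over the dilution regime in which Theorem~\ref{th:limit eta and hat eta} is valid. A rule $f_{n,v}$ depending affinely on the in- and out-statistics forces rigid relations among these four limits --- essentially $T_n^{AA}=-T_n^{BB}+o(1)$ and $T_n^{AB}=-T_n^{BA}+o(1)$, since $B_n$ is the complement of $A_n$ --- which are incompatible with the prescribed values; circumventing this obstruction, and then carrying out the expansion of $\mathbf{E}[T_n^{XY}]$ cleanly (in particular showing that the third-moment/joint corrections are negligible while the leading terms equal the target), is the technical core of the argument.
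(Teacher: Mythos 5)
Your proposal takes a genuinely different route from the paper, but it has a fatal gap: the construction on which everything hinges is never given. You explicitly defer the design of the vertex statistic $f_{n,v}$ to the last paragraph, call it ``the technical core,'' observe correctly that any affine rule forces degenerate relations among the four weights $T_n^{XY}$, and then simply assert that some non-affine rule can be ``reverse-engineered'' to hit the four prescribed limits simultaneously. No such rule is produced, and the CLT-type and expectation computations you sketch all depend on it. As written, the proof proves nothing: the object that must exist is announced, not constructed.

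The paper's construction is quite different and avoids this difficulty entirely. Rather than a simultaneous partition $\{1,\dots,n\}=A_n\sqcup B_n$ built from per-vertex statistics, the paper builds the initial condition \emph{recursively}: working on a symmetric Erd\H{o}s--R\'enyi graph with $p=\tfrac12$ fixed (not the fully general setting your sketch aims for), it places $\theta_0^1$ uniformly and then, having placed $\theta_0^1,\dots,\theta_0^n$, decides $\theta_0^{n+1}$ by comparing the two $\mathcal{G}_{n+1}$-measurable sums $R_0^n=\sum_{i\le n:\theta_0^i=0}\hat\xi_{i,n+1}$ and $R_{\pi/2}^n=\sum_{i\le n:\theta_0^i=\pi/2}\hat\xi_{i,n+1}$, placing the new particle at the phase whose recentred degree toward it is \emph{smaller} (with a fair tiebreak). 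This gives three structural gifts that your static construction does not: (i) $X_n=\#\{i\le n:\theta_0^i=0\}$ is an honest simple random walk, so $\eta_0^n$ converges to the claimed Gaussian with no further work; (ii) conditionally on $X_n=x$ the pair $(R_0^n,R_{\pi/2}^n)$ are \emph{independent} centred Binomials, so the limits of the four coefficients of $\hat\eta_0^n$ reduce to computing $\mathbf{E}[Y_1\mathbf 1\{Y_2\ge Y_1\}]$ and $\mathbf{E}[\max(Y_1,Y_2)]$ for two independent standard Gaussians $Y_1,Y_2$ — a one-line calculus exercise, not a bespoke local-CLT expansion around a pivotal edge; (iii) the filtration $(\mathcal{G}_n)$ gives clean martingale-difference bounds showing the sums concentrate at rate $n^{-1/2}$. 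Your ``essentially disjoint edge sets, hence asymptotically i.i.d.\ Bernoulli($\tfrac12$)'' claim is precisely what the recursion makes \emph{exact} and what your static $f_{n,v}$ would have to establish from scratch, entangled with the median-thresholding you impose. In short: the paper trades your (hoped-for) generality in $p_n$ for an explicit construction whose analysis closes; your sketch outlines a plausible program but leaves its central ingredient as an open problem.

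Two smaller points worth noting. First, the paper's proof does \emph{not} establish the ``for any graph sequence'' wording of the proposition: it commits to symmetric Erd\H{o}s--R\'enyi with $p=\tfrac12$ and to $\xi^{(n)}_{ij}=\xi_{ij}$ independent of $n$, and your ambition to cover the diluted regime via a reference set of size $\asymp np_n/(1-p_n)$ goes well beyond what is actually proved. Second, the directional asymmetry you invoke (treating $\hat\xi^{(n)}_{ij}$ and $\hat\xi^{(n)}_{ji}$ differently) is unavailable in the paper's setting, which is explicitly symmetric; the paper instead gets the right four coefficients from the asymmetry between ``$i$ before $j$'' and ``$j$ before $i$'' in the recursion order.
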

Proof of Proposition~\ref{prop:fluct_Kur} is given in Section~\ref{ss:initial_cond_on_graph}.

\begin{figure}[ht]
\centering
\includegraphics{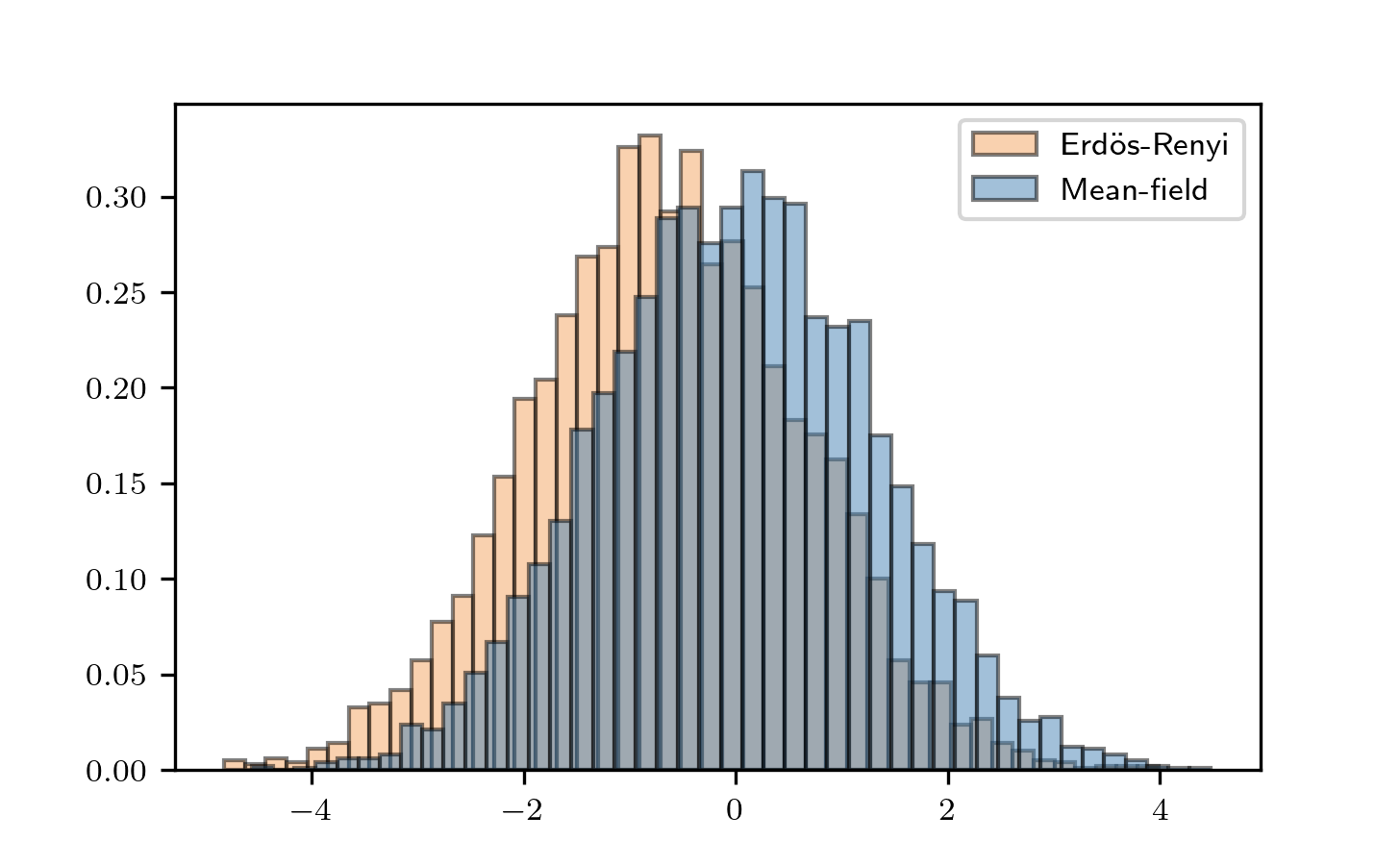}
\label{subfig:toy_model_apos}
\caption{Histograms representing $5000$ realizations of $\sqrt{n}(\psi^n_1-\psi_1)$, where $r^n_t e^{i\psi^n_t}=\langle \mu^n_t(\dd \theta),e^{ i \theta}\rangle$ and  $r_t e^{i\psi_t}=\langle \mu_t(\dd \theta),e^{ i \theta}\rangle$, for the choice of interaction kernel $\Gamma(\theta,\theta')=- K \sin(\theta-\theta')$ with $K=2$. For the blue histogram the interaction is of mean-field type with i.i.d. initial condition of distribution $\frac12 \gd_0+\frac12 \gd_{\frac{\pi}{2}}$, while for the brown one it is of symmetric Erd\H{o}s-Renyi type with $p=0.5$ and initial condition as described in Section~\ref{ss:initial_cond_on_graph}. We observe a dephasing at time $t=1$ at the level of fluctuations, induced by the graph-dependent initial condition.}
\end{figure}

\subsection{On possible generalisations to inhomogeneous graphs} Even though we have focused on Erd\H{o}s-Rényi random graphs, the same formalism should easily adapt to inhomogeneous situations, i.e., to graphons (see e.g. \cite{bayraktar2020graphon,bet2020weakly,Lucon2020}). Notably, the concentration results used in this paper (i.e. Bernstein inequalities or the concentration result for the operator norm of a random matrix \cite{tao2012} used in the proof of Proposition~\ref{prop:concentration_SnT}) only require the $ \xi_{i j}^{ (n)}$ to be independent, not necessarily identically distributed. For the sake of conciseness, we give one illustrating example generalising the present homogeneous case and leave further generalisations to the reader. This example is an elementary instance of the Stochastic Block model in case of only two communities: let $n$ an even number and divide the population into two clusters $C_{ 1}^{ n}= \left\lbrace1, \ldots, \frac{ n}{ 2}\right\rbrace$ and $C_{ 2}^{ n}= \left\lbrace \frac{ n}{ 2}+1,\ldots, n\right\rbrace$, and suppose that the $ \xi_{ij}^{ (n)}$ are independent with Bernoulli law with parameter $p_{ i,j}= p$ if $i,j$ belong to the same cluster and $p_{ i,j}= q$ if $i,j$ belong to different clusters. Then the mean degree of each node is $ nr$ with $r=\frac{ p+q}{ 2}$, so that the dynamics is
\begin{equation*}
\dd \theta^{i,n}_t = \frac{1}{n r}\sum_{j=1}^n \xi_{ij}^{(n)}\Gamma \left(\theta^{i,n}_t, \theta^{j,n}_t \right)\dd t+\dd B^i_t, \quad 0 < t \leq T, \quad i= 1, \dots, n,
\end{equation*}
There are here two global empirical measures, each on one cluster: $ \mu^{ n, C_{ l}}= \frac{ 2}{ n} \sum_{ i\in C_{ l}^{ n}} \delta_{ \theta^{ i, n}}$. Suppose for simplicity that the initial condition is chosen independently from the graph, with the appropriate convergence hypotheses of the empirical measure as $n\to\infty$.
Similar standard Ito's calculations as for Lemma~\ref{lem:mu^n_t} (note here that \eqref{eq:hatxi} has to be replaced by $ \hat{ \xi}_{ij}^{ (n)}:= \xi_{ij}^{ (n)} - p_{ i,j}$), show that, for $ \alpha= \frac{ p}{ p+q}$, $f$ regular and $M^{ n,l}$ appropriate martingales
\begin{equation}
\label{eq:mun_SBM}
\begin{cases}
\< \mu^{ n, C_{ 1}}_t, f> &= \< \mu^{ n, C_{ 1}}_0, f> + \int_0^t \left\langle  \mu^{ n, C_{ 1}}_s\, ,\, \frac 12 \partial_\theta^2 f + \partial_{ \theta} f \left( \Gamma \ast \left\lbrace \alpha\mu_{ s}^{ n, C_{ 1}} + (1- \alpha)\mu_{ s}^{ n, C_{ 2}} \right\rbrace\right)\right\rangle \dd s \\
&+ \< M^{ n, 1}_t, f > + \int_0^t \frac{2}{n^2 r}\sum_{i\in C_{ 1}} \sum_{j=1}^n \hat{ \xi}_{ij}^{ (n)} \Gamma\left(\theta^{i,n}_s, \theta^{j,n}_s\right)\partial_\theta f\left(\theta^{i,n}_s\right) \dd s,\\
\< \mu^{ n, C_{ 2}}_t, f> &= \< \mu^{ n, C_{ 2}}_0, f> + \int_0^t \left\langle \mu^{ n, C_{ 2}}_s\, ,\, \frac 12 \partial_\theta^2 f + \partial_{ \theta}f \left( \Gamma \ast \left\lbrace (1- \alpha) \mu_{ s}^{ n, C_{ 1}} + \alpha \mu_{ s}^{ n, C_{ 2}} \right\rbrace\right) \right\rangle \dd s\\
& + \< M^{ n, 2}_t, f >  + \int_0^t \frac{2}{n^2 r}\sum_{i\in C_{ 2}} \sum_{j=1}^n \hat{ \xi}_{ij}^{ (n)} \Gamma\left(\theta^{i,n}_s, \theta^{j,n}_s\right)\partial_\theta f\left(\theta^{i,n}_s\right) \dd s.
\end{cases}
\end{equation}
The mean-field limit is given by the system of coupled PDEs
\begin{equation}
\label{eq:mu_SBM}
\begin{cases}
\< \mu^{C_{ 1}}_t, f> &= \< \mu^{C_{ 1}}_0, f> + \int_0^t \left\langle \mu^{C_{ 1}}_s\, ,\, \frac 12 \partial_\theta^2 f + \partial_{ \theta} f\left(\Gamma \ast \left\lbrace \alpha\mu_{ s}^{C_{ 1}} + (1- \alpha)\mu_{ s}^{C_{ 2}}\right\rbrace\right)\right\rangle \dd s,\\
\< \mu^{C_{ 2}}_t, f> &= \< \mu^{C_{ 2}}_0, f> + \int_0^t \left\langle  \mu^{C_{ 2}}_s\, ,\, \frac{ 1}{ 2} \partial_\theta^2 f + \partial_{ \theta} f\left(\Gamma \ast \left\lbrace (1- \alpha)\mu_{ s}^{C_{ 1}} + \alpha\mu_{ s}^{C_{ 2}}\right\rbrace\right) \right\rangle  \dd s.
\end{cases}
\end{equation}
Setting now the fluctuations processes
\begin{equation}
\label{eq:fluct_SBM}
\eta^{ n}:= \left( \eta^{ n, C_{ 1}}, \eta^{ n, C_{ 2}}\right):= \left( \sqrt{ \frac{ n}{ 2}} \left( \mu^{ n, C_{ 1}} - \mu^{ C_{ 1}}\right), \sqrt{ \frac{ n}{ 2}} \left( \mu^{ n, C_{ 2}} - \mu^{ C_{ 2}}\right)\right),
\end{equation}
from \eqref{eq:mun_SBM} and \eqref{eq:mu_SBM} and using the same techniques as in the present paper, it is not difficult to show that the proper limit for \eqref{eq:fluct_SBM} is given by
\begin{equation}
\label{eq:limfluct_SBM}
\begin{cases}
\eta^{C_{ 1}}_t &=\eta^{C_{ 1}}_0 + \int_0^t \left\lbrace \mathcal{ L}_{ \mu_s}^{ (1), \ast}\eta^{C_{ 1}}_s +(1- \alpha) \mathcal{ U} \left(\mu_{ s}^{ C_{ 1}}\right)^{ \ast} \eta_{ s}^{C_{ 2}} \right\rbrace \dd s + W^{1}_t,\\
\eta^{C_{ 2}}_t&=  \eta^{C_{ 2}}_0 + \int_0^t \left\lbrace \mathcal{ L}_{ \mu_s}^{ (2), \ast}\eta^{C_{ 2}}_s + (1-\alpha) \mathcal{ U}\left(\mu_{ s}^{ C_{ 2}}\right)^{ \ast} \eta_{ s}^{C_{ 1}} \right\rbrace \dd s +W^{2}_t,
\end{cases}
\end{equation}
for 
\begin{align*}
\mathcal{ L}_{ \mu_s}^{ (1)}f&=\frac 12 \partial_\theta^2 f  + \partial_{ \theta} f \left( \Gamma \ast \left\lbrace \alpha\mu_{ s}^{C_{ 1}} + (1- \alpha)\mu_{ s}^{C_{ 2}}\right\rbrace\right) + \alpha \mathcal{ U} \left(\mu_{ s}^{ C_{ 1}}\right)f,\\
\mathcal{ L}_{ \mu_s}^{ (2)}f&=\frac 12 \partial_\theta^2 f  + \partial_{ \theta} f \left( \Gamma \ast \left\lbrace (1-\alpha)\mu_{ s}^{C_{ 1}} + \alpha\mu_{ s}^{C_{ 2}}\right\rbrace\right) + \alpha \mathcal{ U} \left(\mu_{ s}^{ C_{ 2}}\right)f,\\
\mathcal{ U}(\mu)f&=\int  \partial_{ \theta} f(\theta)  \Gamma \left(\theta, \cdot\right) \mu({\rm d}\theta).
\end{align*}
Here $ \left(W^{ 1}, W^{ 2}\right)$ are independent Gaussian process with covariance $ \mathbf{E} \left[ W_{ s}^{ l}(f) W_{ t}^{ l}(g)\right]= \int_{ 0}^{t\wedge s} \left\langle \mu_{ u}^{ C_{ l}}\, ,\, \partial_{ \theta}f\partial_{ \theta}g\right\rangle {\rm d}u$. We retrieve from this calculations several easy particular cases:
\begin{enumerate}
\item when $q=0$, that is $ \alpha=1$: then we see from \eqref{eq:limfluct_SBM} that $ (\eta^{ C_{ 1}}, \eta^{ C_{ 2}})$ are independent copies of the same process solving \eqref{eq:limit_eta_only} (this is of course normal as the two clusters $C_{ 1}$ and $C_{ 2}$ are now disjoint).
\item when $p=q$, that is $ \alpha= \frac{ 1}{ 2}$: for general initial condition $ \mu^{ C_{ 1}}_{ 0}$ and $\mu_{ 0}^{ C_{ 2}}$ (not necessarily identical),  $ \mu:= \frac{ 1}{ 2}( \mu^{ C_{ 1}} + \mu^{ C_{ 2}})$ solves the mean-field \eqref{eq:limit PDE} with initial condition $\frac{ 1}{ 2}( \mu^{ C_{ 1}}_{ 0} + \mu^{ C_{ 2}}_{ 0})$ and we see from \eqref{eq:limfluct_SBM} that $ \eta:= \frac{ 1}{ 2} \left( \eta^{ C_{ 1}} + \eta^{ C_{ 2}}\right)$ solves also \eqref{eq:limit_eta_only} (this is again obvious as the system consists now in a single Erd\H{o}s-R\'enyi with an homogeneous parameter $p=q$).
\item in the particular case $ \mu^{ C_{ 1}}_{ 0}= \mu_{ 0}^{ C_{ 2}}$, the statements of the previous item remain true for any $ \alpha\in [0,1]$.
\end{enumerate}

\subsection{A look at the literature}
\label{ss:literature}
Interacting particle systems of mean field type have been repeatedly addressed in the literature of the last fifty years, see \cite{McKean65, Oelsch1984, Gartner, Sznitman1991}, this list being in no way exhaustive. The first results focus on the Law of Large Numbers (LLN) for the empirical measure together with existence and uniqueness related to the limit Fokker-Planck equation, see, e.g., \cite{funaki84, leonard86, Gartner}. Shortly after, the Central Limit Theorem (CLT) \cite{braunHepp77, TanakaHitsuda81, sznitman85, tanaka84, hitsudaMitoma86} has been established in several scenarios. Two main methods have been proposed in the literature to study the fluctuations around the LLN limit. One method \cite{TanakaHitsuda81, sznitman84, sznitman85, ShigaTanaka85, BudhirajiaWu16} consists in focusing on the fluctuation field
	\begin{equation*}
		\left( \sqrt{ n} \left( \left\langle \mu_{ n}\, ,\, f\right\rangle- \left\langle \mu\, ,\, f\right\rangle\right), \quad f \in \mathcal{ F} \right),
	\end{equation*}
with $\cF$ some function space, typically $ \mathcal{ F}= \left\lbrace f\in L^{ 2}(\cC([0,T], \R^d)), \mathbf{ E}_{ \mu}(f)=0\right\rbrace$, and to prove that, as $n\to \infty$, it converges to some Gaussian field with prescribed covariance. The other method, the one followed here and firstly proposed in \cite{mitoma85, hitsudaMitoma86, Fernandez1997}, directly addresses the fluctuation process \eqref{def:fluctuation_process}, i.e., $\eta^n = \sqrt{n} (\mu_n - \mu)$, and aims at showing that $\eta^n$ converges to the solution of a stochastic partial differential equation. See \cite[Chapter 3]{TanakaHitsuda81} for an interesting relation between these two approaches when the particle interaction is linear.

We must stress that the first method has three main drawbacks: (i) the convergence only concerns finite-dimensional marginals (ii) its proof relies heavily on exchangeability properties of the system (that is in no way applicable in our quenched context) and (iii) the covariance of the limit Gaussian field is not explicit but involves Radon-Nikodym derivatives and integrals of the dynamics operators. The second method is more challenging, yet it translates the limit dynamics in terms of a "classical" linear SPDE that can be further studied, see \cite{kurtzXiong04} for a general result on this kind of limit equations. Finally, observe that in some cases, the CLT for finite dimensional marginals can be derived from a Large Deviation Principle, see \cite[Chapter 4]{daiPra96} and \cite{Bolthausen86}.

In the case of interacting particle systems on graphs, most of the literature focuses on LLN \cite{Delattre2016, coppini2018law, bayraktar2020graphon, bet2020weakly}, with some exceptions concerning large deviations \cite{coppini2018law,MacLaurin2020, OliveiraReis2019,dupuis_medvedev_20}. It is interesting to compare the LLN established in Theorem~\ref{th:conv_empmeas} with the previous ones in the literature. To the authors' knowledge, there exists no result under weaker assumptions on graph and initial condition than our Theorem \ref{th:conv_empmeas}. Although the assumption $np_n\to \infty$ is common across other results (but only appears in an annealed context, e.g., \cite{bayraktar2020graphon, OliveiraReis2019}, whereas the best condition so far in a quenched context was $ \liminf_{ n\to\infty}\frac{ np_{ n}}{ \log(n)}>0$ \cite{coppini2018law}), the only work assuming general initial condition that may depend on the graph is given by \cite{Coppini2019}. However, \cite{Coppini2019} focuses on a specific particle system, the Kuramoto model, and proves a result in $H^{-1}$-norm whereas Theorem \ref{th:conv_empmeas} is stated in terms of the classical weak convergence. Given the results \cite{Oliveira2020, Lacker2019} on the sparse regime $np_n \to c >0$, the condition \eqref{eq:dilution_cond_opt} of Theorem \ref{th:conv_empmeas} appears to be optimal.

To the authors knowledge, there exists only one work addressing the CLT for diffusions on graphs, i.e., \cite{BBW19}, which addresses interacting diffusions on $\R^d$ and dense inhomogeneous random graphs. Despite the generality of the particle systems and the graphs under consideration, the CLT statement is substantially weaker than the one presented here: it concerns finite dimensional marginals, the underlying graph sequence is dense and the result is proven in probability with respect to the graph, whereas we prove a $\bbP_g$-a.s. convergence and consider graph sequences in possibly diluted regimes. Finally, the initial condition in \cite{BBW19} are taken i.i.d. and independent of the graph, whereas we only suppose the weak convergence of $ \mu_{ 0}^{ n}$ towards some probability measure, see Assumption \ref{ass:initial}.

\medskip

\subsection{Organisation of the paper} 
The rest of the paper is organised as follows: we present in Section~\ref{sec:grothendieck} the main argument that we use for closing the hierarchy of empirical measures, that is extension of Grothendieck inequalities. Section~\ref{sec:global fluctuations} contains the proofs of the global fluctuations result (Theorem~\ref{th:limit eta and hat eta}). The local fluctuations (Theorem~\ref{th:local_fluct}) are treated in Section~\ref{sec:proof_local_fluct}. We gather in App.~\ref{sec:sobolev} and~\ref{sec:App concentration} some technical estimates (namely Sobolev inequalities and further concentration estimates). App.~\ref{sec:conv_empmeas} gives the proof of Theorem~\ref{th:conv_empmeas}. Uniqueness results are gathered in App.~\ref{sec:uniqueness}. We finally discuss on the importance of renormalisation of the interaction in App.~\ref{sec:renorm}.

\section{Grothendieck inequality and concentration estimates}
\label{sec:grothendieck}
Before giving the main result on Grothendieck inequality, we give a characterisation of the processes $\eta^n$ and $\hat{\eta}^n$ in terms of semimartingales.

\subsection{Characterisation of the processes}
For the proofs of Theorem~\ref{th:limit eta and hat eta} and Theorem~\ref{th: limit eta independent}, we rely on the following characterisation of the processes $\eta^n$ and $\hat \eta^n$. Their proof can be found at the end of Section \ref{sec:global fluctuations}. For the definition of the Doob-Meyer process for Hilbert-valued martingale, we refer to \cite{metivier2011semimartingales}. Recall the definitions of $\cL^{(1)}$ and $\cL^{(2)}$, see \eqref{eq:L1} and \eqref{eq:L2} respectively.

\begin{proposition}\label{prop:charac eta tilde eta} 
For any $r> \frac{ 3}{ 2}$ and $r^{ \prime}> \frac{ 5}{ 2}$ the joint process $ (\eta^{ n}, \hat \eta^n)$ belongs $ \mathbb{ P}\otimes \mathbf{ P}$-a.s. to $ \mathcal{ C} \left([0, T], H^{ -r} \left( \mathbb{ T}\right)\otimes H^{ -r^{ \prime}}\left(\mathbb{ T}^{ 2}\right)\right)$. Moreover, $ \eta^{ n}$ and $ \hat{ \eta}^{n}$ satisfy the following semimartingale representations in $H^{-r_{ 1}}(\bbT)$ and $H^{-r_{ 1}}(\bbT^{ 2})$ respectively,
\begin{equation}\label{eq:decomp eta_n}
\begin{split}
\eta^n_t =  \eta_{ 0}^{ n} + \int_{0}^{t} \mathcal{ L}_{ \mu^n_s}^{(1), \ast} \eta_{ s}^{ n} {\rm d}s +\int_{ 0}^{t}\Theta^{ \ast}\hat{\eta}^n_s {\rm d}s+ W_{ t}^{ n}, 
\end{split}
\end{equation}
and
\begin{equation}\label{eq:decomp hat eta_n}
\begin{split}
\hat{\eta}^n_t =  \hat{\eta}_{ 0}^{ n} + \int_{0}^{t} \mathcal{ L}_{ \mu^n_s}^{(2), \ast} \hat{\eta}_{ s}^{ n} {\rm d}s + \sqrt{n}  C_{ t}^{ n} + \hat{ W}_{ t}^{ n}, 
\end{split}
\end{equation}
where, for any regular test function $g$
\begin{equation}\label{eq:def Cn}
\begin{split}
C^n_t(g) =& \int_0^t \frac 1{n^3} \sum_{i,j,k=1}^n \hat{\xi}^{(n)}_{ij} \hat{\xi}^{(n)}_{ik} \partial_{\theta_1} g(\theta^{i,n}_s, \theta^{j,n}_s)  \Gamma(\theta^{i,n}_s, \theta^{k,n}_s)\dd s  \\
&+ \int_0^t \frac 1{n^3} \sum_{i,j,k=1}^n \hat{\xi}^{(n)}_{ij} \hat{\xi}^{(n)}_{jk} \partial_{\theta_2} g(\theta^{i,n}_s, \theta^{j,n}_s)  \Gamma(\theta^{j,n}_s, \theta^{k,n}_s)  \dd s,
\end{split}
\end{equation}
the process $(W_{ t}^{ n})_{t \in [0,T]}$ is a martingale in $ \cC \left([0, T], H^{ -r} \left(\mathbb{ T}\right)\right)$ for  $r> \frac{ 3}{ 2}$, with Doob-Meyer process $\llangle W^{ n} \rrangle$ taking values in $ \mathcal{ L} \left(H^{ r}, H^{ -r}\right)$ and given for $t \in [0,T]$ and $\varphi, \psi\in H^{ r}$ by
\begin{equation}
\label{eq:DM_Mn}
\llangle W^{ n}\rrangle_{ t}\cdot \varphi(\psi)= \left\langle W^{ n}(\varphi)\, ,\, W^{ n}(\psi)\right\rangle_{ t}= \frac{ 1}{ n} \sum_{i=1}^{ n} \int_{ 0}^{t} \varphi^{ \prime}(\theta_{ s}^{ i, n}) \psi^{ \prime}(\theta_{ s}^{ i, n}) {\rm d}s,
\end{equation}
and the process $ (\hat{ W}_{ t}^{ n})_{t \in [0,T]}$ (whose explicit form is given in \eqref{eq:hatMn}) is a martingale in $ \cC \left([0, T], H^{ -r}\left(\mathbb{ T}^{ 2}\right)\right)$ for  $r\geq 4$.
\end{proposition}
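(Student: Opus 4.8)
The plan is to obtain the two decompositions by direct stochastic calculus on \eqref{eq:wips}, and then to promote the resulting scalar identities (one per test function) to identities of continuous distribution-valued paths by soft functional-analytic arguments.

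\emph{Step 1: Itô's formula and the graph splitting.} For a smooth $f$ on $\bbT$ I would apply Itô's formula to $t\mapsto f(\theta^{i,n}_t)$, sum over $i$ and divide by $n$. Writing $\xi^{(n)}_{ij}/p_n=\hat\xi^{(n)}_{ij}+1$ with $\hat\xi^{(n)}$ as in \eqref{eq:hatxi}, the interaction drift splits into a mean-field part $\langle\mu^n_s,(\Gamma\ast\mu^n_s)\,\partial_\theta f\rangle$ and a graph-fluctuation part $\langle\hat\nu^n_s,\Theta f\rangle$ with $\Theta$ as in \eqref{eq:Theta}, while the Brownian part produces the martingale $W^n_t(f)=\tfrac1{\sqrt n}\sum_i\int_0^t f'(\theta^{i,n}_s)\dd B^i_s$ (this is essentially Lemma~\ref{lem:mu^n_t}). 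Subtracting the weak form of the Fokker--Planck equation \eqref{eq:limit PDE} satisfied by $\mu$ and multiplying by $\sqrt n$, one adds and subtracts $\langle\mu^n_s,(\Gamma\ast\mu_s)\,\partial_\theta f\rangle$ to recognise, via \eqref{eq:L1}, the drift $\mathcal{L}^{(1),\ast}_{\mu^n_s}\eta^n_s(f)+\langle\hat\eta^n_s,\Theta f\rangle$ of \eqref{eq:decomp eta_n}; the Doob--Meyer process \eqref{eq:DM_Mn} is then the usual expression for the quadratic variation of a sum of independent stochastic integrals.

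\emph{Step 2: the equation for $\hat\nu^n$.} For a smooth $g$ on $\bbT^2$, apply Itô's formula to $t\mapsto g(\theta^{i,n}_t,\theta^{j,n}_t)$, weight by $\hat\xi^{(n)}_{ij}/n^2$ and sum over $i,j$. The Laplacian together with the part of the interaction drift coming from the ``$+1$'' in $\xi^{(n)}/p_n$ rebuild, after multiplication by $\sqrt n$, the operator $\mathcal{L}^{(2),\ast}_{\mu^n_s}\hat\eta^n_s$ of \eqref{eq:L2}; the part coming from the $\hat\xi^{(n)}$ terms produces exactly the two triple sums in $C^n_t$ of \eqref{eq:def Cn} (one from $\partial_{\theta_1}g\,\dd\theta^{i,n}$, the other from $\partial_{\theta_2}g\,\dd\theta^{j,n}$); the Brownian part is the martingale $\hat W^n$ whose explicit form is \eqref{eq:hatMn}. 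Some bookkeeping is needed on the diagonal $i=j$, where Itô's formula generates an additional cross term in $\partial_{\theta_1}\partial_{\theta_2}g$; its contribution is of order $(\sqrt n\,p_n)^{-1}$ and is handled routinely. This gives \eqref{eq:decomp hat eta_n}.

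\emph{Step 3: lifting to continuous $H^{-r}$-valued processes.} It remains to check that the objects live in the claimed spaces and that the scalar identities lift. Since $\delta_x\in H^{-r}(\bbT)$ for $r>\tfrac12$ and $\delta_{(x,y)}\in H^{-r}(\bbT^2)$ for $r>1$, with norms uniform in $x,y$, and since $x\mapsto\delta_x$ is Hölder continuous into these spaces while $t\mapsto\theta^{i,n}_t$ is continuous, the processes $\mu^n$, $\hat\nu^n$ — hence $\eta^n$, $\hat\eta^n$, using also that $\mu$ is a weak-continuous solution of \eqref{eq:limit PDE} — have continuous paths in the spaces of the statement; here the crude bound $|\hat\xi^{(n)}_{ij}|\le p_n^{-1}$ already suffices for membership, quantitative control being postponed. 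By Assumption~\ref{ass:Gamma} and the Sobolev inequalities \eqref{eq:sobolev_ineq}, the operators $\mathcal{L}^{(1)}_\nu,\mathcal{L}^{(2)}_\nu$ and $\Theta$ map $H^{r}$ boundedly into spaces two, resp.\ one, derivatives rougher, so by duality their adjoints act on the $H^{-r}$ scale as in \eqref{eq:decomp eta_n}--\eqref{eq:decomp hat eta_n}, the choice of indices \eqref{eq:indices} leaving just the needed room. Finally, validity of the scalar identities for each $f$ in a countable dense family, continuity in $f$ of all drift terms, and a Mitoma-type regularisation of the martingale parts into continuous Hilbert-valued martingales with the stated brackets (invoking the Hilbert--Schmidt embeddings $H^{-j}\subset H^{-(m+j)}$, $m>d/2$, and the theory of \cite{metivier2011semimartingales}) upgrade \eqref{eq:decomp eta_n}--\eqref{eq:decomp hat eta_n} to genuine identities in $\mathcal{C}([0,T],H^{-r_1}(\bbT))$, resp.\ $\mathcal{C}([0,T],H^{-r_1}(\bbT^2))$.

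\emph{Main obstacle.} Steps~1--2 are long but mechanical; the real care lies in Step~3 — ensuring the term-by-term identities truly lift to identities of continuous distribution-valued paths, that the martingale parts are genuine continuous Hilbert-valued martingales (not merely cylindrical ones) with the asserted Doob--Meyer processes, and that the objects $\hat\nu^n$ — whose total mass is small ($\sim n^{-1/2}$) but whose weights may be as large as $p_n^{-1}$ — together with the second-order operators $\mathcal{L}^{(1)},\mathcal{L}^{(2)}$ fit into a single scale of Sobolev spaces compatible with \eqref{eq:indices}. It is worth emphasising that no estimate on the \emph{size} of $C^n_t$ is needed at this stage: controlling $C^n_t$ is precisely where the generalised Grothendieck inequality of Section~\ref{sec:grothendieck} will enter.
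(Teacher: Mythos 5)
Your plan is essentially the paper's proof: It\^{o}'s formula combined with the splitting $\xi^{(n)}_{ij}/p_n=\hat\xi^{(n)}_{ij}+1$ to separate mean-field drift, graph-fluctuation terms and noise (Lemmas~\ref{lem:mu^n_t}, \ref{lem:fluctuation_process_semimart}, \ref{lem:centered_emp_mes_spde}), followed by the lift to continuous $H^{-r}$-valued processes using H\"older continuity of $x\mapsto\delta_x$, boundedness of $\mathcal{L}^{(1)},\mathcal{L}^{(2)}$ on the Sobolev scale (Lemma~\ref{lem:continuity_Ln12}), and the Hilbert-valued martingale machinery of Lemma~\ref{lem:Mn_prop} and Proposition~\ref{prop:Wn_prop}. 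Your remark that the crude bound $|\hat\xi^{(n)}_{ij}|\le p_n^{-1}$ already gives membership, with quantitative control postponed to the tightness estimates, is actually a mild simplification relative to the paper, which at this stage invokes the uniform-in-$n$ moment estimates of Propositions~\ref{prop:eta^n_tight}--\ref{prop:hat_eta_n_tight}.

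Two algebraic wrinkles, both left unaddressed by the paper, deserve to be pinned down since the proposition asserts an \emph{exact} semimartingale identity. First, the diagonal: you correctly observe that It\^{o} applied to $g(\theta^{i,n}_t,\theta^{i,n}_t)$ produces an extra $\hat\xi^{(n)}_{ii}\,\partial_{\theta_1}\partial_{\theta_2}g$ term, and $\hat\xi^{(n)}_{ii}$ is not identically zero. Saying that ``its contribution is of order $(\sqrt{n}p_n)^{-1}$ and is handled routinely'' is incompatible with an exact decomposition; either adopt the convention that $i\neq j$ in $\hat\nu^n$, or carry this contribution as an explicit remainder in \eqref{eq:decomp hat eta_n} and show it vanishes separately. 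Second, the linearised drift: the add-and-subtract of $\langle\mu^n_s,(\Gamma*\mu_s)\partial_\theta f\rangle$ that you describe produces the bilinear term $\langle\mu^n_s,(\Gamma*\eta^n_s)\partial_\theta f\rangle+\langle\eta^n_s,(\Gamma*\mu_s)\partial_\theta f\rangle$, while the bilinear part of $\langle\eta^n_s,\mathcal{L}^{(1)}_{\mu^n_s}f\rangle$ is $\langle\mu^n_s,(\Gamma*\eta^n_s)\partial_\theta f\rangle+\langle\eta^n_s,(\Gamma*\mu^n_s)\partial_\theta f\rangle$; the difference is $n^{-1/2}\langle\eta^n_s,(\Gamma*\eta^n_s)\partial_\theta f\rangle$. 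The paper's own mild formulation \eqref{eq:eta_mild_semimart} pairs one $\mu^n_s$ with one $\mu_s$, so it agrees with the It\^{o} computation, but the weak form in \eqref{eq:decomp eta_n} does not, by exactly this quadratic remainder. To keep the identity exact you should either state the drift with the mixed operator, or track $n^{-1/2}\eta^n(\Gamma*\eta^n)$ as an explicit, eventually vanishing correction.
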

Note that one point of the proof, see Proposition \ref{prop:Wn_prop}, will be to show that the noise $\hat{ W}^{ n}$ goes to $0$ as $n\to\infty$.

A crucial step in the proofs of Theorem~\ref{th:limit eta and hat eta} and Theorem~\ref{th: limit eta independent} is to show that the graph dependent term $C_{ n}$ in \eqref{eq:def Cn} goes effectively to $0$ as $n\to\infty$. In view of its structure, a strategy would be to take advantage on concentration estimates, with respect to the graph, on quantities such as $\frac 1{n^3} \sum_{i,j,k=1}^n \hat{\xi}^{(n)}_{ij} \hat{\xi}^{(n)}_{ik} u_{ i,jk}$ where $ (u_{ i,j,k})$ is any fixed sequence, bounded by $1$. However, $u_{ i,j,k}:= \partial_{\theta_1} g(\theta^{i,n}_s, \theta^{j,n}_s)  \Gamma(\theta^{i,n}_s, \theta^{k,n}_s)$ in \eqref{eq:def Cn} depends in a highly nontrivial manner on the graph sequence $(\xi_{ij}^{ (n)})$, and standard concentration results (e.g. Bernstein inequalities) cannot be applied. The main novelty of the present work is to circumvent this difficulty in using multi-linear extensions of the classical Grothendieck inequality proved by R.~Blei \cite{blei1979,blei2014grothendieck}.

The use of Grothendieck inequalities is detailed in the following subsection~\ref{ss:grothendieck}, in the case of the term $C_{ n}$: an immediate consequence of Propositions~\ref{prop:bound Cn} and~\ref{prop:concentration_SnT} is that the term $C_{ n}$ does not contribute to the limit as $n\to\infty$. Note however that this strategy will be applied repeatedly in this work to various other functionals of the graph sequence $ \left(\xi_{ij}^{ (n)}\right)$. For the sake of readability, we postpone the definitions of these other quantities and the corresponding concentration results to Appendix~\ref{sec:App concentration}.
\subsection{Grothendieck inequality}
\label{ss:grothendieck}
The classical Grothendieck equality has received a lot of attention in the recent years, as it was shown to be a powerful tool for the study of graph concentration \cite{alon_naor_2006, guedon_vershynin_2016}. Let us consider an infinite dimensional Euclidian space with coordinates indexed by a space $A$:
\begin{equation*}
l^2(A) = \left\{x=(x_\ga)_{\ga\in A}\in \bbC^A: \, \sum_{\ga\in A}\left\vert x_\ga\right\vert^2 <\infty  \right\},
\end{equation*}
endowed with the usual scalar product $ \left\langle x\, ,\, y\right\rangle_{l^2(A)}=\sum_{\ga\in A} x_\ga \bar y_\ga$ and the associated norm $\Vert \cdot\Vert_{l^2(A)}$. In this context
the classical Grothendieck inequality states that there exists a universal constant $\mathcal{K}$ such that for any finite scalar array $\left(a_{jk}\right)$,
\begin{multline}
\label{eq:classical_grothendieck}
\sup\left\{ \left|\sum_{j,k} a_{jk} \langle x_j,y_k\rangle_{l^2(A)}\right|:\, x_j,y_k\in l^2(A),\, \max(\Vert x_j\Vert_{l^2(A)}, \Vert y_k\Vert_{l^2(A)})\leq 1\right\}\\
\leq \mathcal{K}\sup\left\{\left|\sum_{j,k}a_{jk} s_j t_k\right|:\, s_j =\pm 1, \, t_k  =\pm 1\right\}.
\end{multline}
This inequality is known to fail in general when the scalar product is replaced by a bounded trilinear functional, see for example \cite{pisier2012grothendieck}. However, in \cite{blei1979,blei2014grothendieck} R. Blei describes a family of multilinear functionals for which this inequality remains valid. We present this result in the following.

Consider a positive integer $m$ and a sequence $\cU=(S_1,\ldots,S_N)$ of non empty sets that satisfy $\cup_{i=1}^N S_i=\{1,2,\ldots,m\}$. For $\ga=(\ga_j)_{1\leq j\leq m}\in A^m$ define the projections $\pi_{S_i}(\ga)=(\ga_j)_{j\in S_i}$. Consider, for $\theta :A^m\rightarrow \bbC$ bounded, the functional $\nu_{\theta,\cU}: l^2\left(A^{|S_1|}\right)\times\ldots\times l^2\left(A^{|S_N|}\right)\rightarrow \bbC$ defined as follows:
\begin{equation*}
\nu_{\cU,\theta}(x_1,\ldots,x_N) = \sum_{\ga\in A^m} \theta(\ga) x_1(\pi_{S_1}(\ga))\cdots x_N(\pi_{S_N}(\ga)).
\end{equation*}
The functional $\nu_{\cU,\theta}$ will satisfy a Grothendieck inequality under some assumptions on the covering sequence $\cU$ and on $\theta$. Following the notations of \cite{blei2014grothendieck} we denote, for $1\leq j\leq m$, by $k_j$ the incidence of $j$ in the covering sequence $\cU$, that is
\begin{equation*}
k_j(\cU)= \left|\left\{i\in \{1,\ldots, N\}:\, j\in S_i\right\}\right|,
\end{equation*}
and by $\mathcal{I}_\cU$ the minimal incidence:
\begin{equation*}
\mathcal{I}_\cU = \min\left\{k_j(\cU):\, j\in \{1,\ldots,m\}\right\}.
\end{equation*}
Moreover, we say that the mapping $\theta$ belongs to the space $\tilde \cV_{\cU}(A^m)$ if there exists a probability space $(\tilde \Omega, \tilde \cA, \tilde \mu)$ and family of functions $g^{(i)}_\omega$, indexed by $\omega\in \tilde \Omega$ and defined on $A^{S_i}$ for $i\in \{1,\ldots,N\}$, such that for all $x\in \bbC^m$ the mappings 
\begin{equation*}
\omega \mapsto \left(g^{(i)}_\omega \circ \pi_{S_i}\right)(x) \quad \text{and} \quad \omega \mapsto \left\Vert g^{(i)}_\omega \right\Vert_\infty 
\end{equation*}
are measurable, with 
\begin{equation}\label{eq: int norm bounded}
\int_{\tilde \Omega} \left\Vert g^{(1)}_\omega \right\Vert_\infty \cdots \left\Vert g^{(N)}_\omega \right\Vert_\infty \dd \tilde\mu(\omega)<\infty ,
\end{equation}
and such that we have the representation
\begin{equation*}
\theta(x) =\int_{\tilde \Omega}\left(g^{(1)}_\omega \circ \pi_{S_i}\right)(x)\cdots \left(g^{(N)}_\omega \circ \pi_{S_i}\right)(x)\dd \tilde \mu(\omega).
\end{equation*}
The norm $\Vert \theta\Vert_{\tilde \cV_{\cU}(A^m)}$ corresponds to the infimum of the left-hand side of \eqref{eq: int norm bounded} over all possible representations. The following generalisation of the Grothendieck inequality corresponds to Theorem 11.11 and Section 12.4 in \cite{blei2014grothendieck}.
\begin{theorem}\label{th:Grothendieck}
Suppose that $\mathcal{I}_\cU\geq 2$ and that $\theta\in \tilde \cV_{\cU}(A^m)$. Then there exists a positive constant $\mathcal{K}_\cU$, depending only on the covering $\mathcal{U}$, such that for any finitely supported scalar $n$-array $a_{j_1\ldots j_N}$,
\begin{multline*}
\sup\left|\left\{\sum_{j_1\ldots,j_N}a_{j_1\ldots j_N} \nu_{\cU,\theta} (x_1,\ldots,x_N):\, \Vert x_1\Vert_{l^2(A^{|S_1|})}\leq 1,\ldots, \Vert x_N\Vert_{l^2(A^{|S_N|})}\leq 1\right\}\right|\\
\leq \mathcal{K}_\cU \Vert \theta\Vert_{\tilde \cV_{\cU}(A^m)}\sup\left\{\left|\sum_{j_1,\ldots,j_N}a_{j_1\ldots j_N} s_{1,j_1}\cdots s_{N,j_N}\right|:\, s_{1,j_1}=\pm 1,\ldots, s_{N,j_N}=\pm1\right\}.
\end{multline*}
\end{theorem}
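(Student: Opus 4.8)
The plan is to derive Theorem~\ref{th:Grothendieck} from the classical bilinear inequality \eqref{eq:classical_grothendieck} in two moves: a soft reduction to the normalised case $\theta\equiv 1$ using the integral representation that defines $\tilde\cV_{\cU}(A^m)$, and then a combinatorial argument that eliminates the summation variables $\ga_1,\dots,\ga_m$ one at a time, each elimination being paid for by one application of \eqref{eq:classical_grothendieck}. The incidence hypothesis $\mathcal{I}_{\cU}\geq 2$ is precisely what makes each such elimination a legitimate bilinear Grothendieck step.

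For the reduction, fix a representation $\theta=\int_{\tilde\Omega}\bigl(g^{(1)}_\omega\circ\pi_{S_1}\bigr)\cdots\bigl(g^{(N)}_\omega\circ\pi_{S_N}\bigr)\dd\tilde\mu(\omega)$ as in the definition of $\tilde\cV_{\cU}(A^m)$, and for each $\omega$ absorb the bounded factors into the test vectors: put $x^\omega_i(\beta):=x_i(\beta)g^{(i)}_\omega(\beta)$ for $\beta\in A^{|S_i|}$, so that $\norm{x^\omega_i}_{l^2(A^{|S_i|})}\leq\norm{g^{(i)}_\omega}_\infty\norm{x_i}_{l^2(A^{|S_i|})}\leq\norm{g^{(i)}_\omega}_\infty$. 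By linearity of $\theta\mapsto\nu_{\cU,\theta}$ combined with the identity $\nu_{\cU,\prod_i(g^{(i)}_\omega\circ\pi_{S_i})}(x_1,\dots,x_N)=\nu_{\cU,\mathbf 1}(x^\omega_1,\dots,x^\omega_N)$ (here $\mathbf 1$ denotes $\theta\equiv 1$) and Tonelli (justified by \eqref{eq: int norm bounded} and the finite support of $a$),
\[
\sum_{j_1,\dots,j_N}a_{j_1\dots j_N}\,\nu_{\cU,\theta}(x_1,\dots,x_N)=\int_{\tilde\Omega}\sum_{j_1,\dots,j_N}a_{j_1\dots j_N}\,\nu_{\cU,\mathbf 1}(x^\omega_1,\dots,x^\omega_N)\dd\tilde\mu(\omega).
\]
Bounding the inner supremum by the constant claimed in the $\theta\equiv 1$ case and integrating yields the factor $\int_{\tilde\Omega}\norm{g^{(1)}_\omega}_\infty\cdots\norm{g^{(N)}_\omega}_\infty\dd\tilde\mu(\omega)$; taking the infimum over all representations replaces it by $\norm{\theta}_{\tilde\cV_{\cU}(A^m)}$. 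This step is routine, so it remains to treat $\theta\equiv 1$.

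For $\theta\equiv 1$ we have $\nu_{\cU,\mathbf 1}(x_1,\dots,x_N)=\sum_{\ga\in A^m}\prod_{i=1}^N x_i(\pi_{S_i}(\ga))$, and I would sum out $\ga_1,\dots,\ga_m$ successively. Fix a coordinate $j\in\{1,\dots,m\}$; it lies in exactly $k_j(\cU)\geq 2$ of the sets, say in $S_{i_1},\dots,S_{i_{k_j}}$. Freezing all other coordinates, the inner sum over $\ga_j\in A$ is $\sum_{\ga_j}\prod_{l=1}^{k_j}x_{i_l}(\dots,\ga_j,\dots)$; grouping the last $k_j-1$ of the participating vectors into their pointwise product $w$ (a unit vector of $l^2(A)$ by the generalised H\"older inequality together with $\ell^2\hookrightarrow\ell^{k_j-1}$ when $k_j\geq 3$, and trivially when $k_j=2$) realises this sum as a scalar product $\langle u,\overline w\rangle_{l^2(A)}$ of two $l^2(A)$-unit vectors. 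One is then exactly in the situation of \eqref{eq:classical_grothendieck}, applied in the two directions indexed by $i_1$ and by the fused block $(i_2,\dots,i_{k_j})$: it replaces those vectors by $\pm1$-valued families at the cost of a universal factor $\mathcal{K}$, while the scalar array stays bounded by $1$ in the sense of the right-hand side of \eqref{eq:classical_grothendieck}. Iterating over all $m$ coordinates collapses $\sum_{j_1,\dots,j_N}a_{j_1\dots j_N}\,\nu_{\cU,\mathbf 1}$ into $\sum_{j_1,\dots,j_N}a_{j_1\dots j_N}\,s_{1,j_1}\cdots s_{N,j_N}$ with signs $s_{i,\cdot}=\pm1$, i.e.\ the right-hand side of the theorem, with an accumulated constant $\mathcal{K}_{\cU}$ depending only on the incidence structure of $\cU$.

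The hard part is to organise this iteration so that it really terminates with a constant depending on $\cU$ alone. Each elimination fuses some directions and pushes the not-yet-summed coordinates of the sets involved into the index set of the reshaped scalar array, so the order in which the $\ga_j$ are removed has to be chosen carefully — essentially one builds an iteration tree adapted to the cover $\cU$ — and at every stage one must recheck that the partially summed object is honestly of the bilinear form $\sum a'\langle u,v\rangle_{l^2(A)}$ with $a'$ of norm $\leq 1$ in the sense of \eqref{eq:classical_grothendieck}, so that \eqref{eq:classical_grothendieck} applies again. Keeping track of this bookkeeping, and of the fact that $\mathcal{K}_{\cU}$ is insensitive to $A$, $N$, the array $a$, and the vectors $x_i$, is precisely the content of Blei's theory of fractional Cartesian products and their feasibility; the complete argument is Theorem~11.11 together with Section~12.4 of \cite{blei2014grothendieck}, which I would follow.
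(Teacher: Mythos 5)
The paper itself does not prove this statement: it attributes it verbatim to Theorem~11.11 and Section~12.4 of Blei's book \cite{blei2014grothendieck} and offers no argument. Your proposal is therefore not really in competition with a proof in the paper, and since you too ultimately defer the hard part to Blei, the two sources agree in substance: both treat this as an imported black box.

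That said, the sketch you supply before deferring is useful to assess. The reduction to $\theta\equiv 1$ by absorbing $g^{(i)}_\omega$ into the test vectors and invoking linearity and Tonelli is correct and standard, and it does account cleanly for the factor $\Vert\theta\Vert_{\tilde\cV_\cU(A^m)}$. For the $\theta\equiv 1$ case, the ``fuse then apply the bilinear inequality'' idea captures the right intuition and correctly identifies $\mathcal{I}_\cU\geq 2$ as the hypothesis that makes each elimination a legitimate $\ell^2$-pairing. Two caveats. First, a small quantitative slip: to see that the pointwise product $w=x_{i_2}\cdots x_{i_{k_j}}$ of $k_j-1$ unit $\ell^2$ vectors is again a unit $\ell^2$ vector you need H\"older with exponents $2(k_j-1)$ and the embedding $\ell^2\hookrightarrow\ell^{2(k_j-1)}$, not $\ell^2\hookrightarrow\ell^{k_j-1}$ (the latter is trivial when $k_j=3$ and does not give the bound). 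Second, and more seriously, the claim that one can ``iterate over all $m$ coordinates'' and accumulate a constant $\mathcal{K}_\cU$ is not self-evidently closable: after one application of \eqref{eq:classical_grothendieck} the surviving object is no longer of the clean form ``scalar array of sup-norm $\leq 1$ paired against unit $\ell^2$ vectors'' — the residual coordinates of the fused sets and the freshly introduced sign variables migrate into the array, and a naive pairwise induction does not evidently re-enter the hypotheses of \eqref{eq:classical_grothendieck}. You flag this yourself, and you are right that this is precisely the content of Blei's feasibility machinery for fractional Cartesian products; but it means the sketch, as written, contains a genuine unfilled gap rather than a routine bookkeeping exercise. Since the paper itself provides no proof, this gap does not put you at odds with the paper — it simply confirms that the theorem is, for both you and the authors, an external input from \cite{blei2014grothendieck}.
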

In this paper we will use this result with $A=\bbZ$, $m=2$, $N=3$ and $\theta(x)=1$, which is trivially an element of $\cV_{\cU}(\bbZ^2)$. Let us now show how this inequality can be applied to bound the term $C^n$ in \eqref{eq:def Cn}. 

A remark on notation: as $\hat{\xi}^{(n)}_{ij} \hat{\xi}^{(n)}_{ik}$ in the first term of \eqref{eq:def Cn} is the recentered version of $\xi^{(n)}_{ij} \xi^{(n)}_{ik}$, which encodes for the presence of both (directed) edges $i\to j$ and $i\to k$ in the graph, it is natural to label the corresponding quantity by the local tree $\ijik$ (where each $\ij$ stands for a directed edge $i\to j$). Every similar quantities in the following will be labeled according to this principle. Therefore, define
\begin{equation}
S_{ n}^{\ijik}:= \sup_{ r,s,t\in \left\lbrace\pm 1\right\rbrace^{ n}}\left|\frac{ 1}{ n^{ 3}} \sum_{ i,j,k=1}^{ n} \hat{\xi}^{(n)}_{ij} \hat{\xi}^{(n)}_{ik} r_{ i} s_{ j} t_{ k}\right|, \label{eq:Sijik}
\end{equation}
and
\begin{equation}
S_{ n}^{\ijjk}:= \sup_{ r,s,t\in \left\lbrace\pm 1\right\rbrace^{ n}}\left|\frac 1{n^3} \sum_{i,j,k=1}^n \hat{\xi}^{(n)}_{ij}  \hat{\xi}^{(n)}_{jk}r_{ i} s_{ j} t_{ k}\right|.\label{eq:Sijjk}
\end{equation}

\begin{proposition}\label{prop:bound Cn}
Let $C^n$ given in \eqref{eq:def Cn}. Then there exists a constant $C_\Gamma$, depending only on $\Gamma$, such that for $n$ large enough
\begin{equation}
\label{eq:control_Cn}
\sup_{t\in[0,T]} \norm{C^n_t}_{-3} \leq TC_\Gamma \left(S_{ n}^{\ijik}+S_{ n}^{\ijjk}\right).
\end{equation}
\end{proposition}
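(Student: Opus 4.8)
The plan is to reduce the $H^{-3}(\bbT^2)$-norm of $C^n_t$ to the sup-norms $S_n^{\ijik}$ and $S_n^{\ijjk}$ defined in \eqref{eq:Sijik}--\eqref{eq:Sijjk}, by expanding the test function $g$ in the Fourier basis and applying the generalised Grothendieck inequality (Theorem~\ref{th:Grothendieck}) to each Fourier mode. First I would write, for a test function $g\in H^3(\bbT^2)$ with $\norm{g}_3\leq 1$, the Fourier expansion $g(\theta_1,\theta_2)=\sum_{k_1,k_2\in\bbZ} \hat g_{k_1,k_2} e^{i(k_1\theta_1+k_2\theta_2)}$, so that $\partial_{\theta_1} g(\theta_1,\theta_2)=\sum_{k_1,k_2} i k_1 \hat g_{k_1,k_2} e^{i(k_1\theta_1+k_2\theta_2)}$, and similarly expand $\Gamma(\theta,\theta')=\sum_{l_1,l_2} \hat\Gamma_{l_1,l_2} e^{i(l_1\theta+l_2\theta')}$ (this converges absolutely and rapidly by Assumption~\ref{ass:Gamma}). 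Substituting into the first term of \eqref{eq:def Cn} produces, for each fixed $s$,
\begin{equation*}
\frac{1}{n^3}\sum_{i,j,k=1}^n \hat\xi^{(n)}_{ij}\hat\xi^{(n)}_{ik}\, \partial_{\theta_1}g(\theta^{i,n}_s,\theta^{j,n}_s)\Gamma(\theta^{i,n}_s,\theta^{k,n}_s)
= \sum_{k_1,k_2,l_1,l_2} (ik_1)\hat g_{k_1,k_2}\hat\Gamma_{l_1,l_2}\, \frac{1}{n^3}\sum_{i,j,k} \hat\xi^{(n)}_{ij}\hat\xi^{(n)}_{ik}\, x^{(i)}_{k_1+l_1} x^{(j)}_{k_2} x^{(k)}_{l_2},
\end{equation*}
where $x^{(p)}_m := e^{im\theta^{p,n}_s}$. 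The key point is that the inner triple sum has precisely the bilinear-form-with-two-overlapping-coordinates structure to which Theorem~\ref{th:Grothendieck} applies, with $A=\bbZ$, $m=2$, $N=3$, covering $\cU=(\{1\},\{1,2\},\{2\})$ after a relabelling so that the repeated index $i$ has incidence $2$ (giving $\mathcal{I}_\cU\geq 2$), and $\theta\equiv 1\in\tilde\cV_\cU(\bbZ^2)$. Each vector $(x^{(p)}_m)_m$ has $|x^{(p)}_m|=1$, so it is not in $\ell^2$; the standard fix is to rescale: one works with normalised finite sections, or more cleanly observes that the array $a_{j_1 j_2 j_3}$ in Theorem~\ref{th:Grothendieck} is $\hat\xi^{(n)}_{j_1 j_2}\hat\xi^{(n)}_{j_1 j_3}$ and pulls out the $\ell^2(\bbZ^{|S_i|})$-norms of the (finitely truncated) phase vectors, which are $\sqrt{n}$ each, so the $n^{-3}$ prefactor combined with the three $\sqrt{n}$ factors leaves exactly the sign-maximum $S_n^{\ijik}$ up to the universal constant $\mathcal{K}_\cU$. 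Thus the inner triple sum is bounded, uniformly in $s$, by $\mathcal{K}_\cU\, S_n^{\ijik}$.

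Summing over Fourier modes, I would bound $\sum_{k_1,k_2,l_1,l_2} |k_1|\,|\hat g_{k_1,k_2}|\,|\hat\Gamma_{l_1,l_2}|$ by Cauchy--Schwarz against the $H^3(\bbT^2)$-norm of $g$ (which controls $\sum |k_1|\,|\hat g_{k_1,k_2}|$ since $3>1+\tfrac{2}{2}$, i.e. one derivative plus an $\ell^2$-summable tail in two dimensions) times $\sum|\hat\Gamma_{l_1,l_2}|$, which is finite by smoothness of $\Gamma$; this produces a constant $C_\Gamma$ depending only on $\Gamma$. Taking the supremum over $\norm{g}_3\leq 1$ and then over $t\in[0,T]$, and finally integrating the uniform-in-$s$ bound over $s\in[0,t]\subset[0,T]$ (which is where the factor $T$ comes from), gives $\sup_{t\in[0,T]}\norm{C^n_t}_{-3}\leq TC_\Gamma S_n^{\ijik}$ for the first term. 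The second term of \eqref{eq:def Cn} is handled identically, with the repeated index now being $j$; the relevant covering is again a path on $\{1,2\}$ with minimal incidence $2$, and the resulting bound is $TC_\Gamma S_n^{\ijjk}$. Adding the two contributions yields \eqref{eq:control_Cn}.

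I expect the main obstacle to be the bookkeeping around applying Theorem~\ref{th:Grothendieck} correctly: one must carefully match the combinatorial structure of the two triple sums in \eqref{eq:def Cn} to a covering sequence $\cU$ of $\{1,2\}$ with $\mathcal{I}_\cU\geq 2$, identify the scalar array $a_{j_1 j_2 j_3}$ versus the $\ell^2$-vectors $x_i$, handle the fact that the phase vectors $e^{im\theta^{p,n}_s}$ are bounded but not square-summable (forcing either truncation to a finite index range with an explicit $\sqrt{n}$ normalisation, or a limiting argument), and track that the normalisation $1/n^3$ against the three norms $\sqrt n$ leaves exactly the sign-supremum and not some other power of $n$. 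The Fourier-analytic estimates (absolute summability of $\hat\Gamma$, and $H^3(\bbT^2)\hookrightarrow$ a space on which $\sum|k_1||\hat g_{k_1,k_2}|<\infty$) are routine given Assumption~\ref{ass:Gamma} and the Sobolev embeddings recalled around \eqref{eq:sobolev_ineq}. Everything else — the $s$-integration producing the factor $T$, the identification of $C_\Gamma$, the uniformity in $t$ — is bookkeeping.
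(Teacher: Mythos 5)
Your opening plan — Fourier-expand and force the triple sum into a form where $S_n^{\ijik}$ and $S_n^{\ijjk}$ appear — is essentially the right one, but the step where you feed the phase vectors into Theorem~\ref{th:Grothendieck} contains a concrete error and, as written, does not close. You assert that one can ``pull out the $\ell^2(\bbZ^{|S_i|})$-norms of the (finitely truncated) phase vectors, which are $\sqrt n$ each,'' so that the $n^{-3}$ prefactor and the three $\sqrt n$ factors cancel. This conflates two unrelated index sets: the phase vectors $\bigl(e^{im\theta^{p,n}_s}\bigr)_m$ are indexed by the Fourier mode $m\in\bbZ$, while $n$ is the number of particles. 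Truncating to $|m|\leq M$ gives $\ell^2$-norm $\sqrt{2M+1}$, which is independent of $n$; and, since the $n^{-3}$ is already absorbed into the definition \eqref{eq:Sijik} of $S_n^{\ijik}$ (the array $a_{ijk}$ handed to Theorem~\ref{th:Grothendieck} must be $n^{-3}\hat\xi^{(n)}_{ij}\hat\xi^{(n)}_{ik}$, not $\hat\xi^{(n)}_{ij}\hat\xi^{(n)}_{ik}$), the extracted $\ell^2$-norms introduce an uncancelled divergent factor as $M\to\infty$. The correct way to make the vectors square-summable — and the reason the statement carries the index $-3$ — is the polynomial reweighting trick: one multiplies $x_{2,j}(a)$ and $x_{3,k}(b)$ by $(1+a^2)^{-1/4-\delta}$, $(1+b^2)^{-1/4-\delta}$ so they lie in $\ell^2(\bbZ)$ with norm one, and compensates by the reciprocal weight on $x_{1,i}(a,b)$; controlling $\norm{x_{1,i}}_{\ell^2}$ uniformly in $i,s$ then requires bounding $\sup_{\theta_1}\norm{\partial^{1/2+2\delta}_{\theta_2}\partial_{\theta_1}g(\theta_1,\cdot)}_{L^2}$, which is where Lemma~\ref{lem:mixed_sobolev_ineq}, Sobolev embedding, and the $H^3(\bbT^2)$-norm of $g$ enter, and where $C_\Gamma$ is produced.

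It is also worth noticing that the mode-by-mode version of your opening paragraph, pursued honestly, does not need Theorem~\ref{th:Grothendieck} at all and would be a valid (if more pedestrian) alternative. For a fixed mode $(k_1,k_2,l_1,l_2)$, the quantities $e^{i(k_1+l_1)\theta^{i,n}_s}$, $e^{ik_2\theta^{j,n}_s}$, $e^{il_2\theta^{k,n}_s}$ are scalars of modulus one indexed by the particle labels. Splitting into real and imaginary parts and using the fact that a multilinear form on $[-1,1]^n\times[-1,1]^n\times[-1,1]^n$ attains its supremum on the vertices, the fixed-mode triple sum is at most $8\,S_n^{\ijik}$ by the bare definition \eqref{eq:Sijik}. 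Summing over modes with weights $|k_1|\,|\hat g_{k_1,k_2}|\,|\hat\Gamma_{l_1,l_2}|$ and using Cauchy--Schwarz (your inequality $3>2$, guaranteeing $\sum_{k_1,k_2}|k_1|^2(1+k_1^2+k_2^2)^{-3}<\infty$) then gives $\sup_t\norm{C^{n,1}_t}_{-3}\leq TC_\Gamma S_n^{\ijik}$. The paper instead expands only the second variable of each factor, leaving $x_{1,i}(a,b)$ as a genuine $\ell^2(\bbZ^2)$-vector, and lets the multilinear Grothendieck inequality absorb the $\ell^2$ accounting in a single application — tighter bookkeeping, but it needs the reweighting you attempted to sidestep.
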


\begin{proof}
Let $C^{n,1}$ and $C^{n,2}$ such that $\< C^n, f > = \< C^{n,1}, f > + \< C^{n,2}, f>$, the first term $C^{n,1}$ is given by
\begin{equation*}
\< C^{n,1}_t, f> = \int_0^t \frac 1{n^3} \sum_{i,j,k=1}^n \hat{\xi}^{(n)}_{ij} \hat{\xi}^{(n)}_{ik}  \left[  \partial_{\theta_1} f(\theta^{i,n}_s, \theta^{j,n}_s) \right] \Gamma(\theta^{i,n}_s, \theta^{k,n}_s) \dd s,
\end{equation*}
and the second term $C^{n,2}$ by
\begin{equation*}
\< C^{n,2}_t, f> = \int_0^t \frac 1{n^3} \sum_{i,j,k=1}^n \hat{\xi}^{(n)}_{ij}  \hat{\xi}^{(n)}_{jk} \left[ \partial_{\theta_2} f(\theta^{i,n}_s, \theta^{j,n}_s)\right] \Gamma(\theta^{j,n}_s, \theta^{k,n}_s)  \dd s.
\end{equation*}
Let's focus on $\< C^{n,1}_t, f>$. The point is to establish a bound on $\< C^{n,1}_t, f>$ that only depends on $T$, some norm of $\Gamma$, the $H^3(\bbT^2)$-norm of $f$, and the underlying graph $\xi^{(n)}$, using Theorem~\ref{th:Grothendieck}.
To simplify notations, let $\Xi^{(n)}_{ijk} : = n^{-3} \hat{\xi}^{(n)}_{ij} \hat{\xi}^{(n)}_{ik}$ for every choice of $i,j$ and $k$. Then
\begin{equation}
\label{C:eq:C^{n,1}}
\< C^{n,1}_t, f> = \int_0^t \sum_{i,j,k=1}^n \Xi^{(n)}_{ijk}  \left[  \partial_{\theta_1} f(\theta^{i,n}_s, \theta^{j,n}_s) \right] \Gamma(\theta^{i,n}_s, \theta^{k,n}_s) \dd s.
\end{equation}
Let $(e_a)_{a \in \Z}$ be the canonical basis of $L^2(\bbT)$: observe that $\partial_{\theta_1} f$ and $\Gamma$ can be rewritten in $L^2(\bbT^2)$ as
\begin{eqnarray*}
\partial_{\theta_1} f(\theta_1, \theta_2) &=& \sum_{a \in \Z} e_a(\theta_2) \int_\bbT \partial_{\theta_1} f(\theta_1, \theta) \bar e_a(\theta) \dd \theta, \\
\Gamma(\theta_1, \theta_2)& =&  \sum_{b \in \Z} e_b(\theta_2) \int_\bbT \Gamma (\theta_1, \theta) \bar e_b(\theta) \dd \theta.
\end{eqnarray*}
Moreover, define $y_{1,i}, y_{2,j}$ and $y_{3,k}$ by
\begin{equation}
\begin{split}
\label{C:eq:y}
y_{1,i}(a,b) &= \left( \int_\bbT \partial_{\theta_1} f(\theta^{i,n}_s, \theta) \bar e_a(\theta) \dd \theta \right)  
\left(\int_\bbT \Gamma(\theta^{i,n}_s, \theta) \bar e_b(\theta) \dd \theta\right), \; a,b \in \Z,\\
y_{2,j}(a) &= e_a(\theta^{j,n}_s), \quad a \in \Z,\\
y_{3,k}(b) &= e_b(\theta^{k,n}_s), \quad b \in \Z.
\end{split}
\end{equation}
With the previous notation, the term $\< C^{n,1}_t, f>$ can be decomposed as follows:
\begin{equation*}
\< C^{n,1}_t, f> = \int_0^t \sum_{i,j,k=1}^n \Xi^{(n)}_{ijk}  \sum_{a,b \in \Z} y_{1,i}(a,b) y_{2,j}(a) y_{3,k}(b) \dd s.
\end{equation*}
In order to apply Theorem~\ref{th:Grothendieck}, we replace the factors defined in \eqref{C:eq:y} by $\ell^2$-summable elements. For some $\gd>0$ define the following functions:
\begin{equation*}
\begin{split}
x_{1,i}(a,b) &= \, C^2_\delta \, \big((1+a^2)(1+b^2)\big)^{1/4 + \delta} y_{1,i}(a,b), \quad a,b \in \Z, \\
x_{2,j}(a) &= \, C^{-1}_\delta \, (1+a^2)^{-1/4-\delta} y_{2,j}(a), \quad a \in \Z,\\
x_{3,k}(b) &= \, C^{-1}_\delta \, (1+b^2)^{-1/4 -\delta} y_{3,k}(b), \quad b \in \Z,
\end{split}
\end{equation*}
where $C_\delta = (\sum_{a\in \Z} (1+a^2)^{-1/2-2\delta})^{1/2}$ for some $\delta > 0$. Observe that $x_{1,i}(a,b) x_{2,j}(a) x_{3,k}(b) = y_{1,i}(a,b) y_{2,j}(a) y_{3,k}(b)$ for every $a,b \in \Z$. By construction, the $\ell^2(\bbZ)$-norms of $x_{2, j}$ and $x_{3, k}$ are equal to 1. Moreover the $\ell^2(\bbT^2)$-norm of $x_{1,i}$ can be bounded by
\begin{equation*}
\begin{split}
&\norm{x_{1,i}}^2_{\ell^2(\bbT^2)}\\
&= C^4_\delta \sum_{a,b \in \bbZ} \big((1+a^2)(1+b^2)\big)^{1/2+2\delta} \left\vert \int_\bbT \partial_{\theta_1} f(\theta^{i,n}_s, \theta) \bar e_a(\theta) \dd \theta \right\vert^2
\left\vert\int_\bbT \Gamma(\theta^{i,n}_s, \theta) \bar e_b(\theta) \dd \theta\right\vert  ^2\\
&= C^4_\delta \left( \sum_{a \in \Z} (1+a^2)^{1/2+2\delta} \left| \int_\bbT \partial_{\theta_1} f(\theta^{i,n}_s, \theta) \bar e_a(\theta) \dd \theta \right|^2\right) \\
&\qquad \qquad \qquad \qquad\qquad \times \left( \sum_{b \in \Z} (1+b^2)^{1/2+2\delta} \left| \int_\bbT \Gamma(\theta^{i,n}_s, \theta) \bar e_b(\theta) \dd \theta \right|^2\right)\\
&\leq C^4_\delta \, C_{\Gamma, \delta} \sum_{a \in \Z} (1+a^2)^{1/2+2\delta} \left| \int_\bbT \partial_{\theta_1} f(\theta^{i,n}_s, \theta) \bar e_a(\theta) \dd \theta \right|^2 = C^4_\delta \, C_{\Gamma, \delta} \norm{\partial_{\theta_1} f(\theta^{i,n}_s, \cdot)}^2_{H^{1/2 +2\delta}(\dd \theta_2)},
\end{split}
\end{equation*}
where the constant $C_{\Gamma, \delta}$ only depends on $\Gamma$ and $\delta$. By the definition of the fractional $H^s$-norm, for $0<s<1$, one has that
\begin{equation*}
\begin{split}
\norm{\partial_{\theta_1} f(\theta^{i,n}_s, \cdot)}^2_{H^{1/2 +2\delta}(\dd \theta_2)} = \norm{\partial^{1/2 + 2\delta}_{\theta_2}\partial_{\theta_1} f(\theta^{i,n}_s, \cdot)}^2_{L^2(\dd \theta_2)}
\end{split}.
\end{equation*}
We can bound the previous norm with a fractional Hilbert norm on $\bbT^2$, no longer dependent on the value of $\theta^{i,n}_s$, see Lemma \ref{lem:mixed_sobolev_ineq}. Thus, it holds that
\begin{equation*}
\begin{split}
\sup_{\theta_1 \in \bbT} \norm{\partial^{1/2 + 2\delta}_{\theta_2}\partial_{\theta_1} f(\theta^{i,n}_s, \cdot)}^2_{L^2(\dd \theta_2)} &\leq C \norm{\partial^{1/2 + 2\delta}_{\theta_2}\partial_{\theta_1} f(\cdot, \cdot)}^2_{H^1(\dd \theta_1, \dd \theta_2)}.
\end{split}
\end{equation*}
This last expression is further bounded by $C' \norm{f(\cdot, \cdot)}_{H^{2+ 1/2 + 2\delta}(\dd \theta_1, \dd \theta_2)}$ because of Sobolev's embeddings. Choosing $\delta=1/4$, we conclude that there exists a constant $C_\Gamma$, depending only on $\Gamma$, such that
\begin{equation}
\label{eq:norm_z1}
\norm{x_{1,i}}^2_{\ell^2(\bbT^2)} \leq C_\Gamma \norm{f}_{H^3(\dd \theta_1, \dd \theta_2)}^2 < \infty.
\end{equation}
We are now able to apply Theorem~\ref{th:Grothendieck} with $A=\bbZ$, $m=2$, $N=3$, $\cU=\{\{1,2\},\{1\},\{2\}\}$ and $\theta(x)=1$:
\begin{equation}
\label{eq:Groth_Cn1}
\left|\sum_{i,j,k=1}^n \Xi^{(n)}_{ijk} \sum_{a,b \in \Z}  x_{1,i}(a,b) x_{2,j}(a) x_{3,k}(b)\right| \leq \mathcal{K}_\cU \norm{x_{1,i}}^2_{\ell^2(\bbT^2)} S_{ n}^{\ijik}.
\end{equation}
Taking the supremum in $t \in [0,T]$ and in $f\in H^{ 3}\left(\mathbb{ T}^{ 2}\right)$, we finally obtain
\begin{equation*}
\norm{C^{n,1}}_{\cC([0,T],H^{-3}(\bbT^2))} \leq \, T \, C_\Gamma S_{ n}^{\ijik}.
\end{equation*}
Using similar arguments, one can show that
\begin{equation*}
\norm{C^{n,2}}_{\cC([0,T],H^{-3}(\bbT^2))} \leq \, T \, C_\Gamma S_{ n}^{\ijjk},
\end{equation*}
which concludes the proof.
\end{proof}
Other controls on similar quantities have been gathered in Appendix~\ref{sec:App concentration}.
\subsection{Concentration estimates}
Recall the definition of the $ \hat{ \xi}_{ij}^{ (n)}$ in \eqref{eq:hatxi}, and the definitions of $S_{ n}^{\ijik}$ and $S_{ n}^{\ijjk}$ given in \eqref{eq:Sijik} and \eqref{eq:Sijjk} respectively. In view of Proposition~\ref{prop:bound Cn}, our aim is to obtain a bound on these two terms, as well as others, which will be of constant use in the paper:
\begin{definition}
\label{def:Sn}
For fixed $n\geq1$, define
\begin{align}
S_{ n}^{\ij}&:= \sup_{ s,t\in \left\lbrace\pm 1\right\rbrace^{ n}}\left|\frac{ 1}{ n^{ 2}} \sum_{ i,j=1} \hat{ \xi}^{(n)}_{ ij}s_{ i} t_{j}\right|, \label{eq:Sij}\\
S_{ n}^{\ikjk}&:= \sup_{ r,s,t\in \left\lbrace\pm 1\right\rbrace^{ n}}\left|\frac 1{n^3} \sum_{i,j,k=1}^n \hat{\xi}^{(n)}_{ik}  \hat{\xi}^{(n)}_{jk}r_{ i} s_{ j} t_{ k}\right|,\label{eq:Sikjk}\\
S_{ n}^{\ijki}&:= \sup_{ r,s,t\in \left\lbrace\pm 1\right\rbrace^{ n}}\left|\frac 1{n^3} \sum_{i,j,k=1}^n \hat{\xi}^{(n)}_{ij}  \hat{\xi}^{(n)}_{ki}r_{ i} s_{ j} t_{ k}\right|,\label{eq:Sijki}\\
S_{ n}^{\lijik}(l)&:= \sup_{ r,s,t\in \left\lbrace\pm 1\right\rbrace^{ n}}\left|\frac 1{n^3} \sum_{i,j,k=1}^n \hat{ \xi}^{(n)}_{ li}\hat{\xi}_{ij}^{ (n)}  \hat{\xi}^{(n)}_{ik}r_{ i} s_{ j} t_{ k}\right|,\ l\in \left\lbrace1, 2\right\rbrace,\label{eq:Slijik}\\
S_{ n}^{\lijk}(l)&:= \sup_{ r,s,t\in \left\lbrace\pm 1\right\rbrace^{ n}}\left|\frac 1{n^3} \sum_{i,j,k=1}^n \hat{ \xi}^{(n)}_{ li}\hat{\xi}_{ij}^{ (n)}  \hat{\xi}^{(n)}_{jk}r_{ i} s_{ j} t_{ k}\right|,\ l\in \left\lbrace1, 2\right\rbrace.\label{eq:Slijk}
\end{align}
\end{definition}

\begin{proposition}
\label{prop:concentration_SnT}
Under the assumption $np_{ n}\underset{n\rightarrow\infty}{\longrightarrow}\infty$, we have
\begin{equation}\label{eq:bound Snij}
\limsup_{n\rightarrow\infty} \, \sqrt{ np_{ n}} \,  S_{ n}^{ \ij}  \leq 3,\, \mathbb{ P}-a.s.
\end{equation}
Moreover there exists a positive constant $\kappa$ such that for all $ \mathcal{ T} \in \left\lbrace \ijik, \ijjk, \ikjk, \ijki\right\rbrace$,
\begin{equation}\label{eq:bound SnT}
\limsup_{n\rightarrow\infty} \, n p_n^2 \, S_{ n}^{ \mathcal{ T}}  \leq \kappa ,\, \mathbb{ P}-a.s.
\end{equation}
and such that for $l\in\{1,2\}$ and $ \mathcal{ T} \in \left\lbrace \lijik, \lijk \right\rbrace$,
\begin{equation}\label{eq:bound Snlijik}
\limsup_{n\rightarrow\infty} np_n^3\, S_{ n}^{\mathcal{ T} }(l)  \leq \kappa,\, \mathbb{ P}-a.s.
\end{equation}
\end{proposition}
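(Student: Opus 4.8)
The plan is to apply the generalized Grothendieck inequality (Theorem~\ref{th:Grothendieck}) to relate each $S_n^{\mathcal T}$ to a tractable concentration problem, then control that problem with Bernstein-type estimates plus Borel--Cantelli. First, for the quadratic term $S_n^{\ij}$, I would observe that $\sup_{s,t\in\{\pm1\}^n}\left|\frac1{n^2}\sum_{i,j}\hat\xi^{(n)}_{ij}s_it_j\right|=\frac1{n^2}\|\hat\Xi^{(n)}\|_{\infty\to1}$ where $\hat\Xi^{(n)}=(\hat\xi^{(n)}_{ij})$, which is comparable (up to the classical Grothendieck constant) to the operator norm $\|\hat\Xi^{(n)}\|_{2\to2}$ divided by $n^2$. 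Since $\hat\xi^{(n)}_{ij}=\xi^{(n)}_{ij}/p_n-1$ are independent, centered, bounded by $1/p_n$, with variance $(1-p_n)/p_n\le1/p_n$, the concentration result for the operator norm of a random matrix with independent entries (e.g. \cite{tao2012}) gives $\|\hat\Xi^{(n)}\|_{2\to2}\lesssim \sqrt{n/p_n}$ with high probability; dividing by $n^2$ and summing this is not quite the right normalization, so instead one argues $S_n^{\ij}\le \frac{\mathcal K}{n^2}\|\hat\Xi^{(n)}\|_{2\to 2}\cdot n \lesssim \sqrt{1/(np_n)}\cdot(\text{const})$, pinning the constant $3$ via the sharp asymptotics $\|\hat\Xi^{(n)}\|_{2\to2}/\sqrt{n/p_n}\to 2$ (Bai--Yin type) together with the Grothendieck constant $\mathcal K_{\bbR}\le\pi/(2\ln(1+\sqrt2))$; the precise bookkeeping to reach exactly $3$ is a minor computation. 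Almost sure convergence follows from the exponential tail in the matrix concentration bound and Borel--Cantelli.

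For the cubic terms $\mathcal T\in\{\ijik,\ijjk,\ikjk,\ijki\}$, I would use Theorem~\ref{th:Grothendieck} with $A=\bbZ$ (or just $A$ a singleton, since no extra indices are needed here), $m=2$, $N=3$, the covering $\cU=\{\{1,2\},\{1\},\{2\}\}$ and $\theta\equiv1$ (as already noted in the paper's application to $C^n$): this reduces $S_n^{\mathcal T}$, up to the constant $\mathcal K_\cU$, to controlling sums of the shape $\frac1{n^3}\sum_{i,j,k}\hat\xi^{(n)}_{a(i,j)}\hat\xi^{(n)}_{b(i,k)}\langle x_{1,i},\,x_{2,j}\otimes x_{3,k}\rangle$ — but in fact the cleaner route is to note that each $S_n^{\mathcal T}$ is \emph{already} in the bilinear/multilinear form to which one applies concentration directly: fix the sign vectors $r,s,t$ and bound the probability that $\left|\frac1{n^3}\sum_{i,j,k}\hat\xi^{(n)}_{\cdot}\hat\xi^{(n)}_{\cdot}r_is_jt_k\right|$ exceeds $\kappa/(np_n^2)$. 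The key structural point is which pairs of $\hat\xi$ factors are independent: e.g. for $\ijik$ the product $\hat\xi^{(n)}_{ij}\hat\xi^{(n)}_{ik}$ involves disjoint edges (for $j\ne k$), so summing over $j,k$ first with fixed $i$ gives, via a two-step Bernstein argument, a bound of order $\frac1{n^3}\cdot n\cdot(n/p_n)=1/(np_n^2)$ after optimizing; the diagonal terms $j=k$ contribute $\frac1{n^3}\sum_{i,j}\hat\xi^{(n)2}_{ij}\lesssim \frac1{n^3}\cdot n^2/p_n^2=1/(np_n^2)$ as well, and are handled deterministically by a second-moment bound. The cases $\ijjk,\ikjk,\ijki$ are analogous after relabeling; in each one identifies a vertex ("the middle one") over which the two $\hat\xi$ factors are conditionally independent and applies Bernstein in two stages.

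For the degree-three-edge terms $\mathcal T\in\{\lijik,\lijk\}$ with $l\in\{1,2\}$ fixed, the product now contains three independent-across-edges factors $\hat\xi^{(n)}_{li}\hat\xi^{(n)}_{ij}\hat\xi^{(n)}_{ik}$ (off the diagonals), each of size $\le1/p_n$, so the natural bound after the analogous multi-stage Bernstein/decoupling is $\frac1{n^3}\cdot n^2\cdot(1/p_n)^3\cdot(\text{normalization})$, which after optimizing the signs and summing gives the claimed $\kappa/(np_n^3)$; the diagonal contributions ($i=j$, $i=k$, or $j=k$) involve at most two free edge-indices and one squared $\hat\xi$, hence are of strictly smaller order and absorbed into $\kappa$. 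Throughout, the a.s.\ statements come from taking a union bound over the $2^n$ (resp.\ $2^{3n}$) sign configurations — the exponential Bernstein tail beats $2^{Cn}$ precisely because the effective variance proxy is a positive power of $n$ times $1/p_n^{k}$ and $np_n\to\infty$ — followed by Borel--Cantelli along $n$.

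The main obstacle I anticipate is \emph{not} the Grothendieck reduction (which is essentially as in Proposition~\ref{prop:bound Cn}) but the two- and three-stage conditional Bernstein estimates: one must carefully separate, in each pattern $\mathcal T$, the "generic" index configurations (all distinct) where the relevant $\hat\xi$ factors genuinely decouple from the lower-dimensional "collision" configurations, control the former by iterated concentration with the right variance proxy, and check that the union bound over $\{\pm1\}^{3n}$ is still summable — this forces the exponents of $p_n$ in \eqref{eq:bound SnT} and \eqref{eq:bound Snlijik} and is where all the care lies. A secondary, purely technical point is verifying the sharp constant $3$ in \eqref{eq:bound Snij}, which requires combining the extremal value of the Grothendieck constant with the Bai--Yin edge of the spectrum of the rescaled adjacency matrix.
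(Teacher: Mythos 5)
Your proposal takes a genuinely different route from the paper, and in places it is unnecessarily heavy or slightly misattributed. For \eqref{eq:bound Snij}, the paper does not go through operator norms or Grothendieck at all: it simply fixes a sign pair $(s,t)$, applies Bernstein to the $n^2$ independent terms $\hat\xi^{(n)}_{ij}s_it_j$ (each bounded by $1/p_n$ with variance $\leq 1/p_n$), takes a union bound over the $4^n$ sign configurations, and chooses $t=3/\sqrt{np_n}$ so that the exponent $\sim \tfrac{9}{4}n$ beats $n\ln 4$. Your detour through $\|\hat\Xi^{(n)}\|_{\infty\to 1}$ and $\|\hat\Xi^{(n)}\|_{2\to 2}$ is more machinery than needed, and note that the inequality you actually need, $\|\hat\Xi^{(n)}\|_{\infty\to 1}\leq n\|\hat\Xi^{(n)}\|_{2\to 2}$, is Cauchy--Schwarz, not Grothendieck (Grothendieck gives the reverse-type comparison, bounding a relaxed/SDP quantity by the sign optimum, and is used in Proposition~\ref{prop:bound Cn} for exactly that reduction, not here). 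You would also need to worry about whether Bai--Yin edge asymptotics apply at the generality $np_n\to\infty$; the Bernstein route sidesteps that.

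For \eqref{eq:bound SnT} and \eqref{eq:bound Snlijik}, the key idea you miss is that each sign-optimized cubic sum collapses into a quadratic form in a random matrix, uniformly over the signs. For instance for $\cT=\ijjk$, writing $A=(\hat\xi^{(n)}_{ij})$ and $S=\mathrm{diag}(s_j)$, one has $\sum_{i,j,k}\hat\xi^{(n)}_{ij}\hat\xi^{(n)}_{jk}r_is_jt_k = r^{T}ASAt\leq n\,\|A\|^2$ since $\|S\|=1$; the other three patterns are the same identity after a transpose or two. Thus the supremum over $r,s,t$ disappears \emph{before} any probabilistic estimate is made, and a single application of the random-matrix operator norm bound of \cite[Cor.~2.3.5]{tao2012} for $p_nA$ (entries bounded by one) gives $\|A\|\lesssim\sqrt{n}/p_n$ with exponential tail, hence $S_n^{\cT}\lesssim 1/(np_n^2)$ almost surely. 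For the $\lijik/\lijk$ patterns, the extra factor $\hat\xi^{(n)}_{li}r_i$ is absorbed into a diagonal matrix $\hat R$ with $\|\hat R\|\leq 1/p_n$, giving $s^{T}A^{T}\hat RAt\leq \tfrac{n}{p_n}\|A\|^2$ and hence the rate $1/(np_n^3)$. This completely avoids the two- and three-stage conditional Bernstein / decoupling scheme, the diagonal/off-diagonal bookkeeping, and the delicate union bound over $2^{3n}$ signs that you flag as the main obstacle. Your plan might be made to work, but you would be reconstructing (at much greater technical cost and risk) what the quadratic-form identity gives for free; I would encourage you to look for such an algebraic reduction first before resorting to iterated concentration and decoupling.
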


\begin{proof}
Let us first prove \eqref{eq:bound Snij}. Relying on Bernstein's inequality and on a union bound, we obtain
\begin{equation*}
\bbP\left(\sup_{s,t\in \{\pm 1\}^n }\left|\sum_{i,j=1}^n \hat \xi_{ij}^{(n)}s_it_j\right|>t\right)\leq 2\cdot 4^n\exp\left(-\frac12\frac{t^2p_n}{2n^2+\frac{t}{3}}\right).
\end{equation*}
Thus, the choice $t= \frac{c}{\sqrt{np_n}}$ leads to
\begin{equation*}
\bbP\left(S_{ n}^{\ij}>\frac{c}{\sqrt{np_n}}\right)\leq  2\cdot 4^n\exp\left(-\frac12\frac{c^2 n}{2+\frac{c}{3\sqrt{np_n}}}\right),
\end{equation*}
which is summable for $c=3$ with the hypothesis $np_n \underset{n\rightarrow\infty}{\longrightarrow}\infty$. Let us now prove \eqref{eq:bound SnT} for $\cT=\ijjk$. The proof for the other cases are the same, up to a transposition of matrix. Remark that, considering $A$ the matrix $\left(\hat \xi_{ij}\right)_{1\leq i,j\leq n}$ and $S$ the diagonal matrix with values $(s_i)_{1\leq i\leq n}$ on the diagonal, we have, denoting $\Vert \cdot\Vert$ the operator norm of matrices,
\begin{equation*}
\sum_{ i,j,k=1} \hat{ \xi}_{ ij}^{(n)}\hat{ \xi}^{(n)}_{ jk}r_i s_{ j} t_{k} = r A S A t\leq n \Vert ASA\Vert\leq n \Vert A\Vert^2.
\end{equation*} 
Consider the matrix $A$ multiplied by $p_n$, its coefficients are bounded by one: it is well known (see for example Corollary 2.3.5 in \cite{tao2012}), that there exist absolute constants $c$ and $C$ such that, for all $\lambda \geq C$,
\begin{equation}\label{eq:bound norm op A}
\bbP\left(\left\Vert p_n A\right\Vert > \lambda \sqrt{n} \right)\leq C e^{-c\lambda n}.
\end{equation}
Recalling the previous inequality, this means that 
\begin{equation*}
\bbP\left(S_{ n}^{\ijjk}> \frac{\lambda}{np_n^2}\right)\leq C e^{-c\lambda n},
\end{equation*} 
which is summable. The bound \eqref{eq:bound Snlijik} can be treated in the same way. Let us fix $l\in \left\lbrace1, 2\right\rbrace$, $\cT=\lijik$, and define moreover $\hat R$ to the the diagonal matrix with diagonal values given by $\left(\hat \xi_{li}^{(n)}  r_i\right)_{1\leq i\leq n}$. We then have
\begin{equation*}
\sum_{ i,j,k} \hat{ \xi}^{(n)}_{li}\hat{\xi}_{ij}^{(n)} \hat{ \xi}_{ ik}^{(n)} r_{ i}s_{ j} t_{ k} = s A^t \hat R A t\leq \frac{n}{p_n} \Vert A\Vert^2,
\end{equation*}
where we used the rough bound $\Vert \hat R\Vert\leq \frac{1}{p_n}$. We can then proceed as above. The proof for $\cT=\lijk$ is similar.
\end{proof}

We will consider in many places of the paper various (possibly weighted) empirical means involving the centered variables $ \hat{ \xi}_{ij}^{ (n)}$. We refer to Appendix~\ref{sec:App concentration} where the definitions and the corresponding asymptotics for these quantities have been gathered.
\section{Proofs concerning the global fluctuations}
\label{sec:global fluctuations}

In this section, we will refer to two linear forms and their continuity properties.
\begin{lemma}
\label{lem:linear_forms}
Let $ \theta, \theta^{ \prime}\in \mathbb{ T}$ be fixed. The following linear forms
\begin{align*}
D_{ \theta}(f)&:= f^{ \prime}(\theta),\\
\Delta_{ \theta, \theta^{ \prime}}(f)&:= f(\theta)- f( \theta^{ \prime}),
\end{align*}
are continuous on $H^{ m} \left( \mathbb{ T}\right)$ for any $m> \frac{ 1}{ 2}$: there exists a constant $c_{ m}>0$ such that 
\begin{align*}
\sup_{ \theta\in \mathbb{ T}} \left\Vert D_{ \theta} \right\Vert_{ -(m+1)}&< c_{ m},\\
\sup_{ \theta, \theta^{ \prime}\in \mathbb{ T}} \left\Vert \Delta_{ \theta, \theta^{ \prime}} \right\Vert_{ -(m+1)}&< c_{ m} \left\vert \theta- \theta^{ \prime} \right\vert.
\end{align*}
\end{lemma}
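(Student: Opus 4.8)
The plan is to pass to Fourier coordinates on $\mathbb{T}$. Write $f \in H^{r}(\mathbb{T})$ as $f(\theta) = \sum_{k\in\mathbb{Z}} \hat f_k\, e^{ik\theta}$, so that $\|f\|_{H^{r}(\mathbb{T})}^2 = \sum_{k\in\mathbb{Z}}(1+k^2)^{r}\,|\hat f_k|^2$, and take $r := m+1 > \tfrac{3}{2}$. The gain of one derivative encoded in the gap between $m$ and $m+1$ is exactly what places the relevant series in the convergent regime.

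First I would bound $D_\theta$. Differentiating term by term, $f'(\theta) = \sum_{k\in\mathbb{Z}} ik\,\hat f_k\, e^{ik\theta}$, and Cauchy--Schwarz gives
\[
|D_\theta(f)| = |f'(\theta)| \le \sum_{k\in\mathbb{Z}} |k|\,|\hat f_k| \le \Big(\sum_{k\in\mathbb{Z}} \frac{k^2}{(1+k^2)^{m+1}}\Big)^{1/2}\|f\|_{H^{m+1}(\mathbb{T})}.
\]
The series on the right converges precisely because $2(m+1)-2 > 1 \iff m > \tfrac{1}{2}$; denoting its square root by $c_m$ (which does not depend on $\theta$), we obtain $\sup_{\theta\in\mathbb{T}}\|D_\theta\|_{-(m+1)} \le c_m$. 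In particular the Fourier series of $f'$ converges absolutely and uniformly, so $f$ is $\mathcal{C}^1$.

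For $\Delta_{\theta,\theta'}$ I would avoid a second Fourier computation and reduce to the bound just obtained. Joining $\theta$ and $\theta'$ by the shorter arc $\gamma$ on $\mathbb{T}$, of length $|\theta-\theta'|$ (the geodesic distance on the torus), the fundamental theorem of calculus gives $f(\theta)-f(\theta') = \int_{\gamma} f'(u)\,\mathrm{d}u$, hence
\[
|\Delta_{\theta,\theta'}(f)| \le |\theta-\theta'|\,\sup_{u\in\mathbb{T}}|f'(u)| = |\theta-\theta'|\,\sup_{u\in\mathbb{T}}|D_u(f)| \le c_m\,|\theta-\theta'|\,\|f\|_{H^{m+1}(\mathbb{T})},
\]
with the same constant $c_m$. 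Taking the supremum over $f$ in the unit ball of $H^{m+1}(\mathbb{T})$ gives $\sup_{\theta,\theta'}\|\Delta_{\theta,\theta'}\|_{-(m+1)} \le c_m\,|\theta-\theta'|$.

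There is no serious obstacle; the only points deserving a line of care are the normalisation of the Sobolev norm (any of the equivalent weights $(1+k^2)^{r}$, $(1+|k|)^{2r}$, $(1+|k|^2)^{r}$ works, changing only the value of $c_m$) and the convention that $|\theta-\theta'|$ denotes the distance on $\mathbb{T}$, which is what makes the arc-integral representation of $f(\theta)-f(\theta')$ legitimate. If one prefers to keep everything on the Fourier side, the bound on $\Delta_{\theta,\theta'}$ also follows directly from $|e^{ik\theta}-e^{ik\theta'}| \le |k|\,|\theta-\theta'|$ together with the same Cauchy--Schwarz step used for $D_\theta$.
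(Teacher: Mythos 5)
Your argument is correct and follows the same structure as the paper: first bound $D_\theta(f)$ by a constant times $\|f\|_{H^{m+1}}$, then handle $\Delta_{\theta,\theta'}$ via the fundamental theorem of calculus along the shorter arc. The only cosmetic difference is that you make the Sobolev embedding $H^{m+1}(\mathbb{T})\hookrightarrow C^1(\mathbb{T})$ explicit through Fourier coefficients and Cauchy--Schwarz, whereas the paper simply cites Adams--Fournier for the same inequality.
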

\begin{proof}
We have for all regular $f$ and $ \theta, \theta^{ \prime}\in \mathbb{ T}$,
\begin{equation*}
\left\vert D_{ \theta}(f) \right\vert= \left\vert f^{ \prime}(\theta) \right\vert \leq \left\Vert f \right\Vert_{ C^{ 1}} \leq C_{ m} \left\Vert f \right\Vert_{ H^{ m+1}},
\end{equation*}
where the last inequality is due to Sobolev embedding, see, e.g., \cite[Theorem 5.4, Case C]{adams_fournier_2003}. Similarly, for $ \Delta_{ \theta, \theta^{ \prime}}$ we have
\begin{equation*}
\left\vert \Delta_{ \theta, \theta^{ \prime}} (f)\right\vert \leq \left\vert \theta- \theta^{ \prime} \right\vert \left\Vert f \right\Vert_{ C^{ 1}} \leq C_m \left\vert \theta- \theta^{ \prime} \right\vert \left\Vert f \right\Vert_{ H^{ m+1}} \leq 2\pi C_m \left\Vert f \right\Vert_{ H^{ m+1}}.
\end{equation*}
\qedhere
\end{proof}

\subsection{Regularity and semimartingale representations}

We present the stochastic differential equations satisfied by $\mu^n$, $\eta^n$ and $\hat{\eta}^n$, and aim in particular at proving Proposition~\ref{prop:charac eta tilde eta}. We define $S:= (S_t)_{t\geq 0}$ as the analytic semigroup associated to the Laplacian operator. We present here a well-known argument (\cite{bechtold_coppini_2021,flandoli_oliveira_2020}) concerning the regularity of $(S_{ t})$, that we will employ at multiple steps.

\begin{lemma}
\label{lem:regularity_in_H_r}
Let $r>d/2$ and $k\geq r$. Then, there exists $\varepsilon>0$ such that the following conditions hold:
\begin{enumerate}
\item $\cP(\bbT^d) \subset H^{-r+\varepsilon}(\bbT^d)$ continuously and there exists $C$, only depending on $\varepsilon$, such that for every $\mu \in \cP(\bbT^d)$
\begin{equation*}
\sup_h \frac{ \< \mu, h>_{-r+\varepsilon, r-\varepsilon}}{ \norm{h}_{r-\varepsilon}} \leq C,
\end{equation*}
\item For every $h \in H^r(\bbT^d)$, it holds that
\begin{equation*}
\norm{S_{t-s} h}_{k} \leq C\left(1+\frac{1}{(t-s)^\frac{k-r}{2}}\right)\norm{h}_r.
\end{equation*}
\end{enumerate}
\end{lemma}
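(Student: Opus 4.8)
The plan is to prove the two items separately; both are classical facts about Sobolev spaces on the torus, so I expect no genuine obstacle, only a little care in tracking the dependence of the constants. First I would fix $\varepsilon$ once and for all: since $r>d/2$, any $\varepsilon\in(0,r-\tfrac d2)$ works (and $\varepsilon$ plays no role in item (2), which only involves $k\geq r$). With such a choice $r-\varepsilon>d/2$, so the Sobolev embedding $H^{r-\varepsilon}(\bbT^d)\hookrightarrow\cC^{0}(\bbT^d)$ of \cite{adams_fournier_2003} gives a constant $C$, depending only on $\varepsilon$, with $\norm{h}_\infty\leq C\norm{h}_{r-\varepsilon}$ for every test function $h$. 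For any $\mu\in\cP(\bbT^d)$ this yields $|\langle\mu,h\rangle|=\big|\int_{\bbT^d}h\dd\mu\big|\leq\norm{h}_\infty\leq C\norm{h}_{r-\varepsilon}$, and taking the supremum over $h$ with $\norm{h}_{r-\varepsilon}\leq1$ gives both the continuous embedding $\cP(\bbT^d)\subset H^{-r+\varepsilon}(\bbT^d)$ and the uniform bound of item (1); the only property of $\mu$ used is $\mu(\bbT^d)=1$, so $C$ is indeed independent of $\mu$.

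For item (2) I would diagonalise $-\Delta$ in the Fourier basis $(e_\xi)_{\xi\in\bbZ^d}$ of $L^2(\bbT^d)$, in which $S_{t-s}$ multiplies the $\xi$-th coefficient by $e^{-(t-s)|\xi|^2}$ and $\norm{h}_m^2=\sum_{\xi}(1+|\xi|^2)^m|\widehat h(\xi)|^2$. Writing $\tau:=t-s>0$ and $\alpha:=k-r\geq0$, this gives at once $\norm{S_\tau h}_k^2\leq\big(\sup_{x\geq0}(1+x)^\alpha e^{-2\tau x}\big)\norm{h}_r^2$, so everything reduces to the scalar bound $\sup_{x\geq0}(1+x)^\alpha e^{-2\tau x}\leq C_\alpha(1+\tau^{-\alpha})$ with $C_\alpha$ depending only on $\alpha=k-r$. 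For $\alpha=0$ the supremum equals $1$; for $\alpha>0$ I would split at $x=1$ (on $\{x\leq1\}$ the quantity is at most $2^\alpha$) and, on $\{x\geq1\}$, estimate $(1+x)^\alpha\leq2^\alpha x^\alpha$ and maximise $x\mapsto x^\alpha e^{-2\tau x}$ over $x\geq0$, the maximum being $(\alpha/2e)^\alpha\tau^{-\alpha}$ attained at $x=\alpha/(2\tau)$. Combining and using $\sqrt{1+\tau^{-\alpha}}\leq1+\tau^{-\alpha/2}$ then produces the stated bound $\norm{S_{t-s}h}_k\leq C\big(1+(t-s)^{-(k-r)/2}\big)\norm{h}_r$. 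Equivalently, one may quote the standard smoothing estimate $\norm{(-\Delta)^{\alpha/2}S_\tau}_{\mathcal{L}(H^{r},H^{r})}\lesssim\tau^{-\alpha/2}$ for the analytic semigroup generated by $\Delta$ (see e.g.\ \cite{henry}) together with $\norm{h}_{k}\lesssim\norm{(-\Delta)^{\alpha/2}h}_{r}+\norm{h}_{r}$.

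Since both items are elementary, there is no real difficulty to point to; the only thing to watch is the bookkeeping of constants --- that $C$ in item (1) depends only on $\varepsilon$, and that the constant in item (2) depends only on $k-r$ with the singularity of the exact order $(t-s)^{-(k-r)/2}$ --- and this is transparent from the two computations above.
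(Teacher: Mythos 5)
Your proof is correct and follows the same approach as the paper, which simply cites Sobolev embedding for item (1) and analytic semigroup regularity (\cite{henry}) for item (2) without further detail. You have merely made explicit, via the Fourier computation and the elementary supremum bound on $(1+x)^\alpha e^{-2\tau x}$, the standard estimates that the paper delegates to those references.
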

\begin{proof}
The first statement is a consequence of Sobolev's inequalities. The second statement comes from the regularity of the semigroup (see for example \cite{henry}).
\end{proof}
We start with the semimartingale representation of $\mu^n$.
\begin{lemma}
\label{lem:mu^n_t}
The empirical measure $\mu^n$ satisfies the following weak semimartingale representation: for any $f \in \cC^2(\bbT)$ and $t\in[0,T]$
\begin{equation}
\label{eq:mu^n_t}
\begin{split}
\< \mu^n_t, f> =& \< \mu^n_0, f> + \int_0^t \< \mu^n_s, \frac 12 \partial_\theta^2 f + (\mu^n_s * \Gamma)  \partial_\theta f > \dd s\\
&+ \< \hat{\nu}^n_t(\dd \theta_1, \dd \theta_2), \Gamma\left(\theta_1, \theta_2\right)\partial_\theta f\left(\theta_1\right) > + \< M^n_t, f >,
\end{split}
\end{equation}
where $ \hat{ \nu}_{ t}^{ n}$ is given by \eqref{def:centered_graph_emp}. The noise term $M_{ t}^{ n}$ in \eqref{eq:mu^n_t} is defined by
\begin{equation}
\label{eq:M^n}
\< M^n_t, f > = \frac{1}{n}\sum_{i=1}^n \int_0^t \partial_\theta f\left(\theta^{i,n}_s\right)\dd B^i_s.
\end{equation}
Let $r>1/2$. Then, $\mu^n$ satisfies the following weak-mild equation: for any $h \in H^r$ and $t \in [0,T]$,
\begin{equation*}
\begin{split}
\< \mu^n_t, h>_{-r,r} =& \< \mu^n_0, S_t h >_{-r,r} + \int_0^t \< \mu^n_s, (\mu^n_s * \Gamma)  (\partial_\theta S_{t-s} h) >_{-r,r} \dd s\\
&+ \int_0^t \frac{1}{n^2} \sum_{i,j=1}^n \hat{ \xi}_{ij}^{ (n)} \< \delta_{\theta^{i,n}_s}, (\Gamma * \delta_{\theta^{j,n}_s}) \partial_\theta S_{t-s}h >_{-r,r} \dd s + m^n_t(h),
\end{split}
\end{equation*}
where
\begin{equation}
\label{eq:mn}
m^n_t(h) = \frac{1}{n}\sum_{i=1}^n \int_0^t \partial_\theta S_{t-s} h \left(\theta^{i,n}_s\right)\dd B^i_s.
\end{equation}
\end{lemma}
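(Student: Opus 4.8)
The plan is to establish the two identities separately, both by Itô's formula. For the weak semimartingale representation \eqref{eq:mu^n_t}, I would apply Itô's formula to $s\mapsto f(\theta^{i,n}_s)$ for each $i$, using the dynamics \eqref{eq:wips} and the fact that $\theta^{i,n}$ has quadratic variation $\langle\theta^{i,n}\rangle_s=s$ (its martingale part being the standard Brownian motion $B^i$). This gives
\[
\dd f(\theta^{i,n}_s)=\Bigl(\tfrac12 f''(\theta^{i,n}_s)+\tfrac{1}{np_n}\sum_{j=1}^n\xi^{(n)}_{ij}\Gamma(\theta^{i,n}_s,\theta^{j,n}_s)f'(\theta^{i,n}_s)\Bigr)\dd s+f'(\theta^{i,n}_s)\dd B^i_s .
\]
Summing over $i$ and dividing by $n$, the Brownian contributions assemble into $M^n$ as in \eqref{eq:M^n}, which is a genuine continuous martingale since $f'$ is bounded on the compact torus. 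The only point requiring care is the interaction drift: using \eqref{eq:hatxi}, i.e. $\xi^{(n)}_{ij}/p_n=1+\hat\xi^{(n)}_{ij}$, I would split
\[
\tfrac{1}{n^2 p_n}\sum_{i,j}\xi^{(n)}_{ij}\Gamma(\theta^{i,n}_s,\theta^{j,n}_s)f'(\theta^{i,n}_s)=\tfrac{1}{n^2}\sum_{i,j}\Gamma(\theta^{i,n}_s,\theta^{j,n}_s)f'(\theta^{i,n}_s)+\tfrac{1}{n^2}\sum_{i,j}\hat\xi^{(n)}_{ij}\Gamma(\theta^{i,n}_s,\theta^{j,n}_s)f'(\theta^{i,n}_s),
\]
the first term being exactly $\langle\mu^n_s,(\mu^n_s*\Gamma)\partial_\theta f\rangle$ and the second exactly $\langle\hat\nu^n_s,\Gamma(\theta_1,\theta_2)\partial_\theta f(\theta_1)\rangle$ by the definition \eqref{def:centered_graph_emp} of $\hat\nu^n$. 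Integrating in $s$ yields \eqref{eq:mu^n_t}.

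For the weak--mild equation I would use a Duhamel (variation-of-constants) argument. Fix $t\in[0,T]$ and $h\in H^r$ with $r>1/2$, and consider the real-valued process $\phi(s):=\langle\mu^n_s,S_{t-s}h\rangle$ for $s\in[0,t)$. Since $S$ is analytic, Lemma~\ref{lem:regularity_in_H_r}(2) ensures that for $s<t$ the function $S_{t-s}h$ is smooth (in particular $C^2$, so that \eqref{eq:mu^n_t} applies with this function frozen in $s$), and that $s\mapsto S_{t-s}h$ is differentiable in every $H^k$ with derivative cancelling the $\tfrac12\partial_\theta^2$ term of \eqref{eq:mu^n_t}. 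Applying a product rule for the pairing of the $H^{-m}$-valued semimartingale $\mu^n_s$ with the deterministic $C^1$ path $s\mapsto S_{t-s}h$ (the covariation term vanishes, one factor being deterministic), the Laplacian contribution produced by \eqref{eq:mu^n_t} cancels against the time-derivative contribution, leaving
\[
\dd\phi(s)=\langle\mu^n_s,(\mu^n_s*\Gamma)\partial_\theta S_{t-s}h\rangle\dd s+\langle\hat\nu^n_s,\Gamma(\theta_1,\theta_2)\,\partial_\theta S_{t-s}h(\theta_1)\rangle\dd s+\langle\dd M^n_s,S_{t-s}h\rangle .
\]
Expanding $\hat\nu^n_s$ via \eqref{def:centered_graph_emp} turns the middle term into $\int_0^t\frac{1}{n^2}\sum_{i,j}\hat\xi^{(n)}_{ij}\langle\delta_{\theta^{i,n}_s},(\Gamma*\delta_{\theta^{j,n}_s})\partial_\theta S_{t-s}h\rangle\dd s$, and the martingale term integrates to $m^n_t(h)$ from \eqref{eq:mn}. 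Since $\phi(t^-)=\langle\mu^n_t,h\rangle$ and $\phi(0)=\langle\mu^n_0,S_th\rangle$, integrating from $0$ to $t$ gives the claimed identity.

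The main obstacle is making this last computation rigorous near $s=t$, where $S_{t-s}h$ and its derivatives blow up, so \eqref{eq:mu^n_t} cannot be used verbatim with $f=S_{t-s}h$. I would circumvent this by carrying out the argument on $[0,t-\varepsilon]$ first and then letting $\varepsilon\downarrow0$, controlling every term through Lemma~\ref{lem:regularity_in_H_r}(2): for $h$ smooth (the general $h\in H^r$ following by density of $C^\infty(\bbT)$ in $H^r$ and boundedness of $S$) one has $\|\partial_\theta S_{t-s}h\|_\infty\leq C(1+(t-s)^{-\beta})\|h\|_r$ with some $\beta<1/2$ (choosing the Sobolev margin appropriately), so that the drift integrals $\int_0^t(t-s)^{-\beta}\dd s$ converge and the stochastic integral defining $m^n_t(h)$ is well defined and $L^2$-continuous in $\varepsilon$ by the Itô isometry together with $\int_0^t(t-s)^{-2\beta}\dd s<\infty$. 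An equivalent route, avoiding the time-dependent test function altogether, is to apply a stochastic Fubini theorem to the drift and martingale terms of \eqref{eq:mu^n_t} together with the identity $S_t=\mathrm{id}+\int_0^t\tfrac12\Delta S_{t-s}(\cdot)\dd s$; either way the only analytic input beyond \eqref{eq:mu^n_t} is the semigroup regularity of Lemma~\ref{lem:regularity_in_H_r}.
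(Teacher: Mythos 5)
Your proof follows essentially the same approach as the paper's: Itô's formula applied to $f(\theta^{i,n}_s)$ for the weak representation, using the split $\xi^{(n)}_{ij}/p_n = 1 + \hat\xi^{(n)}_{ij}$ to isolate the graph fluctuation term, and Itô's formula with the time-dependent test function $S_{t-s}h$ (equivalently, the Duhamel identity $\partial_s S_{t-s}h = -\tfrac12\partial_\theta^2 S_{t-s}h$) for the weak-mild form. Your added care in regularizing near $s=t$ and checking integrability of $(t-s)^{-\beta}$ via Lemma~\ref{lem:regularity_in_H_r} is a sound way to make rigorous what the paper handles more tersely by deferring the well-posedness of $m^n$ to Proposition~\ref{prop:eta^n_tight}.
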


\begin{proof}
The proof is based on the Itô formula. Consider a regular function $f=f(\theta)$, then
\begin{equation*}
\dd f (\theta^{i,n}_t) = \partial_\theta f(\theta^{i,n}_t)\dd \theta^{i,n}_t + \frac 12 \partial_\theta^2 f(\theta^{i,n}_t) \dd \llbracket \theta^{i,n} \rrbracket_t.
\end{equation*}
Observe that $\dd \llbracket \theta^{i,n} \rrbracket_t = \dd t$ for every $i=1,\dots,n$. By summing over $i$ and integrating with respect to the time the previous expression, one obtains that, writing $ \frac{ \xi_{ij}^{ (n)}}{ p_{ n}} = \hat{ \xi}_{ij}^{ (n)}+1$,
\begin{equation*}
\begin{split}
\< \mu^n_t, f> &= \< \mu^n_0, f> + \int_0^t \< \mu^n_s, \frac 12 \partial_\theta^2 f + (\mu^n_s * \Gamma)  \partial_\theta f > \dd s + \< M^n_s, f > \\
+& \int_0^t \frac{1}{n^2}\sum_{i,j=1}^n \hat{ \xi}_{ij}^{ (n)} \Gamma\left(\theta^{i,n}_s, \theta^{j,n}_s\right)\partial_\theta f\left(\theta^{i,n}_s\right) \dd s.
\end{split}
\end{equation*}
Finally, observe that the last expression is equivalent to \eqref{eq:mu^n_t} since
\begin{equation}
\label{eq:graph_term_hat_eta_n}
\int_0^t \frac{1}{n^2}\sum_{i,j=1}^n \hat{\xi}^{(n)}_{ij} \Gamma\left(\theta^{i,n}_s, \theta^{j,n}_s\right)\partial_\theta f\left(\theta^{i,n}_s\right) \dd s = \< \hat{\nu}^n_t(\dd \theta_1, \dd \theta_2), \Gamma\left(\theta_1, \theta_2\right)\partial_\theta f\left(\theta_1\right) >.
\end{equation}
The second part of the proposition follows from Ito formula on the test function $f(\theta, s)=(S_{t-s}h)(\theta)$ and by using the fact that $\partial_s S_{t-s} h = - \frac 12 \partial_\theta^2 S_{t-s} h$. Because of Sobolev inequalities, recall \eqref{eq:sobolev_ineq}, $\cP(\bbT) \subset H^{-r}$ continuously for any $r>1/2$: any bracket $\<\cdot, \cdot>_{-r,r}$ is indeed the action of an element of $H^{-r}$ against an element of $H^r$. Well-posedness of $m^n$ given in \eqref{eq:mn} as an element of $H^{-r}$ have already been addressed in the literature, see, e.g., \cite{Bertini:2013aa, bechtold_coppini_2021, Coppini2019}, we will give another proof in Proposition \ref{prop:eta^n_tight}.
\end{proof}

Using Lemma~\ref{lem:mu^n_t} as well as the limit PDE \eqref{eq:limit PDE}, we can write down an equation for $\eta^n$ as defined in \eqref{def:fluctuation_process}. 
\begin{lemma}
\label{lem:fluctuation_process_semimart}
Suppose Assumption~\ref{ass:Gamma}. The process $ (\eta^{ n})$ given in \eqref{def:fluctuation_process}  belongs $ \mathbb{ P}\otimes \mathbf{ P}$-a.s. to $ \mathcal{ C} \left([0, T], H^{ -r} \left( \mathbb{ T}\right)\right)$ for any $r> \frac{ 3}{ 2}$. Moreover, $\eta^n$ satisfies the following weak semimartingale representation: for every regular test function $f$ (recall the definition of $\mathcal{ L}^{(1)}$ given in \eqref{eq:L1}),
\begin{equation}
\label{eq:eta_weak_semimart}
\left\langle \eta_{ t}^{ n}\, ,\, f\right\rangle = \left\langle \eta_{ 0}^{ n}\, ,\, f\right\rangle + \int_{ 0}^{t} \left\langle \eta_{ s}^{ n}\, ,\,  \mathcal{ L}_{ \mu^n_s}^{(1)}(f)\right\rangle {\rm d}s + \int_{ 0}^{t} \left\langle \hat{ \eta}_{ s}^{ n}({\rm d}\theta, {\rm d}\theta^{ \prime})\, ,\, \Gamma \left(\theta, \theta^{ \prime}\right)\partial_{ \theta}f(\theta)\right\rangle {\rm d}s + W_{ t}^{ n}(f),
\end{equation} 
where $ \hat{ \eta}_{ t}^{ n}$ is given in \eqref{eq:hat_etan} and where $W^n_t$ is defined by
\begin{equation}
\label{eq:Wn}
\< W^n_t, f > := \frac{1}{\sqrt n}\sum_{i=1}^n \int_0^t \partial_\theta f\left(\theta^{i,n}_s\right)\dd B^i_s.
\end{equation}
Let $r>1/2$. The process $ \eta^{ n}$ satisfies the following weak-mild equation: for every $h \in H^r(\bbT)$ and $t \in [0,T]$:
\begin{equation}
\label{eq:eta_mild_semimart}
\begin{split}
\left\langle \eta_{ t}^{ n}\, ,\, h\right\rangle_{-r,r} =& \left\langle \eta_{ 0}^{ n}\, ,\, S_{t}h\right\rangle_{-r,r} + \int_{ 0}^{t} \left\langle \eta_{ s}^{ n}\, ,\, (\Gamma*\mu^n_s) \partial_\theta S_{t-s} h  \right\rangle_{-r,r} {\rm d}s \\
&+ \int_{ 0}^{t} \left\langle \mu_{ s}\, ,\, (\Gamma*\eta^n_s) \partial_\theta S_{t-s} h  \right\rangle_{-r,r} {\rm d}s + w_{ t}^{ n}(h)\\
&+ \int_0^t \frac{1}{n^{3/2}} \sum_{i,j=1}^n \hat{ \xi}_{ij}^{ (n)} \< \delta_{\theta^{i,n}_s}, (\Gamma * \delta_{\theta^{j,n}_s}) \partial_\theta S_{t-s}h >_{-r,r} \dd s,
\end{split}
\end{equation}
where
\begin{equation*}
w^n_t(h) = \frac{1}{\sqrt n}\sum_{i=1}^n \int_0^t (\partial_\theta S_{t-s} h) \left(\theta^{i,n}_s\right)\dd B^i_s.
\end{equation*}
\end{lemma}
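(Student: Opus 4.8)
To establish the weak representation \eqref{eq:eta_weak_semimart}, the plan is to subtract the weak form $\langle\mu_t,f\rangle=\langle\mu_0,f\rangle+\int_0^t\langle\mu_s,\tfrac12\partial_\theta^2 f+(\Gamma*\mu_s)\partial_\theta f\rangle\dd s$ of \eqref{eq:limit PDE} from the decomposition \eqref{eq:mu^n_t} of $\mu^n$, and to multiply the result by $\sqrt n$. The diffusive term is linear in the measure and immediately produces $\langle\eta^n_s,\tfrac12\partial_\theta^2 f\rangle$. The interaction term is the quadratic form $\nu\mapsto\langle\nu,(\Gamma*\nu)\partial_\theta f\rangle$; writing $\mu^n_s=\mu_s+n^{-1/2}\eta^n_s$ and expanding this form bilinearly around $\mu_s$, the two $O(1)$ cross terms survive the rescaling by $\sqrt n$ and, after a Fubini relabelling of the integration variables, reassemble (together with the diffusive contribution) into $\langle\eta^n_s,\cL^{(1)}_{\mu^n_s}(f)\rangle$, with $\cL^{(1)}$ and $\Theta$ as in \eqref{eq:Theta}--\eqref{eq:L1}. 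Finally $\sqrt n\langle M^n_t,f\rangle=\langle W^n_t,f\rangle$ by \eqref{eq:M^n}--\eqref{eq:Wn}, while $\sqrt n\,\hat\nu^n=\hat\eta^n$ by \eqref{eq:hat_etan} turns the graph term into $\int_0^t\langle\hat\eta^n_s(\dd\theta,\dd\theta'),\Gamma(\theta,\theta')\partial_\theta f(\theta)\rangle\dd s$; collecting the terms gives \eqref{eq:eta_weak_semimart}.

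For \eqref{eq:eta_mild_semimart} I would run the same computation, but starting from the weak--mild equation for $\mu^n$ in Lemma~\ref{lem:mu^n_t} (obtained by applying Itô's formula to $(\theta,s)\mapsto(S_{t-s}h)(\theta)$ and using $\partial_s S_{t-s}h=-\tfrac12\partial_\theta^2 S_{t-s}h$) and from the corresponding mild form of \eqref{eq:limit PDE}. After subtraction and multiplication by $\sqrt n$, the bilinear expansion of the interaction term now produces the two explicit brackets $\langle\eta^n_s,(\Gamma*\mu^n_s)\partial_\theta S_{t-s}h\rangle$ and $\langle\mu_s,(\Gamma*\eta^n_s)\partial_\theta S_{t-s}h\rangle$, the martingale becomes $w^n_t(h)=\sqrt n\,m^n_t(h)$, and $\sqrt n$ times the graph term yields the last line of \eqref{eq:eta_mild_semimart}. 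Here one has to check that every $s$-integral is absolutely convergent near the singularity $s=t$: by Lemma~\ref{lem:regularity_in_H_r}(2), $\norm{S_{t-s}h}_{r+1}\leq C(1+(t-s)^{-1/2})\norm{h}_r$, so after multiplying by the smooth coefficient $\Gamma*\mu^n_s$ (or $\Gamma*\eta^n_s$, smooth because $\Gamma\in\cC^\infty$) and pairing with $\eta^n_s\in H^{-r}$ or $\mu_s\in H^{-r}$, each integrand is bounded by a $t$-independent constant times $1+(t-s)^{-1/2}$, which is integrable on $[0,t]$.

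It remains to justify the regularity statement, which I would do as follows. Pointwise in $t$, $\mu^n_t,\mu_t\in\cP(\bbT)\hookrightarrow H^{-r}(\bbT)$ continuously for any $r>1/2$ (Sobolev duality, Section~\ref{ss:notation}), hence $\eta^n_t\in H^{-r}(\bbT)$; for path continuity in $H^{-r}$ with $r>3/2$, Lemma~\ref{lem:linear_forms} with $m=r-1>\tfrac12$ gives $\norm{\delta_\theta-\delta_{\theta'}}_{-r}=\norm{\Delta_{\theta,\theta'}}_{-(m+1)}\leq c_{r-1}\left\vert\theta-\theta'\right\vert$, whence $\norm{\mu^n_t-\mu^n_s}_{-r}\leq\frac1n\sum_{i=1}^n\left\vert\theta^{i,n}_t-\theta^{i,n}_s\right\vert$; since each $t\mapsto\theta^{i,n}_t$ has $\bbP\otimes\bP$-a.s.\ continuous paths by \eqref{eq:wips} and $t\mapsto\mu_t$ is continuous in $H^{-r}$ by the same bound (applied to $\mu_t=\mathrm{Law}(\bar\theta_t)$, or via the mild form of \eqref{eq:limit PDE}), we obtain $\eta^n\in\cC([0,T],H^{-r}(\bbT))$ $\bbP\otimes\bP$-a.s. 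The algebra above is routine; the two genuinely delicate points, both deferred or reduced to tools already available, are the well-posedness and path-continuity of $W^n$ and $w^n$ as continuous $H^{-r}$-valued processes (the uniform control of their Sobolev norms, carried out in Proposition~\ref{prop:eta^n_tight}) and the Fubini-type interchange of the space-time integration with the analytic semigroup that legitimises the mild formulation, which again rests on Lemma~\ref{lem:regularity_in_H_r}(2).
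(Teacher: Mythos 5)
Your strategy is the paper's: derive \eqref{eq:eta_weak_semimart} and \eqref{eq:eta_mild_semimart} by subtracting the weak (resp.\ weak-mild) formulations of $\mu^n$ and $\mu$ and rescaling by $\sqrt n$, obtain $H^{-r}$ path-continuity from the Lipschitz bound on $\theta\mapsto\delta_\theta$ and the a.s.\ continuity of $t\mapsto\theta^{i,n}_t$, and defer the well-posedness of $W^n$ and $w^n$ to the later moment estimates. The step that does not close as you wrote it is the bilinear bookkeeping for the interaction term. Set $B(\nu,\rho):=\langle\nu,(\Gamma*\rho)\,\partial_\theta f\rangle$ and write $\mu^n_s=\mu_s+n^{-1/2}\eta^n_s$. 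Then
\begin{equation*}
\sqrt{n}\bigl[B(\mu^n_s,\mu^n_s)-B(\mu_s,\mu_s)\bigr]
=B(\eta^n_s,\mu_s)+B(\mu_s,\eta^n_s)+n^{-1/2}B(\eta^n_s,\eta^n_s).
\end{equation*}
The two $O(1)$ cross terms reassemble with the diffusion into $\langle\eta^n_s,\cL^{(1)}_{\mu_s}(f)\rangle$ --- argument $\mu_s$, not $\mu^n_s$ --- and the quadratic remainder $n^{-1/2}B(\eta^n_s,\eta^n_s)$ does not vanish identically and must be kept. Since $\langle\eta^n_s,\cL^{(1)}_{\mu^n_s}(f)\rangle=\langle\eta^n_s,\cL^{(1)}_{\mu_s}(f)\rangle+2n^{-1/2}B(\eta^n_s,\eta^n_s)$, the drift your expansion actually produces differs from the one you claim to obtain in \eqref{eq:eta_weak_semimart} by $-n^{-1/2}B(\eta^n_s,\eta^n_s)$. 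By contrast, the asymmetric split $B(a,a)-B(b,b)=B(a-b,a)+B(b,a-b)$ has no quadratic leftover and yields exactly the two displayed brackets of the mild equation \eqref{eq:eta_mild_semimart}, one carrying $\mu^n_s$ in the convolution and the other carrying $\mu_s$ as measure --- which, note, is again not of the form $\langle\eta^n_s,\cL^{(1)}_\nu(f)\rangle$ for a single $\nu$. So ``the cross terms reassemble into $\langle\eta^n_s,\cL^{(1)}_{\mu^n_s}(f)\rangle$'' is not a step that you can justify from the expansion you wrote down: either keep the two asymmetric brackets as the mild form does, or carry the $n^{-1/2}$ quadratic correction explicitly if you insist on writing the drift through a single $\cL^{(1)}_\nu$. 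The rest of your argument (continuity of $\eta^n$, the integrability check near the semigroup singularity via Lemma~\ref{lem:regularity_in_H_r}, deferring the $H^{-r}$ martingale bounds) is sound and is exactly what the paper does.
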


\begin{proof}
We first address the continuity of the processes $ \eta^{ n}$: for $ \theta, \theta^{ \prime}\in \mathbb{ T}$ and $ \varphi$ a regular test function
\begin{align*}
\left\vert \left\langle \delta_{ \theta} - \delta_{ \theta^{ \prime}}\, ,\, \varphi\right\rangle \right\vert &= \left\vert \varphi(\theta) - \varphi(\theta^{ \prime}) \right\vert \leq \left\Vert \varphi^{ \prime} \right\Vert_{ \infty} \left\vert \theta- \theta^{ \prime} \right\vert \leq \left\Vert \varphi \right\Vert_{ C^{ 1}} \left\vert \theta- \theta^{ \prime} \right\vert\leq C\left\Vert \varphi \right\Vert_{ H^{ r}} \left\vert \theta- \theta^{ \prime} \right\vert,
\end{align*}
by Sobolev embedding, for any $r> \frac{ 3}{ 2}$. This means that \[ \left\Vert \delta_{ \theta}- \delta_{ \theta^{ \prime}} \right\Vert_{ H^{ -(r+1)}}\leq C \left\vert \theta- \theta^{ \prime} \right\vert.\]
Now, a.s. for all $n\geq1$, $ t \mapsto \left(\theta_{ t}^{ 1, n}, \ldots, \theta_{ t}^{ n,n}\right)$ is continuous from $[0, T]$ to $ \mathbb{ T}^{ n}$. In particular, for all $i=1, \ldots, n$, for all $t$ and $t_{ m}\to t$, we have $ \left\Vert \delta_{ \theta_{ t_{ m}}^{ i , n}}- \delta_{ \theta_{ t}^{ i, n}} \right\Vert_{ H^{ -r}}\leq C \left\vert \theta_{ t_{ m}}^{ i, n}- \theta_{ t}^{ i, n}\right\vert$ which goes to $0$ as $t_{ m}\to t$. As a conclusion: $ t \mapsto \mu^n_t$ is a.s. continuous from $[0, T]$ to $H^{ -r}$ for $r> \frac{ 3}{ 2}$. Classical results \cite{Fernandez1997, Sznitman1991} assure that $\mu$ solves the Fokker-Planck equation \eqref{eq:limit PDE}, i.e.,
\begin{equation*}
\partial_t \mu_t(\theta) = \frac12 \partial^2_{\theta}\mu_t(\theta)-\partial_\theta\left[\mu_t(\theta) (\Gamma*\mu_t)(\theta)\right], \quad 0 < t \leq T,
\end{equation*}
and that it is an element of $ \mathcal{ C}([0, T], H^{ -r})$ (see for example \cite{sell2013dynamics}). As a conclusion, $ \eta^{ n}$ is a.s. continuous in $H^{ -r}(\mathbb{ T})$. Equation \eqref{eq:eta_weak_semimart} (respectively \eqref{eq:eta_mild_semimart}) is derived by subtracting the representations (resp. the weak-mild formulations) satisfied by $\mu^n$ and $\mu$, and by multiplying everything by $\sqrt{n}$. Observe that $W^n$ in \eqref{eq:Wn} is nothing but $M^n$ in \eqref{eq:M^n} multiplied by $\sqrt{n}$, and similarly for $w^n$ and $m^n$. Well-posedness of $w^n$ as an element of $H^{-r}$ is postponed to Proposition \ref{prop:eta^n_tight}.
\end{proof}
Recall the definition of the noise term $W_{ t}^{ n}$ in \eqref{eq:Wn}. 
\begin{lemma}
\label{lem:Mn_prop}
Under Assumption~\ref{ass:Gamma}, $ \mathbb{ P}$-a.s., for any $n \geq 1$, the process $ (W_{ t}^{ n})_{t \in [0,T]}$ is a martingale in $ \cC \left([0, T], H^{ -r}\left(\mathbb{ T}\right)\right)$, for  $r> \frac{ 3}{ 2}$. The corresponding Doob-Meyer process $\llangle W^{ n} \rrangle$ takes values in $ \mathcal{ L} \left(H^{ r}, H^{ -r}\right)$ and is given by \eqref{eq:DM_Mn}. Moreover, $ \left(W^{ n}\right)_{n\geq 1}$ satisfies 
\begin{equation}
\label{eq:control_supMn}
\sup_{ n} \mathbf{ E} \left( \sup_{ t\in [0, T]} \left\Vert W_{ t}^{ n}\right\Vert_{-r}^{ 2}\right)< +\infty.
\end{equation}
\end{lemma}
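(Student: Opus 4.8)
The plan is to follow the by-now standard route for distribution-valued martingales of interacting-diffusion type (see \cite{mitoma85, hitsudaMitoma86, Fernandez1997, metivier2011semimartingales}): reduce everything to the real-valued coordinate martingales $t\mapsto\langle W^n_t,f\rangle$ obtained by testing $W^n$ against a fixed $f\in H^r(\bbT)$, reassemble these along a complete orthonormal basis of $H^r(\bbT)$ to obtain the $H^{-r}$-valued object, and read off the Doob--Meyer process from the brackets of the coordinates.

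First I would fix $f\in H^r(\bbT)$ with $r>\tfrac32$. Under Assumption~\ref{ass:Gamma} the system \eqref{eq:wips} has, for every realisation of the graph, a unique strong solution on $[0,T]$, and $s\mapsto\partial_\theta f(\theta^{i,n}_s)$ is a bounded continuous adapted process (Sobolev embedding $H^r\hookrightarrow\cC^1$, valid for $r>\tfrac32$). Hence $t\mapsto\langle W^n_t,f\rangle=\tfrac1{\sqrt n}\sum_{i=1}^n\int_0^t\partial_\theta f(\theta^{i,n}_s)\dd B^i_s$ is, under $\mathbf{P}$, a continuous square-integrable martingale, and, the $B^i$ being independent, for $\varphi,\psi\in H^r(\bbT)$ its covariation is $\langle W^n(\varphi),W^n(\psi)\rangle_t=\tfrac1n\sum_{i=1}^n\int_0^t\varphi'(\theta^{i,n}_s)\psi'(\theta^{i,n}_s)\dd s$, which is precisely the right-hand side of \eqref{eq:DM_Mn} (all cross terms $i\neq j$ vanish). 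I would then observe that this bilinear form defines, for each $t$, a bounded symmetric nonnegative operator $\llangle W^n\rrangle_t\in\cL(H^r,H^{-r})$, via $\big|\tfrac1n\sum_i\int_0^t\varphi'(\theta^{i,n}_s)\psi'(\theta^{i,n}_s)\dd s\big|\leq T\norm{\varphi'}_\infty\norm{\psi'}_\infty\leq c_r^2\,T\norm{\varphi}_r\norm{\psi}_r$ (Sobolev, $r-1>\tfrac12$), that $t\mapsto\llangle W^n\rrangle_t$ is continuous, adapted and nondecreasing as a quadratic form, and that $\langle W^n_\cdot,\varphi\rangle^2-\llangle W^n\rrangle_\cdot\varphi(\varphi)$ is a martingale; uniqueness of the Doob--Meyer decomposition for $H^{-r}$-valued martingales \cite{metivier2011semimartingales} then identifies $\llangle W^n\rrangle$ as claimed.

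Next I would upgrade $W^n$ to an $H^{-r}(\bbT)$-valued continuous martingale and prove \eqref{eq:control_supMn} at once. Take the orthonormal basis $(f_k)_{k\in\bbZ}$ of $H^r(\bbT)$ of rescaled Fourier modes $f_k=(1+k^2)^{-r/2}(2\pi)^{-1/2}e^{ik\cdot}$, so that $\norm{v}_{-r}^2=\sum_k|\langle v,f_k\rangle|^2$. By the Itô isometry and the previous step, using that $\mu^n_s$ is a probability measure,
\begin{equation*}
\mathbf{E}\big[|\langle W^n_t,f_k\rangle|^2\big]=\int_0^t\mathbf{E}\big\langle\mu^n_s\,,\,|\partial_\theta f_k|^2\big\rangle\dd s\leq\frac{t}{2\pi}(1+k^2)^{1-r},
\end{equation*}
and Doob's $L^2$ maximal inequality applied coordinatewise gives
\begin{equation*}
\mathbf{E}\Big[\sup_{t\in[0,T]}\norm{W^n_t}_{-r}^2\Big]\leq\sum_{k\in\bbZ}\mathbf{E}\Big[\sup_{t\in[0,T]}|\langle W^n_t,f_k\rangle|^2\Big]\leq\frac{4T}{2\pi}\sum_{k\in\bbZ}(1+k^2)^{1-r}=:C_r,
\end{equation*}
which is finite exactly because $r>\tfrac32$ and is independent of $n$ and of the graph realisation; this is \eqref{eq:control_supMn}. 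The same summability shows the partial sums $\sum_{|k|\leq N}\langle W^n_\cdot,f_k\rangle f_k$ are Cauchy in $L^2(\mathbf{P};\cC([0,T],H^{-r}(\bbT)))$, so their limit (which coincides with $W^n$) has continuous paths and inherits the martingale property coordinate by coordinate; hence $W^n\in\cC([0,T],H^{-r}(\bbT))$ is a martingale for every $r>\tfrac32$, $\mathbb{P}$-a.s.

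I do not expect a genuine obstacle here: the content is the routine dictionary between the scalar coordinate martingales and the Hilbert-space object, and the independence of the $B^i$ does all the work in the bracket computation. The one point that requires care is the bookkeeping of the regularity index: the series $\sum_k(1+k^2)^{1-r}$ converges iff $2(r-1)>1$, i.e.\ $r>\tfrac32$, and this single threshold simultaneously governs path-continuity in $H^{-r}$, the uniform bound \eqref{eq:control_supMn}, and the fact that $\llangle W^n\rrangle_t$ is $\cL(H^r,H^{-r})$-valued.
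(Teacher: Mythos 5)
Your proof is correct and follows essentially the same strategy as the paper: computing the brackets coordinatewise via Itô's isometry, summing over a complete orthonormal system of $H^r(\bbT)$ with Doob's maximal inequality, and invoking the single threshold $r>\tfrac32$ for the summability that simultaneously yields \eqref{eq:control_supMn}, path-continuity in $H^{-r}$, and boundedness of $\llangle W^n\rrangle_t$ in $\cL(H^r,H^{-r})$. The only cosmetic differences are that the paper works with a generic orthonormal system and appeals to Lemma~\ref{lem:linear_forms} for the uniform bound on $\sum_p|\varphi_p'(\theta)|^2$, whereas you fix the Fourier basis and make the bound explicit, and that the paper establishes path continuity by an a.s.\ tail-truncation argument while you use a Cauchy-in-$L^2(\mathbf P;\cC([0,T],H^{-r}))$ argument; both routes are standard and equivalent in content.
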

\begin{proof}
This proofs follows closely the arguments given in \cite[Prop.~4.1]{Fernandez1997}. Let $ \left(\varphi_{ p}\right)_{ p\geq1}$ be a orthonormal system in $H^{ r}$. Let us first prove that 
\begin{equation}
\label{eq:bound_Mn}
\sup_{ n} \sum_{ p\geq 1} \mathbf{ E} \left( \sup_{ t\in [0, T]} \left\vert W_{ t}^{ n} \left(\varphi_{ p}\right) \right\vert^{ 2}\right)= \sup_{ n} \mathbf{ E} \left( \sum_{ p\geq 1}  \sup_{ t\in [0, T]} \left\vert W_{ t}^{ n} \left(\varphi_{ p}\right) \right\vert^{ 2}\right)< + \infty.
\end{equation} By Doob's inequality,
\begin{align*}
\sum_{ p\geq 1} \mathbf{ E} \left( \sup_{ t\in [0, T]} \left\vert W_{ t}^{ n} \left(\varphi_{ p}\right) \right\vert^{ 2}\right) &\leq C \sum_{ p\geq 1} \mathbf{ E} \left(\left\vert W_{ T}^{ n} \left(\varphi_{ p}\right) \right\vert^{ 2}\right)\\
&= C \sum_{ p\geq 1} \mathbf{ E} \left(\frac{ 1}{ n} \sum_{ i=1}^{ n} \int_{ 0}^{T} \left\vert \partial_{ \theta} \varphi_{ p}(\theta_{ s}^{ i, n} )\right\vert^{ 2} {\rm d}s\right),\\
&=C \frac{ 1}{ n} \sum_{ i=1}^{ n} \mathbf{ E} \left( \int_{ 0}^{T}  \left\Vert D_{ \theta_{ s}^{ i, n}} \right\Vert_{ -r}^{ 2} {\rm d}s\right)\leq Cc_{ r-1}^{ 2} T,
\end{align*}
for the constant $c_{ r-1}>0$ given in Lemma~\ref{lem:linear_forms}, which gives \eqref{eq:bound_Mn}.

We now prove that the trajectories of $W^{ n}$ are almost surely continuous in $H^{ -r}$. By \eqref{eq:bound_Mn}, the series $\sum_{ p\geq 1}  \sup_{ t\in [0, T]} \left\vert W_{ t}^{ n} \left(\varphi_{ p}\right) \right\vert^{ 2}$ is a.s. convergent, and hence, for fixed $n$ and all $ \varepsilon>0$, there exists $p_{ 0}\geq1$ sufficiently large so that $\sum_{ p>p_{ 0}}  \sup_{ t\in [0, T]} \left\vert W_{ t}^{ n} \left(\varphi_{ p}\right) \right\vert^{ 2}< \frac{ \varepsilon}{ 6}$. Let $(t_{ m})_{ m\geq1}$ be a sequence in $[0, T]$ such that $t_{ m} \xrightarrow[ m\to\infty]{}t$, then
\begin{align*}
\left\Vert W^{ n}_{ t_{ m}} - W^{ n}_{ t} \right\Vert_{ -r}^{ 2} &= \sum_{ p\geq1} \left\vert \left\langle W^{ n}_{ t_{ m}} - W^{ n}_{ t}\, ,\, \varphi_{ p}\right\rangle \right\vert^{ 2} \\
&\leq \sum_{ p=1}^{ p_{ 0}}  \left\vert \left\langle W^{ n}_{ t_{ m}} - W^{ n}_{ t}\, ,\, \varphi_{ p}\right\rangle \right\vert^{ 2} + 2 \sum_{ p> p_{ 0}} \left(  \left\vert \left\langle W^{ n}_{ t_{ m}}\, ,\, \varphi_{ p}\right\rangle \right\vert^{ 2} +  \left\vert \left\langle W^{ n}_{ t}\, ,\, \varphi_{ p}\right\rangle \right\vert^{ 2}\right).
\end{align*}
The last summand is smaller than $ \frac{ 4 \varepsilon}{ 6}$ and the first one can be made smaller than $ \frac{ \varepsilon}{ 3}$ by the a.s. continuity of $t \mapsto \left\langle W^{ n}_{ t}\, ,\, \varphi\right\rangle$ for all $ \varphi$. Hence, a.s. $W^{ n}\in \cC \left([0, T], H^{ -r}\right)$. The expression of $ \llangle W^{ n} \rrangle$ in \eqref{eq:DM_Mn} follows directly from Ito's isometry and \eqref{eq:control_supMn} is a direct consequence of the stronger statement \eqref{eq:bound_Mn}.
\end{proof}

The supplementary term in the drift in the semimartingale representation \eqref{eq:eta_weak_semimart} can be expressed in terms of the higher order fluctuation process $ \hat{ \eta}^{ n}$ defined in \eqref{eq:hat_etan}. The idea is to proceed further and write the semimartingale decomposition of $ \hat{ \eta}^{ n}$. For $1\leq i \leq n$, define the measures
\begin{equation}
\label{def:hat_mu_i_n}
\hat{\mu}^{n,i}_t = \frac 1n \sum_{j=1}^n \hat{\xi}_{ij}^{ (n)} \delta_{\theta^{j,n}_t} \in H^{-r}, \qquad r > \frac 12.
\end{equation}
\begin{lemma}
\label{lem:centered_emp_mes_spde}
Suppose Assumption~\ref{ass:Gamma}. The process $ (\hat \eta^n)$ defined in \eqref{eq:hat_etan} belongs $ \mathbb{ P}\otimes \mathbf{ P}$-a.s. to $ \mathcal{ C} \left([0, T], H^{ -r^{ \prime}}\left(\mathbb{ T}^{ 2}\right)\right)$ for any $r^{ \prime}> \frac{ 5}{ 2}$ and satisfies the following weak semimartingale decomposition: for all regular test function $(\theta_{ 1}, \theta_{ 2}) \mapsto g(\theta_{ 1}, \theta_{ 2})$ (recall the definition of $\mathcal{ L}^{(2)}$ and $C^n$ given respectively in \eqref{eq:L2} and \eqref{eq:def Cn}),
\begin{equation}
\label{eq:hat_eta_semimart}
\left\langle \hat{ \eta}_{ t}^{ n}\, ,\, g\right\rangle= \left\langle \hat{ \eta}_{ 0}^{ n}\, ,\, g\right\rangle + \int_{ 0}^{t} \left\langle \hat{ \eta}_{ s}^{ n}\, ,\, \mathcal{ L}_{ \mu^n_s}^{(2)}(g)\right\rangle {\rm d}s + \sqrt{ n} C_{t}^{ n}(g) + \hat{ W}_{ t}^{ n} (g),
\end{equation}
\begin{equation}\label{eq:hatMn}
\hat{W}^n_t(g) := \frac 1{n^{ 3/2}} \sum_{i,j=1}^n \hat{\xi}^{(n)}_{ij} \int_0^t \nabla g(\theta^{i,n}_s, \theta^{j,n}_s) \cdot (\dd B^i_s, \dd B^j_s).
\end{equation}
Let $r>3/2$. The process $\hat{\eta}^{ n}$ satisfies the following weak-mild equation: for every $h \in H^r(\bbT^2)$ and $t \in [0,T]$:
\begin{equation}
\label{eq:hat_eta_mild_semimart}
\begin{split}
\left\langle \hat{\eta}_{ t}^{ n}\, ,\, h\right\rangle_{-r,r} =& \left\langle \hat{\eta}_{ 0}^{ n}\, ,\, S_{t}h\right\rangle_{-r,r}  + \int_{ 0}^{t} \left\langle \hat{\eta}^n_{ s}\, ,\, \Lambda^n_s \cdot [ \nabla S_{t-s} h] \right\rangle_{-r,r} {\rm d}s + \hat{w}_{ t}^{ n}(h) \\
&+ \int_0^t \frac{1}{n^{3/2}} \sum_{i,j=1}^n \hat{ \xi}_{ij}^{ (n)} \< \delta_{\theta^{i,n}_s}  \otimes \ \delta_{ \theta_{s}^{j, n}}, \hat{\Lambda}^n_{s, ij} \cdot \nabla S_{t-s}h >_{-r,r} \dd s,
\end{split}
\end{equation}
with, recall definition \eqref{def:hat_mu_i_n}, $ \Lambda^{ n}_{ s}$ and $\hat{\Lambda}^n_{s, ij}$ are respectively given by
\begin{align}
\label{eq:Lambdasn}
\Lambda^n_s(\theta_1, \theta_2) &:= \left(\left\langle \mu_{ s}^{ n}({\rm d}\theta^{ \prime})\, ,\, \Gamma (\theta_{ 1}, \theta^{ \prime})\right\rangle,  \left\langle \mu_{ s}^{ n}({\rm d}\theta^{ \prime})\, ,\, \Gamma (\theta_{ 2}, \theta^{ \prime})\right\rangle \right) \in \R^2,\\
\hat{\Lambda}^n_{s, ij} (\theta_1, \theta_2) &:= \left(\left\langle \hat{\mu}_{ s}^{n,i}({\rm d}\theta^{ \prime})\, ,\, \Gamma (\theta_{ 1}, \theta^{ \prime})\right\rangle,  \left\langle \hat{\mu}_{ s}^{n,j}({\rm d}\theta^{ \prime})\, ,\, \Gamma (\theta_{ 2}, \theta^{ \prime})\right\rangle \right) \in \R^2,
\end{align}
and where
\begin{equation}
\label{eq:hatwn}
\hat{w}^n_t(h) := \frac 1{n^{ 3/2}} \sum_{i,j=1}^n \hat{\xi}^{(n)}_{ij} \int_0^t \nabla S_{t-s} h(\theta^{i,n}_s, \theta^{j,n}_s) \cdot (\dd B^i_s, \dd B^j_s).
\end{equation}
\end{lemma}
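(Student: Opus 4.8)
The plan is to repeat, one order higher, the Itô computations behind Lemma~\ref{lem:mu^n_t} and Lemma~\ref{lem:fluctuation_process_semimart}: here $\hat\eta^n_t=\frac{1}{n^{3/2}}\sum_{i,j=1}^n\hat\xi^{(n)}_{ij}\,\delta_{\theta^{i,n}_t}\otimes\delta_{\theta^{j,n}_t}$ is a distribution on $\bbT^2$ whose terms carry the \emph{pair} of labels $(i,j)$ through the centred weight $\hat\xi^{(n)}_{ij}$. For the continuity statement, the Sobolev embedding $H^{r'}(\bbT^2)\hookrightarrow\cC^1(\bbT^2)$ --- valid for $r'>2$, hence in particular for $r'>5/2$ --- yields, exactly as in the proof of Lemma~\ref{lem:fluctuation_process_semimart}, the Lipschitz bound $\norm{\delta_{\theta_1}\otimes\delta_{\theta_2}-\delta_{\theta_1'}\otimes\delta_{\theta_2'}}_{-r'}\leq C(|\theta_1-\theta_1'|+|\theta_2-\theta_2'|)$. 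Since for fixed $n$ the defining sum is finite and $t\mapsto(\theta^{1,n}_t,\dots,\theta^{n,n}_t)$ is a.s.\ continuous in $\bbT^n$, this gives $\hat\eta^n\in\cC([0,T],H^{-r'}(\bbT^2))$, $\bbP\otimes\bP$-a.s.; membership in $H^{-r'}(\bbT^2)$ at a fixed time also follows from $\cP(\bbT^2)\subset H^{-r}(\bbT^2)$ for $r>1$ (Lemma~\ref{lem:regularity_in_H_r}).

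For the weak decomposition \eqref{eq:hat_eta_semimart}, I would apply Itô's formula to $s\mapsto g(\theta^{i,n}_s,\theta^{j,n}_s)$ for every pair $(i,j)$ using \eqref{eq:wips}, then sum against $\frac{1}{n^{3/2}}\hat\xi^{(n)}_{ij}$ and integrate in time. For $i\neq j$ the driving Brownian motions $B^i,B^j$ are independent, so the second-order Itô term is precisely $\frac12\Delta g(\theta^{i,n}_s,\theta^{j,n}_s)$ (the $n$ diagonal terms $i=j$ produce only an additional cross-derivative drift, of size $O(\norm{g}_{C^2}/(n^{1/2}p_n))$, which requires nothing more than an elementary separate bookkeeping). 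The algebraic heart is the identity, for the drift of $\theta^{i,n}$,
\begin{equation*}
\frac{1}{np_n}\sum_{k=1}^n\xi^{(n)}_{ik}\,\Gamma(\theta^{i,n}_s,\theta^{k,n}_s)=\langle\mu^n_s\, ,\,\Gamma(\theta^{i,n}_s,\cdot)\rangle+\langle\hat\mu^{n,i}_s\, ,\,\Gamma(\theta^{i,n}_s,\cdot)\rangle,
\end{equation*}
obtained by writing $\xi^{(n)}_{ik}/p_n=1+\hat\xi^{(n)}_{ik}$ and recalling \eqref{def:hat_mu_i_n} (and symmetrically for the drift of $\theta^{j,n}$, which multiplies $\partial_{\theta_2}g$ instead of $\partial_{\theta_1}g$). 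Together with $\frac12\Delta g$, the $\mu^n_s$-part reassembles into $\langle\hat\eta^n_s,\mathcal{L}^{(2)}_{\mu^n_s}g\rangle$ by \eqref{eq:L2}; the $\hat\mu^{n,i}_s$-part, after substituting $\langle\hat\mu^{n,i}_s,\Gamma(\theta^{i,n}_s,\cdot)\rangle=\frac1n\sum_k\hat\xi^{(n)}_{ik}\Gamma(\theta^{i,n}_s,\theta^{k,n}_s)$, produces exactly $\sqrt n$ times the two triple sums defining $C^n_t(g)$ in \eqref{eq:def Cn}; and the stochastic integrals collect into $\hat W^n_t(g)$ as in \eqref{eq:hatMn}. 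For each fixed test function the latter is an honest finite sum of Itô integrals with bounded integrands, hence well defined; that it is moreover a $\cC([0,T],H^{-r}(\bbT^2))$-valued martingale (the argument being that of Lemma~\ref{lem:Mn_prop}, with $D_\theta$ replaced by the gradient point-evaluations $g\mapsto\nabla g(\theta_1,\theta_2)$, which lie in $H^{-r}(\bbT^2)$ with room to spare) and that it vanishes as $n\to\infty$ are the content of Proposition~\ref{prop:charac eta tilde eta} and Proposition~\ref{prop:Wn_prop}.

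The weak-mild form \eqref{eq:hat_eta_mild_semimart} then follows by redoing this computation with the time-dependent test function $(\theta_1,\theta_2)\mapsto(S_{t-s}h)(\theta_1,\theta_2)$ and using $\partial_s S_{t-s}h=-\frac12\Delta S_{t-s}h$ to cancel the Laplacian part of the drift; what is left is precisely $\int_0^t\langle\hat\eta^n_s,\Lambda^n_s\cdot\nabla S_{t-s}h\rangle\,ds$, the $\hat\xi^{(n)}_{ij}$-weighted term carrying $\hat\Lambda^n_{s,ij}$, and the stochastic convolution $\hat w^n_t(h)$ of \eqref{eq:hatwn} --- where $\Lambda^n_s$ and $\hat\Lambda^n_{s,ij}$ are just the vector fields already identified above, with $\mu^n_s$ resp.\ $(\hat\mu^{n,i}_s,\hat\mu^{n,j}_s)$ in the two components. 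Well-posedness of $\hat w^n$ as an $H^{-r}(\bbT^2)$-valued process uses the semigroup estimate of Lemma~\ref{lem:regularity_in_H_r}(2) to absorb the integrable singularity of $\nabla S_{t-s}h$ near $s=t$; exactly as with $m^n$ and $w^n$ in Lemma~\ref{lem:mu^n_t} and Lemma~\ref{lem:fluctuation_process_semimart}, I would defer the detailed argument to Proposition~\ref{prop:eta^n_tight}.

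The computation is conceptually routine; the only delicate points are technical. The first is the combinatorial bookkeeping that must yield exactly the weights $\hat\xi^{(n)}_{ij}\hat\xi^{(n)}_{ik}$ and $\hat\xi^{(n)}_{ij}\hat\xi^{(n)}_{jk}$ of $C^n$ --- one has to keep track that expanding $\xi^{(n)}_{ik}/p_n=1+\hat\xi^{(n)}_{ik}$ in the drift of $\theta^{i,n}$ multiplies $\partial_{\theta_1}g$ whereas the same operation on the drift of $\theta^{j,n}$ multiplies $\partial_{\theta_2}g$, and to dispose of the diagonal contributions --- and the second is the regularity of the distribution-valued stochastic integrals $\hat W^n$ and $\hat w^n$ on $\bbT^2$, which needs a slightly higher Sobolev index than on $\bbT$ and is controlled via the Hilbert--Schmidt embeddings recalled in Section~\ref{ss:notation}. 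The genuinely substantial matters flagged right after the statement --- that $\hat W^n$ is negligible and that $\sqrt n C^n$ does not survive in the limit --- are not part of this lemma; they are handled in Proposition~\ref{prop:Wn_prop}, Proposition~\ref{prop:bound Cn} and Proposition~\ref{prop:concentration_SnT}.
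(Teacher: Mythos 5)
Your proof is correct and follows the paper's route: apply It\^o to the two-variable test functions $g(\theta^{i,n}_s,\theta^{j,n}_s)$ (resp.\ $(S_{t-s}h)(\theta^{i,n}_s,\theta^{j,n}_s)$), expand each particle's drift via $\xi^{(n)}_{ik}/p_n = 1+\hat{\xi}^{(n)}_{ik}$, and collect. Your bookkeeping yielding $\langle\hat{\eta}^n_s,\mathcal{L}^{(2)}_{\mu^n_s}g\rangle$ from the $\mu^n_s$-pieces, $\sqrt{n}\,C^n_t(g)$ from the $\hat{\mu}^{n,i}_s$-pieces, and $\hat{W}^n_t(g)$ from the stochastic integrals is exactly right, as is the mild counterpart using $\partial_s S_{t-s}h = -\tfrac12\Delta S_{t-s}h$ to cancel the Laplacian (the paper's statement has a typo, $\nabla$ for $\Delta$, which you silently corrected). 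Your parenthetical on the diagonal terms deserves emphasis rather than deferral: for $i=j$ the quadratic-variation contribution in It\^o's formula is $\tfrac12\Delta g + \partial_{\theta_1}\partial_{\theta_2}g$, not $\tfrac12\Delta g$, so as literally stated \eqref{eq:hat_eta_semimart} should carry an extra drift
\[
\frac{1}{n^{3/2}}\sum_{i=1}^n\hat{\xi}^{(n)}_{ii}\int_0^t\partial_{\theta_1}\partial_{\theta_2}g\bigl(\theta^{i,n}_s,\theta^{i,n}_s\bigr)\,\dd s,
\]
which the paper silently drops. Your size estimate $O(\|g\|_{C^2}/(\sqrt{n}p_n))$ for this contribution is correct; under the dilution condition \eqref{eq:dilution_cond} it vanishes and is harmless for every downstream statement, but strictly speaking the displayed identity requires either excluding the diagonal $i=j$ from $\hat{\nu}^n$ in \eqref{def:centered_graph_emp} or carrying this negligible remainder.
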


\begin{proof}
The proof of continuity of the trajectories of $ \hat{ \eta}^{ n}$ follows the same argument as in the proof of Lemma~\ref{lem:fluctuation_process_semimart}, as for any $n\geq1$, $ \hat{ \eta}^{ n}$ is a finite weighted sum of Dirac measures $ \delta_{ (\theta_{ t}^{ i,n}, \theta_{ t}^{ j, n})}$. One has for any $(\theta_{ 1}, \theta_{ 2}), (\theta_{ 1}^{ \prime}, \theta_{ 2}^{ \prime})\in \mathbb{ T}^{ 2}$, any regular test function $ \psi$ on $ \mathbb{ T}^{ 2}$, for any $r> \frac{ 5}{ 2}$, 
\begin{multline*}
\left\vert \left\langle \delta_{ (\theta_{ 1}, \theta_{ 2})} - \delta_{ (\theta_{ 1}^{ \prime}, \theta_{ 2}^{\prime})}\, ,\, \psi\right\rangle \right\vert= \left\vert \psi(\theta_{ 1}, \theta_{ 2}) - \psi \left(\theta_{ 1}^{ \prime}, \theta_{ 2}^{ \prime}\right)\right\vert \leq \left\Vert \psi \right\Vert_{ \mathcal{ C}^{ 1}} \left\lbrace\left\vert \theta_{ 1}- \theta_{ 1}^{ \prime} \right\vert + \left\vert \theta_{ 2}- \theta_{ 2}^{ \prime} \right\vert\right\rbrace\\ \leq C \left\Vert \psi \right\Vert_{H^{ r}(\mathbb{ T}^{ 2})} \left\lbrace\left\vert \theta_{ 1}- \theta_{ 1}^{ \prime} \right\vert + \left\vert \theta_{ 2}- \theta_{ 2}^{ \prime} \right\vert\right\rbrace,
\end{multline*}
which proves the desired continuity, with the same arguments as for Lemma~\ref{lem:fluctuation_process_semimart}. The semimartingale representation \eqref{eq:hat_eta_semimart} is derived as the one of Lemma~\ref{lem:mu^n_t} but where Ito formula is applied to test functions of two variables, i.e., to $g(\theta^{i,n}_t, \theta^{j,n}_t)$. The second part of the proposition follows again from Ito formula, but with test functions $g(\theta, \theta', s)=(S_{t-s}h)(\theta, \theta')$, and by using the fact that $\partial_s S_{t-s} h = - \frac 12 \nabla S_{t-s} h$.
The choice of $r>3/2$ and Sobolev inequalities, recall \eqref{eq:sobolev_ineq}, assure that any bracket $\<\cdot, \cdot>_{-r,r}$ makes sense as the action of an element of $H^{-r}(\bbT^2)$ against an element of $H^r(\bbT^2)$. Well-posedness of $m^n$ given in \eqref{eq:mn} is given in Proposition \ref{prop:hat_eta_n_tight}.
\end{proof}
\begin{proposition}
\label{prop:Wn_prop}
For any $r>3$, the process $ \left( \hat{ W}_{ t}^{ n}\right)$ is a martingale in $ \cC \left([0, T], H^{ -r}(\mathbb{ T}^{ 2})\right)$, whose Doob-Meyer process $ \llangle \hat{ W}^{ n} \rrangle_{ t}$ taking values in $ \mathcal{ L} \left(H^{ r}(\mathbb{ T}^{ 2}), H^{ -r}(\mathbb{ T}^{ 2})\right)$ is given for every $ \varphi, \psi\in H^{ r}(\mathbb{ T}^{ 2})$ by
\begin{align*}
\label{eq:DM_Wn}
\llangle \hat{ W}^{ n}\rrangle_{ t}\cdot \varphi(\psi)&= \left\langle \hat{ W}^{ n}(\varphi)\, ,\, \hat{ W}^{ n}(\psi)\right\rangle_{ t}\\
&= \frac{ 1}{ n^{ 3}} \sum_{ i,j,k} \hat{ \xi}_{ij}^{ (n)} \hat{ \xi}_{ik}^{ (n)} \int_{ 0}^{ t} \partial_{ \theta_{ 1}} \varphi \left(\theta_{ s}^{i,n}, \theta_{ s}^{ j,n}\right) \partial_{ \theta_{ 1}} \psi \left(\theta_{ s}^{ i,n}, \theta_{ s}^{ k, n}\right) {\rm d}s\\
&+ \frac{ 1}{ n^{ 3}} \sum_{ i,j,k} \hat{ \xi}_{ij}^{ (n)} \hat{ \xi}_{jk}^{ (n)} \int_{ 0}^{ t} \partial_{ \theta_{ 2}} \varphi \left(\theta_{ s}^{i,n}, \theta_{ s}^{ j,n}\right) \partial_{ \theta_{ 1}} \psi \left(\theta_{ s}^{ j,n}, \theta_{ s}^{ k, n}\right) {\rm d}s\\
&+ \frac{ 1}{ n^{ 3}} \sum_{ i,j,k} \hat{ \xi}_{ij}^{ (n)} \hat{ \xi}_{ki}^{ (n)} \int_{ 0}^{ t} \partial_{ \theta_{ 1}} \varphi \left(\theta_{ s}^{i,n}, \theta_{ s}^{ j,n}\right) \partial_{ \theta_{ 2}} \psi \left(\theta_{ s}^{ k,n}, \theta_{ s}^{ i, n}\right) {\rm d}s\\
&+ \frac{ 1}{ n^{ 3}} \sum_{ i,j,k} \hat{ \xi}_{ij}^{ (n)} \hat{ \xi}_{k,j}^{ (n)} \int_{ 0}^{ t} \partial_{ \theta_{ 2}} \varphi \left(\theta_{ s}^{i,n}, \theta_{ s}^{ j,n}\right) \partial_{ \theta_{ 2}} \psi \left(\theta_{ s}^{ k,n}, \theta_{ s}^{ j, n}\right) {\rm d}s.
\end{align*}
Moreover, $ \left( \hat{ W}^{ n}\right)$ satisfies $\bbP$-almost surely
\begin{equation}
\label{eq:control_supWn}
 \mathbf{ E} \left( \sup_{ t\in [0, T]} \left\Vert \hat{ W}_{ t}^{ n}\right\Vert_{-r}^{ 2}\right) \xrightarrow[ n\to\infty]{}0.
\end{equation}
\end{proposition}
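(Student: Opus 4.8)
The plan is to follow the two-step scheme of Lemma~\ref{lem:Mn_prop}, reinforced by the Grothendieck inequality as in Proposition~\ref{prop:bound Cn}.

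\emph{Step 1: martingale property and Doob--Meyer bracket.} Fix $n$. For any smooth $g$, the real process $\hat W^n(g)$ of \eqref{eq:hatMn} is a finite sum of Itô integrals with bounded integrands, hence a continuous $L^2$-martingale. Itô's isometry applied to $\hat W^n(\varphi)$ and $\hat W^n(\psi)$, after grouping for each fixed index $\ell$ the contributions of $\dd B^\ell_s$ coming both from the indices $i=\ell$ and $j=\ell$ in \eqref{eq:hatMn}, produces exactly the four-term formula for $\llangle\hat W^n\rrangle_t$ in the statement. To upgrade $\hat W^n$ to an $H^{-r}(\bbT^2)$-valued martingale ($r>3$) with $\mathbb{P}\otimes\mathbf{P}$-a.s. continuous trajectories, I would copy the Parseval/Doob argument of Lemma~\ref{lem:Mn_prop}: for an orthonormal basis $(\varphi_p)_p$ of $H^r(\bbT^2)$, Doob's $L^2$-inequality gives $\sum_p\mathbf{E}\sup_{t\le T}|\hat W^n_t(\varphi_p)|^2\le 4\sum_p\mathbf{E}\,\llangle\hat W^n(\varphi_p)\rrangle_T$, and in each of the four summands one carries out the sum over $p$ \emph{first}, using that $\sum_p\partial_{\theta_a}\varphi_p(x)\,\partial_{\theta_b}\varphi_p(y)=\big\langle\partial_{\theta_a}\delta_x\,,\,\partial_{\theta_b}\delta_y\big\rangle_{H^{-r}(\bbT^2)}$ is a bounded kernel on $\bbT^2\times\bbT^2$ for $r>2$. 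For fixed $n$ the trivial bound $|\hat\xi^{(n)}_{ij}|\le p_n^{-1}$ then makes this series finite, yielding simultaneously the a.s. continuity in $H^{-r}(\bbT^2)$, the martingale property, and the claimed description of $\llangle\hat W^n\rrangle$ as an $\mathcal{L}\big(H^r(\bbT^2),H^{-r}(\bbT^2)\big)$-valued process.

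\emph{Step 2: vanishing of the noise.} The same chain of inequalities bounds $\mathbf{E}\big(\sup_{t\le T}\|\hat W^n_t\|_{-r}^2\big)$ by a constant times a sum of four terms, each of the form $\mathbf{E}\big|\,\tfrac1{n^3}\sum_{i,j,k=1}^n (\text{product of two }\hat\xi^{(n)}\text{'s in pattern }\mathcal T)\int_0^T K^{\mathcal T}_r(\theta^i_s,\theta^j_s,\theta^k_s)\,\dd s\,\big|$, where $\mathcal T$ runs over $\ijik,\ijjk,\ijki,\ikjk$ and each $K^{\mathcal T}_r$ is a \emph{fixed} bounded smooth kernel on $\bbT^2\times\bbT^2$ coming from the Parseval summation over $p$ (the sums involved being absolutely convergent for $r>2$). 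The decisive point is that Theorem~\ref{th:Grothendieck} cannot be applied termwise in $p$: the $\ell^2$-norms of the factors entering its statement are controlled by $\|\varphi_p\|_{H^3}$, which is not summable over $p$, so the summation over the basis must precede the use of Grothendieck. With the fixed kernels $K^{\mathcal T}_r$ in place, for each fixed $s$ the map $(\theta^i_s,\theta^j_s,\theta^k_s)\mapsto K^{\mathcal T}_r(\theta^i_s,\theta^j_s,\theta^k_s)$ is written, after Fourier-expanding $K^{\mathcal T}_r$ and regularising the two ``vertex'' factors by weights $(1+a^2)^{-\delta}$ exactly as in the proof of Proposition~\ref{prop:bound Cn} (the compensating weights absorbed into the remaining $\ell^2(\bbZ^2)$-factor, whose norm stays finite and uniform in $s$ and in the Brownian realisation precisely because $r>3$), as a Grothendieck trilinear form $\nu_{\cU,1}$ with $\cU=\{\{1,2\},\{1\},\{2\}\}$, for which $\mathcal I_\cU=2$. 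Theorem~\ref{th:Grothendieck} then bounds the $(i,j,k)$-sum by $C_\Gamma\,S_n^{\mathcal T}$; integrating in $s$ and taking $\mathbf{E}$ — which leaves the deterministic quantity $S_n^{\mathcal T}$ unchanged — gives
\[
\mathbf{E}\Big(\sup_{t\le T}\big\|\hat W^n_t\big\|_{-r}^2\Big)\ \le\ C\Big(S_n^{\ijik}+S_n^{\ijjk}+S_n^{\ijki}+S_n^{\ikjk}\Big).
\]
By Proposition~\ref{prop:concentration_SnT}, $\mathbb{P}$-a.s. one has $S_n^{\mathcal T}=O\big(1/(np_n^2)\big)$, so the right-hand side is $O\big(1/(np_n^2)\big)$, which tends to $0$ since $np_n^2\to\infty$ (this holds under the dilution condition \eqref{eq:dilution_cond} of Theorem~\ref{th:limit eta and hat eta}); this proves \eqref{eq:control_supWn}.

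The main obstacle is Step 2, and specifically the order of operations: the Grothendieck inequality must be applied to the fixed kernels produced by the Parseval summation over $(\varphi_p)_p$, not to the individual basis functions, and one then has to recognise in each of the four kernels $K^{\mathcal T}_r$ — which, contrary to Proposition~\ref{prop:bound Cn}, are no longer tensor products $f_1\otimes f_2$ — the required trilinear Blei form, while checking the $\ell^2(\bbZ^2)$-summability, uniform in the Brownian realisation, of the compensated Fourier factors; this summability is exactly where the regularity threshold $r>3$ enters. Everything else is routine, modelled on Lemma~\ref{lem:Mn_prop} and Proposition~\ref{prop:bound Cn}.
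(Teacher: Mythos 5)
Your proposal is correct and follows essentially the same strategy as the paper: compute the four-term Doob--Meyer bracket via It\^o's isometry, control the $H^{-r}$-supremum by the Parseval/Doob argument of Lemma~\ref{lem:Mn_prop}, and bound the resulting $(i,j,k)$-sums via Blei--Grothendieck. The observation you highlight as the crux --- that the summation over the orthonormal basis must be carried out \emph{before} invoking Theorem~\ref{th:Grothendieck}, lest one be left with $\sum_p\Vert\varphi_p\Vert_{H^3}^2$, which only converges when the embedding $H^r\hookrightarrow H^3$ is Hilbert--Schmidt, i.e.\ $r>4$ --- is precisely what the paper packages into Lemma~\ref{lem:Grothendieck_fonct} (the quantities $c_n^{\cT}(\Phi,F,u)$ are defined with the sum over $p$ already performed, and the proof of that lemma regularises the vertex Fourier factors and controls the remaining $\ell^2$-factor through the operator norm $\Vert F\Vert_{\cL(H^r,H^k)}$, with $k>3$ being the sharp threshold).

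The only cosmetic deviation is bookkeeping: the paper first splits $\hat W^n=\hat W^{n,1}+\hat W^{n,2}$ according to whether $\dd B^i$ or $\dd B^j$ drives the integral, then estimates each half's sup-norm separately. This way only the two diagonal brackets $\llangle \hat W^{n,1}\rrangle$ and $\llangle \hat W^{n,2}\rrangle$ appear, hence only the patterns $\ijik$ and $\ikjk$ need to be controlled. You keep $\hat W^n$ whole and therefore also encounter the cross-terms $\ijjk$ and $\ijki$; since Proposition~\ref{prop:concentration_SnT} covers all four patterns with the same $O(1/(np_n^2))$ rate, this makes no difference to the conclusion.
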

\begin{proof}
Once again, we follow a very similar approach as in \cite{Fernandez1997}. Recall from \eqref{eq:hatMn}, that we may write 
\begin{align*}
\hat{ W}_{ t}^{ n}(f) &= \frac 1{n^{ 3/2}} \sum_{i,j=1}^n \hat{\xi}^{(n)}_{ij} \int_0^t \partial_{ \theta_{ 1}} f(\theta^{i,n}_s, \theta^{j,n}_s) \dd B^i_s + \frac 1{n^{ 3/2}} \sum_{i,j=1}^n \hat{\xi}^{(n)}_{ij} \int_0^t \partial_{ \theta_{ 2}} f(\theta^{i,n}_s, \theta^{j,n}_s) \dd B^j_s,\\ &:= \hat{ W}_{ t}^{ n, 1}(f) + \hat{ W}_{ t}^{ n, 2}(f),
\end{align*}
so that it is sufficient to prove \eqref{eq:control_supWn} for $ \hat{ W}_{ t}^{ n, 1}$ and $ \hat{ W}_{ t}^{ n, 2}$ separately. Let $ \left(\varphi_{ p}\right)_{ p\geq1}$ be a orthonormal system in $H^{ r}\left(\mathbb{ T}^{ 2}\right)$. We have that 
\begin{align*}
\mathbf{ E} \left( \sup_{ t\in [0, T]} \left\Vert \hat{ W}_{ t}^{ n, 1}\right\Vert_{-r}^{ 2}\right) &= \mathbf{ E} \left( \sup_{ t\in [0, T]} \sum_{ p\geq1}\left\vert \hat{ W}_{ t}^{ n, 1} \left(\varphi_{ p}\right)\right\vert^{ 2}\right) \leq \sum_{ p\geq1} \mathbf{ E} \left( \sup_{ t\in [0, T]} \left\vert \hat{ W}_{ t}^{ n, 1} \left(\varphi_{ p}\right)\right\vert^{ 2}\right)\\
&\leq C\sum_{ p\geq1} \mathbf{ E} \left( \left\langle \hat{ W}^{ n, 1} \left(\varphi_{ p}\right) \right\rangle_{ T} \right),
\end{align*}
by Doob's inequality. This last quantity is further bounded by
\begin{align*}
\mathbf{ E} \left( \sup_{ t\in [0, T]} \left\Vert \hat{ W}_{ t}^{ n, 1}\right\Vert_{-r}^{ 2}\right) &\leq  \int_{ 0}^{ T} \frac{ C}{ n^{ 3}} \bE \sum_{ i,j,k=1}^{ n} \hat{ \xi}_{ij}^{ (n)} \hat{ \xi}_{ik}^{ (n)}  \sum_{ p\geq1}\partial_{ \theta_{ 1}} \varphi_{ p} \left(\theta_{ s}^{i,n}, \theta_{ s}^{ j,n}\right) \partial_{ \theta_{ 1}} \bar\varphi_{ p} \left(\theta_{ s}^{ i,n}, \theta_{ s}^{ k, n}\right) {\rm d}s\\
&= C\int_{ 0}^{T} \bE \left[ c^{\ijik}_{ n} \left(\Phi, I_d,\theta_{ s}^{ n}\right) \right] {\rm d}s,
\end{align*}
with the notations of Lemma~\ref{lem:Grothendieck_fonct} (with $k=r$). Doing the same for $ \hat{ W}_{ t}^{ n, 2}$ requires to control now the term $ c^{\ikjk}_{ n} \left(\Phi, I_d,\theta_{ s}^{ n}\right)$, and hence we obtain directly \eqref{eq:control_supWn} from \eqref{eq:Grothendieck_fonct} and \eqref{eq:bound SnT}.
The continuity of the trajectories of $ \hat{ W}^{ n}$ follows from the very same argument previously used from Lemma~\ref{lem:Mn_prop}.
\end{proof}
We now proceed further with moment estimates. Note that from here, our proof differs significantly from the line of proof followed in \cite{Fernandez1997}. One could see Propositions~\ref{prop:eta^n_tight} and~\ref{prop:hat_eta_n_tight} as equivalents of \cite[Prop.~3.5]{Fernandez1997}, where a similar estimate on the fluctuation process is proven. Note however that the proof of \cite[Prop.~3.5]{Fernandez1997} uses in an essential way the exchangeability of the particles as well as the fact that the initial condition of \eqref{eq:wips} are i.i.d., which is no longer the case here. We circumvent this difficulty by taking advantage of the mild formulations \eqref{eq:eta_mild_semimart} and \eqref{eq:hat_eta_mild_semimart} and the regularising properties of the heat kernel. By this method (see \eqref{eq:graph_eta^n} and Remark~\ref{rem:St}), the control we have on the moments of $ \eta^{ n}$ and $ \hat{ \eta}^{ n}$ is weaker than in \cite[Prop.~3.5]{Fernandez1997}, in the sense that one needs to have $ \alpha\in[0, 1)$ in \eqref{eq:first_control_etan} and \eqref{eq:first_control_hat_etan}, whereas the same estimate is proven directly for $ \alpha=1$ in \cite[Prop.~3.5]{Fernandez1997}. Note finally that a stronger uniform in time control $ \bE [\sup_{t \in [0,T]}\norm{\eta^n_t}^{2}_{-r}]$ is proven in \cite[Prop.~4.3]{Fernandez1997}, but a careful reading shows that this estimate is only used in \cite{Fernandez1997} to prove the continuity of the fluctuation process, for which we have provided an alternative proof in Lemma~\ref{lem:fluctuation_process_semimart}. Recall that $ \mathbf{ E} \left[\cdot\right]$ stands for the expectation w.r.t. the noise only, the results below hold for a fixed realisation of the graph and initial condition.
\begin{proposition}
\label{prop:eta^n_tight}
Suppose that Assumption~\ref{ass:Gamma} hold and take $r> \frac 32$. The sequence of processes $\left(\eta^{ n}\right)_{n \geq 1}$ satisfies $\bbP$-a.s.
\begin{equation}
\label{eq:first_control_etan}
\sup_{t \in [0,T]} \bE [\norm{\eta^n_t}^{ 1+ \alpha}_{-r}] \leq C \left(1+\norm{\eta^n_0}^{ 1+ \alpha}_{-r} + \sup_{t \in [0,T]} \bE [\norm{\hat{\eta}^n_t}^{ 1+ \alpha}_{-r}] \right).
\end{equation}
\end{proposition}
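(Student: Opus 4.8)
The plan is to run a weakly singular Gr\"onwall argument directly on the function $t\mapsto\mathbf{E}\big[\|\eta^n_t\|_{-r}^{1+\alpha}\big]$, starting not from the weak semimartingale form \eqref{eq:eta_weak_semimart} but from the weak--mild representation \eqref{eq:eta_mild_semimart}, so that the smoothing of the heat semigroup can be used and the exchangeability argument of \cite{Fernandez1997} bypassed. Throughout one fixes a realisation of the graph and of the initial condition, so $\mathbf{E}[\cdot]$ is the Brownian expectation only. Note that, for $r>1/2$, $\|\eta^n_t\|_{-r}=\sqrt n\,\|\mu^n_t-\mu_t\|_{-r}\le 2\sqrt n\,\sup_{\nu\in\mathcal P(\bbT)}\|\nu\|_{-r}<\infty$ deterministically; hence $\mathbf{E}[\|\eta^n_t\|_{-r}^{1+\alpha}]$ is a priori finite for each fixed $n$, which legitimates the Gr\"onwall bootstrap below (the bound to be proved is of course uniform in $n$).

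First I would estimate, for a test function $h$ with $\|h\|_r\le 1$, the five terms appearing in the dual pairing of \eqref{eq:eta_mild_semimart}. The initial term is $\le\|\eta^n_0\|_{-r}$ since $S_t$ is a contraction on $H^r$. For the two drift terms, Assumption~\ref{ass:Gamma} gives that $\|\Gamma*\mu^n_s\|_{H^r}$ is bounded uniformly and that $\|\Gamma*\eta^n_s\|_{H^r}\le C_\Gamma\|\eta^n_s\|_{-r}$ (differentiate under the bracket and pair $\eta^n_s$ with the smooth functions $\partial^j_{\theta_1}\Gamma(\theta,\cdot)\in H^r$); combining this with the Banach-algebra property of $H^r(\bbT)$ for $r>1/2$, with $\sup_s\|\mu_s\|_{-r}<\infty$ (as $\mu_\cdot\in\mathcal C([0,T],H^{-r})$), and with $\|\partial_\theta S_{t-s}h\|_{H^r}=\|S_{t-s}h\|_{H^{r+1}}\le C\big(1+(t-s)^{-1/2}\big)\|h\|_{H^r}$ from Lemma~\ref{lem:regularity_in_H_r}, each drift term is bounded by $C\int_0^t\big(1+(t-s)^{-1/2}\big)\|\eta^n_s\|_{-r}\,ds$. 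The graph term is rewritten using $\hat\eta^n=\sqrt n\,\hat\nu^n$ and the operator $\Theta$ of \eqref{eq:Theta} as $\int_0^t\langle\hat\eta^n_s,\Theta S_{t-s}h\rangle\,ds$; since a function on $\bbT^2$ independent of its second variable has the same $H^r(\bbT^2)$ and $H^r(\bbT)$ norm and $\Gamma$ is a smooth multiplier, one gets $\|\Theta S_{t-s}h\|_{H^r(\bbT^2)}\le C_\Gamma\big(1+(t-s)^{-1/2}\big)\|h\|_{H^r(\bbT)}$, so this term is $\le C_\Gamma\int_0^t\big(1+(t-s)^{-1/2}\big)\|\hat\eta^n_s\|_{-r}\,ds$. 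Finally, the stochastic term $w^n_t$ is a stochastic convolution, \emph not a martingale in $t$ --- which is why the estimate is on $\sup_t\mathbf{E}[\cdot]$ and not $\mathbf{E}[\sup_t\cdot]$; by It\^o's isometry, for an orthonormal basis $(\varphi_p)$ of $H^r$, $\mathbf{E}[\|w^n_t\|_{-r}^2]=\frac1n\sum_i\int_0^t\mathbf{E}\big[\sum_p|(\partial_\theta S_{t-s}\varphi_p)(\theta^{i,n}_s)|^2\big]\,ds$, and the inner sum equals in Fourier $\frac1{2\pi}\sum_{k\in\Z}k^2(1+k^2)^{-r}e^{-(t-s)k^2}\le\frac1{2\pi}\sum_k k^2(1+k^2)^{-r}=:c_r<\infty$ precisely because $r>3/2$; hence $\sup_{t\le T}\mathbf{E}[\|w^n_t\|_{-r}^2]\le c_rT$, which incidentally settles the well-posedness of $w^n$ in $H^{-r}$ left open in Lemma~\ref{lem:fluctuation_process_semimart}.

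Assembling these bounds, taking the supremum over $\|h\|_r\le1$ and raising to the power $1+\alpha$, I would split the four contributions by convexity and linearise the drift integrals by Jensen's inequality against the finite measure $\big(1+(t-s)^{-1/2}\big)\,ds$ on $[0,t]$; for the stochastic term one uses $\mathbf{E}[\|w^n_t\|_{-r}^{1+\alpha}]\le(\mathbf{E}[\|w^n_t\|_{-r}^2])^{(1+\alpha)/2}$, which needs $1+\alpha\le2$. Writing $\phi(t):=\mathbf{E}[\|\eta^n_t\|_{-r}^{1+\alpha}]$, this produces
\[
\phi(t)\ \le\ C\Big(1+\|\eta^n_0\|_{-r}^{1+\alpha}+\sup_{s\le T}\mathbf{E}\big[\|\hat\eta^n_s\|_{-r}^{1+\alpha}\big]\Big)+C\int_0^t\big(1+(t-s)^{-1/2}\big)\,\phi(s)\,ds,
\]
and a weakly singular Gr\"onwall lemma (the kernel $1+(t-s)^{-1/2}$ being integrable on $[0,T]$) then yields $\sup_{t\le T}\phi(t)\le C\big(1+\|\eta^n_0\|_{-r}^{1+\alpha}+\sup_{s\le T}\mathbf{E}[\|\hat\eta^n_s\|_{-r}^{1+\alpha}]\big)$, which is the claim.

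The Sobolev multiplier and heat-kernel estimates above are routine; the point that needs care is the bookkeeping of the heat-kernel singularities, so that \emph after raising to the power $1+\alpha$ every resulting Volterra kernel stays integrable --- this is the structural reason one is limited to $\alpha\in[0,1)$ (sub-quadratic moments), in contrast with the $L^2$ estimate of \cite[Prop.~3.5]{Fernandez1997}, whose proof used exchangeability and i.i.d. initial data that are unavailable in the present quenched, non-exchangeable setting. One should also note that the identical scheme must be run simultaneously for $\hat\eta^n$ (the companion Proposition~\ref{prop:hat_eta_n_tight}), where the only genuinely new term, $\sqrt n\,C^n$, is disposed of by the Grothendieck bound of Proposition~\ref{prop:bound Cn} together with the concentration estimates of Proposition~\ref{prop:concentration_SnT}.
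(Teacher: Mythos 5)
Your proposal is correct and follows essentially the same route as the paper's own proof: start from the weak--mild representation \eqref{eq:eta_mild_semimart}, bound each of the five dual pairings with the heat-kernel smoothing estimates, rewrite the graph term through $\hat\eta^n$ and the operator $\Theta$, raise to the power $1+\alpha$, and close with a weakly-singular Gr\"onwall lemma --- the restriction to $\alpha\in[0,1)$ entering precisely so the Volterra kernel stays integrable. The one place you depart is the stochastic convolution $w^n_t$: instead of the fractional $\varepsilon$-trick of Remark~\ref{rem:St} (borrowed from \cite{flandoli_oliveira_2020}), you compute the dual norm $\|U_{s,t,i}\|_{-r}^2=\sum_p|(\partial_\theta S_{t-s}\varphi_p)(\theta)|^2$ directly in Fourier and observe that it is bounded \emph{uniformly} in $t-s$ by $\frac1{2\pi}\sum_k k^2(1+k^2)^{-r}<\infty$ for $r>3/2$, which is sharper than the paper's $C(1+(t-s)^{-(1-\varepsilon)})$ and dovetails with Lemma~\ref{lem:linear_forms} (point evaluation of $\partial_\theta$ is already bounded on $H^r$ for $r>3/2$, so the semigroup smoothing is not needed here). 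The Jensen bookkeeping against the weighted measure $(1+(t-s)^{-1/2})\,ds$ is a cosmetic variant of the $T^\alpha\int_0^t(\cdot)^{1+\alpha}$ split used in the paper; both yield the same integrable kernel. Your opening remark that $\|\eta^n_t\|_{-r}\le 2\sqrt n\sup_{\nu\in\cP(\bbT)}\|\nu\|_{-r}<\infty$ deterministically, legitimating the Gr\"onwall bootstrap for fixed $n$, is a worthwhile point that the paper leaves implicit.
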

\begin{proof}
Recall the weak-mild formulation \eqref{eq:eta_mild_semimart}. Let $g^n(h)$ be the term involving the graph, i.e.,
\begin{equation*}
\begin{split}
g^n_t(h) := \int_0^t \frac{1}{n^{3/2}} \sum_{i,j=1}^n \hat{ \xi}_{ij}^{ (n)} \< \delta_{\theta^{i,n}_s}, (\Gamma * \delta_{\theta^{j,n}_s}) \partial_\theta S_{t-s}h >_{-r,r} \dd s.
\end{split}
\end{equation*}
In \eqref{eq:eta_mild_semimart}, taking the supremum with respect to $h$ in $H^r$ such that $\norm{h}_r \leq  1$, one can write:
\begin{equation*}
\begin{split}
\norm{\eta^n_t}_{-r} \leq& \norm{\eta^n_0}_{-r} + \int_0^t \norm{\eta^n_s}_{-r} \sup_{\norm{h}_r \leq 1}\norm{(\Gamma*\mu^n_s) \partial_\theta S_{t-s} h}_r \dd s  \\
&+  \int_0^t \norm{\mu_t}_{-r} \sup_{\norm{h}_r \leq 1}\norm{(\Gamma*\eta^n_s) \partial_\theta S_{t-s} h}_r \dd s + \norm{w^n_t}_{-r} + \sup_{\norm{h}_r \leq 1} g^n_t(h).
\end{split}
\end{equation*}
By using the classical inequality $(\sum_{i=1}^m a_i)^{ 1+ \alpha} \leq m^{ \alpha} \sum_{i=1}^m a_i^{ 1+ \alpha}$, one obtains
\begin{equation}
\label{aux:eta_alpha}
\begin{split}
\frac{ 1}{ 5^{ \alpha}} \norm{\eta^n_t}^{ 1+ \alpha}_{-r} \leq& \norm{\eta^n_0}^{ 1+ \alpha}_{-r} + T^{ \alpha}\int_0^t \norm{\eta^n_s}^{ 1+ \alpha}_{-r}\sup_{\norm{h}_r \leq 1} \norm{(\Gamma*\mu^n_s) \partial_\theta S_{t-s} h}^{ 1+ \alpha}_r \dd s  \\
&+ T^{ \alpha} \int_0^t \norm{\mu_t}^{ 1+ \alpha}_{-r} \sup_{\norm{h}_r \leq 1}\norm{(\Gamma*\eta^n_s) \partial_\theta S_{t-s} h}^{ 1+ \alpha}_r \dd s + \norm{w^n_t}^{ 1+ \alpha}_{-r}\\
& + \left(\sup_{\norm{h}_r \leq 1} g^n_t(h)\right)^{ 1+ \alpha}.
\end{split}
\end{equation}
Observe that similar to the proof of \cite[Proposition 2.2]{bechtold_coppini_2021}, one has that
\begin{equation*}
\begin{split}
\norm{(\Gamma*\eta^n_s) \partial_\theta S_{t-s} h}_r  &\leq \norm{\partial_\theta S_{t-s} h}_{r} \norm{\Gamma*\eta^n_s}_{W^{r,\infty}} \\
&\leq C\left(1+\frac 1{ \sqrt{t-s}}\right) \norm{h}_r \norm{\eta^n_s}_{-r},
\end{split}
\end{equation*}
where we have used the fact that $\norm{\Gamma*\eta^n_s}_{W^{r,\infty}} \leq C'  \norm{\eta^n_s}_{-r}$ given Assumption \ref{ass:Gamma}.
Note that, as pointed out in Lemma~\ref{lem:regularity_in_H_r}, uniformly on $s, n$, $ \left\langle \mu_{ s}^{ n}\, ,\, h\right\rangle \leq \left\Vert h \right\Vert_{ \infty} \leq C \left\Vert h \right\Vert_{ r}$, since $ \mu_{ s}^{ n}$ is a probability measure. Hence, by Assumption~\ref{ass:Gamma}, there is a constant $C>0$ independent on $s, n$ such that
\begin{equation}
\label{aux:semigroup1+alpha}
\sup_{\norm{h}_r \leq 1}\norm{(\Gamma*\mu^n_s) \partial_\theta S_{t-s} h}^{ 1+ \alpha}_r \leq C\left(1+\frac{ 1}{ (t-s)^{ \frac{ 1+ \alpha}{ 2}}}\right).
\end{equation}
For the graph term, one could repeat the second part of the proof in \cite[Lemma 3.2]{Coppini2019}, where it is shown that there exists a positive constant $C$ (only depending on $\Gamma$) such that
\begin{equation*}
\sup_{\norm{h}_r \leq 1} g^n_t(h) \leq C \sqrt{t} \, \sqrt{n} \, S_{ n}^{\ij}.
\end{equation*}
In turn, this would mean that, using Proposition \ref{prop:concentration_SnT}, for any $t \in [0,T]$ and $\bbP$-a.s.
\begin{equation*}
\sup_{\norm{h}_r \leq 1} g^n_t(h) \leq C \frac {\sqrt{T}}{\sqrt{p_n}}. 
\end{equation*}
However this is not enough when one considers $p_n$ converging to zero. To tackle the interesting case $p_n \to 0$, we need to take advantage of the representation of $g^n_t(h)$ through $\hat{\eta}^n$, recall \eqref{eq:graph_term_hat_eta_n}. Observe that, using \eqref{aux:semigroup1+alpha}, we have
\begin{equation}
\label{eq:graph_eta^n}
\begin{split}
\left(\sup_{\norm{h}_r \leq 1}g^n_t(h)\right)^{1+ \alpha} &\leq T^{ \alpha}\sup_{\norm{h}_r \leq 1} \int_{ 0}^{t} \left\langle \hat{ \eta}_{ s}^{ n}({\rm d}\theta, {\rm d}\theta^{ \prime})\, ,\, \Gamma \left(\theta, \theta^{ \prime}\right) \partial_{ \theta}S_{t-s} h(\theta)\right\rangle_{-r, r}^{ 1+ \alpha} {\rm d}s\\
&\leq T^{ \alpha}\int_0^t \norm{\hat{\eta}^n_s}^{ 1+ \alpha}_{-r} \sup_{\norm{h}_r \leq 1}\norm{\Gamma \left(\theta, \theta^{ \prime}\right) \partial_{ \theta}S_{t-s} h(\theta)}^{ 1+ \alpha}_{r} \dd s\\
&\leq C_{ \Gamma, T} \int_0^t \left(1+ \frac 1{(t-s)^{ \frac{ 1+ \alpha}{ 2}}}\right) \norm{\hat{\eta}^n_s}^{ 1+ \alpha}_{-r} \dd s,
\end{split}
\end{equation}
that is, by Lemma~\ref{lem:fluctuation_process_semimart}, a finite quantity, since $r> \frac{ 3}{ 2}$. Taking the expectation with respect to the Brownian motions, we get
\begin{equation*}
\bE\left[\sup_{\norm{h}_r \leq 1} g^n_t(h)^{ 1+ \alpha}\right] \leq C_{T, \Gamma} \sup_{t \in [0,T]} \bE [\norm{\hat{\eta}^n_t}_{-r}^{ 1+ \alpha}].
\end{equation*}
Taking the expectation $\bE$ in the inequality \eqref{aux:eta_alpha} yields 
\begin{equation*}
\begin{split}
\bE [\norm{\eta^n_t}_{-r}^{ 1+ \alpha}] \leq& \norm{\eta^n_0}_{-r}^{ 1+ \alpha} + C \int_0^t \left(1+\frac {1}{(t-s)^{ \frac{ 1+ \alpha}{ 2}}} \right) \bE[ \norm{\eta^n_s}_{-r}^{ 1+ \alpha}] \dd s \\
&+ \bE[ \norm{w^n_t}_{-r}^{ 1+ \alpha}] +  C_{T, \Gamma} \sup_{t \in [0,T]} \bE [\norm{\hat{\eta}^n_t}^{ 1+ \alpha}_{-r}].
\end{split}
\end{equation*}
One can then apply a version of Gronwall-Henry’s inequality (see for example \cite{GPPP}, Lemma 5.2), and obtain that
\begin{equation*}
\bE [\norm{\eta^n_t}_{-r}^{ 1+ \alpha}] \leq C \left(\norm{\eta^n_0}_{-r}^{ 1+ \alpha} + \sup_{t \in [0,T]} \bE [\norm{\hat{\eta}^n_t}_{-r}^{ 1+ \alpha}] +  \bE[\norm{w^n_t}_{-r}^{ 1+ \alpha}] \right).
\end{equation*}
Concerning the noise term, we have, as $ \alpha< 1$, $\bE[\norm{w^n_t}_{-r}^{ 1+ \alpha}] \leq \bE[\norm{w^n_t}_{-r}^{2}]^{ \frac{ 1+ \alpha}{ 2}}$ and for $(\varphi_p)_{p\geq 0}$ an orthonormal system in $H^r(\bbT)$, 
\begin{equation*}
\begin{split}
\bE[\norm{w^n_t}_{-r}^{2}]&=\sum_{p\geq 1}\bE\left[\left|w^n_t(\varphi_p)\right|^{2}\right]\leq C\sum_{p\geq 1}\bE\left[\left\langle w^n(\varphi_p)\right\rangle_t\right]\\
&\leq \frac{C}{n}\sum_{p\geq 1}\sum_{i=1}^n \int_0^t \bE\left[\left\vert\partial_\theta S_{t-s} \varphi_p(\theta^{i,n}_s)\right\vert^2\right]\dd s =\frac{C}{n}\sum_{i=1}^n \int_0^t \bE\left[\left\Vert U_{ s,t,i}\right\Vert_{ -r}^2\right]\dd s,
\end{split}
\end{equation*}
where $U_{ s,t,i}:= f \mapsto \partial_\theta S_{t-s} f(\theta^{i,n}_s)$.
\begin{rem}
\label{rem:St}
We cannot use here the fact that $\norm{\partial_\theta S_{t-s}}_{-r} \leq C(1+1/\sqrt{t-s})$ because $(t-s)^{-1}$ is not integrable. We follow arguments similar to \cite[Lemma 11]{flandoli_oliveira_2020}: by exploiting Sobolev embeddings, fractional operators and the fact that there exists $\varepsilon > 0$ such that $r-\varepsilon > 1/2$, we can conclude that
\begin{equation*}
\begin{split}
\sup_{\norm{h}_r \leq 1} \left| (\partial_\theta S_{t-s} h) \left(\theta^{i,n}_s\right) \right|^2 &= \sup_{\norm{h}_r \leq 1} \left| \< \delta_{\theta^{i,n}_s}, (\partial_\theta S_{t-s} h) >_{-r+ \varepsilon, r - \varepsilon}\right|^2 \\
&\leq C \left(1+\frac{1}{(t-s)^{1-\varepsilon}}\right) \norm{\delta_{ \theta_{ s}^{ i, n}}}^2_{-r + \varepsilon}  \leq C' \left(1+\frac{1}{(t-s)^{1-\varepsilon}}\right),
\end{split}
\end{equation*}
where we rely on Lemma~\ref{lem:regularity_in_H_r}.
\end{rem}
Finally, gathering all these estimates, we deduce that there exists $C>0$, depending on $T$, such that
\begin{equation*}
\sup_{t \in [0,T]} \bE[\norm{w^n_t}_{-r}^{ 1+ \alpha}] \leq C.
\end{equation*}
Putting every estimate together, we end up with \eqref{eq:first_control_etan}.
\end{proof}
\begin{proposition}
\label{prop:hat_eta_n_tight}
Suppose that Assumption~\ref{ass:Gamma} holds and take $r > 2$. For $n$ large enough, the sequence of processes $\left(\hat{\eta}^{ n}\right)_{n \geq 1}$ satisfies $\bbP$-a.s.
\begin{equation}
\label{eq:first_control_hat_etan}
\sup_{t \in [0,T]} \bE [ \norm{\hat{\eta}^n_t}_{-r}^{ 1+ \alpha} ] \leq C \left(\norm{\hat{\eta}^n_0}_{-r}^{ 1+ \alpha} + \frac 1{ \left(\sqrt{n}p_n\right)^{ 1+ \alpha}} + \frac 1{ \left(\sqrt{ n}p_{ n}^{ 2}\right)^{ 1+ \alpha}} \right).
\end{equation}
\end{proposition}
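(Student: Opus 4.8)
The plan is to mirror the proof of Proposition~\ref{prop:eta^n_tight}, starting now from the weak-mild formulation \eqref{eq:hat_eta_mild_semimart} for $\hat{\eta}^n$ and working in dimension $d=2$ (this is what forces $r>2$ in place of $r>\tfrac32$). I would fix $r>2$, take in \eqref{eq:hat_eta_mild_semimart} the supremum over $h\in H^r(\bbT^2)$ with $\norm{h}_r\leq 1$, raise the resulting inequality to the power $1+\alpha$ using $(\sum_{i=1}^4 a_i)^{1+\alpha}\leq 4^\alpha\sum_{i=1}^4 a_i^{1+\alpha}$, and take the expectation over the Brownian motions. This isolates four contributions --- an initial term, a drift term, the noise term $\hat{w}^n$, and the graph-dependent term (the last line of \eqref{eq:hat_eta_mild_semimart}), which I will call $g^n$ --- and the endgame is to bound each of them and close via a singular Gronwall--Henry inequality (\cite[Lemma~5.2]{GPPP}), exactly as in Proposition~\ref{prop:eta^n_tight}; finiteness of all quantities involved is guaranteed by Lemma~\ref{lem:centered_emp_mes_spde}.

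The two first terms are routine. For the initial datum, $S_t$ being a contraction on $H^r(\bbT^2)$ gives $\sup_{\norm{h}_r\leq1}|\langle\hat{\eta}^n_0,S_th\rangle|\leq\norm{\hat{\eta}^n_0}_{-r}$, whence after raising to the power $1+\alpha$ the first term on the right of \eqref{eq:first_control_hat_etan}. For the drift term, I would use that $\Lambda^n_s$ defined in \eqref{eq:Lambdasn}, by Assumption~\ref{ass:Gamma} and since $\mu^n_s$ is a probability measure, is smooth with all derivatives bounded uniformly in $s$ and $n$, so that multiplication by $\Lambda^n_s$ is bounded on $H^r(\bbT^2)$ with operator norm depending only on $\Gamma$; combined with $\norm{\nabla S_{t-s}h}_r\leq\norm{S_{t-s}h}_{r+1}\leq C(1+(t-s)^{-1/2})\norm{h}_r$ from Lemma~\ref{lem:regularity_in_H_r}, this yields a term of the form $C\int_0^t\left(1+(t-s)^{-(1+\alpha)/2}\right)\bE\norm{\hat{\eta}^n_s}^{1+\alpha}_{-r}\dd s$, which is precisely the shape handled by the singular Gronwall lemma (the exponent $(1+\alpha)/2<1$ being integrable --- this is where $\alpha<1$ is used).

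The noise term $\hat{w}^n$ is the mild analogue of the martingale $\hat{W}^n$ of Proposition~\ref{prop:Wn_prop}. I would expand $\bE\norm{\hat{w}^n_t}^2_{-r}=\sum_p\bE[\langle\hat{w}^n(\varphi_p)\rangle_t]$ along an orthonormal basis $(\varphi_p)$ of $H^r(\bbT^2)$ and use It\^o's isometry; the quadratic variation carries the same graph weights $\hat{\xi}^{(n)}_{ij}\hat{\xi}^{(n)}_{ik}$ and its three symmetric variants as in Proposition~\ref{prop:Wn_prop}, now paired with first derivatives of $S_{t-s}\varphi_p$ evaluated at particle positions. Since $r>2$, the smoothing of $S_{t-s}$ together with Sobolev embedding on $\bbT^2$ keeps the relevant Grothendieck norm (the analogue of $\norm{x_{1,i}}_{\ell^2(\bbT^2)}$ in Proposition~\ref{prop:bound Cn}) finite and \emph{uniformly bounded in the time variable $s$}, so that the functional Grothendieck estimate of Lemma~\ref{lem:Grothendieck_fonct} (already invoked in the proof of Proposition~\ref{prop:Wn_prop}) bounds $\bE\norm{\hat{w}^n_t}^2_{-r}$ by $C_{T,\Gamma}(S_n^{\ijik}+S_n^{\ikjk}+S_n^{\ijki}+S_n^{\ijjk})$; by the concentration estimate \eqref{eq:bound SnT} this is $\leq C_{T,\Gamma}(np_n^2)^{-1}$, $\bbP$-a.s.\ for $n$ large. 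Finally, since $\alpha<1$, Jensen gives $\bE\norm{\hat{w}^n_t}^{1+\alpha}_{-r}\leq(\bE\norm{\hat{w}^n_t}^2_{-r})^{(1+\alpha)/2}\leq C(\sqrt{n}p_n)^{-(1+\alpha)}$, which is the first error term in \eqref{eq:first_control_hat_etan}.

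For the graph term $g^n$, expanding $\hat{\Lambda}^n_{s,ij}$ through \eqref{eq:Lambdasn} and \eqref{def:hat_mu_i_n} shows that $g^n_t$ is $\sqrt{n}$ times the mild version (test function $g$ replaced by $S_{t-s}h$) of the process $C^n$ of \eqref{eq:def Cn}, consistently with \eqref{eq:hat_eta_semimart}. I would then run, for each fixed $s$, the Grothendieck argument of Proposition~\ref{prop:bound Cn}, the only modification being that $\norm{f}_{H^3(\bbT^2)}$ there becomes $\norm{S_{t-s}h}_{H^3(\bbT^2)}\leq C(1+(t-s)^{-(3-r)/2})\norm{h}_r$ (Lemma~\ref{lem:regularity_in_H_r}), whose singularity $(3-r)/2<1/2$ is integrable because $r>2$; this gives $\sup_{\norm{h}_r\leq1}|g^n_t(h)|\leq C_{T,\Gamma}\sqrt{n}(S_n^{\ijik}+S_n^{\ijjk})$, which by \eqref{eq:bound SnT} is $\leq C_{T,\Gamma}(\sqrt{n}p_n^2)^{-1}$, $\bbP$-a.s.\ for $n$ large, and raising to the power $1+\alpha$ produces the second error term. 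Collecting the four bounds and applying the Gronwall--Henry inequality yields \eqref{eq:first_control_hat_etan}. The main obstacle is the handling of $\hat{w}^n$ and of $g^n$: in both, the recentred graph weights $\hat{\xi}^{(n)}_{ij}$ are entangled with the particle trajectories through the heat semigroup, so standard concentration is useless; the key is to first decorrelate the graph from the dynamics via the multilinear (functional) Grothendieck inequality of Theorem~\ref{th:Grothendieck} / Lemma~\ref{lem:Grothendieck_fonct} and only then apply the concentration bounds of Proposition~\ref{prop:concentration_SnT}, all the while keeping track of the heat-kernel singularities, which stay integrable precisely because $\alpha<1$.
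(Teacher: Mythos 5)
Your proposal is essentially the paper's proof, following the same decomposition of the mild formulation \eqref{eq:hat_eta_mild_semimart} into the four contributions, the same use of Lemma~\ref{lem:Grothendieck_fonct} for the noise, the same Grothendieck argument for the graph term with $f$ replaced by $S_{t-s}h$, and the same Gronwall--Henry closure. One small imprecision: for the noise term $\hat{w}^n$ you claim the Grothendieck norm (the $\norm{x_{1,i}}_{\ell^2}$-type quantity) is ``uniformly bounded in $s$''; in fact, since Lemma~\ref{lem:Grothendieck_fonct} requires $k>3$ and the proposition only assumes $r>2$, one is forced into a genuine heat-kernel singularity $\norm{S_{t-s}}_{\cL(H^r,H^k)}^2\sim 1+(t-s)^{-(k-r)}$ whenever $r\leq 3$; the paper handles this by choosing $k=3+\gep$ with $r-2\gep>2$ so that $k-r<1$ is an integrable but not bounded singularity, and then integrating over $s$ before applying Jensen. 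This does not change the final bound, but it is the integrability (not boundedness) that carries the argument there, and it is automatic rather than tied to $\alpha<1$ (only the drift term genuinely needs $\alpha<1$ for the Gronwall singularity $(t-s)^{-(1+\alpha)/2}$). Also a cosmetic point: the paper's treatment of $\hat{w}^n$ only produces the two terms $c_n^{\ijik}$ and $c_n^{\ikjk}$ (after the split of $\hat{w}^n$ into its $B^i$- and $B^j$-parts), while you list all four $S_n^{\cT}$; since all are of the same order under \eqref{eq:bound SnT}, this is harmless.
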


\begin{proof}
From the mild formulation \eqref{eq:hat_eta_mild_semimart} of $\hat{\eta}^n$, one has that
\begin{equation}
\label{eq:decomp_hateta_alpha}
\begin{split}
\frac{ 1}{ 4^{ \alpha}}\norm{\hat{\eta}^n_t}_{-r}^{ 1+ \alpha} &\leq \norm{\hat{\eta}^n_0}_{-r}^{ 1+ \alpha} + T^{ \alpha}\int_0^t \norm{\hat{\eta}^n_s}_{-r}^{ 1+ \alpha} \sup_{\norm{h}_r \leq 1}\norm{\Lambda^n_s \cdot \nabla S_{t-s} h}_r^{ 1+ \alpha} \dd s + \norm{\hat{w}^n_t}_{-r}^{ 1+ \alpha} \\
&\quad + \left(\int_0^t \sup_{\norm{h}_r \leq 1} \frac{1}{n^{3/2}} \sum_{i,j=1}^n \hat{ \xi}_{ij}^{ (n)} \< \delta_{\theta^{i,n}_s}  \otimes \ \delta_{ \theta_{s}^{j, n}}, \hat{\Lambda}^n_{s, ij} \cdot \nabla S_{t-s}h >_{-r,r}  \dd s\right)^{ 1+ \alpha}\\&:= \norm{\hat{\eta}^n_0}_{-r}^{ 1+ \alpha} + (A) + (B)+ (C)
\end{split}
\end{equation}
The noise term $(B)$ can be treated similarly to $\hat{W}^n$, recall Proposition \ref{prop:Wn_prop}, using the same arguments of $w^n$, recall Proposition \ref{prop:eta^n_tight}. In particular, for an orthonormal basis $\Phi = (\varphi_p)_{p\geq 1}$ of $H^r$ one has that
\begin{equation*}
\begin{split}
\sup_{t \in [0,T]} \bE \left[ \norm{\hat{w}^n_t}^2_{-r} \right] & = \sup_{t \in [0,T]} \sum_{p\geq 1} \bE \left[ \left(\hat{w}^n_t(\varphi_p)\right)^2 \right]\\
&\leq C \sup_{t \in [0,T]} \sum_{p\geq 1} \bE \left[ \left\langle \hat{w}^n( \varphi_p) \right\rangle_t \right]\\
& \leq C' \sup_{t \in [0,T]}  \int_{ 0}^{t}\left[ \sup_{u \in \bbT^n} c^{\ijik}_n (\Phi,S_{ t-s}, u) + \sup_{u \in \bbT^n} c^{\ikjk}_n (\Phi,S_{ t-s}, u) \right] {\rm d}s,
\end{split}
\end{equation*}
where the definitions of $c_{ n}^{ \mathcal{ T}} \left(\Phi, S_{t-s},u\right)$ are given in Lemma~\ref{lem:Grothendieck_fonct} (taking in particular $k=3+\gep$ if $\gep>0$ is such that $r-2\gep>2$, so that $k-r=1-\gep$). Since $\Vert S_{t-s}\Vert_{\cL(H^r,H^k)}\leq C\left(1+\frac{1}{(t-s)^\frac{k-r}{2}}\right)$, Lemma~\ref{lem:Grothendieck_fonct} implies that $\sup_{ u\in \mathbb{ T}^{ n}}  \left\vert c_{ n}^{ \mathcal{ T}}(\Phi, S_{ t-s} , u) \right\vert \leq C\left(1+\frac{ 1}{ (t-s)^{ 1- \varepsilon}}\right) S_{ n}^{ \mathcal{ T}}$ for any $ \mathcal{ T}\in \left\lbrace \ijik, \ikjk\right\rbrace$ for some constant $C>0$ that does not depend on $s, t$ or $n$. Integrating w.r.t. $s$ and using Proposition \ref{prop:concentration_SnT} gives (recall that $ \alpha\in (0, 1)$)
\begin{equation*}
\sup_{t \in [0,T]} \bE \left[ \norm{\hat{w}^n_t}^{ 1+ \alpha}_{-r} \right] \leq \sup_{t \in [0,T]} \bE \left[ \norm{\hat{w}^n_t}^{2}_{-r} \right]^{ \frac{ 1+ \alpha}{ 2}} \leq C \left(S_{ n}^{ \ijik}+S_n^{\ikjk}\right)^{ \frac{ 1+ \alpha}{ 2}},
\end{equation*}
and from Proposition~\ref{prop:concentration_SnT} we deduce that the last quantity is $ \mathbb{ P}$-a.s. of order $ \frac{ 1}{ ( \sqrt{ n} p_{ n})^{ 1+ \alpha}}$. Consider now the second term $(A)$ in \eqref{eq:decomp_hateta_alpha}: we have (recall the definition of $ \Lambda_{ s}^{ n}$ in \eqref{eq:Lambdasn}) 
\begin{align*}
\sup_{\norm{h}_r \leq 1}\norm{\Lambda^n_s \cdot \nabla S_{t-s} h}_r \leq \sup_{\norm{h}_r \leq 1}\left\Vert \Lambda_{ s}^{ n} \right\Vert_{ W^{ r, \infty}} \left\Vert  \nabla S_{t-s} h \right\Vert_{ r} \leq  C\left(1+\frac{ 1}{ (t-s)^{ 1/2}}\right).
\end{align*}
Elevating everything to the power $ 1+ \alpha$ (recall that $ \alpha<1$) and taking the expectation gives that 
\begin{align*}
\mathbf{ E} \left[\int_0^t \norm{\hat{\eta}^n_s}_{-r}^{ 1+ \alpha} \sup_{\norm{h}_r \leq 1}\norm{\Lambda^n_s \cdot \nabla S_{t-s} h}_r^{ 1+ \alpha} {\rm d}s\right] \leq C\int_{ 0}^{t} \left(1+\frac{ 1}{ (t-s)^{ \frac{ 1+ \alpha}{ 2}}}\right)\mathbf{ E} \left[ \norm{\hat{\eta}^n_s}_{-r}^{ 1+ \alpha}\right] {\rm d}s.
\end{align*}
The expression within the integral in the last term $(C)$ in \eqref{eq:decomp_hateta_alpha}, is the sum $\sqrt{n}g^{n, (1)}_s+ \sqrt{n}g^{n, (2)}_s$ where
\begin{equation*}
\begin{split}
g^{n, (1)}_s =  \sup_{\norm{h}_r \leq 1}\frac{ 1}{n^3} \sum_{i,j,k=1}^n \hat{\xi}^{(n)}_{ij} \hat{\xi}^{(n)}_{ik}  \left[  \partial_{\theta_1}  S_{t-s} h (\theta^{i,n}_s, \theta^{j,n}_s) \right] \Gamma(\theta^{i,n}_s, \theta^{k,n}_s),\\
g^{n, (2)}_s =  \sup_{\norm{h}_r \leq 1}\frac{ 1}{n^3} \sum_{i,j,k=1}^n \hat{\xi}^{(n)}_{ij}  \hat{\xi}^{(n)}_{jk} \left[ \partial_{\theta_2} S_{t-s} h (\theta^{i,n}_s, \theta^{j,n}_s)\right] \Gamma(\theta^{j,n}_s, \theta^{k,n}_s).
\end{split}
\end{equation*}
These terms are very similar to the term $C^n_t$ defined in  \eqref{eq:def Cn}, and can be treated similarly as done Proposition~\ref{prop:bound Cn}, but relying moreover on the regularity of the semigroup $S_{ t-s}$. More precisely one can follow the steps of the proof of Proposition~\ref{prop:bound Cn}, only replacing $f$ with $S_{t-s}g$, the bound given \eqref{eq:norm_z1} becoming then
\begin{equation*}
\Vert  x_{1,i}\Vert_{l^2(\bbT^2)}\leq C\Vert S_{t-s}f\Vert_{3}\leq C' \left(1+\frac{1}{(t-s)^{\frac{3-r}{2}}}\right)\Vert f\Vert_{r}.
\end{equation*}
which leads to the bounds, for some constant $C>0$ independent on $s< t\leq T$
\begin{align*}
g_{ s}^{ n, (1)} \leq \left(1+\frac{1}{(t-s)^{\frac{3-r}{2}}}\right)S_{ n}^{ \ijik} \quad \text{and} \quad g_{ s}^{ n, (2)} \leq\left(1+\frac{1}{(t-s)^{\frac{3-r}{2}}}\right) S_{ n}^{ \ijjk}
\end{align*}
Integrating w.r.t. $s$, elevating to the power $1+ \alpha$ and taking the expectation gives finally that $ \mathbf{ E} \left[ (C)\right]\leq \frac{ C}{ \left(\sqrt{ n} p_{ n}^{ 2}\right)^{ 1+ \alpha}}$, $ \mathbb{ P}$-a.s. Finally, with another application of Gronwall-Henri inequality, one obtains that the process $\hat{\eta}^n$ satisfies $\bbP$-a.s.
\begin{equation*}
\sup_{t \in [0,T]} \bE [ \norm{\hat{\eta}^n_t}_{-r}^{ 1+ \alpha} ] \leq C \left( \norm{\hat{\eta}^n_0}_{-r}^{ 1+ \alpha} + \frac 1{ \left(\sqrt{n}p_n\right)^{ 1+ \alpha}} + \frac 1{ \left(\sqrt{ n}p_{ n}^{ 2}\right)^{ 1+ \alpha}} \right).
\end{equation*}
This concludes the proof.
\end{proof}

To prove Proposition~\ref{prop:charac eta tilde eta} it remains to give some regularity results for the operators $ \mathcal{ L}_{ \mu^n_s}^{(1)}$ and $ \mathcal{ L}_{ \mu^n_s}^{(2)}$. They are continuous in a suitable class of Hilbert spaces, as stated below.

\begin{lemma}
\label{lem:continuity_Ln12}
Fix $r > 0$. For every probability measure $\nu$, the linear operator $ \mathcal{ L}_{ \nu}^{(1)}$ (resp. $ \mathcal{ L}_{ \nu}^{(2)}$) is continuous from $H^{ -(r+2)}(\mathbb{ T})$ to $H^{ -r}(\mathbb{ T})$ (resp. from $H^{ -(r+2)}(\mathbb{ T}^{ 2})$ to $H^{ -r}(\mathbb{ T}^{ 2})$): there exist two positive constants $C_{ 1,T}$ and $C_{ 2, T}$ such that for all test function $f(\theta)$ and $g(\theta_{ 1}, \theta_{ 2})$,
\begin{align}
\left\Vert \mathcal{ L}_{ \nu}^{(1)}(f) \right\Vert_{ r} &\leq C_{ 1, T} \left\Vert f \right\Vert_{ r+2}, \label{eq:L1n_bound}\\
\left\Vert \mathcal{ L}_{ \nu}^{(2)}(g) \right\Vert_{ r} &\leq C_{ 2, T} \left\Vert g \right\Vert_{ r+2}. \label{eq:L2n_bound}
\end{align}
\end{lemma}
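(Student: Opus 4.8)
The plan is to prove directly the two displayed estimates \eqref{eq:L1n_bound} and \eqref{eq:L2n_bound}; by duality between $H^{r}$ and $H^{-r}$ these are equivalent to boundedness of the transposes $\mathcal{L}^{(1),\ast}_\nu$ and $\mathcal{L}^{(2),\ast}_\nu$ between the corresponding negative-order Sobolev spaces, which is the form in which the operators appear in \eqref{eq:limit_etas}. The two facts that do all the work are: (i) since $\Gamma$ is $C^\infty$ (Assumption~\ref{ass:Gamma}) and $\nu$ is a \emph{probability} measure, the partial convolution $(\Gamma*\nu)(\theta):=\int_{\mathbb{T}}\Gamma(\theta,\theta')\,\nu(\dd\theta')$ is $C^\infty$ with $\|\partial_\theta^k(\Gamma*\nu)\|_\infty\le\|\partial_{\theta_1}^k\Gamma\|_\infty$ for every $k$, uniformly in $\nu$ (differentiate under the integral sign and use $\nu(\mathbb{T})=1$); and (ii) multiplication by a $C^\infty$ function is a bounded operator on every $H^r(\mathbb{T}^d)$, with operator norm controlled by finitely many $C^k$-norms of the multiplier (classical Sobolev product estimates, see e.g.\ \cite{adams_fournier_2003}).

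First I would rewrite the operators explicitly. From \eqref{eq:Theta}--\eqref{eq:L1}, using $\langle\nu(\dd\theta'),\Theta f(\theta,\theta')\rangle=(\Gamma*\nu)(\theta)\,\partial_\theta f(\theta)$, one gets
\begin{equation*}
\mathcal{L}^{(1)}_\nu f(\theta)=\tfrac12\partial_\theta^2 f(\theta)+(\Gamma*\nu)(\theta)\,\partial_\theta f(\theta)+\int_{\mathbb{T}}\Gamma(\theta',\theta)\,\partial_{\theta'}f(\theta')\,\nu(\dd\theta'),
\end{equation*}
while from \eqref{eq:L2}, writing again $\Gamma*\nu$ for the convolution in the first slot,
\begin{equation*}
\mathcal{L}^{(2)}_\nu g(\theta_1,\theta_2)=\tfrac12\Delta g(\theta_1,\theta_2)+(\Gamma*\nu)(\theta_1)\,\partial_{\theta_1}g(\theta_1,\theta_2)+(\Gamma*\nu)(\theta_2)\,\partial_{\theta_2}g(\theta_1,\theta_2).
\end{equation*}
Then I would bound each summand separately. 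The second-order terms satisfy $\|\tfrac12\partial_\theta^2 f\|_r\le\tfrac12\|f\|_{r+2}$ and $\|\tfrac12\Delta g\|_r\le\tfrac12\|g\|_{r+2}$ directly from the definition of the Sobolev norms. Each term of the shape (smooth function)$\times$(first-order derivative of $f$, resp.\ of $g$) is controlled by the multiplier bound (ii): for instance $\|(\Gamma*\nu)\,\partial_\theta f\|_{H^r(\mathbb{T})}\le C_\Gamma\,\|\partial_\theta f\|_{H^r(\mathbb{T})}\le C_\Gamma\|f\|_{r+2}$, the constant being independent of $\nu$ by (i); and likewise for the two corresponding terms of $\mathcal{L}^{(2)}_\nu$, with multiplier the smooth lift $(\theta_1,\theta_2)\mapsto(\Gamma*\nu)(\theta_i)$ on $\mathbb{T}^2$.

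The only remaining contribution is the ``swapped-variable'' term $G(\theta):=\int_{\mathbb{T}}\Gamma(\theta',\theta)\,\partial_{\theta'}f(\theta')\,\nu(\dd\theta')$, which occurs only in $\mathcal{L}^{(1)}_\nu$. Differentiating under the integral sign gives $\|\partial_\theta^k G\|_\infty\le\|\partial_{\theta_2}^k\Gamma\|_\infty\,\|\partial_\theta f\|_\infty$ for all $k$, and since $r>0$ the Sobolev embedding on $\mathbb{T}$ yields $\|\partial_\theta f\|_\infty\le C\|\partial_\theta f\|_{H^{r+1}}\le C\|f\|_{r+2}$; as $\mathbb{T}$ is compact this gives $\|G\|_{H^r(\mathbb{T})}\le C\|G\|_{C^{\lceil r\rceil}(\mathbb{T})}\le C'\|f\|_{r+2}$. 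Summing the three contributions yields \eqref{eq:L1n_bound}, and the analogous (in fact simpler, there being no swapped-variable term) computation on $\mathbb{T}^2$ yields \eqref{eq:L2n_bound}, with constants depending only on $\Gamma$ and $r$. The argument is entirely routine; the single point requiring attention is that every constant stay independent of the probability measure $\nu$, which is ensured at each step by $\|\cdot\|_{L^1(\nu)}\le\|\cdot\|_\infty$, i.e.\ $\nu(\mathbb{T})=1$; a minor bookkeeping subtlety is the non-integer case of the multiplier estimate (ii), which causes no trouble since $\Gamma$ is $C^\infty$.
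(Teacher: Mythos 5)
Your proof is correct and fills in precisely the details the paper elides: it decomposes $\mathcal{L}^{(1)}_\nu$ and $\mathcal{L}^{(2)}_\nu$ into the Laplacian, a multiplication by the bounded smooth function $\Gamma\ast\nu$, and (for $\mathcal{L}^{(1)}_\nu$ only) the smoothing integral term, and bounds each using the smoothness of $\Gamma$ and the fact that $\nu$ has mass one — exactly the two ingredients the paper's one-line proof points to. The paper simply asserts the result is a "straightforward consequence" and defers to Lemma~3.7 of Fernandez--M\'el\'eard \cite{Fernandez1997}; your argument is the expected implementation and moreover makes explicit that the constants depend only on $\Gamma$ and $r$, not on $T$ or $\nu$.
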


\begin{proof}
This is a straightforward consequence of the definitions \eqref{eq:L1} and \eqref{eq:L2}, Assumption \ref{ass:Gamma} and the fact that $\nu$ is a probability measure. See \cite[Lemma 3.7]{Fernandez1997} for a very similar proof.
\end{proof}

\begin{proof}[Proof of Proposition~\ref{prop:charac eta tilde eta}]
The continuity of the processes $ \left( \eta^{ n}, \hat{ \eta}^{ n}\right)$ has been addressed in Lemmas~\ref{lem:fluctuation_process_semimart} and~\ref{lem:centered_emp_mes_spde}. Both semimartingale decompositions in \eqref{eq:limit_etas} hold if each term make sense in $H^{ - r_{ 1}}(\mathbb{ T})$ and $H^{ - r_{ 1}}\left(\mathbb{ T}^{ 2}\right)$ respectively.  Since $ \left\Vert \mathcal{ L}_{ \mu^n_s}^{ (1), \ast} \eta_{ s}^{ n} \right\Vert_{ -(r+2)} \leq C \left\Vert \eta_{ s}^{ n} \right\Vert_{ -r}$ for any $r> 0$ by Lemma~\ref{lem:continuity_Ln12}, one obtains from \eqref{eq:first_control_etan} and \eqref{eq:first_control_hat_etan} that, under both Assumptions~\ref{ass:jointconv_0W_qu} and~\ref{ass:jointconv_0W_ann} $, \int_{ 0}^{ T} \left\Vert \mathcal{ L}_{ \mu^n_s}^{ (1), \ast} \eta_{ s}^{ n} \right\Vert_{ -r_{ 1}} {\rm d}s$ is $ \mathbb{ P}\otimes \mathbf{ P}$ almost-surely finite (recall the definition of $r_{ 0}, r_{ 1}$ in \eqref{eq:indices}), so that this drift term makes sense as a Bochner integral. It suffices now to gather this estimate and Lemma~\ref{lem:Mn_prop} to conclude. The same argument works also for $ \hat{ \eta}^{ n}$.
\end{proof}
\subsection{Tightness results}
We use the following tightness criterion \cite[pp. 34-35]{joffe_metivier_86}: a sequence of $ \left( \Omega^{ n}, \mathcal{ F}_{ t}^{ n}\right)$-adapted processes $ (Y^{ n})_{ n\geq1}$ with path in $ \mathcal{ C}([0, T], H)$, where $H$ is an Hilbert space is tight if both of the following conditions hold:
\begin{enumerate}
\item For every $t$ in some dense subset of $[0, T]$, the law of $Y_{ t}^{ n}$ is tight in $H$,
\item (Aldous condition) For every $ \varepsilon_{ 1}, \varepsilon_{ 2}>0$, there exists $ \delta>0$ and $n_{ 0}\geq1$ such that for every $ \left(\mathcal{ F}_{ t}^{ n}\right)$-stopping time $ \tau_{ n}\geq T$,
\begin{equation*}
\sup_{ n\geq n_{ 0}} \sup_{ \theta\leq \delta} \mathbf{ P} \left( \left\Vert Y_{ \tau_{ n}}^{ n} - Y_{ \tau_{ n} + \theta}^{ n} \right\Vert_{ H} \geq \varepsilon_{ 1}\right)\leq \varepsilon_{ 2}.
\end{equation*}
\end{enumerate}
\begin{rem}
\label{rem:compact}
Suppose that there exists a Hilbert space $H_{ 0}$ such that the injection $H_{ 0} \to H$ is compact and such that, there exists $m\geq1$ such that for fixed $t\in[0, T]$, we have $ \sup_{ n} \mathbf{ E} \left( \left\Vert Y_{ t}^{ n} \right\Vert_{ H_{ 0}}^{ m}\right)< +\infty$. Then, for this $t$, Condition (1) above is satisfied. Indeed, for any $R>0$, $B_{ R}:= \left\lbrace h\in H_{ 0}, \ \left\Vert h \right\Vert_{ H_{ 0}}\leq R\right\rbrace$ is compact in $H$, and, by Markov inequality, \[ \mathbf{ P} \left( Y_{ t}^{ n} \notin B_{ R}\right)= \mathbf{ P} \left( \left\Vert Y_{ t}^{ n} \right\Vert_{ H_{ 0}} > R\right)\leq \frac{ \mathbf{ E}\left( \left\Vert Y_{ t}^{ n} \right\Vert_{ H_{ 0}}^{ m}\right)}{ R^{ m}},\] which goes to $0$ as $R\to\infty$, uniformly in $n$.
\end{rem}
\begin{theorem}
\label{th:tight}
Recall the definition of $(r_{ 0}, r_{ 1})$ in \eqref{eq:indices}. Under Assumptions~\ref{ass:Gamma},~\ref{ass:initial} and either Assumption~\ref{ass:jointconv_0W_qu} or~\ref{ass:jointconv_0W_ann}, the sequence of laws of both $ \left(\eta^{ n}\right)_{ n\geq1}$ (resp. $  \left(\hat{ \eta}^{ n}\right)_{ n\geq1}$) is tight in $ \cC \left([0, T], H^{ - r_{ 1}} \left(\mathbb{ T}\right)\right)$ (resp. in $ \cC \left([0, T], H^{ - r_{ 1}} \left(\mathbb{ T}^{ 2}\right)\right)$) (and the tightness is $ \mathbb{ P}_{ g}$-almost sure in the case of Assumption~\ref{ass:jointconv_0W_qu}).
\end{theorem}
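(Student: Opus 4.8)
\emph{Plan.} The strategy is to apply the Joffe--M\'etivier tightness criterion recalled just above, separately to $(\eta^n)_{n\geq1}$ in $H:=H^{-r_1}(\bbT)$ and to $(\hat\eta^n)_{n\geq1}$ in $H:=H^{-r_1}(\bbT^2)$, taking in both cases $H_0:=H^{-r_0}$ as the auxiliary space. Since $r_1-r_0=2>d/2$ for $d\in\{1,2\}$, the embedding $H_0\hookrightarrow H$ is Hilbert--Schmidt, hence compact, so Remark~\ref{rem:compact} applies with exponent $m=1+\alpha$: it is then enough to produce uniform-in-$n$ bounds on $\sup_{t\leq T}\mathbf{E}[\|\eta^n_t\|_{-r_0}^{1+\alpha}]$ and $\sup_{t\leq T}\mathbf{E}[\|\hat\eta^n_t\|_{-r_0}^{1+\alpha}]$ (after an extra $\mathbb{E}_g$-integration in the annealed case) and to verify the Aldous condition on each term of the semimartingale decompositions \eqref{eq:eta_weak_semimart} and \eqref{eq:hat_eta_semimart}.

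\emph{Condition (1).} Applying Proposition~\ref{prop:hat_eta_n_tight} with $r=r_0>2$ bounds $\sup_{t\leq T}\mathbf{E}[\|\hat\eta^n_t\|_{-r_0}^{1+\alpha}]$, $\mathbb{P}$-a.s., by a constant times $\|\hat\eta^n_0\|_{-r_0}^{1+\alpha}+(\sqrt n\,p_n)^{-(1+\alpha)}+(\sqrt n\,p_n^2)^{-(1+\alpha)}$. Integrating against $\mathbb{E}_0$ (resp.\ $\mathbb{E}_g\mathbb{E}_0$) and using \eqref{eq:control_hateta0_qu} (resp.\ \eqref{eq:control_hateta0_ann}) for the initial term together with the dilution condition \eqref{eq:dilution_cond} for the two graph terms yields the desired finite bound, uniform in $n$, $\mathbb{P}_g$-a.s.\ (resp.\ under $\mathbb{P}$). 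Feeding this into Proposition~\ref{prop:eta^n_tight} (with $r=r_0>3/2$) and invoking \eqref{eq:control_eta0_qu} (resp.\ \eqref{eq:control_eta0_ann}) gives the analogous uniform bound for $(\eta^n)$. By Remark~\ref{rem:compact}, Condition (1) holds for every $t\in[0,T]$.

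\emph{Condition (2) (Aldous).} I would check this term by term. For the drift terms, Lemma~\ref{lem:continuity_Ln12} shows that $\cL^{(1),\ast}_{\mu^n_s}$ and $\cL^{(2),\ast}_{\mu^n_s}$ map $H^{-r_0}$ boundedly into $H^{-r_1}$, uniformly in $s$ and $n$ (because $\mu^n_s$ is a probability measure), and an elementary estimate using the smoothness of $\Gamma$ handles $\Theta^\ast\colon H^{-r_0}(\bbT^2)\to H^{-r_1}(\bbT)$ likewise; hence the $H^{-r_1}$-norm of the increment over $[\tau_n,\tau_n+\theta]$ of each drift is at most $C\int_{\tau_n}^{\tau_n+\theta}\|\eta^n_s\|_{-r_0}\dd s$ (resp.\ with $\hat\eta^n_s$). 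Using H\"older in $s$ together with $\int_0^{T+\delta}\mathbf{E}[\mathbf{1}_{\{\tau_n\leq s\leq\tau_n+\theta\}}]\dd s=\mathbf{E}[\theta]\leq\delta$, this random-interval integral has expectation at most $C(T+\delta)^{1/(1+\alpha)}\delta^{\alpha/(1+\alpha)}$, uniformly in $n$ and in the stopping time, by the moment bounds of Condition (1). For the extra term $\sqrt n\,C^n$ in \eqref{eq:hat_eta_semimart}, the argument of Proposition~\ref{prop:bound Cn} applied on $[\tau_n,\tau_n+\theta]$ (rather than $[0,t]$) bounds the increment of $C^n$ in $H^{-3}\subset H^{-r_1}$ by $C_\Gamma\,\theta$ times the quantities $S_n^{\mathcal{T}}$ of Definition~\ref{def:Sn}, and Proposition~\ref{prop:concentration_SnT} with \eqref{eq:dilution_cond} gives $\sqrt n\,S_n^{\mathcal{T}}\leq\kappa/\sqrt{np_n^4}\to0$, $\mathbb{P}$-a.s.; so this term contributes at most $\delta\cdot o(1)$ (in the annealed case one upgrades the almost sure concentration bounds to moment bounds via the exponential tails appearing in the proof of Proposition~\ref{prop:concentration_SnT}). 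For the martingale increments, optional stopping and the explicit Doob--Meyer process \eqref{eq:DM_Mn}, combined with $\sup_\theta\|D_\theta\|_{-r_1}<\infty$ from Lemma~\ref{lem:linear_forms}, give $\mathbf{E}[\|W^n_{\tau_n+\theta}-W^n_{\tau_n}\|_{-r_1}^2]\leq C\theta\leq C\delta$, while Proposition~\ref{prop:Wn_prop} yields $\mathbf{E}[\sup_{t\leq T}\|\hat W^n_t\|_{-r_1}^2]\to0$, making the $\hat W^n$-increment uniformly negligible. Markov's inequality on each piece establishes the Aldous condition, hence tightness.

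\emph{Main obstacle.} The delicate point is the drift part of the Aldous estimate: since Propositions~\ref{prop:eta^n_tight} and~\ref{prop:hat_eta_n_tight} furnish only the $(1+\alpha)$-th moments of $\eta^n_t$ and $\hat\eta^n_t$ pointwise in $t$ --- and not the uniform-in-time bound $\mathbf{E}[\sup_{t\leq T}\|\eta^n_t\|_{-r_0}]$ available in the mean-field setting of \cite{Fernandez1997} --- one cannot simply pull a supremum out of the time integral and must instead route the estimate through the H\"older-in-time / stopping-time argument sketched above. The other point requiring care is the systematic bookkeeping of the quenched versus annealed readings, i.e.\ whether a given estimate holds $\mathbb{P}_g$-almost surely or only after an additional $\mathbb{E}_g$-integration.
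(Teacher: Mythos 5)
Your proposal is correct and follows essentially the same route as the paper: compactness of $H^{-r_0}\hookrightarrow H^{-r_1}$ via Remark~\ref{rem:compact} fed by the moment bounds of Propositions~\ref{prop:eta^n_tight} and~\ref{prop:hat_eta_n_tight}, then the Aldous criterion checked term by term on the decompositions \eqref{eq:eta_weak_semimart} and \eqref{eq:hat_eta_semimart}, using Lemma~\ref{lem:continuity_Ln12} for the drifts, Propositions~\ref{prop:bound Cn} and~\ref{prop:concentration_SnT} for $\sqrt n\,C^n$, and Proposition~\ref{prop:Wn_prop} for $\hat W^n$. The only noteworthy (and harmless) departure is in the martingale increment: you bound $\mathbf{E}\bigl[\|W^n_{\tau_n+\theta}-W^n_{\tau_n}\|_{-r_1}^2\bigr]$ directly through optional stopping and the explicit trace of \eqref{eq:DM_Mn}, which is cleaner here since the trace is uniformly integrable, whereas the paper runs the Rebolledo-type truncation argument from \cite{joffe_metivier_86}; for the drifts you organize the estimate via H\"older on a fixed window $[0,T+\delta]$ while the paper applies Markov with exponent $1+\alpha$ followed by Jensen, but both yield a $\delta^{\alpha/(1+\alpha)}$-type factor uniform in $n$ and the stopping time.
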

\begin{proof}
We deal first with the process $ \hat{ \eta}^{ n}$. Recall the definition of $(r_{ 0}, r_{ 1})$ in \eqref{eq:indices}. We apply the above tightness criterion in the case $H_{ 0}= H^{ - r_{ 0}}\left(\mathbb{ T}^{ 2}\right)$ and $H=H^{ - r_{ 1}}\left(\mathbb{ T}^{ 2}\right)$. Applying \eqref{eq:first_control_hat_etan} for $r=r_{ 0} > 3$, we obtain from Remark~\ref{rem:compact} that Condition (1) of the tightness result is satisfied. Let us verify now the Aldous condition: let $ \varepsilon_{ 1}, \varepsilon_{ 2}>0$, $\theta\geq0$ and $ \tau_{ n}$ a stopping time. Then, from \eqref{eq:decomp hat eta_n},
\begin{align}
\mathbf{ P} \Big( &\left\Vert \hat{ \eta}^{ n}_{ \tau_{ n}+ \theta}  - \hat{ \eta}_{ \tau_{ n}}^{ n}\right\Vert_{ -r_{ 1}} \geq \varepsilon_{ 1}\Big) \nonumber\\
&= \mathbb{ P} \left( \left\Vert \int_{ \tau_{ n}}^{ \tau_{ n}+ \theta} \mathcal{ L}_{ \mu^n_s}^{(2), \ast} \hat{\eta}_{ s}^{ n} {\rm d}s + \sqrt{n}\int_{ \tau_{ n}}^{ \tau_{ n}+ \theta} C_{ s}^{ n} {\rm d}s+ \hat{ W}_{ \tau_{ n}+ \theta}^{ n} - \hat{ W}_{ \tau_{ n}}^{ n}\right\Vert_{ -r_{ 1}} \geq \varepsilon_{ 1}\right) \nonumber\\
&\leq \mathbf{ P} \left( \left\Vert \int_{ \tau_{ n}}^{ \tau_{ n}+ \theta} \mathcal{ L}_{ \mu^n_s}^{(2), \ast} \hat{\eta}_{ s}^{ n} {\rm d}s \right\Vert_{ -r_{ 1}} \geq \frac{ \varepsilon_{ 1}}{ 3}\right) \label{aux:tight1}\\
&+\mathbf{ P} \left( \left\Vert \sqrt{n}\int_{ \tau_{ n}}^{ \tau_{ n}+ \theta} C_{ s}^{ n} {\rm d}s\right\Vert_{ -r_{ 1}} \geq \frac{ \varepsilon_{ 1}}{ 3}\right)\label{aux:tight2}\\
&+\mathbf{ P} \left( \left\Vert \hat{ W}_{ \tau_{ n}+ \theta}^{ n} - \hat{ W}_{ \tau_{ n}}^{ n}\right\Vert_{ -r_{ 1}} \geq \frac{ \varepsilon_{ 1}}{ 3}\right).\label{aux:tight3}
\end{align}
Concentrate on the first term \eqref{aux:tight1}: by Markov and Jensen's inequalities
\begin{align*}
\mathbf{ P} \left( \left\Vert \int_{ \tau_{ n}}^{ \tau_{ n}+ \theta} \mathcal{ L}_{ \mu^n_s}^{(2), \ast} \hat{\eta}_{ s}^{ n} {\rm d}s \right\Vert_{ -r_{ 1}} \geq \frac{ \varepsilon_{ 1}}{ 3}\right) & \leq \frac{ 1}{ \left( \varepsilon_{ 1}/3\right)^{ 1+ \alpha}} \mathbf{ E} \left( \left(\int_{ \tau_{ n}}^{ \tau_{ n}+ \theta} \left\Vert \mathcal{ L}_{ \mu^n_s}^{(2), \ast} \hat{\eta}_{ s}^{ n} \right\Vert_{ -r_{ 1}} {\rm d}s\right)^{ 1+ \alpha}\right)\\
& \leq \frac{ \theta^{ \alpha}}{ \left( \varepsilon_{ 1}/3\right)^{ 1+ \alpha}} \mathbf{ E} \left(\int_{ \tau_{ n}}^{ \tau_{ n}+ \theta} \left\Vert \mathcal{ L}_{ \mu^n_s}^{(2), \ast} \hat{\eta}_{ s}^{ n} \right\Vert_{ -r_{ 1}}^{ 1+ \alpha} {\rm d}s\right)\\
& \leq \frac{ C\theta^{ \alpha}}{ \left( \varepsilon_{ 1}/3\right)^{ 1+ \alpha}} \mathbf{ E} \left(\int_{ 0}^{T} \left\Vert \hat{\eta}_{ s}^{ n} \right\Vert_{ -r_{ 0}}^{ 1+ \alpha} {\rm d}s\right),
\end{align*}
where we used Lemma~\ref{lem:continuity_Ln12}. We are now in position to apply \eqref{eq:first_control_hat_etan} for $r=r_{ 0}$, so that, for another constant $C>0$ independent of $n$, 
\begin{equation*}
\mathbf{ P} \left( \left\Vert \int_{ \tau_{ n}}^{ \tau_{ n}+ \theta} \mathcal{ L}_{ \mu^n_s}^{(2), \ast} \hat{\eta}_{ s}^{ n} {\rm d}s \right\Vert_{ -r_{ 1}} \geq \frac{ \varepsilon_{ 1}}{ 3}\right)  \leq \frac{ C\theta^{ \alpha}}{ \left( \varepsilon_{ 1}/3\right)^{ 1+ \alpha}}.
\end{equation*}
The second term \eqref{aux:tight2} is treated using Proposition~\ref{prop:bound Cn}: since $r_{ 1} > 5\geq 3$,
\begin{align*}
\mathbf{ P} \left( \left\Vert \sqrt{n}\int_{ \tau_{ n}}^{ \tau_{ n}+ \theta} C_{ s}^{ n} {\rm d}s\right\Vert_{ -r_{ 1}} \geq \frac{ \varepsilon_{ 1}}{ 3}\right)\leq \frac{ T \sqrt{ n} \mathbf{ E} \left(\sup_{ s\in[0, T]} \left\Vert C_{ s}^{ n} \right\Vert_{ -r_{ 1}}\right)}{ \varepsilon_{ 1}/3},
\end{align*}
which goes to $0$ as $n\to\infty$, by Proposition~\ref{prop:bound Cn}, uniformly in $ \theta\leq \delta$. The last term \eqref{aux:tight3} follows from a similar argument, using now Proposition~\ref{prop:Wn_prop}. We have the rough bound 
\begin{align*}
\mathbf{ P} \left( \left\Vert \hat{ W}_{ \tau_{ n}+ \theta}^{ n} - \hat{ W}_{ \tau_{ n}}^{ n}\right\Vert_{ -r_{ 1}} \geq \frac{ \varepsilon_{ 1}}{ 3}\right) \leq \frac{ 4 \mathbf{ E} \left(\sup_{ s\in[0, T]} \left\Vert \hat{ W}_{ s}^{ n} \right\Vert_{ -r_{ 1}}^{ 2}\right)}{ \left(\varepsilon_{ 1}/3\right)^{ 2}}.
\end{align*}
Since $r_{ 1}\geq4$, we have by Proposition~\ref{prop:Wn_prop} that this term also goes to $0$ as $n\to\infty$. Putting all the previous estimates together, we obtain that Aldous criterion is verified, and hence the tightness of $ \hat{ \eta}^{ n}$ in $ \mathcal{ C} \left([0, T], H^{ - r_{ 1}}\left(\mathbb{ T}^{ 2}\right)\right)$.

Now turn to the tightness of $ \eta^{ n}$, which follows from very similar arguments. The point (1) of the tightness criterion follows directly from \eqref{eq:first_control_etan} and \eqref{eq:first_control_hat_etan} (for $r=r_{ 0}$) and Remark~\ref{rem:compact}. In a similar way, for the Aldous criterion, 
\begin{align*}
\mathbf{ P} \left( \left\Vert \eta^{ n}_{ \tau_{ n}+ \theta}  -\eta_{ \tau_{ n}}^{ n}\right\Vert_{ -r_{ 1}} \geq \varepsilon_{ 1}\right) 
&\leq \mathbf{ P} \left( \left\Vert \int_{ \tau_{ n}}^{ \tau_{ n}+ \theta} \mathcal{ L}_{ \mu^n_s}^{(1), \ast} \eta_{ s}^{ n} {\rm d}s \right\Vert_{ -r_{ 1}} \geq \frac{ \varepsilon_{ 1}}{ 3}\right)\\
&\quad +\mathbf{ P} \left( \left\Vert \int_{ \tau_{ n}}^{ \tau_{ n}+ \theta} \partial_{\theta_1} (\Gamma * \hat{\eta}^n_s) {\rm d}s\right\Vert_{ -r_{ 1}} \geq \frac{ \varepsilon_{ 1}}{ 3}\right)\\
&\quad +\mathbf{ P} \left( \left\Vert W_{ \tau_{ n}+ \theta}^{ n} - W_{ \tau_{ n}}^{ n}\right\Vert_{ -r_{ 1}} \geq \frac{ \varepsilon_{ 1}}{ 3}\right).
\end{align*}
The first term is treated in the exact same way as for \eqref{aux:tight1} above and we leave the details to the reader. Similarly, we have
\begin{align*}
\mathbf{ P} \left( \left\Vert \int_{ \tau_{ n}}^{ \tau_{ n}+ \theta} \partial_{\theta_1} (\Gamma * \hat{\eta}^n_s) {\rm d}s\right\Vert_{ -r_{ 1}} \geq \frac{ \varepsilon_{ 1}}{ 3}\right)&\leq \frac{ 1}{ \left( \varepsilon_{ 1}/3\right)^{ 1+ \alpha}} \mathbf{ E} \left( \left(\int_{ \tau_{ n}}^{ \tau_{ n}+ \theta} \left\Vert \partial_{\theta_1} (\Gamma * \hat{\eta}^n_s) \right\Vert_{ -r_{ 1}} {\rm d}s\right)^{ 1+ \alpha}\right),\\
& \leq \frac{ \theta^{ \alpha}}{ \left( \varepsilon_{ 1}/3\right)^{ 1+ \alpha}} \mathbf{ E} \left(\int_{ \tau_{ n}}^{ \tau_{ n}+ \theta} \left\Vert \partial_{\theta_1} (\Gamma * \hat{\eta}^n_s) \right\Vert_{ -r_{ 1}}^{ 1+ \alpha} {\rm d}s\right),\\
& \leq \frac{ C\theta^{ \alpha}}{ \left( \varepsilon_{ 1}/3\right)^{ 1+ \alpha}} \mathbf{ E} \left(\int_{ 0}^{T} \left\Vert \hat{\eta}_{ s}^{ n} \right\Vert_{ -r_{ 0}}^{ 1+ \alpha} {\rm d}s\right),
\end{align*}
where we used $ \left\Vert  \partial_{\theta_1} (\Gamma * \hat{\eta}^n_s) \right\Vert_{ -r_{ 1}} \leq \left\Vert  \partial_{\theta_1} (\Gamma * \hat{\eta}^n_s) \right\Vert_{ -(r_{ 1}-1)} \leq C \left\Vert \hat{ \eta}_{ s}^{ n} \right\Vert_{ -(r_{ 1}-2)}=C \left\Vert \hat{ \eta}_{ s}^{ n} \right\Vert_{ -r_{ 0}} $ for a constant $C>0$ independent of $s$. Recall now that $r_{ 0}> 3$ so that \eqref{eq:first_control_hat_etan} holds. We finally turn to the noise term $W^{ n}$ whose treatment differs slightly from the previous calculations: we reproduce here (without giving all the details) some parts of the calculation made in \cite[p. 40]{joffe_metivier_86} (as part of the proof of the Rebolledo's theorem): we have, for all $a>0$
\begin{align*}
\mathbf{ P} \left( \left\Vert W_{ \tau_{ n}+ \theta}^{ n} - W_{ \tau_{ n}}^{ n}\right\Vert_{ -r_{ 1}} \geq \varepsilon_{ 1}\right) &\leq \frac{ 1}{ \varepsilon_{ 1}^{ 2}} \mathbf{ E} \left( \left({\rm tr}_{ H^{ -r_{ 1}}}\llangle W^{ n}\rrangle_{ \tau_{ n}+ \theta} - {\rm tr}_{ H^{ -r_{ 1}}}\llangle W^{ n}\rrangle_{ \tau_{ n}}\right) \wedge a\right) \\
&+ \mathbf{ P} \left( {\rm tr}_{ H^{ -r_{ 1}}}\llangle W^{ n}\rrangle_{ \tau_{ n}+ \theta} - {\rm tr}_{ H^{ -r_{ 1}}}\llangle W^{ n}\rrangle_{ \tau_{ n}} \geq a\right),
\end{align*}
where $ \llangle W^{ n} \rrangle$ is the Doob-Meyer process associated to $W^{ n}$ given by \eqref{eq:DM_Mn}. Choosing here $a \leq \varepsilon_{ 1}^{ 2} \varepsilon_{ 2}/2$, we see that it suffices to control 
\begin{align*}
\mathbf{ P} \left( \left\vert {\rm tr}_{ H^{ -r_{ 1}}} \llangle W^{ n}\rrangle_{ \tau_{ n}+ \theta} -  {\rm tr}_{ H^{ -r_{ 1}}} \llangle W^{ n}\rrangle_{ \tau_{ n}}\right\vert> a\right) &\leq \frac{ 1}{ a} \mathbf{ E} \left( \int_{ \tau_{ n}}^{ \tau_{ n}+ \theta} \frac{ 1}{ n} \sum_{ i=1}^{ n} \sum_{ p\geq 1} \left\vert \psi_{ p}^{ \prime} \left( \theta_{ s}^{ i, n}\right) \right\vert^{ 2} {\rm d}s \right)\\
&\leq \frac{ 1}{ a} \mathbf{ E} \left( \int_{ \tau_{ n}}^{ \tau_{ n}+ \theta} \frac{ 1}{ n} \sum_{ i=1}^{ n} \left\Vert D_{ \theta_{ s}^{ i, n}} \right\Vert_{ -r_{ 1}}^{ 2} {\rm d}s\right)\leq \frac{ C \theta}{ a},
\end{align*}
where $ (\psi_{ p})$ is a complete orthonormal system in $H^{r_{ 1}}$ and using Lemma~\ref{lem:linear_forms}. This gives the result.
\end{proof}
\subsection{ Convergence}
The first result of this paragraph concerns the identification of the limit of the noise term:
\begin{proposition}[Identification of the noise]
\label{prop:limW}
Under the assumptions of Section~\ref{sec:hypotheses}, $ \mathbb{ P}$-a.s., the process $ \left(W^{ n}\right)$ converges in law in $ \mathcal{ C} \left([0, T], H^{ -r}\right)$ for all $r>2$ to the Gaussian process $W\in  \mathcal{ C} \left([0, T], H^{ -r}\right)$ with covariance given by \eqref{eq:cov_Weta}.
\end{proposition}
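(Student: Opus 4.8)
The plan is to follow the standard two-step scheme: first prove tightness of $(W^n)_{n\geq1}$ in $\mathcal{C}([0,T],H^{-r}(\mathbb{T}))$ for every $r>2$, then identify any subsequential weak limit as the Gaussian martingale $W$ with covariance \eqref{eq:cov_Weta}. Throughout, a realisation of the graph and of the initial condition is frozen and all weak convergence is understood under $\mathbf{P}$; the graph enters only through the empirical measure $\mu^n$ appearing in the Doob--Meyer process \eqref{eq:DM_Mn}, and it is precisely the a.s. convergence $\mu^n\to\mu$ that will make the statement hold $\mathbb{P}$-a.s.

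For tightness, fix $r>2$ and choose $r'$ with $\tfrac32<r'<r-\tfrac12$. By Lemma~\ref{lem:Mn_prop}, $(W^n)$ is a sequence of continuous $H^{-r'}(\mathbb{T})$-valued martingales with $\sup_n\mathbf{E}\big(\sup_{t\in[0,T]}\|W^n_t\|_{-r'}^2\big)<\infty$. Since $r-r'>\tfrac12$, the injection $H^{-r'}(\mathbb{T})\hookrightarrow H^{-r}(\mathbb{T})$ is Hilbert--Schmidt, hence compact, so Remark~\ref{rem:compact} gives tightness of the marginals $W^n_t$ in $H^{-r}(\mathbb{T})$. The Aldous condition is obtained exactly as in the last part of the proof of Theorem~\ref{th:tight}: writing $\mathrm{tr}_{H^{-r}}\llangle W^n\rrangle$ over an orthonormal basis $(\psi_p)$ of $H^{r}(\mathbb{T})$ and bounding $\sum_p|\psi_p'(\theta^{i,n}_s)|^2=\|D_{\theta^{i,n}_s}\|_{-r}^2\leq c_{r-1}^2$ via Lemma~\ref{lem:linear_forms}, Rebolledo's criterion \cite{joffe_metivier_86} yields the equicontinuity estimate. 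Hence $(W^n)$ is tight in $\mathcal{C}([0,T],H^{-r}(\mathbb{T}))$.

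For the identification, let $W$ be a subsequential weak limit. The bound \eqref{eq:control_supMn} makes $(W^n_t(f))_n$ uniformly integrable for each test function $f$, so the martingale property passes to the limit and $W$ is a continuous $H^{-r}(\mathbb{T})$-valued martingale with $W_0=0$. Its Doob--Meyer process is the limit of $\llangle W^n\rrangle$: for regular $\varphi,\psi$,
\[
\llangle W^n\rrangle_t\cdot\varphi(\psi)=\int_0^t\langle\mu^n_s\,,\,\varphi'\psi'\rangle\,\mathrm{d}s\xrightarrow[n\to\infty]{}\int_0^t\langle\mu_s\,,\,\varphi'\psi'\rangle\,\mathrm{d}s,
\]
uniformly in $t\in[0,T]$, the convergence of the integrand holding pointwise in $s$ because $\varphi'\psi'$ is bounded Lipschitz and $\mu^n_s\to\mu_s$ weakly by Theorem~\ref{th:conv_empmeas} (valid $\mathbb{P}$-a.s. under $np_n\to\infty$), then concluding by dominated convergence. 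Thus $W$ is a continuous martingale with \emph{deterministic} Doob--Meyer process, so by the central limit theorem for continuous martingales (Rebolledo's theorem \cite{joffe_metivier_86}, the jump part being identically zero) it is the centred Gaussian process with
\[
\mathbf{E}[W_s(f_1)W_t(f_2)]=\llangle W\rrangle_{s}\cdot f_1(f_2)=\int_0^{s}\langle\mu_u\,,\,\partial_\theta f_1\,\partial_\theta f_2\rangle\,\mathrm{d}u,\qquad 0\leq s\leq t\leq T,
\]
i.e. exactly \eqref{eq:cov_Weta}. Since a Gaussian process is determined by its covariance, all subsequential limits coincide, and together with tightness this gives $W^n\Rightarrow W$ in $\mathcal{C}([0,T],H^{-r}(\mathbb{T}))$ for every $r>2$, $\mathbb{P}$-a.s.

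The only genuinely delicate point is the joint passage to the limit in the Doob--Meyer bracket and the Gaussianity of the limit; both are handled simultaneously by the martingale central limit theorem, whose hypotheses here reduce to (i) the bracket convergence displayed above --- a direct consequence of the law of large numbers for $\mu^n$ (Theorem~\ref{th:conv_empmeas}), which is where the $\mathbb{P}$-a.s. nature of the statement enters --- and (ii) a Lindeberg/negligibility-of-jumps condition that is vacuous since $W^n$ has continuous paths. All remaining ingredients (tightness, uniform integrability, continuity of the limit) are already furnished by Lemma~\ref{lem:Mn_prop} and Remark~\ref{rem:compact}, so no further work is needed.
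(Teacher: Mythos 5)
Your proposal is correct and follows essentially the same route the paper takes: tightness via \eqref{eq:control_supMn}, the Hilbert--Schmidt embedding of Remark~\ref{rem:compact}, and the Aldous calculation already written for $W^n$ at the end of the proof of Theorem~\ref{th:tight}; then identification of the limit from the convergence in probability of the bracket \eqref{eq:DM_Mn} (driven by Theorem~\ref{th:conv_empmeas}) together with a martingale CLT, which is exactly the reference to \cite{Fernandez1997}, Th.~5.2, in the paper. One small stylistic remark: the intermediate assertion that the Doob--Meyer process of the limit $W$ \emph{is} the limit of the $\llangle W^n\rrangle$ would, if taken literally, require uniform integrability of the products $W^n_t(\varphi)W^n_t(\psi)$, which the $L^2$ bound alone does not supply; however this claim is superfluous, since Rebolledo's theorem (which you invoke right afterward and which only needs convergence in probability of the brackets plus the vanishing-jump condition) directly yields the Gaussian limit with the stated covariance, so the argument is sound as written.
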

\begin{proof}
Tightness of $(W^{ n})$ follows from \eqref{eq:control_supMn} and the same calculations as the end of the proof of Theorem~\ref{th:tight} (with $H^{ -r}$ in place of $H^{ -r_{ 1}}$). Identification of the limit is a simple consequence of \eqref{eq:DM_Mn} and the weak convergence \eqref{eq:conv_empmeas} of the empirical measure $ \mu_{ n}$ (see also \cite{Fernandez1997}, Th.~5.2 for a similar proof).
\end{proof}
\begin{proposition}
\label{prop:ident_lim} 
Recall the definition of $(r_{ 0}, r_{ 1}, r_{ 2})$ in \eqref{eq:indices}. Under Assumption~\ref{ass:Gamma}, Assumption~\ref{ass:initial} and either Assumptions~\ref{ass:jointconv_0W_qu} and~\ref{ass:jointconv_0W_ann}, the process $ (\eta^{ n}, \hat{ \eta}^{ n})$ has convergent subsequences in $ \mathcal{ C} \left( [0, T], H^{ -r_{ 1}}\left(\mathbb{ T}\right)\oplus H^{ - r_{ 1}}( \mathbb{ T}^{ 2})\right)$ and any accumulation point $ \left(\eta, \hat{ \eta}\right)$ is a solution in the space $ \mathcal{ C} \left([0, T], H^{ -r_{ 2}}\left( \mathbb{ T}\right) \oplus H^{ -r_{ 2}}( \mathbb{ T}^{ 2})\right)$ to the system \eqref{eq:limit_etas}.
\end{proposition}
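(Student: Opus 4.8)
The plan is to follow the classical three-step scheme (tightness, identification, uniqueness), of which tightness has already been established in Theorem~\ref{th:tight} and the semimartingale decompositions in Proposition~\ref{prop:charac eta tilde eta}. By Theorem~\ref{th:tight}, $(\eta^n,\hat\eta^n)$ is tight in $\mathcal{C}([0,T],H^{-r_1}(\bbT)\oplus H^{-r_1}(\bbT^2))$ (the tightness of the joint process follows from tightness of each marginal), hence by Prokhorov's theorem it admits convergent subsequences; let $(\eta,\hat\eta)$ be any accumulation point. Along such a subsequence, by appealing also to Proposition~\ref{prop:limW} (and the Skorokhod representation theorem, to work on a common probability space with almost sure convergence), we may assume $(\eta^n,\hat\eta^n,W^n)\to(\eta,\hat\eta,W)$ in $\mathcal{C}([0,T],H^{-r_1}(\bbT)\oplus H^{-r_1}(\bbT^2)\oplus H^{-r}(\bbT))$ for $r>2$, with $W$ the Gaussian process of covariance \eqref{eq:cov_Weta}.

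First I would pass to the limit in the semimartingale decomposition \eqref{eq:decomp eta_n} for $\eta^n$, tested against a fixed $f\in H^{r_2}(\bbT)$. The term $\eta_0^n(f)\to\eta_0(f)$ by Assumption~\ref{ass:jointconv_0W_qu} or~\ref{ass:jointconv_0W_ann}; the martingale term $W_t^n(f)\to W_t(f)$ by the above. For the drift $\int_0^t\langle\eta_s^n,\mathcal{L}^{(1)}_{\mu_s^n}f\rangle\,{\rm d}s$ one writes $\mathcal{L}^{(1)}_{\mu_s^n}=\mathcal{L}^{(1)}_{\mu_s}+(\mathcal{L}^{(1)}_{\mu_s^n}-\mathcal{L}^{(1)}_{\mu_s})$; since $\mathcal{L}^{(1),\ast}_{\mu_s^n}$ maps $H^{-(r_1)}$ continuously into $H^{-(r_1+2)}=H^{-r_2}$ with norm bounded uniformly in $n$ (Lemma~\ref{lem:continuity_Ln12}), the continuity $s\mapsto\eta_s^n$ together with the a.s. uniform convergence of $\eta^n$ in $\mathcal{C}([0,T],H^{-r_1})$ and the convergence $d_{BL}(\mu_s^n,\mu_s)\to0$ (Theorem~\ref{th:conv_empmeas}, using $np_n^4\to\infty$ hence $np_n\to\infty$) lets one take the limit inside the time integral by dominated convergence, using that $f\in H^{r_2}$ so the pairings are well-defined in the $H^{-r_2},H^{r_2}$ duality. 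The term $\int_0^t\Theta^\ast\hat\eta_s^n(f)\,{\rm d}s$ is handled identically, since $\Theta^\ast$ maps $H^{-r_1}(\bbT^2)$ continuously into $H^{-r_1-1}(\bbT)\subset H^{-r_2}(\bbT)$ uniformly. This yields the first equation of \eqref{eq:limit_etas} in weak (tested against $f\in H^{r_2}$) form; that it holds as an identity in $\mathcal{C}([0,T],H^{-r_2})$ follows from Lemma~\ref{lem:continuity_Ln12} and Assumption~\ref{ass:jointconv_0W_qu}/\ref{ass:jointconv_0W_ann}, exactly as in the proof of Proposition~\ref{prop:charac eta tilde eta}.

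Second I would treat $\hat\eta^n$, i.e.\ pass to the limit in \eqref{eq:decomp hat eta_n}. The new feature is the term $\sqrt n\,C_t^n$: by Proposition~\ref{prop:bound Cn} and Proposition~\ref{prop:concentration_SnT}, $\sqrt n\sup_{t\le T}\|C_t^n\|_{-3}\le T C_\Gamma\sqrt n(S_n^{\ijik}+S_n^{\ijjk})$ which is $\mathbb{P}$-a.s.\ of order $\sqrt n/(np_n^2)=1/(\sqrt n p_n^2)\to0$ precisely under $np_n^4\to\infty$; hence this term vanishes in the limit. Similarly $\hat W_t^n\to0$ by Proposition~\ref{prop:Wn_prop}. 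The drift $\int_0^t\mathcal{L}^{(2),\ast}_{\mu_s^n}\hat\eta_s^n\,{\rm d}s$ converges to $\int_0^t\mathcal{L}^{(2),\ast}_{\mu_s}\hat\eta_s\,{\rm d}s$ by the same argument as above (Lemma~\ref{lem:continuity_Ln12}, uniform-in-$n$ moment bound \eqref{eq:first_control_hat_etan} with $r=r_0$, and $d_{BL}(\mu_s^n,\mu_s)\to0$). Combined with $\hat\eta_0^n\to\hat\eta_0$, this gives the second equation of \eqref{eq:limit_etas}. Finally one records that both equations hold in $\mathcal{C}([0,T],H^{-r_2}(\bbT)\oplus H^{-r_2}(\bbT^2))$: each term of the right-hand side is continuous with values in $H^{-r_2}$ thanks to Lemma~\ref{lem:continuity_Ln12} and the regularity of $W$. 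The main obstacle is the exchange of limit and time-integral in the drift terms $\langle\eta_s^n,\mathcal{L}^{(1)}_{\mu_s^n}f\rangle$ and $\langle\hat\eta_s^n,\mathcal{L}^{(2)}_{\mu_s^n}g\rangle$: one must control the product of the weakly-convergent-in-a-negative-Sobolev-space sequences $\eta_s^n,\hat\eta_s^n$ against the $n$-dependent operator coefficients $\mathcal{L}^{(i)}_{\mu_s^n}$; the remedy is to split off the limiting operator $\mathcal{L}^{(i)}_{\mu_s}$ and estimate the remainder via $\|(\mathcal{L}^{(1)}_{\mu_s^n}-\mathcal{L}^{(1)}_{\mu_s})^\ast\eta_s^n\|_{-r_2}\le C\,d_{BL}(\mu_s^n,\mu_s)\,\|\eta_s^n\|_{-r_1}$, which is dominated uniformly in $s$ and tends to $0$, so dominated convergence applies. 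Note that we do not claim uniqueness here — that is deferred to Appendix~\ref{sec:uniqueness} — so Proposition~\ref{prop:ident_lim} only asserts that every accumulation point solves \eqref{eq:limit_etas}.
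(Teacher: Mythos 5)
Your route is structurally the same as the paper's (subsequential limits via tightness, passing to the limit in the semimartingale decompositions, and showing the graph-dependent remainders vanish), but differs in one significant technical choice: you invoke Skorokhod's representation theorem to work with almost-sure convergence, whereas the paper never leaves the convergence-in-law setting. The paper's device is to isolate, for each fixed test function $g$, the \emph{continuous} functional $\alpha\mapsto F_g(\alpha)_t:=\langle\alpha_t,g\rangle-\langle\alpha_0,g\rangle-\int_0^t\langle\alpha_s,\mathcal{L}^{(2)}_{\mu_s}g\rangle\,{\rm d}s$ on $\mathcal{C}([0,T],H^{-r_1})$, so that $F_g(\hat\eta^n)\to F_g(\hat\eta)$ in law by the continuous mapping theorem; the remainder $\int_0^t\langle\hat\eta^n_s,(\mathcal{L}^{(2)}_{\mu^n_s}-\mathcal{L}^{(2)}_{\mu_s})g\rangle\,{\rm d}s$ is then shown to vanish in $L^1$ by a H\"older inequality with exponents $1+\alpha$ and $(1+\alpha)/\alpha$, using the uniform $L^{1+\alpha}$ bound of Proposition~\ref{prop:hat_eta_n_tight} and the LLN \eqref{eq:conv_empmeas} with $q=(1+\alpha)/\alpha$. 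The same is done for $\mathcal{L}^{(1)}$. This entirely avoids Skorokhod.

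Your Skorokhod route can be made to work, but as written there is a gap in the dominated-convergence step. After Skorokhod applied to $(\eta^n,\hat\eta^n,W^n)$ you do obtain $\sup_n\sup_{s\le T}\|\eta^n_s\|_{-r_1}<\infty$ a.s.; however, you then cite Theorem~\ref{th:conv_empmeas} for $d_{BL}(\mu^n_s,\mu_s)\to0$, and that theorem is a statement about the original probability space, not the Skorokhod copies. One must instead note that $\mu^n_s=\mu_s+n^{-1/2}\eta^n_s$ is a deterministic functional of $\eta^n_s$, so that after Skorokhod $\sup_{s\le T}\|\mu^n_s-\mu_s\|_{-r_1}\to0$ a.s., and then argue (e.g.\ by mollification, using that $\mu^n_s,\mu_s$ are probability measures on the compact torus and $H^{r_1}$ is dense in $\mathcal{C}(\mathbb{T})$ uniformly over the equicontinuous class $BL$) that this forces $\sup_{s\le T}d_{BL}(\mu^n_s,\mu_s)\to0$ a.s. Only then can your a.s.\ bound on $\sup_n\sup_s\|\eta^n_s\|_{-r_1}$ be combined with the a.s.\ uniform vanishing of $d_{BL}(\mu^n_s,\mu_s)$ to justify dominated convergence in the time integral. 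Without Skorokhod, the moment bounds of Propositions~\ref{prop:eta^n_tight} and~\ref{prop:hat_eta_n_tight} only control $\|\eta^n_s\|_{-r_1}$ in $L^{1+\alpha}$, which is \emph{not} a pointwise dominant, so ``dominated uniformly in $s$'' would be unjustified on the original space — which is precisely why the paper switches to the $L^1$/H\"older argument. Everything else in your outline (the vanishing of $\sqrt{n}C^n$ and $\hat W^n$ under the dilution condition, the continuity of $\Theta^\ast$, the identification of the noise via Proposition~\ref{prop:limW}, and the well-posedness of the Bochner integrals in $H^{-r_2}$) matches the paper's argument.
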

\begin{proof}
This proof follows the classical arguments of \cite{Fernandez1997} with adequate technical changes. Consider such a convergent subsequence that we rename $ \left(\eta^{ n}, \hat{ \eta}^{ n}\right)$ for convenience and let $ \left(\eta, \hat{ \eta}\right)$ its limit in $ \mathcal{ C} \left( [0, T], H^{ -r_{ 1}}\left(\mathbb{ T}\right)\oplus H^{ - r_{ 1}}( \mathbb{ T}^{ 2})\right)$. We prove that $ (\eta, \hat{ \eta})$ solves \eqref{eq:limit_etas}.

From Assumption~\ref{ass:jointconv_0W_qu}, \eqref{eq:first_control_etan} and \eqref{eq:first_control_hat_etan}, we deduce (replace it by the same estimate with some additional $ \mathbb{ E}_{ g}$ in case of Assumption~\ref{ass:jointconv_0W_ann}) that $ \mathbb{ P}_{ g}$-a.s., 
\begin{equation*}
\mathbb{ E}_{ 0}\mathbf{ E} \left[\sup_{ t\leq T} \left(\left\Vert \eta_{ t} \right\Vert_{ -r_{ 1}}^{ 1+\alpha} + \left\Vert \hat{ \eta}_{ t} \right\Vert_{ -r_{ 1}}^{ 1+ \alpha}\right)\right]< \infty,
\end{equation*}
so that $\int_{ 0}^{t}\mathcal{ L}_{ \mu_s}^{(1), \ast}\eta_{ s} {\rm d}s$ (resp. $\int_{ 0}^{t}\mathcal{ L}_{ \mu_s}^{(2), \ast}\hat{ \eta}_{ s} {\rm d}s$) makes sense as a Bochner integral in $H^{ -r_{ 2}} \left(\mathbb{ T}\right)$ (resp. $H^{ -r_{ 2}} \left(\mathbb{ T}^{ 2}\right)$). For the same reason, since $ \Gamma$ is regular with bounded derivatives (Assumption~\ref{ass:Gamma}), we easily see from the definition of $ \Theta$ in \eqref{eq:Theta} that $ \int_{ 0}^{t} \Theta^{ \ast} \hat{ \eta}_{ s} {\rm d}s$ makes sense as a Bochner integral in $ H^{ -r_{ 2}}(\mathbb{ T})$. Let us start with the second equation in \eqref{eq:limit_etas}: we know from \eqref{eq:control_supWn} that $ \hat{ W}^{ n}$ converges in $ \mathcal{ C}\left([0, T], H^{ -r_{ 1}}\left(\mathbb{ T}^{ 2}\right)\right)$ to $0$ as $n\to\infty$ and from Proposition~\ref{prop:bound Cn} combined with Proposition~\ref{prop:concentration_SnT} that, under the dilution condition \eqref{eq:dilution_cond}, $ \sqrt{ n} C^{ n}$ converges to $0$ in $\mathcal{ C}\left([0, T], H^{ -r_{ 1}}\left(\mathbb{ T}^{ 2}\right)\right)$. Hence, to prove the result, it suffices to show that $ \left\langle \hat{ \eta}_{ t}^{ n}\, ,\, g\right\rangle- \left\langle \hat{ \eta}_{ 0}^{ n}\, ,\, g\right\rangle - \int_{ 0}^{t} \left\langle \hat{ \eta}_{ s}^{ n}\, ,\, \mathcal{ L}_{ \mu^n_s}^{(2)}(g)\right\rangle {\rm d}s$ converges in law, as $n\to\infty$, to $\left\langle \hat{ \eta}_{ t}\, ,\, g\right\rangle- \left\langle \hat{ \eta}_{ 0}\, ,\, g\right\rangle - \int_{ 0}^{t} \left\langle \hat{ \eta}_{ s}\, ,\, \mathcal{ L}_{\mu_s}^{(2)}(g)\right\rangle {\rm d}s$ for all test function $g\in H^{ r_{ 2}}\left(\mathbb{ T}^{ 2}\right)$. Decompose the previous quantity into, for $F_{ g}(\alpha)_{ t}:=\left\langle \alpha_{ t}\, ,\, g\right\rangle- \left\langle \alpha_{ 0}\, ,\, g\right\rangle - \int_{ 0}^{t} \left\langle \alpha_{ s}\, ,\, \mathcal{ L}_{ \mu_s}^{(2)}(g)\right\rangle {\rm d}s$,
\begin{equation*}
\left\langle \hat{ \eta}_{ t}^{ n}\, ,\, g\right\rangle- \left\langle \hat{ \eta}_{ 0}^{ n}\, ,\, g\right\rangle - \int_{ 0}^{t} \left\langle \hat{ \eta}_{ s}^{ n}\, ,\, \mathcal{ L}_{ \mu^n_s}^{(2)}(g)\right\rangle {\rm d}s = F_{ g} \left( \hat{ \eta}^{ n}\right)_{ t} - \int_{ 0}^{t} \left\langle \hat{ \eta}_{ s}^{ n}\, ,\, \left\lbrace \mathcal{ L}_{ \mu^n_s}^{(2)} - \mathcal{ L}_{ \mu_s}^{(2)}\right\rbrace(g)\right\rangle {\rm d}s.
\end{equation*}
It is easy to see that for fixed $g$ in $H^{ r_{ 2}} \left(\mathbb{ T}^{ 2}\right)$, $ \alpha \in \mathcal{ C} \left([0, T], H^{ -r_{ 1}}( \mathbb{ T}^{ 2})\right) \mapsto F_{ g}(\alpha) \in \mathcal{ C} \left([0, T], \mathbb{ R}\right)$ is continuous, so that, since $ \left( \hat{ \eta}^{ n}\right)$ converges in law as $n\to\infty$ to $ \hat{ \eta}$ in $ \mathcal{ C} \left([0, T], H^{ -r_{ 1}}(\mathbb{ T}^{ 2})\right)$, we have that $ F_{ g} \left( \hat{ \eta}^{ n}\right)$ converges in law to $ F_{ g} \left( \hat{ \eta}\right)$ as $n\to\infty$. It remains to prove that $\int_{ 0}^{t} \left\langle \hat{ \eta}_{ s}^{ n}\, ,\, \left\lbrace \mathcal{ L}_{ \mu^n_s}^{(2)} - \mathcal{ L}_{ \mu_s}^{(2)}\right\rbrace(g)\right\rangle {\rm d}s $ converges in law to $0$. We prove that it goes to $0$ in $L^{ 1}$: we have
\begin{equation}
\label{aux:rest_hat_eta}
\mathbb{ E}_{ 0}\mathbf{ E} \left[ \int_{ 0}^{t} \left\vert \left\langle \hat{ \eta}_{ s}^{ n}\, ,\, \left\lbrace \mathcal{ L}_{ \mu^n_s}^{(2)} - \mathcal{ L}_{ \mu_s}^{(2)}\right\rbrace(g)\right\rangle \right\vert {\rm d}s\right] \leq \mathbb{ E}_{ 0}\mathbf{ E} \left[ \int_{ 0}^{t}  \left\Vert \hat{ \eta}^{ n}_{ s} \right\Vert_{ -r_{ 1}} \left\Vert \left\lbrace\mathcal{ L}_{ \mu^n_s}^{(2)} - \mathcal{ L}_{ \mu_s}^{(2)}\right\rbrace(g) \right\Vert_{ r_{ 1}}{\rm d}s\right].
\end{equation}
Concentrate on $\left\Vert \left\lbrace\mathcal{ L}_{ \mu^n_s}^{(2)} - \mathcal{ L}_{ \mu_s}^{(2)}\right\rbrace(g) \right\Vert_{ r_{ 1}}$: by definition of $ \mathcal{ L}^{(2)}$ (recall \eqref{eq:L2}), it suffices to estimate $ \left\Vert \partial_{ \theta_{ 1}}g \left\langle (\mu_{ s}^{ n} - \mu_{ s})({\rm d}\theta^{ \prime})\, ,\, \Gamma \left(\cdot, \theta^{ \prime}\right)\right\rangle \right\Vert_{ r_{ 1}}$ (the other term being treated analogously):
\begin{align*}
\big\Vert \partial_{ \theta_{ 1}}g &\left\langle (\mu_{ s}^{ n} - \mu_{ s})({\rm d}\theta^{ \prime})\, ,\, \Gamma \left(\cdot, \theta^{ \prime}\right)\right\rangle \big\Vert_{ r_{ 1}}^{ 2}\\
&= \sum_{ \underset{k+l\leq r_{ 1}}{k,l\geq 0}} \int_{ \mathbb{ T}^{ 2}} \left\vert \partial_{ \theta_{ 1}}^{ k}\left( \partial_{ \theta_{ 1}} \partial_{ \theta_{ 2}}^{ l} g(\theta_{ 1}, \theta_{ 2}) \int \Gamma(\theta_{ 1}, \theta^{ \prime})(\mu_{ s}^{ n}- \mu_{ s})({\rm d}\theta^{ \prime})\right)\right\vert^{ 2} {\rm d}\theta_{ 1} {\rm d}\theta_{ 2}\\
&=\sum_{ \underset{k+l\leq r_{ 1}}{k,l\geq 0}} \int_{ \mathbb{ T}^{ 2}} \left\vert \sum_{ j=0}^{ k} \binom{k}{j}\left( \partial_{ \theta_{ 1}}^{ j+1} \partial_{ \theta_{ 2}}^{ l} g(\theta_{ 1}, \theta_{ 2}) \int \partial_{ \theta_{ 1}}^{ k-j}\Gamma(\theta_{ 1}, \theta^{ \prime})(\mu_{ s}^{ n}- \mu_{ s})({\rm d}\theta^{ \prime})\right)\right\vert^{ 2} {\rm d}\theta_{ 1} {\rm d}\theta_{ 2}\\
&\leq \sum_{ \underset{k+l\leq 6}{k,l\geq 0}} 2^{ k} \sum_{ j=0}^{ k} \binom{k}{j}\int_{ \mathbb{ T}^{ 2}} \left\vert \partial_{ \theta_{ 1}}^{ j+1} \partial_{ \theta_{ 2}}^{ l} g(\theta_{ 1}, \theta_{ 2})\right\vert^{ 2} \left\vert \int \partial_{ \theta_{ 1}}^{ k-j}\Gamma(\theta_{ 1}, \theta^{ \prime})(\mu_{ s}^{ n}- \mu_{ s})({\rm d}\theta^{ \prime})\right\vert^{ 2} {\rm d}\theta_{ 1} {\rm d}\theta_{ 2}.
\end{align*}
Since $ \Gamma$ is regular with bounded derivatives, we have the following bound (recall the definition of $d_{ BL}$ in \eqref{eq:dBL})
\begin{equation*}
\left\vert \int \partial_{ \theta_{ 1}}^{ k-j}\Gamma(\theta_{ 1}, \theta^{ \prime})(\mu_{ s}^{ n}- \mu_{ s})({\rm d}\theta^{ \prime})\right\vert^{ 2} \leq C d_{ BL} \left( \mu_{ s}^{ n}, \mu_{ s}\right)^{ 2},
\end{equation*}
where the constant $C$ above is uniform in $s$ and $0\leq j \leq k \leq r_{ 1}$. Hence, we obtain, for another numerical constant $ \tilde{ C}>0$
\begin{align*}
\left\Vert \left\lbrace\mathcal{ L}_{ \mu^n_s}^{(2)} - \mathcal{ L}_{ \mu_s}^{(2)}\right\rbrace(g) \right\Vert_{ r_{ 1}}&\leq \tilde{ C} d_{ BL} \left( \mu_{ s}^{ n}, \mu_{ s}\right)\left\Vert g \right\Vert_{ r_{ 1}+1}.
\end{align*}
Going back to \eqref{aux:rest_hat_eta}, we obtain by H\"{o}lder inequality and using \eqref{eq:first_control_hat_etan}, 
\begin{align*}
\mathbb{ E}_{ 0}\mathbf{ E} &\left[ \int_{ 0}^{t} \left\vert \left\langle \hat{ \eta}_{ s}^{ n}\, ,\, \left\lbrace \mathcal{ L}_{ \mu^n_s}^{(2)} - \mathcal{ L}_{ \mu_s}^{(2)}\right\rbrace(g)\right\rangle \right\vert {\rm d}s\right] \leq \tilde{ C} \left\Vert g \right\Vert_{ r_{ 1}+1} \int_{ 0}^{t} \mathbb{ E}_{ 0}\mathbf{ E} \left[\left\Vert \hat{ \eta}^{ n}_{ s} \right\Vert_{ -r_{ 1}}d_{ BL} \left( \mu_{ s}^{ n}, \mu_{ s}\right)\right] {\rm d}s\\
&\leq \tilde{ C} \left\Vert g \right\Vert_{ r_{ 1}+1} \int_{ 0}^{t} \mathbb{ E}_{ 0}\mathbf{ E} \left[\left\Vert \hat{ \eta}^{ n}_{ s} \right\Vert_{ -r_{ 1}}^{ 1+ \alpha} \right]^{ \frac{ 1}{ 1+ \alpha}} \mathbb{ E}_{ 0}\mathbf{ E}\left[d_{ BL} \left( \mu_{ s}^{ n}, \mu_{ s}\right)^{ \frac{ 1+ \alpha}{ \alpha}}\right]^{ \frac{ \alpha}{ 1+ \alpha}} {\rm d}s\\
&\leq \tilde{ C} \left\Vert g \right\Vert_{ r_{ 1}+1} \left(\sup_{ n}\sup_{ s\leq T} \mathbb{ E}_{ 0}\mathbf{ E} \left[\left\Vert \hat{ \eta}^{ n}_{ s} \right\Vert_{ -r_{ 1}}^{ 1+ \alpha} \right]^{ \frac{ 1}{ 1+ \alpha}} \right)\int_{ 0}^{t}  \mathbb{ E}_{ 0}\mathbf{ E}\left[d_{ BL} \left( \mu_{ s}^{ n}, \mu_{ s}\right)^{ \frac{ 1+ \alpha}{ \alpha}}\right]^{ \frac{ \alpha}{ 1+ \alpha}} {\rm d}s\\
&\leq C_{ T} \left\Vert g \right\Vert_{ r_{ 1}+1}\int_{ 0}^{t}  \mathbb{ E}_{ 0}\mathbf{ E} \left[d_{ BL} \left( \mu_{ s}^{ n}, \mu_{ s}\right)^{ \frac{ 1+ \alpha}{ \alpha}} \right]^{ \frac{ \alpha}{ 1+ \alpha}} {\rm d}s,
\end{align*}
which goes to $0$ as $n\to\infty$, by \eqref{eq:conv_empmeas} applied for $ q= \frac{ 1+ \alpha}{ \alpha}\geq1$ (recall Remark~\ref{rem:conv_empmeas}). Now turn to the first equation in \eqref{eq:limit_etas}: for all $f\in H^{ r_{ 1}+1} \left(\mathbb{ T}\right)$, the mapping $ \alpha \mapsto \left\langle \alpha_{ \cdot}\, ,\, \Theta(f)\right\rangle$ is continuous from $ \mathcal{ C}([0, T], H^{ -r_{ 1}}(\mathbb{ T}^{ 2})$ to $ \mathcal{ C} \left([0, T], \mathbb{ R}\right)$ so that we have the convergence of $\int_{ 0}^{t} \Theta^{ \ast} \hat{ \eta}^{ n}_{ s} {\rm d}s$ towards $\int_{ 0}^{t} \Theta^{ \ast} \hat{ \eta}_{ s} {\rm d}s$ as $n\to\infty$. Decomposing in a same way as before the drift as $\left\langle \eta_{ s}^{ n}\, ,\,  \mathcal{ L}_{ \mu^n_s}^{(1)}(f)\right\rangle= \left\langle \eta_{ s}^{ n}\, ,\,  \mathcal{ L}_{ \mu_s}^{(1)}(f)\right\rangle + \left\langle \eta_{ s}^{ n}\, ,\,  \mathcal{ L}_{ \mu^n_s}^{(1)} - \mathcal{ L}_{ \mu_s}^{ (1)}(f)\right\rangle$, we see once again that is suffices to prove that $ \mathbb{ E}_{ 0}\mathbf{ E} \left[ \int_{ 0}^{t} \left\vert \left\langle \eta_{ s}^{ n}\, ,\,  \mathcal{ L}_{ \mu^n_s}^{(1)} - \mathcal{ L}_{ \mu_s}^{ (1)}(f)\right\rangle \right\vert{\rm d}s\right] \xrightarrow[ n\to\infty]{}0$ for fixed $f\in H^{ r_{ 1}+1} \left(\mathbb{ T}\right)$, which follows from the very same calculation as before, using again the fact that $\sup_{ s\leq T} \mathbb{ E}_{ 0}\mathbf{ E} \left[d_{ BL} \left( \mu_{ s}^{ n}, \mu_{ s}\right)^{ \frac{ 1+ \alpha}{ \alpha}} \right] \xrightarrow[ n\to\infty]{}0$ (recall \eqref{eq:conv_empmeas}). This concludes the proof of Proposition~\ref{prop:ident_lim}.
\end{proof}
The proof of Theorem~\ref{th:limit eta and hat eta} is then complete provided we prove the following uniqueness result:
\begin{proposition}
\label{prop:unique_SPDE}
Under the assumptions of Theorem~\ref{th:limit eta and hat eta}, there is pathwise uniqueness (as well as uniqueness in law) of a solution $ \left(\eta, \hat{ \eta}\right)$ in $ \mathcal{ C} \left([0, T], H^{ -r_{ 1}} \left(\mathbb{ T}\right)\oplus H^{-r_{ 1}}(\mathbb{ T}^{ 2})\right)$ to the coupled system \eqref{eq:limit_etas}.
\end{proposition}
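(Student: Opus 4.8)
The plan is to exploit the \emph{triangular} structure of the system \eqref{eq:limit_etas}: the second equation for $\hat\eta$ is a closed, deterministic linear evolution equation driven only by the datum $\hat\eta_0$, while the first equation — once $\hat\eta$ is viewed as a given forcing — is a linear inhomogeneous equation for $\eta$ driven by $W$ and by $\int_0^{\cdot}\Theta^{\ast}\hat\eta_s\,\dd s$. Pathwise uniqueness therefore reduces to two successive applications of one and the same linear uniqueness lemma. Uniqueness in law is then automatic: given the data $(\eta_0,\hat\eta_0)$ and the noise path $W$, the solution is unique, hence a measurable functional of $(\eta_0,\hat\eta_0,W)$, whose joint law is fixed.

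First I would pass to the variation-of-constants (weak–mild) formulation. Splitting $\mathcal{L}_{\nu}^{(1)}=\tfrac12\partial_\theta^2+\mathcal{B}_\nu^{(1)}$ and $\mathcal{L}_{\nu}^{(2)}=\tfrac12\Delta+\mathcal{B}_\nu^{(2)}$, the operators $\mathcal{B}^{(i)}_\nu$ are first-order differential operators with $\mathcal{C}^\infty$ coefficients, bounded uniformly in $s\in[0,T]$ and in the probability measure $\nu=\mu_s$ (Assumption~\ref{ass:Gamma} and the fact that $\mu_s$ is a probability measure); in particular $\mathcal{B}^{(i),\ast}_{\mu_s}$ maps $H^{-r}$ continuously into $H^{-(r+1)}$, uniformly in $s$. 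Testing the integrated identities \eqref{eq:limit_etas} against $S_{t-s}\varphi$ and using $\partial_s S_{t-s}=-\tfrac12\Delta S_{t-s}$ — exactly as in Lemmas~\ref{lem:mu^n_t} and~\ref{lem:fluctuation_process_semimart} — one shows that any solution in $\mathcal{C}([0,T],H^{-r_1}\oplus H^{-r_1})$ satisfies
\begin{equation*}
\hat\eta_t=S_t\hat\eta_0+\int_0^t S_{t-s}\mathcal{B}^{(2),\ast}_{\mu_s}\hat\eta_s\,\dd s,\qquad
\eta_t=S_t\eta_0+\int_0^t S_{t-s}\bigl(\mathcal{B}^{(1),\ast}_{\mu_s}\eta_s+\Theta^{\ast}\hat\eta_s\bigr)\dd s+\widetilde W_t,
\end{equation*}
where $\widetilde W$ depends only on the noise $W$. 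The Bochner (and stochastic convolution) integrals are well defined because $\|S_{t-s}\|_{\mathcal{L}(H^{-(r+1)},H^{-r})}\le C\bigl(1+(t-s)^{-1/2}\bigr)$ by analyticity of $(S_t)$ (Lemma~\ref{lem:regularity_in_H_r}), and $(t-s)^{-1/2}$ is integrable.

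Given two solutions $(\eta^1,\hat\eta^1),(\eta^2,\hat\eta^2)$ with the same data, set $\delta:=\hat\eta^1-\hat\eta^2$. Then $\delta_0=0$ and $\delta_t=\int_0^t S_{t-s}\mathcal{B}^{(2),\ast}_{\mu_s}\delta_s\,\dd s$, so $\|\delta_t\|_{-r_1}\le C\int_0^t\bigl(1+(t-s)^{-1/2}\bigr)\|\delta_s\|_{-r_1}\,\dd s$ on $[0,T]$; the singular Gronwall–Henry inequality (\cite{GPPP}, Lemma~5.2) forces $\delta\equiv0$, i.e. $\hat\eta^1=\hat\eta^2$. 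Now $\varepsilon:=\eta^1-\eta^2$ satisfies $\varepsilon_0=0$ and, the $\Theta^{\ast}\hat\eta$ and $\widetilde W$ terms cancelling, $\varepsilon_t=\int_0^t S_{t-s}\mathcal{B}^{(1),\ast}_{\mu_s}\varepsilon_s\,\dd s$; the same estimate and lemma give $\varepsilon\equiv0$. This yields pathwise uniqueness in $\mathcal{C}([0,T],H^{-r_1}\oplus H^{-r_1})$, and uniqueness in law follows as noted in the first paragraph. The only real obstacle is technical: justifying the passage to the mild formulation for solutions that a priori live only in $H^{-r_1}$ (so that each application of $\mathcal{L}^{(i),\ast}$ costs two derivatives), tracking the Sobolev scale $(r_0,r_1,r_2)$ of \eqref{eq:indices} consistently, and checking that the convolution terms — including $\widetilde W$ — are well defined with the stated $(t-s)^{-1/2}$-type singularities; once the mild reformulation is secured, the uniqueness itself is the soft two-step singular-Gronwall argument above.
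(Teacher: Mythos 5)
Your argument is correct in substance but takes a genuinely different route from the paper's. You pass to a mild formulation through the \emph{heat} semigroup $S$ in the Sobolev scale $H^{-r}$, absorbing the whole first-order part $\mathcal{B}^{(i)}_\nu=\mathcal{L}^{(i)}_\nu-\tfrac12\Delta$ into a singular Gr\"onwall--Henry estimate; the paper instead tests against the \emph{backward flow} semigroups $U(t,s)$ and $V(t,s)$ generated by $\mathcal{L}^{(2)}$ and by the purely local part $L^{(1)}$ of $\mathcal{L}^{(1)}$, works in H\"older spaces $\mathcal{C}^{k}$ using the non-singular bounds of Lemma~\ref{lem:Uts}, and then handles the residual nonlocal piece $K^{(1)}$ by a Picard iteration whose limit is the explicit Neumann series \eqref{eq:repr_eta}, from which uniqueness in law is read off directly. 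Your scheme is lighter (one semigroup, one singular Gr\"onwall, the triangular $\hat\eta\!\to\!\eta$ structure does the rest) and, as you note, the real burden is pushed into justifying the mild reformulation for a priori $H^{-r_1}$-valued solutions; the paper's scheme avoids parabolic singularities entirely at the cost of introducing the diffeomorphic flows of Kunita and the explicit iteration.

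One point in your write-up is not merely cosmetic and should be repaired: you call $\mathcal{B}^{(1)}_\nu$ a ``first-order differential operator with $\mathcal{C}^\infty$ coefficients.'' It is not. From \eqref{eq:L1} one has $\mathcal{B}^{(1)}_\nu f= v_\nu\,\partial_\theta f + K^{(1)}_\nu f$ with $K^{(1)}_\nu f(\theta)=\langle\nu(\dd\theta'),\Gamma(\theta',\theta)\partial_\theta f(\theta')\rangle$, and this second term is a genuinely \emph{nonlocal} rank-one–type perturbation; it is precisely what prevents $\mathcal{L}^{(1)}$ from generating a diffusion and forces the paper to split $\mathcal{L}^{(1)}=L^{(1)}+K^{(1)}$ and iterate. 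Your argument survives because it never uses the differential-operator structure, only the bound $\|\mathcal{B}^{(1),\ast}_{\mu_s}\varepsilon\|_{-(r_1+1)}\le C\|\varepsilon\|_{-r_1}$, and this is true but needs its own one-line proof: the local part loses one derivative by duality, while $K^{(1),\ast}_{\mu_s}$ sends $H^{-r_1}$ into $H^{-(3/2+\gep)}\subset H^{-(r_1+1)}$ because $K^{(1)}_{\mu_s}f$ depends on $f$ only through the scalar $\int\partial_\theta f(\theta')\Gamma(\theta',\cdot)\mu_s(\dd\theta')$, bounded by $\|f\|_{\mathcal{C}^1}\lesssim\|f\|_{H^{3/2+\gep}}$, so that $K^{(1),\ast}_{\mu_s}\varepsilon$ is (minus) a derivative of a smooth multiple of the probability measure $\mu_s$. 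With that correction the singular Gr\"onwall step closes as claimed; without it, a reader would be entitled to object that the hypothesis of the estimate is simply false as stated.
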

Proof of Proposition~\ref{prop:unique_SPDE}, which relies heavily on classical estimates (see e.g. \cite{Jourdain1998,MR876080,mitoma85,lucon_stannat_2016} for similar techniques) is postponed to Appendix~\ref{sec:uniqueness}.

\subsection{Particular cases}
\label{sec:proofs_examples}
The point of this paragraph is to prove the results of Section~\ref{sec:examples}.
\subsubsection{Proof of Proposition~\ref{prop:limit_hateta0_indep}}
Recall that we suppose Assumption~\ref{ass:theta0_indep_xi} and that for any test function $g$
\begin{align*}
\left\langle \hat{ \eta}_{ 0}^{ n}\, ,\, g\right\rangle&= \frac{ 1}{ n^{ 3/2}} \sum_{ i,j=1}^{ n} \hat{ \xi}_{ij}^{ (n)} g(\theta_{ 0}^{ i,n}, \theta_{ 0}^{ j, n}).
\end{align*}
Considering $(e_p)_{p\in \bbZ}$ the canonical orthonormal basis of $L^2(\bbT)$, the family defined by $\left(\psi_{p,q}=(1+p^2+q^2)^{-\frac{r}{2}}e_p\otimes e_p\right)_{p,q\in \bbZ}$ constitutes an orthonormal basis of $H^{-r}(\bbT^2)$. Then
\begin{align*}
\mathbb{ E}_{ 0} \left[\left\Vert \hat{ \eta}_{ 0}^{ n} \right\Vert^{ 2}_{ -r}\right]&= \sum_{ p,q\in \bbZ}  \mathbb{ E}_{ 0} \left[ \left\vert \left\langle \hat{ \eta}_{ 0}^{ n}\, ,\, \psi_{ p,q}\right\rangle \right\vert^{ 2}\right]\nonumber\\
&=  \frac{ 1}{ n^{ 3}} \sum_{ i,j, k,l} \hat{ \xi}_{ij}^{ (n)} \hat{ \xi}_{kl}^{ (n)} \mathbb{ E}_{ 0} \left[\sum_{ p,q\in \bbZ}\psi_{ p,q}(\theta_{ 0}^{ i,n}, \theta_{ 0}^{ j, n})\bar\psi_{ p,q}(\theta_{ 0}^{ k,n}, \theta_{ 0}^{ l, n})\right].
\end{align*}
The term
\begin{equation*}
H_{i,j,k,l} :=  \mathbb{ E}_{ 0} \left[\sum_{ p,q\in \bbZ}\psi_{ p,q}(\theta_{ 0}^{ i,n}, \theta_{ 0}^{ j, n})\bar\psi_{ p,q}(\theta_{ 0}^{ k,n}, \theta_{ 0}^{ l, n})\right]
\end{equation*}
satisfies (recall $r>1$), 
\begin{equation*}
\left|H_{i,j,k,l} \right|\leq \sum_{p,q\in \bbZ} (1+p^2+q^2)^{-r}=:C_r.
\end{equation*}
Defining
\begin{equation*}
S_n = \frac{ 1}{ n^{ 3}} \sum_{ i,j, k,l \in \{1,\ldots, n\}, (i,j)\neq (k,l)} \hat{ \xi}_{ij}^{ (n)} \hat{ \xi}_{kl}^{ (n)}  H_{i,j,i,j},
\end{equation*}
we get the decomposition
\begin{equation*}
\bbE_0\left[\left\Vert \hat{ \eta}_{ 0}^{ n} \right\Vert^{ 2}_{ -r}\right] - S_n = \frac{ 1}{ n^{ 3}} \sum_{ i,j=1}^n \left(\hat{ \xi}_{ij}^{ (n)}\right)^2  H_{i,j,i,j}.
\end{equation*}
Since the random variables $V_{ij}=\frac{1}{n^3}\left(\left(\hat{ \xi}_{ij}^{ (n)}\right)^2  H_{i,j,i,j} - \frac{1-p_n}{p_n} H_{i,j,i,j}\right)$ satisfy $\left|V_{ij}\right|\leq \frac{C'_r}{n^3 p_n 2}$ and $\mathrm{Var}\left(V_{ij}\right)\leq \frac{C'_r}{n p_n^3}$, applying Bernstein's inequality leads to the bound
\begin{equation*}
\bbP\left(\left|\sum_{i,j=1}^n V_{ij}\right|>t\right)\leq 2\exp \left(-\frac12\frac{-t^2}{\frac{C'_r}{np_n^3}+\frac{t C'_r}{3 n^3p_n^2}}\right)=2\exp \left(-\frac12\frac{-np_n^3 t^2}{C'_r+\frac{t C'_r p_n}{3n^2}}\right).
\end{equation*}
So, choosing $t=p_n^{-\frac32}n^{-\frac12+\gd}$ for $\gd$ small and remarking that  $\sum_{i,j=1}^n \frac{1-p_n}{n^3 p_n} H_{i,j,i,j}\leq p_n^{-\frac32}n^{-\frac12+\gd}$ for $n$ large enough, we obtain, for some positive constant $c$,
\begin{equation*}
\bbP\left(\left|\bbE_0\left[\left\Vert \hat{ \eta}_{ 0}^{ n}\right\Vert^{ 2}_{ -r} \right] - S_n\right|> 2p_n^{-\frac32}n^{-\frac12+\gd}\right)\leq e^{-c n^{2\gd}}.
\end{equation*}
Now, relying on the decoupling inequality provided by \cite{de1995decoupling}, we get that for some constant $C$ and for $\hat{ \xi}_{ij}^{ (n,1)},\hat{ \xi}_{ij}^{ (n,2)}$ two independent copies of $\hat{ \xi}_{ij}^{ (n)}$,
\begin{equation*}
\bbP\left(\left|S_n\right|\geq t \right)\leq
C  \bbP\left(\sum_{i=1}^n \left| \frac{1}{n^3} \sum_{ i,j, k,l \in \{1,\ldots, n\}, (i,j)\neq (k,l)} \hat{ \xi}_{ij}^{ (n,1)} \hat{ \xi}_{kl}^{ (n,2)}  H_{i,j,k,l}\right|\geq \frac{t}{C} \right).
\end{equation*}
Consider the filtration $\cF_{l,m}=\gs\left(\hat{ \xi}^{ (n,1)},\hat{ \xi}_{1,1}^{ (n,2)},\ldots,\hat{ \xi}_{lm}^{ (n,2)}\right)$. Denoting
\begin{equation*}
X_{k,l}(x,z) =\frac{1}{n^3} \left(\sum_{i,j\in \{1,\ldots,n\}: (i,j)\neq (k,l)}^n H_{i,j,k,l} x_{ij}\right)z,
\end{equation*}
the process
\begin{equation*}
Y_{m_1,m_2} = \sum_{(k,l):(k,l)\leq (m_1,m_2)} X_{k,l}\left(\hat{ \xi}^{ (n,1)}, \hat{ \xi}_{kl}^{ (n,2)}\right)
\end{equation*}
is a martingale which satisfies $Y_{n,n} = \frac{1}{n^3} \sum_{ i,j, k,l \in \{1,\ldots, n\}, (i,j)\neq (k,l)} \hat{ \xi}_{ij}^{ (n,1)} \hat{ \xi}_{kl}^{ (n,2)}  H_{i,j,k,l}$.
It satisfies moreover
\begin{equation*}
\left|X_{k,l}\left(\hat{ \xi}^{ (n,1)}, \hat{ \xi}_{kl}^{ (n,2)}\right)\right|\leq \frac{C_r}{np_n^2},
\end{equation*}
and
\begin{align*}
\left\langle Y\right\rangle_{n,n}& = \sum_{m_1,m_2=1}^n \bbE\left[\left(X_{k,l}\left(\hat{ \xi}^{ (n,1)}, \hat{ \xi}_{kl}^{ (n,2)}\right)\right)^2|\cF_{m_1,m_2-1}\right]\nonumber\\
&=\frac{1}{n^2}\left(\frac{1}{p_n}-1\right) \sum_{m_1,m_2=1}^n \left(\frac{1}{n^2}\sum_{i,j\in \{1,\ldots,n\}: (i,j)\neq (k,l)}^n H_{i,j,k,l} \hat{ \xi}_{ij}^{ (n,1)}\right)^2.
\end{align*}
Applying Bernstein's inequality we get
\begin{equation*}
\begin{split}
	\bbP\left(\left|\frac{1}{n^2}\sum_{i,j\in \{1,\ldots,n\}: (i,j)\neq (k,l)}^n H_{i,j,k,l} \hat{ \xi}_{ij}^{ (n,1)}\right|>t\right)&\leq 2\exp\left(-\frac12 \frac{t^2}{\frac{C_r}{p_n n^2}+\frac{t C_r}{3n^2 p_n}}\right)\\
	&=2\exp\left(-\frac12 \frac{p_n n^2 t^2}{C_r+ \frac{t C_r}{3}}\right),
\end{split}
\end{equation*}
which means that, taking $t=p_n^{-\frac12}n^{-1+\gd}$ for $\gd$ small, we get, for some positive constant $c$,
\begin{equation*}
\bbP\left(\left\langle Y\right\rangle_{n,n}> p_n^{-\frac32}n^{-1+\gd}\right)\leq n^2e^{-cn^{2\gd}}.
\end{equation*}
We can now apply Bernstein's inequality for martingales (see \cite[Th.~1.6]{Freedman1975} or \cite[Th.~1.1]{Tropp2011}):
\begin{align*}
\mathbb{ P} \left( \left\vert Y_{ n,n} \right\vert>t\right) &\leq \mathbb{ P} \left(  \left\langle Y\right\rangle_{ n,n}> p_n^{-\frac12}n^{-1+\gd}\right)+ \mathbb{ P} \left( \left\vert Y_{ n,n} \right\vert> t,\ \left\langle Y\right\rangle_{n, n}\leq p_n^{-\frac12}n^{-1+\gd}\right) \nonumber\\
&\leq n^2e^{-cn^{2\gd}}+ 2 \exp \left(- \frac{ 1}{ 2} \frac{ t^{ 2} }{ p_n^{-\frac12}n^{-1+\gd}+ \frac{  t C_r p_n^{-2}n^{-1}}{ 3}}\right).
\end{align*}
Choosing $t=p_n^{-\frac14} n^{-\frac12+\gd}$ we obtain
\begin{equation*}
\mathbb{ P} \left( \left\vert Y_{ n,n} \right\vert>t\right) \leq n^2e^{-cn^{2\gd}}+ 2 \exp \left(- \frac{ 1}{ 2} \frac{ n^\gd }{ C_r+ \frac{  C_r}{ 3p_n^\frac74 n^\frac12}}\right),
\end{equation*}
which concludes the proof of Proposition~\ref{prop:limit_hateta0_indep}.
\subsection{A case where the initial condition depends on the graph}
\label{ss:initial_cond_on_graph}

	The aim is to construct an example of initial condition that depends on the graph and for which $\hat \eta^n_0$ has a non trivial limit in distribution. We place ourselves in the case where $\Gamma$ is given by $\Gamma(\theta,\theta')=-\sin(\theta-\theta')$ and the graph sequence is a symmetric Erd\H{o}s-R\'enyi with $p=\frac 12$. In particular, the variables $\xi^{(n)}_{ij}$ are such that $\xi^{(n)}_{ij} = \xi^{(n)}_{ji}$ for every $1\leq i, j \leq n$. We further suppose that they do not depend on $n$, i.e., $\xi^{(n)}_{ij} = \xi_{ij}$ for every $i, j$ and $n$. Let $\left(\cG_n\right)_{n\geq 1}$ be the corresponding filtration with $\cG_n=\gs\left(\left( \xi_{ij}\right)_{1\leq i,j\leq n}\right)$.
	
	Let the sequence $(\theta^i_0)_{i\geq 1}$ be defined by recursion with values in $\{0,\frac{\pi}{2}\}$. We initiate the recursion by choosing $\theta^1_0$ uniformly in $\{0,\frac{\pi}{2}\}$. Then, supposing that $(\theta^1_0,\ldots,\theta^n_0)$ are already defined (and $\cG_n$-measurable), we consider the $\cG_{n+1}$-measurable random variables
	\begin{align*}
		R^n_0&=\sum_{i\leq n:\, \theta^i_0=0} \hat \xi_{i\, n+1},\\
		R^n_{\frac{\pi}{2}}&=\sum_{i\leq n:\, \theta^i_0=\frac{\pi}{2}} \hat \xi_{i\, n+1},
	\end{align*}
	and make the choice $\theta^{n+1}_0=0$ when $R^n_{\frac{\pi}{2}} > R^n_0$, $\theta^{n+1}_0=\frac{\pi}{2}$ when $R^n_{\frac{\pi}{2}} < R^n_0$, and $\theta^{n+1}_0=U^k$ with $U^k$ chosen uniformly in $\left\{0,\frac{\pi}{2}\right\}$ (independently from $\cG_n$) when $R^n_{\frac{\pi}{2}} = R^n_0$.
	
	Remark that by symmetry of the laws of $R^n_0$ and $R^n_{\frac \pi 2}$ (recall that $p=\frac12$), $\theta^{n+1}_0$ is a uniform random variable in $\{0, \frac\pi 2\}$ independent from $\cG_n$. Hence, the sequence $(X_n)_{n\geq 1}$ defined by $X_n=\sum_{i=1}^n \mathrm{1}_{\{\theta^i_0=0\}}$ is a symmetric simple random walk on $\mathbb{Z}$.
	Observe that the laws of $R^n_0$ and $R^n_{\frac \pi 2}$ depend on $\cG_n$ only via $X_n$. Conditionally on $\{X_n = x\}$, $\frac{R^n_0 + x}{2}$ is a Binomial random variable with parameters $x$ and $\frac12$ and $\frac{R^n_{\frac \pi 2} + n-x}{2}$ is a Binomial with parameters $n-x$ and $\frac12$, independent of $R^n_0$.
	
	Since $\mu^n_0=\frac{X_n}{n}\gd_0+\frac{n-X_n}{n}\gd_{\frac\pi2}$, we have $d_{BL}(\mu^n_0,\mu_0)\underset{n\rightarrow\infty}{\rightarrow} 0$ $\bbP-$a.s., where $\mu_0=\frac{1}{2}\gd_0+\frac12\gd_{\frac{\pi}{2}}$, while $\eta_0^n=\sqrt{n}\left(\frac{X_n}{n}-\frac12\right)\gd_0+\sqrt{n}\left(\frac{n-X_n}{n}-\frac12\right)\gd_{\frac\pi2}$ converges in law in $H^{-r_1}$ to $\eta_0=Z_1 \gd_0+Z_2\gd_{\frac{\pi}{2}}$, where $(Z_1,Z_2)$ has centered Gaussian distribution of covariance matrix $\left(\begin{array}{cc}
		\frac14 &-\frac14\\-\frac14 &\frac14
	\end{array}\right)$.
	
	The process $\hat{\eta}^n_0$ can be expressed in terms of $R^n_0$, $R^n_{\frac \pi 2}$ as follows:
	\begin{equation*}
		\begin{split}
			\langle \hat\eta^n_0,g\rangle = &\frac{2}{n^\frac32}\sum_{k=1}^{n-1} R^k_0 \left(\mathrm{1}_{\left\{R^k_{\frac{\pi}{2}}> R^k_0 \right\}}+\mathrm{1}_{\left\{R^k_{\frac{\pi}{2}}= R^k_0, \, U^k=0 \right\}}\right)g(0,0)\\
			&+\frac{1}{n^\frac32}\sum_{k=1}^{n-1}  \max\left(R^k_{\frac{\pi}{2} }, R^k_{ 0 }\right) \left( g\left(\frac{\pi}{2},0\right) + g\left(0, \frac \pi 2\right) \right)\\
			&+\frac{2}{n^\frac32}\sum_{k=1}^{n-1} R^k_{\frac{\pi}{2}} \left(\mathrm{1}_{\left\{R^k_{\frac{\pi}{2}  }< R^k_{ 0 }\right\}}+\mathrm{1}_{\left\{R^k_{\frac{\pi}{2}}= R^k_0, \, U^k=\frac{\pi}{2} \right\}}\right)g\left(\frac{\pi}{2},\frac{\pi}{2}\right),
		\end{split}
	\end{equation*}
	and
	\begin{equation*}
		\langle\Gamma *\hat\eta^n_0,f\rangle = \frac{1}{n^\frac32}\sum_{k=1}^{n-1} \max\left(R^k_{\frac{\pi}{2} }, R^k_{0 }\right) \left( f(0) - f\left(\frac{\pi}{2}\right)\right).
	\end{equation*}
	Let us study the convergence of these different terms. Since we have $\bbE\left[\left(R^k_{0 }\right)^2\right]\leq  \frac{n}{4}$ and $\bbE\left[\left(R^k_{\frac{\pi}{2} }\right)^2\right]\leq  \frac{n}{4}$, we get
	\begin{align*}
		\bbE&\Bigg[\Bigg( \frac{1}{n^\frac32}\sum_{k=1}^{n-1} R^k_{  0 }\left(\mathrm{1}_{\left\{R^k_{\frac{\pi}{2}}> R^k_0 \right\}}+\mathrm{1}_{\left\{R^k_{\frac{\pi}{2}}= R^k_0, \, U^k=0 \right\}}\right)\nonumber\\
		&\qquad \qquad\qquad \qquad \qquad - \frac{1}{n^\frac32}\sum_{k=1}^{n-1}\bbE\left[ R^k_{ 0 }\left(\mathrm{1}_{\left\{R^k_{\frac{\pi}{2}}> R^k_0 \right\}}+\mathrm{1}_{\left\{R^k_{\frac{\pi}{2}}= R^k_0, \, U^k=0 \right\}}\right)|\cG_k\right]\Bigg)^2\Bigg]\nonumber\\
		&\leq \frac{1}{n^3}\sum_{k=1}^{n-1}\bbE\Bigg[\Bigg(R^k_{ 0 }\left(\mathrm{1}_{\left\{R^k_{\frac{\pi}{2}}> R^k_0 \right\}}+\mathrm{1}_{\left\{R^k_{\frac{\pi}{2}}= R^k_0, \, U^k=0 \right\}}\right)\nonumber\\
		&\qquad \qquad\qquad \qquad \qquad -\bbE\left[ R^k_{ 0 }\left(\mathrm{1}_{\left\{R^k_{\frac{\pi}{2}}> R^k_0 \right\}}+\mathrm{1}_{\left\{R^k_{\frac{\pi}{2}}= R^k_0, \, U^k=0 \right\}}\right)|\cG_k\right]\Bigg)^2\Bigg]\nonumber\\
		&\leq \frac{4}{n^3}\sum_{k=1}^{n-1}\bbE\left[\left(R^k_{ 0 }\mathrm{1}_{\left\{R^k_{\frac{\pi}{2}  }\geq R^k_{ 0 }\right\}}\right)^2\right]\leq \frac{1}{n},
	\end{align*}
	and similarly
	\begin{align*}
		\bbE&\left[\left(\frac{1}{n^\frac32}\sum_{k=1}^{n-1}  \max\left(R^k_{\frac{\pi}{2}  }, R^k_{ 0 }\right)- \frac{1}{n^\frac32}\sum_{k=1}^{n-1}\bbE\left[\max\left(R^k_{\frac{\pi}{2}  }, R^k_{ 0 }\right)|\cG_k\right] \right)^2\right]\nonumber\\
		&\leq \frac{4}{n^3}\sum_{k=1}^{n-1}\left(\bbE\left[\left(R^k_{\frac{\pi}{2}  }\right)^2\right]+\bbE\left[\left(R^k_{  0}\right)^2\right] \right)\leq \frac{1}{n},
	\end{align*}
and
\begin{align*}
		\bbE&\Bigg[\Bigg( \frac{1}{n^\frac32}\sum_{k=1}^{n-1} R^k_{  \frac{\pi}{2} }\left(\mathrm{1}_{\left\{R^k_{\frac{\pi}{2}}< R^k_0 \right\}}+\mathrm{1}_{\left\{R^k_{\frac{\pi}{2}}= R^k_0, \, U^k=0 \right\}}\right)\nonumber\\
		&\qquad \qquad\qquad \qquad \qquad - \frac{1}{n^\frac32}\sum_{k=1}^{n-1}\bbE\left[ R^k_{ 0 }\left(\mathrm{1}_{\left\{R^k_{\frac{\pi}{2}}< R^k_0 \right\}}+\mathrm{1}_{\left\{R^k_{\frac{\pi}{2}}= R^k_0, \, U^k=\frac{\pi}{2} \right\}}\right)|\cG_k\right]\Bigg)^2\Bigg]\leq \frac{1}{n}.	
\end{align*}
	Hence, it remains to study the convergence of the terms
	\begin{align*}
		N^1_n &:= \frac{1}{n^\frac32}\sum_{k=1}^{n-1}\bbE\left[ R^k_{ 0 }\left(\mathrm{1}_{\left\{R^k_{\frac{\pi}{2}}> R^k_0 \right\}}+\mathrm{1}_{\left\{R^k_{\frac{\pi}{2}}= R^k_0, \, U^k=0 \right\}}\right)|\cG_k\right],\\
		N^2_n &:= \frac{1}{n^\frac32}\sum_{k=1}^{n-1}\bbE\left[\max\left(R^k_{\frac{\pi}{2} \cdot }, R^k_{\cdot 0 }\right)|\cG_k\right],\\
	N^3_n&:= \frac{1}{n^\frac32}\sum_{k=1}^{n-1}\bbE\left[ R^k_{\frac{\pi}{2} }\left(\mathrm{1}_{\left\{R^k_{\frac{\pi}{2}}< R^k_0 \right\}}+\mathrm{1}_{\left\{R^k_{\frac{\pi}{2}}= R^k_0, \, U^k=\frac{\pi}{2} \right\}}\right)|\cG_k\right].
	\end{align*}
	Recall that, conditionally to $\{X_n=x\}$ and up to a scaling factor $1/2$, $R^n_{0}$ and $R^n_{\frac \pi 2}$ are (centered) Binomial random variables. Since $X_n/n$ converges almost surely to $\frac12$, one can apply the classical Normal approximation of the binomial distribution and approximate $\frac{R^n_{0}}{\sqrt{n/2}}$ by a centered Normal random variable. This yields to
	\begin{equation*}
		N^1_n = \frac{1}{\sqrt{2}} \frac{1}{n}\sum_{k=1}^{n-1}\sqrt{\frac{k}{n}}\bbE\left[ \frac{\sqrt{2} R^k_{ 0 }}{\sqrt{k}}\left(\mathrm{1}_{\left\{R^k_{\frac{\pi}{2}}> R^k_0 \right\}}+\mathrm{1}_{\left\{R^k_{\frac{\pi}{2}}= R^k_0, \, U^k=0 \right\}}\right)|\cG_k\right]\overset{a.s.}{\underset{n\rightarrow \infty}{\longrightarrow}} \frac L{\sqrt{2}} \int_{[0,1]} \sqrt{x} \dd x,
	\end{equation*} 
	where, considering two independent random variables $Y_1,Y_2$ with standard normal distribution,
	\begin{equation*}
		L := \bbE\left[Y_1\mathrm{1}_{\{Y_2\geq Y_1\}}\right]=-\frac{1}{2\sqrt{\pi}}.
	\end{equation*}
	This means that $N^1_n$ converges almost surely to $-\frac{1}{3\sqrt{2\pi}}$. Via similar arguments, one can prove that
	\begin{equation*}
		N^2_n \overset{a.s.}{\underset{n\rightarrow \infty}{\longrightarrow}} \frac{1}{\sqrt{2}} \bbE\left[\max \left(Y_1, Y_2\right)\right]\int_{[0,1]} \sqrt{x} \dd x = \frac{2}{3\sqrt{2\pi}},
	\end{equation*} 
and
\begin{equation*}
N^3_n\overset{a.s.}{\underset{n\rightarrow \infty}{\longrightarrow}}-\frac{1}{3\sqrt{2\pi}}.
\end{equation*}
	Finally, we deduce that in this particular example $\hat \eta^n_0$ convergences law in $H^{-r_1}(\bbT^2)$ to $\hat \eta_0=\frac{2}{3\sqrt{\pi}}\left(-\gd_{(0,0)}+\gd_{\left(0,\frac{\pi}{2}\right)}+\gd_{\left(\frac{\pi}{2},0\right)}-\gd_{\left(\frac{\pi}{2},\frac{\pi}{2}\right)}\right)$, while $\Gamma*\hat \eta^n_0$ converges in law to $\frac{2}{3\sqrt{2\pi}}\left(\gd_{0}-\gd_{\frac{\pi}{2}}\right)$ in $H^{-r_1}(\bbT)$.

\section{ About fluctuations of local empirical measures}
\label{sec:proof_local_fluct}
The purpose of the present section is to prove Theorem~\ref{th:local_fluct}. Recall the definition \eqref{def:mu_nl} of the empirical measure $\mu_{ t}^{ n,l}$ of particles at distance $1$ of vertex $l=1,2$
and the definition \eqref{def:mu_n12}  of the empirical measure $\mu_{ t}^{ n, 1,2}$ of particles that are connected to both vertices $1$ and $2$.
Recall that we are interested in the behavior of the joint fluctuation process  \eqref{def:zetas}
\begin{equation*}
\zeta_{ t}^{ n}= \left(\zeta_{ t}^{ n, 1}, \zeta_{ t}^{ n, 2}\right)= \left( \sqrt{ n p_{ n}} \left( \mu_{ t}^{ n, 1} - \mu_{ t}\right), \sqrt{ n p_{ n}} \left( \mu_{ t}^{ n, 2} - \mu_{ t}\right)\right).
\end{equation*}
As we will see below, we will actually need to incorporate also the dynamics of the global fluctuation process $ \eta_{ t}^{ n}$, that is to look at the joint convergence of $ \left(\zeta_{ t}^{ n,1}, \zeta_{ t}^{ n, 2}, \eta_{ t}^{ n}\right)$ as $n\to\infty$.
\subsection{The initial condition}
We first address the convergence of the initial condition $ \left(\zeta_{ 0}^{ n,1}, \zeta_{ 0}^{ n, 2}, \eta_{ 0}^{ n}\right)$.
\begin{proposition}
\label{prop:ident_cond0}
Suppose that $ (\theta_{ 0}^{ i, n})$ are i.i.d. random variable with law $ \mu_{ 0}$, independent of the graph. Recall that $p:= \lim_{ n\to\infty} p_{ n}\in [0, 1]$. Then, if $n p_{ n}^{ 2}\to \infty$, for all $r> \frac{ 1}{ 2}$,
\begin{equation}
\sup_{ n}\mathbb{ E} \left[ \left\Vert \zeta_{ 0}^{ n, l} \right\Vert_{ -r}^{ 2}\right]<\infty,
\end{equation}
and $ \left(\zeta_{ 0}^{n,1}, \zeta_{ 0}^{n,2}, \eta_{ 0}^{ n}\right)$ converges in law as $n\to\infty$ in $H^{ -(r+1/2)} \left(\mathbb{ T}\right)^{ 3}$ to the Gaussian process $(\zeta_{ 0}^{ 1}, \zeta_{ 0}^{ 2}, \eta_{ 0})$ with covariance
\begin{equation}
\label{eq:cov_zetas_0}
\begin{split}
C \left( \left(\begin{smallmatrix} f_{ 1}\\ g_{ 1}\\ h_{ 1}\end{smallmatrix}\right), \left(\begin{smallmatrix} f_{ 2}\\ g_{ 2}\\ h_{ 2}\end{smallmatrix}\right)\right)&:= C_{ \zeta_{ 0}}( f_{ 1}, f_{ 2}) + C_{ \zeta_{ 0}}( g_{ 1}, g_{ 2}) + C_{ \eta_{ 0}}( h_{ 1}, h_{ 2})\\
&\quad+ C_{ \zeta_{ 0}^{ 1},\zeta_{ 0}^{ 2}}( f_{ 1}, g_{ 2}) + C_{ \zeta_{ 0}^{ 1},\zeta_{ 0}^{ 2}}( f_{ 2}, g_{ 1}) + C_{ \zeta_{ 0}, \eta_{ 0}}( f_{ 1}, h_{ 2}) + C_{ \zeta_{ 0}, \eta_{ 0}}( f_{ 2}, h_{ 1}) \\&\quad + C_{ \zeta_{ 0},\eta_{ 0}}( g_{ 1}, h_{ 2}) + C_{ \zeta_{ 0},\eta_{ 0}}( g_{ 2}, h_{ 1}),
\end{split}
\end{equation}
where for $ {\rm Cov}_{ \mu_{ 0}}(f,g)= \int \left(f - \int f {\rm d}\mu_{ 0}\right)\left( g - \int g {\rm d}\mu_{ 0}\right) {\rm d}\mu_{ 0}$,
\begin{equation}
\label{eq:covariances_0_cross}
\begin{split}
C_{ \zeta_{ 0}}( f, g)&:=  {\rm Cov}_{ \mu_{ 0}}(f,g) +(1-p)\left(\int f {\rm d}\mu_{ 0}\right) \left(\int g {\rm d}\mu_{ 0}\right),\\
C_{ \eta_{ 0}}(f,g)&:= {\rm Cov}_{ \mu_{ 0}}(f,g),\\
C_{ \zeta_{ 0}^{ 1}, \zeta_{ 0}^{ 2}}(f,g)&:= p {\rm Cov}_{ \mu_{ 0}}(f,g),\\
C_{ \zeta_{ 0}, \eta_{ 0}}(f,g)&:= \sqrt{ p} {\rm Cov}_{ \mu_{ 0}}(f,g),
\end{split}
\end{equation}
where $ f_{ i}$, $g_{ i}$, $h_{ i}$, $i=1,2$ are test functions on $ \mathbb{ T}$. In particular, $(\zeta_{ 0}^{ 1}, \zeta_{ 0}^{ 2}, \eta_{ 0})$ are mutually independent in the diluted case $p=0$.
\end{proposition}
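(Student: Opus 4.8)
The plan is to use that the statement concerns only time $0$, where there is no stochastic dynamics: everything reduces to a multivariate central limit theorem for sums of independent identically distributed vectors, together with a uniform second moment estimate that gives tightness in the negative Sobolev space. First I would record the exact, noise-free identities valid for any test function $f$ on $\mathbb{T}$,
\begin{equation*}
\zeta_{0}^{n,l}(f)=\frac{1}{\sqrt{np_n}}\sum_{i=1}^{n}\bigl(\xi_{li}^{n}f(\theta_0^{i,n})-p_n\langle\mu_0,f\rangle\bigr),\qquad
\eta_0^{n}(f)=\frac{1}{\sqrt n}\sum_{i=1}^{n}\bigl(f(\theta_0^{i,n})-\langle\mu_0,f\rangle\bigr),
\end{equation*}
which follow from $\langle\mu_0^{n,l},f\rangle=\tfrac1{np_n}\sum_i\xi_{li}^nf(\theta_0^{i,n})$ and $\mathbb{E}[\xi_{li}^nf(\theta_0^{i,n})]=p_n\langle\mu_0,f\rangle$. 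The crucial observation is that, the initial variables being i.i.d.\ with law $\mu_0$ and independent of the i.i.d.\ array $(\xi_{ij}^n)$, the summands above (jointly over the three quantities) form an i.i.d.\ family indexed by $i$: for $l\in\{1,2\}$ the entries $\xi_{1i}^n,\xi_{2i}^n$ are \emph{distinct} coordinates of the array, so the diagonal entries $\xi_{11}^n,\xi_{22}^n$ do not spoil independence. This is where the loss of exchangeability of the particle system --- essential to the mean-field CLT of \cite{Fernandez1997} --- is simply replaced by plain independence over $i$.

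Next I would establish the uniform $L^2$ bound. Expanding $\|\zeta_0^{n,l}\|_{-r}^2=\sum_p|\zeta_0^{n,l}(\psi_p)|^2$ over an orthonormal basis $(\psi_p)$ of $H^r(\mathbb{T})$ and using $(\xi_{li}^n)^2=\xi_{li}^n$ with $\xi_{li}^n\perp\theta_0^{i,n}$, one computes $\mathbb{E}[|\zeta_0^{n,l}(\psi_p)|^2]=\langle\mu_0,|\psi_p|^2\rangle-p_n|\langle\mu_0,\psi_p\rangle|^2\le\langle\mu_0,|\psi_p|^2\rangle$, whence
\begin{equation*}
\sup_n\mathbb{E}\bigl[\|\zeta_0^{n,l}\|_{-r}^2\bigr]\le\sum_p\langle\mu_0,|\psi_p|^2\rangle=\int_{\mathbb{T}}\|\delta_\theta\|_{-r}^2\,\mu_0(\mathrm{d}\theta)=:C_r<\infty\qquad(r>1/2),
\end{equation*}
where $\sum_p|\psi_p(\theta)|^2=\|\delta_\theta\|_{-r}^2$ is finite (since $\mathcal{P}(\mathbb{T})\subset H^{-r}(\mathbb{T})$) and $\theta$-independent by translation invariance of $\mathbb{T}$; the same computation gives $\sup_n\mathbb{E}[\|\eta_0^n\|_{-r}^2]\le C_r$. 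By Markov's inequality and compactness of the embedding $H^{-r}(\mathbb{T})\hookrightarrow H^{-(r+1/2)}(\mathbb{T})$, this yields tightness of the laws of $(\zeta_0^{n,1},\zeta_0^{n,2},\eta_0^n)$ in $H^{-(r+1/2)}(\mathbb{T})^3$ (as in Remark~\ref{rem:compact}).

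It then remains to identify the limit through finite-dimensional distributions. Since a probability measure on the separable Hilbert space $H^{-(r+1/2)}(\mathbb{T})^3$ is determined by its images under continuous linear functionals, it suffices to show that for any finite family of test functions $(f_j,g_j,h_j)_{j=1}^m$ the vector $\bigl(\zeta_0^{n,1}(f_j),\zeta_0^{n,2}(g_j),\eta_0^n(h_j)\bigr)_{j}$ converges to a centered Gaussian. This is the Lindeberg--Feller multivariate CLT applied to the i.i.d.\ array of the first step: the summands are bounded by $O(1/\sqrt{np_n})$, so the Lindeberg condition holds since $np_n\to\infty$ (which follows from the hypothesis $np_n^2\to\infty$), and the covariances converge by short computations using the mutual independence of $\xi_{1i}^n$, $\xi_{2i}^n$, $\theta_0^{i,n}$ --- namely $\mathrm{Var}(\zeta_0^{n,l}(f))=\langle\mu_0,f^2\rangle-p_n\langle\mu_0,f\rangle^2$, $\mathrm{Cov}(\zeta_0^{n,1}(f),\zeta_0^{n,2}(g))=p_n\,\mathrm{Cov}_{\mu_0}(f,g)$, $\mathrm{Cov}(\zeta_0^{n,l}(f),\eta_0^n(h))=\sqrt{p_n}\,\mathrm{Cov}_{\mu_0}(f,h)$ and $\mathrm{Var}(\eta_0^n(h))=\mathrm{Cov}_{\mu_0}(h,h)$, which assemble precisely into the bilinear form $C$ of \eqref{eq:cov_zetas_0}--\eqref{eq:covariances_0_cross} after letting $p_n\to p$.

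Tightness together with this convergence of all finite-dimensional marginals pins down the weak limit as the centered Gaussian process with covariance $C$, which is then automatically a genuine Gaussian element of $H^{-(r+1/2)}(\mathbb{T})^3$ (being the limit of a tight sequence whose finite-dimensional projections are Gaussian); the diluted regime $p=0$ follows at once, since the off-diagonal blocks $C_{\zeta_0^1,\zeta_0^2}$ and $C_{\zeta_0,\eta_0}$ then vanish and jointly Gaussian uncorrelated components are independent. I do not expect a genuine conceptual obstacle: the argument is standard, and the only delicate part is the bookkeeping --- carrying the two distinct normalizations $\sqrt{np_n}$ and $\sqrt n$ consistently through the covariance computation (this is exactly what produces the asymmetric weights $(1-p)$, $p$, $\sqrt p$) and verifying that the diagonal graph entries leave the i.i.d.\ structure of the array intact.
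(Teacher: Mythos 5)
Your proof is correct and reaches the same covariance structure, but it takes a genuinely cleaner route than the paper's. The paper splits each $\zeta_0^{n,l}(f)$ as $X_n^{(l)}(f)+Y_n^{(l)}(f)$, where $X_n^{(l)}(f)=\frac{1}{\sqrt{np_n}}\sum_j\xi_{lj}^{(n)}\bar f(\theta_0^{j,n})$ (with $\bar f = f-\mu_0(f)$) and $Y_n^{(l)}(f)=\mu_0(f)\frac{\sqrt{np_n}}{n}\sum_j\hat\xi_{lj}^{(n)}$; it then computes the characteristic function $\varphi_n(u,v,w)$, conditions on the graph $\sigma$-field $\mathcal{F}_\xi$, applies Lyapounov's CLT to the conditionally independent $X$-part (which requires the $\mathbb{P}$-a.s.\ convergence of the conditional variance $s_{n,U}^2$, proved via the concentration estimates \eqref{eq:sum_hat_xi_lj}--\eqref{eq:sum_hat_xi_12j}), applies Lyapounov a second time to the graph-only $Y$-part, and finally recombines the two by dominated convergence. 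Your observation --- that the triplets $(\xi_{1i}^{(n)},\xi_{2i}^{(n)},\theta_0^{i,n})$ are genuinely i.i.d.\ over $i$ because they occupy distinct entries of the i.i.d.\ array, so the whole vector $(\zeta_0^{n,1},\zeta_0^{n,2},\eta_0^n)$ applied to any finite family of test functions is a row sum of an i.i.d.\ triangular array --- short-circuits all of this: the multivariate Lindeberg--Feller CLT applies at once, and the annealed covariances $\mu_0(f^2)-p_n\mu_0(f)^2$, $p_n\,\mathrm{Cov}_{\mu_0}(f,g)$ and $\sqrt{p_n}\,\mathrm{Cov}_{\mu_0}(f,h)$ drop out in a few lines without any concentration input or conditioning/deconditioning. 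What the paper's route buys is a structural alignment with the rest of the section, where conditioning on the graph is ubiquitous; what yours buys is brevity and the elimination of the auxiliary Lemma~\ref{lem:concent_U Vn} from this particular step. Your tightness computation is also a little sharper: you evaluate $\mathbb{E}[|\zeta_0^{n,l}(\psi_p)|^2]=\langle\mu_0,|\psi_p|^2\rangle-p_n|\langle\mu_0,\psi_p\rangle|^2$ exactly (the cross-term between the paper's $X$ and $Y$ parts vanishes because $\mathbb{E}[\bar\psi_p(\theta_0^{i,n})]=0$), avoiding the $2a^2+2b^2$ bound the paper resorts to. One minor bookkeeping remark: the only place $np_n^2\to\infty$ is invoked is to get $np_n\to\infty$, and you correctly note that this follows since $np_n^2\le np_n$; the stronger hypothesis is carried in the statement only because the same rate is needed downstream in Theorem~\ref{th:local_fluct}.
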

\begin{proof}[Proof of Proposition~\ref{prop:ident_cond0}]
We have, setting $ \bar \psi := \psi - \int \psi {\rm d} \mu_{ 0}$,
\begin{align*}
\left\Vert \zeta_{ 0}^{ n, l} \right\Vert_{ -r}^{ 2} &= \sum_{ p\geq1} \left\vert \left\langle \zeta_{ 0}^{ n, l}\, ,\, \psi_{ p}\right\rangle \right\vert^{ 2} = np_{ n} \sum_{ p\geq 1} \left\vert \frac{ 1}{ np_{ n}} \sum_{ i=1}^{ n} \xi_{li}^{ (n)}\bar\psi_{ p} \left(\theta_{ 0}^{ i, n}\right) + \frac{ 1}{ n} \sum_{ i=1}^{ n} \hat{ \xi}_{li}^{ (n)} \int \psi_{ p} {\rm d} \mu_{ 0}\right\vert^{ 2}\\
&\leq 2np_{ n} \sum_{ p\geq 1} \left\vert \frac{ 1}{ np_{ n}} \sum_{ i=1}^{ n} \xi_{li}^{ (n)}\bar\psi_{ p} \left(\theta_{ 0}^{ i, n}\right)\right\vert^{ 2} + 2np_{ n}  \left\vert \frac{ 1}{ n} \sum_{ i=1}^{ n} \hat{ \xi}_{li}^{ (n)} \right\vert^{ 2} \left\Vert \mu_{ 0} \right\Vert_{ -r}^{ 2}.
\end{align*}
Taking the expectation (w.r.t. both graph and initial condition), we obtain
\begin{align*}
\mathbb{ E} \left[\left\Vert \zeta_{ 0}^{ n, l} \right\Vert_{ -r}^{ 2}\right] & \leq 2(1-p_{ n}) \left(\sum_{ p\geq1} \int \left(\bar\psi_{ p}(\theta)\right)^{ 2} \mu_{ 0} \left({\rm d}\theta\right) + \left\Vert \mu_{ 0} \right\Vert_{ -r}^{ 2}\right).
\end{align*}
Recalling that $\psi_{p}=(1+p^2)^{-\frac{r}{2}}e_p$, this last quantity is bounded provided $r> \frac{ 1}{ 2}$. Hence $ ( \zeta_{ 0}^{ n ,l})$ is tight in $ H^{ -(r+1/2)} \left(\mathbb{ T}\right)$ and the triplet $ \left(\zeta_{ 0}^{ n, 1}, \zeta_{ 0}^{ n, 2}, \eta_{ 0}^{ n}\right)$ has convergent subsequences in $ H^{ -(r+1/2)} \left(\mathbb{ T}\right)^{ 3}$. It suffices to identify its finite dimensional marginals: let $u,v,w\in \mathbb{ R}$ and $f,g,h$ test functions. Define
\begin{align*}
\varphi_{ n}(u,v,w)&:= \mathbb{ E} \left[ e^{ i u \left\langle \zeta_{ 0}^{ n , 1}\, ,\, f\right\rangle + i v \left\langle \zeta_{ 0}^{ n , 2}\, ,\, g\right\rangle + i w \left\langle \eta_{ 0}^{ n}\, ,\, h\right\rangle}\right],\\
X_{ n}^{ (l)} (f)&:=\frac{ 1}{ \sqrt{ np_{ n}}} \sum_{ j=1}^{ n} \xi_{lj}^{ (n)} \bar f \left( \theta_{ j, 0}\right),\\
Y_{ n}^{ (l)}(f)&:= \left(\frac{\sqrt{ np_{ n}}}{ n} \sum_{ j=1}^{ n} \hat{ \xi}_{lj}^{ (n)}\right)\int f {\rm d}\mu_{ 0},
\end{align*}
so that
\begin{align}
\varphi_{ n}&(u,v,w) \nonumber\\
&= \mathbb{ E} \left[ \exp \left(iu X_{ n}^{ (1)} (f) + iv X_{ n}^{ (2)} (g) + iw \left\langle \eta_{ 0}^{ n}\, ,\, h\right\rangle\right)\exp \left(iu Y_{ n}^{ (1)}(f) + iv Y_{ n}^{ (2)}(g)\right) \right] \nonumber\\
&= \mathbb{ E} \left[ \exp \left(iu Y_{ n}^{ (1)}(f) + iv Y_{ n}^{ (2)}(g)\right)\mathbb{ E} \left[\exp \left(iu X_{ n}^{ (1)} (f) + iv X_{ n}^{ (2)} (g) + iw \left\langle \eta_{ 0}^{ n}\, ,\, h\right\rangle\right) \Bigg \vert \mathcal{ F}_{ \xi}\right]\right],\label{eq:char_funct}
\end{align}
where $ \mathcal{ F}_{ \xi}$ is the $ \sigma$-field generated by the variables $ \left(\xi_{ij}^{ (n)}\right)$. For fixed $ \left(\xi_{lj}^{ (n)}\right)_{ j=1, \ldots, n}$, denote by $U_{ j}^{ (n)}:= \frac{ 1}{ \sqrt{ n}} \left(u \frac{ \xi_{1j}^{ (n)}}{ \sqrt{ p_{ n}}}\bar f \left(\theta_{ j, 0}\right) + v\frac{ \xi_{2j}^{ (n)} }{ \sqrt{p_{ n}}}\bar g \left(\theta_{ j, 0}\right)+ w \bar h \left(\theta_{ j,0}\right)\right):= \frac{ 1}{ \sqrt{ n}} \left(A_{ j}^{ (n)}+B_{ j}^{ (n)} + C_{ j}^{ (n)}\right)$. Then $ \mathbb{ E} \left[ U_{ j}^{ (n)}\vert \mathcal{ F}_{ \xi}\right]=0$ for all $j$ and define $s_{ n, U}^{ 2}:= \sum_{ j=1}^{ n} \mathbb{ E} \left[ \left(U_{ j}^{ (n)}\right)^{ 2} \vert \mathcal{ F}_{ \xi}\right]$. Then we have
\begin{align*}
s_{ n, U}^{ 2}&= \frac{ 1}{ n}\sum_{j=1}^{ n} \mathbb{ E} \left[ \left(A_{ j}^{ (n)} + B_{ j}^{ (n)} + C_{ j}^{ (n)}\right)^{ 2} \vert \mathcal{ F}_{ \xi}\right]\\
&=\frac{ 1}{ n} \sum_{ j=1}^{ n} \left(u^{ 2} \frac{ \xi_{1j}^{ (n)}}{ p_{ n}} {\rm Var}_{ \mu_{ 0}} f + v^{ 2}\frac{ \xi_{2j}^{ (n)} }{p_{ n}} {\rm Var}_{ \mu_{ 0}} g+ w^{ 2} {\rm Var}_{ \mu_{ 0}}h\right)\\
&\quad + \frac{ 2}{ n} \sum_{ j=1}^{ n}\left(uv \frac{ \xi_{1j}^{ (n)} \xi_{2 j}^{ (n)}}{ p_{ n}} {\rm Cov}_{ \mu_{ 0}}(f,g) + uw \frac{ \xi_{1j}^{ (n)}}{ \sqrt{ p_{ n}}} {\rm Cov}_{ \mu_{ 0}}(f,h) + vw \frac{ \xi_{2j}^{ (n)}}{ \sqrt{ p_{ n}}} {\rm Cov}_{ \mu_{ 0}}(g,h) \right).
\end{align*}
Note that, using \eqref{eq:sum_hat_xi_lj} and \eqref{eq:sum_hat_xi_12j}, we have that $ \mathbb{ P}$-a.s. $ s_{ n, U}^{ 2} \xrightarrow[ n\to\infty]{} s_{ U}^{ 2}:=s_{ U}^{ 2}(u,v,w)$, where
\begin{align}
s_{ U}^{ 2}:=& u^{ 2} {\rm Var}_{ \mu_{ 0}} f + v^{ 2} {\rm Var}_{ \mu_{ 0}} g+ w^{ 2} {\rm Var}_{ \mu_{ 0}}h\nonumber\\
&+ 2 \sqrt{ p}\big(uv \sqrt{ p}{\rm Cov}_{ \mu_{ 0}}(f,g) + uw {\rm Cov}_{ \mu_{ 0}}(f,h) + vw {\rm Cov}_{ \mu_{ 0}}(g,h) \big).\label{eq:s2U}
\end{align}
Fix some $ \delta>0$ and compute
\begin{align*}
\frac{ 1}{ s_{ n, U}^{ 2+ \delta}}\sum_{ j=1}^{ n} \mathbb{ E}\left[  \left\vert U_{j}^{ (n)} \right\vert^{ 2+ \delta} \vert \mathcal{ F}_{ \xi}\right]&= \frac{ 1}{ n^{ \frac{  \delta}{ 2}}}  \frac{ \frac{ 1}{ n}\sum_{ j=1}^{ n} \mathbb{ E} \left[ \left(A_{ j}^{ (n)} + B_{ j}^{ (n)} + C_{ j}^{ (n)}\right)^{ 2+ \delta} \vert \mathcal{ F}_{ \xi}\right]}{ \left(  \frac{ 1}{ n}\sum_{j=1}^{ n} \mathbb{ E} \left[ \left(A_{ j}^{ (n)} + B_{ j}^{ (n)} + C_{ j}^{ (n)}\right)^{ 2} \vert \mathcal{ F}_{ \xi}\right]\right)^{ \frac{ 2+  \delta}{ 2}}}.
\end{align*}
Applying H\"{o}lder inequality twice (to the expectation $ \mathbb{ E} \left[\cdot \vert \mathcal{ F}_{ \xi}\right]$ and to the discrete mean $ \frac{ 1}{ n} \sum_{ j=1}^{ n}$), we have,
\begin{align*}
\left(\frac{ 1}{ n}\sum_{ j=1}^{ n} \mathbb{ E} \left[ \left(A_{ j}^{ (n)} + B_{ j}^{ (n)} + C_{ j}^{ (n)}\right)^{ 2+ \delta} \vert \mathcal{ F}_{ \xi}\right]\right)^{ \frac{ 1}{ 2+ \delta}}&\leq \left(\frac{ 1}{ n}\sum_{ j=1}^{ n} \mathbb{ E} \left[ \left(A_{ j}^{ (n)} + B_{ j}^{ (n)} + C_{ j}^{ (n)}\right)^{ 2+ \delta} \vert \mathcal{ F}_{ \xi}\right]^{ \frac{ 2}{ 2+ \delta}}\right)^{ \frac{ 1}{ 2}}\\
& \leq \left(\frac{ 1}{ n}\sum_{ j=1}^{ n} \mathbb{ E} \left[ \left(A_{ j}^{ (n)} + B_{ j}^{ (n)} + C_{ j}^{ (n)}\right)^{ 2} \vert \mathcal{ F}_{ \xi}\right]\right)^{ \frac{ 1}{ 2}}.
\end{align*}
This means that $ \mathbb{ P}$-a.s., $\frac{ 1}{ s_{ n, U}^{ 2+ \delta}}\sum_{ j=1}^{ n} \mathbb{ E}\left[  \left\vert U_{j}^{ (n)} \right\vert^{ 2+ \delta} \vert \mathcal{ F}_{ \xi}\right]\leq n^{ - \frac{ \delta}{ 2}}$ which goes to $0$ as $n\to\infty$. The Lyapounov's condition for CLT is satisfied (see \cite[eq. (27.16), p.~385]{Billingsley1995}). We are in position to apply Th.~27.3, p.~385 of \cite{Billingsley1995}: $ \mathbb{ P}$-a.s., $ \frac{ 1}{ s_{ n, U}}\sum_{ j=1}^{ n} U_{ j}^{ (n)}$ converge in law to a standard Gaussian $ \mathcal{ N}(0,1)$, which gives that $u X_{ n}^{ (1)} (f) + v X_{ n}^{ (2)} (g) + w \left\langle \eta_{ 0}^{ n}\, ,\, h\right\rangle$ converges in law to $ \mathcal{ N}(0, s_{ U}^{ 2}(u,v,w))$ where $s_{ U}^{ 2}(u,v,w)$ is given by \eqref{eq:s2U}.

We use the same argument for the term $u Y_{ n}^{ (1)}(f) + v Y_{ n}^{ (2)}(g)$: if we denote $V_{ j}^{ (n)}:= u\frac{ \sqrt{ np_{ n}}}{ n} \hat{ \xi}_{1j}^{ (n)} \int f {\rm d}\mu_{ 0}+v\frac{ \sqrt{ np_{ n}}}{ n} \hat{ \xi}_{2j}^{ (n)} \int g {\rm d}\mu_{ 0}$, we have $ \mathbb{ E} \left[ V_{ j}^{ (n)}\right]=0$ and
\begin{equation*}
s_{ n, V}^{ 2}:= \sum_{ j=1}^{ n} \mathbb{ E} \left[\left(V_{ j}^{ (n)}\right)^{ 2}\right]= (1-p_{ n}) \left(u^{ 2} \left(\int f {\rm d}\mu_{ 0}\right)^{ 2} + v^{ 2} \left(\int g {\rm d}\mu_{ 0}\right)^{ 2}\right),
\end{equation*}
which goes as $n\to\infty$ to
\begin{equation}
s_{ V}^{ 2}:= (1-p) \left(u^{ 2} \left(\int f {\rm d}\mu_{ 0}\right)^{ 2} + v^{ 2} \left(\int g {\rm d}\mu_{ 0}\right)^{ 2}\right) \label{eq:s2V}.
\end{equation}
The Lyapounov condition is also verified: $ \frac{ 1}{ s_{ n, V}^{ 2+ \delta}}\sum_{ j=1}^{ n} \mathbb{ E} \left[ \left\vert V_{ j}^{ (n)} \right\vert^{ 2+ \delta}\right] $ is of order $ \frac{ c}{ \left(np_{ n}\right)^{ \frac{\delta}{ 2}}} \xrightarrow[ n\to\infty]{}0$. Hence, applying the same result, we have that $u Y_{ n}^{ (1)}(f) + v Y_{ n}^{ (2)}(g)$ converges in law to some Gaussian $ \mathcal{ N} \left(0, s_{ V}^{ 2}\right)$, where $s_{ V}^{ 2}$ is given by \eqref{eq:s2V}.

With these two convergence results at hand, we can now go back to \eqref{eq:char_funct}: since that $ \mathbb{ P}$-a.s., $u X_{ n}^{ (1)} (f) + v X_{ n}^{ (2)} (g) + w \left\langle \eta_{ 0}^{ n}\, ,\, h\right\rangle$ converges in law to $ \mathcal{ N}(0, s_{ U}^{ 2})$, we have that $ \mathbb{ P}$-a.s., $\mathbb{ E} \left[\exp \left(iu X_{ n}^{ (1)} (f) + iv X_{ n}^{ (2)} (g) + iw \left\langle \eta_{ 0}^{ n}\, ,\, h\right\rangle\right) \Bigg \vert \mathcal{ F}_{ \xi}\right]$ converges to $\exp \left(- \frac{ s_{ U}^{ 2}}{ 2}\right)$. Then one can write from \eqref{eq:char_funct},
\begin{align*}
\varphi_{ n}(u,v,w)=& \exp \left(- \frac{ s_{ U}^{ 2}}{ 2}\right) \mathbb{ E} \Bigg[ \exp \left(iu Y_{ n}^{ (1)}(f) + iv Y_{ n}^{ (2)}(g)\right)\Bigg] \\
&+ \mathbb{ E} \Bigg[ \exp \left(iu Y_{ n}^{ (1)}(f) + iv Y_{ n}^{ (2)}(g)\right)\\
&\qquad \qquad \qquad \times \mathbb{ E} \left[\exp \left(iu X_{ n}^{ (1)} (f) + iv X_{ n}^{ (2)} (g) + iw \left\langle \eta_{ 0}^{ n}\, ,\, h\right\rangle\right) \Bigg \vert \mathcal{ F}_{ \xi}\right] - e^{ - \frac{ s_{ U}^{ 2}}{ 2}}\Bigg].
\end{align*}
The first term above converges to $\exp \left(- \frac{ s_{ U}^{ 2}+ s_{ V}^{ 2}}{ 2}\right)$ and by dominated convergence theorem, we see that the second term above converges to $0$.
\end{proof}

\subsection{Semimartingale decompositions}
We follow here the same approach as for the global fluctuation process, that is to apply Ito's formula so as to derive a proper semimartingale decomposition for $ \zeta_{ t}^{ n, l}$, the key step being to identify the vanishing terms as $n\to\infty$ in this decomposition (which correspond here to terms in the asymptotic development of $ \mu^{ n, l}$ that are of order lower than $ \frac{ 1}{ \sqrt{ np_{ n}}}$). Since the approach is highly similar to the one followed for global fluctuations, we only give the main lines of proof and leave to the details to the reader.

\begin{proposition}
\label{prop:semimart_zetas} 
For any $r> \frac{ 3}{ 2}$ the joint process $ (\zeta^{ n,1}, \zeta^{ n, 2}, \eta^n)$ belongs $ \mathbb{ P}\otimes \mathbf{ P}$-a.s. to $ \mathcal{ C} \left([0, T], H^{ -r} \left( \mathbb{ T}\right)^{ 3}\right)$. Moreover, $ \zeta^{ n, l}$, $l=1, 2$ satisfy the following semimartingale representation in $H^{-r_{ 1}}(\bbT)$
\begin{equation}
\label{eq:semimart_zeta}
\zeta_{ t}^{ n, l} = \zeta_{ 0}^{ n, l} + \int_{ 0}^{t}\mathcal{ U}_{ s}^{ \ast}\zeta_{ s}^{ n, l}{\rm d}s + \sqrt{ p_{ n}}\int_{ 0}^{t}  \mathcal{ V}_{ s}^{ n,l,\ast}\eta^{ n}_{ s} {\rm d}s + \int_{ 0}^{t}  \left\lbrace \sqrt{ p_{ n}}\Theta^{ \ast}\hat{ \eta}_{ s}^{ n} + \Theta^{ \ast} \varpi_{ s}^{ n, l}\right\rbrace {\rm d}s + W_{ t}^{ n,l}
\end{equation}
where $ \mathcal{ U}_{ s}$ is given in \eqref{eq:Us}, $ \mathcal{ V}_{ s}^{ n, l}f(\theta):= \left\langle \mu_{ s}^{ n, l}({\rm d}\theta^{ \prime})\, ,\, \partial_{ \theta}f \left(\theta^{ \prime}\right)\Gamma \left(\theta^{ \prime}, \theta\right)\right\rangle$ is the microscopic equivalent of $ \mathcal{ V}_{ s}$ defined in \eqref{eq:Vs} and $ \Theta$ is defined by \eqref{eq:Theta}. The remaining drift term in \eqref{eq:semimart_zeta} is given by
\begin{equation}
\label{eq:hln}
\varpi_{ t}^{n, l}:= \frac{ \sqrt{ np_{ n}}}{ n^{ 2}} \sum_{i, j=1}^n \hat{ \xi}_{li}^{ (n)}\hat{\xi}^{(n)}_{ij}  \delta_{\theta^{i,n}_t} \otimes \delta_{\theta^{j,n}_t},
\end{equation}
and the noise term is
\begin{equation}
W_{ t}^{ n,l}(f):=\int_{ 0}^{t} \frac{ 1}{ \sqrt{ np_{ n}}} \sum_{ i=1}^{ n} \xi_{l i}^{ (n)} \partial_{ \theta}f(\theta_{ s}^{ i, n}) {\rm d}B^{ i}_{s}. \label{aux:IV}
\end{equation}
The process $ (W_{ t}^{ n, 1}, W_{ t}^{ n, 2}, W_{ t}^{ n})_{t \in [0,T]}$ is a martingale in $ \cC \left([0, T], \left(H^{ -r}\right)^{ 3}\right)$ for  $r> \frac{ 3}{ 2}$, with Doob-Meyer process given for $t \in [0,T]$ and $\varphi, \psi\in H^{ r}$ by
\begin{equation}
\label{eq:DM_Mn12G}
\begin{split}
\llangle W^{ n, l}, W^{ n, l}\rrangle_{ t}\cdot \varphi(\psi)&= \int_{ 0}^{t}\left\langle \mu_{ s}^{ n, l}\, ,\,  \partial_{ \theta} \varphi \partial_{ \theta}\psi\right\rangle {\rm d}s, \ l=1, 2,\\
\llangle W^{ n, 1}, W^{ n, 2}\rrangle_{ t}\cdot \varphi(\psi)&= p_{ n}\int_{ 0}^{t}\left\langle \mu_{ s}^{ n, 1,2}\, ,\, \partial_{ \theta} \varphi \partial_{ \theta}\psi\right\rangle {\rm d}s,\\
\llangle W^{ n, l}, W^{ n}\rrangle_{ t}\cdot \varphi(\psi)&= \sqrt{ p_{ n}}\int_{ 0}^{t}\left\langle \mu_{ s}^{ n, l}\, ,\,  \partial_{ \theta} \varphi \partial_{ \theta}\psi\right\rangle {\rm d}s, \ l=1, 2.
\end{split}
\end{equation}
Finally, for $r>1/2$, the process $ \zeta^{ n, l}$ satisfies the following weak-mild equation: for every $h \in H^r(\bbT)$ and $t \in [0,T]$:
\begin{equation}
\label{eq:zeta_mild_semimart}
\begin{split}
\left\langle \zeta_{ t}^{ n, l}\, ,\, h\right\rangle_{-r,r} =& \left\langle \zeta_{ 0}^{ n, l}\, ,\, S_{t}h\right\rangle_{-r,r} + \int_{ 0}^{t} \left\langle \zeta_{ s}^{ n, l}\, ,\, (\Gamma*\mu_s) \partial_\theta S_{t-s} h  \right\rangle_{-r,r} {\rm d}s \\
&+ \sqrt{ p_{ n}}\int_{ 0}^{t} \left\langle \mu_{ s}^{ n, l}\, ,\, (\Gamma*\eta^n_s) \partial_\theta S_{t-s} h  \right\rangle_{-r,r} {\rm d}s \\
&+ \int_0^t \frac{ \sqrt{ p_{ n}}}{n^{3/2}} \sum_{i,j=1}^n \hat{ \xi}_{ij}^{ (n)} \< \delta_{\theta^{i,n}_s}, (\Gamma * \delta_{\theta^{j,n}_s}) \partial_\theta S_{t-s}h >_{-r,r} \dd s \\ 
&+ \int_{ 0}^{t} \frac{ \sqrt{ np_{ n}}}{ n^{ 2}} \sum_{ i,j=1}^{ n} \hat{ \xi}_{li}^{ (n)} \hat{ \xi}_{ij}^{ (n)} \< \delta_{\theta^{i,n}_s}, (\Gamma * \delta_{\theta^{j,n}_s}) \partial_\theta S_{t-s}h >_{-r,r} {\rm d}s \\
&+ w_{ t}^{ n, l}(h),
\end{split}
\end{equation}
where
\begin{equation*}
w^{ n, l}_t(h) = \frac{1}{\sqrt{n p_{ n}}}\sum_{i=1}^n \xi_{li}^{ (n)}\int_0^t (\partial_\theta S_{t-s} h) \left(\theta^{i,n}_s\right)\dd B^i_s.
\end{equation*}
\end{proposition}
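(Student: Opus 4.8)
The plan is to treat Proposition~\ref{prop:semimart_zetas} as the exact counterpart, for the local empirical measures, of Proposition~\ref{prop:charac eta tilde eta}: apply Itô's formula to obtain a raw semimartingale decomposition of $\mu^{n,l}$, subtract the weak form of the Fokker--Planck equation \eqref{eq:limit PDE}, rescale by $\sqrt{np_n}$, and then reorganise the resulting drift into the four terms of \eqref{eq:semimart_zeta} by expanding the two graph weights through $\xi^{(n)}_{li}/p_n=\hat\xi^{(n)}_{li}+1$ and $\xi^{(n)}_{ij}/p_n=\hat\xi^{(n)}_{ij}+1$. The continuity statement, the Doob--Meyer brackets and the weak--mild equation then follow by the same routine arguments already used for $\eta^n$ and $\hat\eta^n$.

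First I would settle the continuity claim. Since for each fixed $n$ the process $\mu^{n,l}_t=\frac1{np_n}\sum_i\xi^{(n)}_{li}\delta_{\theta^{i,n}_t}$ (recall \eqref{def:mu_nl}) is a finite nonnegative combination of Dirac masses, the bound $\|\delta_\theta-\delta_{\theta'}\|_{H^{-r}(\mathbb{T})}\le C|\theta-\theta'|$ for $r>\tfrac32$ (Sobolev embedding $H^r(\mathbb{T})\subset\mathcal{C}^1(\mathbb{T})$, exactly as in Lemma~\ref{lem:fluctuation_process_semimart}) together with the a.s.\ continuity of $t\mapsto(\theta^{1,n}_t,\dots,\theta^{n,n}_t)$ gives $\mu^{n,l}\in\mathcal{C}([0,T],H^{-r}(\mathbb{T}))$; combined with $\mu\in\mathcal{C}([0,T],H^{-r}(\mathbb{T}))$ and with the continuity of $\eta^n$ already obtained in Lemma~\ref{lem:fluctuation_process_semimart}, this yields the first assertion. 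For the semimartingale representation, Itô's formula applied to $f(\theta^{i,n}_t)$, weighted by $\tfrac1{np_n}\xi^{(n)}_{li}$ and summed over $i$, gives, with $W^{n,l}$ as in \eqref{aux:IV},
\begin{equation*}
\langle\mu^{n,l}_t,f\rangle=\langle\mu^{n,l}_0,f\rangle+\int_0^t\big\langle\mu^{n,l}_s,\tfrac12\partial_\theta^2 f\big\rangle\dd s+\int_0^t\frac1{n^2p_n^2}\sum_{i,j=1}^n\xi^{(n)}_{li}\xi^{(n)}_{ij}\,\Gamma(\theta^{i,n}_s,\theta^{j,n}_s)\,\partial_\theta f(\theta^{i,n}_s)\dd s+\frac1{\sqrt{np_n}}W^{n,l}_t(f).
\end{equation*}
Writing $\tfrac{\xi^{(n)}_{li}\xi^{(n)}_{ij}}{p_n^2}=\hat\xi^{(n)}_{li}\hat\xi^{(n)}_{ij}+\hat\xi^{(n)}_{li}+\hat\xi^{(n)}_{ij}+1$ splits the interaction integral into four monomials: the monomials $1$ and $\hat\xi^{(n)}_{li}$ recombine, after averaging the inner $j$-sum against $\mu^n_s$ and using $\mu^{n,l}_s=\mu^n_s+\tfrac1n\sum_i\hat\xi^{(n)}_{li}\delta_{\theta^{i,n}_s}$, into $\langle\mu^{n,l}_s,(\Gamma*\mu^n_s)\partial_\theta f\rangle$; the monomial $\hat\xi^{(n)}_{ij}$ is $\langle\hat\nu^n_s,\Theta f\rangle$; and the monomial $\hat\xi^{(n)}_{li}\hat\xi^{(n)}_{ij}$ is $\tfrac1{\sqrt{np_n}}\langle\varpi^{n,l}_s,\Theta f\rangle$ with $\varpi^{n,l}$ as in \eqref{eq:hln}. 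Subtracting the weak form of \eqref{eq:limit PDE}, multiplying by $\sqrt{np_n}$, and using $\hat\eta^n=\sqrt n\,\hat\nu^n$ and $\eta^n=\sqrt n(\mu^n-\mu)$, one adds and subtracts $\langle\mu^{n,l}_s,(\Gamma*\mu_s)\partial_\theta f\rangle$ and $\langle\mu_s,\mathcal{U}_s f\rangle$ to recognise that the diffusion-plus-drift part equals $\langle\zeta^{n,l}_s,\mathcal{U}_s f\rangle+\sqrt{p_n}\langle\eta^n_s,\mathcal{V}^{n,l}_s f\rangle$, while the two graph monomials become $\sqrt{p_n}\langle\hat\eta^n_s,\Theta f\rangle$ and $\langle\varpi^{n,l}_s,\Theta f\rangle$; this is exactly \eqref{eq:semimart_zeta}. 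That each drift term defines a Bochner integral in $H^{-r_1}(\mathbb{T})$ follows from the continuity just established, the mapping properties $\mathcal{U}_s^{\ast},\mathcal{V}^{n,l,\ast}_s,\Theta^{\ast}:H^{-(r+2)}\to H^{-r}$ (as in Lemma~\ref{lem:continuity_Ln12}, using Assumption~\ref{ass:Gamma}), and the fact that $\varpi^{n,l}\in\mathcal{C}([0,T],H^{-r'}(\mathbb{T}^2))$ for $r'>2$; note that at this stage one does \emph{not} need that $\varpi^{n,l}$ is negligible, that estimate (based on the quantities $S_n^{\mathcal{T}}(l)$ of Definition~\ref{def:Sn} and Proposition~\ref{prop:concentration_SnT}) entering only later in the convergence proof.

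For the Doob--Meyer brackets \eqref{eq:DM_Mn12G} I would use Itô's isometry: from \eqref{aux:IV}, for $\varphi,\psi\in H^r$, $\langle W^{n,l}(\varphi),W^{n,l'}(\psi)\rangle_t=\int_0^t\tfrac1{np_n}\sum_i\xi^{(n)}_{li}\xi^{(n)}_{l'i}\,\partial_\theta\varphi(\theta^{i,n}_s)\partial_\theta\psi(\theta^{i,n}_s)\dd s$. For $l=l'$ one has $(\xi^{(n)}_{li})^2=\xi^{(n)}_{li}$, so $\tfrac1{np_n}\sum_i\xi^{(n)}_{li}(\cdot)=\langle\mu^{n,l}_s,\cdot\rangle$; for $\{l,l'\}=\{1,2\}$, $\tfrac1{np_n}\sum_i\xi^{(n)}_{1i}\xi^{(n)}_{2i}(\cdot)=p_n\langle\mu^{n,1,2}_s,\cdot\rangle$ by \eqref{def:mu_n12}; and the mixed bracket with $W^n$ (recall \eqref{eq:Wn}) carries an extra $\tfrac1{\sqrt n}$, giving $\tfrac1{n\sqrt{p_n}}\sum_i\xi^{(n)}_{li}(\cdot)=\sqrt{p_n}\langle\mu^{n,l}_s,\cdot\rangle$. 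That $(W^{n,1},W^{n,2},W^n)$ is a martingale with a.s.\ continuous paths in $\mathcal{C}([0,T],(H^{-r})^3)$ for $r>\tfrac32$, with the stated Doob--Meyer process, is proven verbatim as in Lemma~\ref{lem:Mn_prop}: along an orthonormal system $(\varphi_p)$ of $H^r$, Doob's inequality bounds $\sum_p\mathbf{E}[\sup_{t\le T}|W^{n,l}_t(\varphi_p)|^2]$ by $\tfrac1{np_n}\sum_i\mathbf{E}\int_0^T\xi^{(n)}_{li}\|D_{\theta^{i,n}_s}\|_{-r}^2\dd s\le C$, using $\sup_\theta\|D_\theta\|_{-r}<\infty$ from Lemma~\ref{lem:linear_forms}, and continuity of paths follows as there.

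Finally, the weak--mild equation \eqref{eq:zeta_mild_semimart} is obtained, as in Lemma~\ref{lem:mu^n_t} and Lemma~\ref{lem:fluctuation_process_semimart}, by running the same Itô computation on the time-dependent test function $f(\theta,s)=(S_{t-s}h)(\theta)$ with $h\in H^r(\mathbb{T})$, $r>\tfrac12$, and using $\partial_s S_{t-s}h=-\tfrac12\partial_\theta^2 S_{t-s}h$ to cancel the Laplacian term; well-posedness of $w^{n,l}$ as an element of $\mathcal{C}([0,T],H^{-r})$ is handled by the heat-kernel regularisation argument of Remark~\ref{rem:St}. The main obstacle here is not conceptual but organisational: one must keep strict track of which of the four monomials in $\hat\xi^{(n)}_{li}\hat\xi^{(n)}_{ij}+\hat\xi^{(n)}_{li}+\hat\xi^{(n)}_{ij}+1$ produces which of $\mathcal{U}_s$, $\mathcal{V}^{n,l}_s$, $\Theta^{\ast}\hat\eta^n$ and $\Theta^{\ast}\varpi^{n,l}$, and verify that the $\mu^n$-versus-$\mu$ and $\mu^{n,l}$-versus-$\mu$ discrepancies cancel exactly, so that \eqref{eq:semimart_zeta} carries no leftover remainder.
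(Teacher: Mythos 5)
Your proposal is correct and follows essentially the same route as the paper's proof: Itô's formula for $\mu^{n,l}$, the expansion of the graph weights via $\xi^{(n)}/p_n=\hat\xi^{(n)}+1$ to split the interaction term into the $\mathcal{U}$, $\mathcal{V}^{n,l}$, $\hat\eta^n$ and $\varpi^{n,l}$ contributions, subtraction of the limit Fokker--Planck equation and rescaling by $\sqrt{np_n}$, with the continuity, Doob--Meyer brackets and weak--mild form handled exactly as in Lemmas~\ref{lem:mu^n_t},~\ref{lem:fluctuation_process_semimart} and~\ref{lem:Mn_prop}. The only difference is cosmetic (you expand into four monomials where the paper keeps $\xi^{(n)}_{li}/p_n$ intact in the first term), and your bracket computations, including the $p_n\mu^{n,1,2}$ cross term and the $\sqrt{p_n}$ correlation with $W^n$, match \eqref{eq:DM_Mn12G}.
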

\begin{proof}[Main lines of proof of Proposition~\ref{prop:semimart_zetas}]
First begin by applying Ito's formula to $\mu^{n,l}$, for $l=1,2$: for $f$ regular,
\begin{multline}
\label{eq:nunl}
\left\langle \mu_{ t}^{ n, l}\, ,\, f\right\rangle = \left\langle \mu_{ 0}^{ n, l}\, ,\, f\right\rangle + \int_{ 0}^{t} \left\langle \mu_{ s}^{ n, l}\, ,\, \frac{ 1}{ 2} \partial_{ \theta}^{ 2} f\right\rangle{\rm d}s + \int_{ 0}^{t} \frac{ 1}{ np_{ n}} \sum_{ i=1}^{ n} \xi_{l i}^{ (n)} \partial_{ \theta}f(\theta_{ s}^{ i, n}) {\rm d}B^{ i}{ s} \\+ \int_{ 0}^{t} \frac{ 1}{ n^{ 2}} \sum_{ i,j=1}^{ n} \frac{ \xi_{li}^{ (n)}}{ p_{ n}} \frac{ \xi_{ij}^{ (n)}}{ p_{ n}}\partial_{ \theta}f \left(\theta_{ s}^{ i,n}\right) \Gamma \left(\theta_{ s}^{ i,n}, \theta_{ s}^{ j, n}\right){\rm d}s.
\end{multline}
  Concentrate on the last term of \eqref{eq:nunl}: write $\frac{ \xi_{li}^{ (n)}}{ p_{ n}}= \hat{ \xi}_{li}^{ (n)} +1$ and $\frac{ \xi_{ij}^{ (n)}}{ p_{ n}}= \hat{ \xi}_{ij}^{ (n)} +1$ so that
\begin{align*}
\int_{ 0}^{t} \frac{ 1}{ n^{ 2}} \sum_{ i,j=1}^{ n} \frac{ \xi_{li}^{ (n)}}{ p_{ n}} \frac{ \xi_{ij}^{ (n)}}{ p_{ n}}\partial_{ \theta}f \left(\theta_{ s}^{ i,n}\right)& \Gamma \left(\theta_{ s}^{ i,n}, \theta_{ s}^{ j, n}\right){\rm d}s \\
&= \int_{ 0}^{t} \frac{ 1}{ n^{ 2}} \sum_{ i,j=1}^{ n} \frac{ \xi_{li}^{ (n)}}{ p_{ n}} \partial_{ \theta}f \left(\theta_{ s}^{ i,n}\right) \Gamma \left(\theta_{ s}^{ i,n}, \theta_{ s}^{ j, n}\right){\rm d}s\\
&\quad+ \int_{ 0}^{t} \frac{ 1}{ n^{ 2}} \sum_{ i,j=1}^{ n}  \hat{ \xi}_{ij}^{ (n)}\partial_{ \theta}f \left(\theta_{ s}^{ i,n}\right) \Gamma \left(\theta_{ s}^{ i,n}, \theta_{ s}^{ j, n}\right){\rm d}s\\
&\quad + \int_{ 0}^{t} \frac{ 1}{ n^{ 2}} \sum_{ i,j=1}^{ n} \hat{ \xi}_{li}^{ (n)} \hat{ \xi}_{ij}^{ (n)}\partial_{ \theta}f \left(\theta_{ s}^{ i,n}\right) \Gamma \left(\theta_{ s}^{ i,n}, \theta_{ s}^{ j, n}\right){\rm d}s.
\end{align*}
The first term above is equal to $\int_{ 0}^{t} \left\langle \mu_{ s}^{ n, l}({\rm d}\theta)\, ,\, \partial_{ \theta}f \left(\theta\right)  \left\langle \mu_{ s}^{ n}({\rm d}\theta^{ \prime})\, ,\, \Gamma \left(\theta, \theta^{ \prime}\right)\right\rangle\right\rangle {\rm d}s$ and that the second one is exactly $ \int_{ 0}^{t} \< \hat{\nu}^n_t(\dd \theta_1, \dd \theta_2), \Gamma\left(\theta_1, \theta_2\right)\partial_\theta f\left(\theta_1\right) > {\rm d}s$ (recall \eqref{eq:graph_term_hat_eta_n}). Using now the fact that $\mu$ solves  \eqref{eq:limit PDE}, we obtain (recall the definitions of $ \eta^{ n}$ in \eqref{def:fluctuation_process} and of $ \hat{ \eta}^{ n}$ in \eqref{eq:hat_etan}),
\begin{align*}
\left\langle \zeta_{ t}^{ n, l}\, ,\, f\right\rangle =& \left\langle \zeta_{ 0}^{ n, l}\, ,\, f\right\rangle + \int_{ 0}^{t} \left\langle \zeta_{ s}^{ n, l}\, ,\, \frac{ 1}{ 2} \partial_{ \theta}^{ 2} f + \partial_{ \theta}f \left(\theta\right)  \left\langle \mu_{ s}({\rm d}\theta^{ \prime})\, ,\, \Gamma \left(\theta, \theta^{ \prime}\right)\right\rangle \right\rangle{\rm d}s \\ 
&+ \sqrt{ p_{ n}}\int_{ 0}^{t} \left\langle \eta^{ n}_{ s}\left({\rm d}\theta^{ \prime}\right)\, ,\, \left\langle \mu_{ s}^{ n, l}({\rm d}\theta)\, ,\, \partial_{ \theta}f \left(\theta\right) \Gamma \left(\theta, \theta^{ \prime}\right)\right\rangle\right\rangle {\rm d}s\\
&+ \sqrt{ p_{ n}}\int_{ 0}^{t} \left\langle \hat{ \eta}_{ s}^{ n}(\dd \theta_1, \dd \theta_2)\, ,\, \Gamma\left(\theta_1, \theta_2\right)\partial_\theta f\left(\theta_1\right)\right\rangle {\rm d}s\\
&+ \int_{ 0}^{t} \frac{ \sqrt{ np_{ n}}}{ n^{ 2}} \sum_{ i,j=1}^{ n} \hat{ \xi}_{li}^{ (n)} \hat{ \xi}_{ij}^{ (n)}\partial_{ \theta}f \left(\theta_{ s}^{ i,n}\right) \Gamma \left(\theta_{ s}^{ i,n}, \theta_{ s}^{ j, n}\right){\rm d}s+ W_{ t}^{ n,l}(f).
\end{align*}
All of this gives \eqref{eq:semimart_zeta}, using the definition of the noise in \eqref{aux:IV} and the drift term $ \varpi^{ n, l}$ in \eqref{eq:hln}. The rest of the proof follows from the same arguments as for Lemma~\ref{lem:fluctuation_process_semimart}.
\end{proof}

\subsection{Tightness and convergence}
Recall Proposition~\ref{prop:limit_hateta0_indep}: under our hypotheses $ \hat{ \eta}^{ n}$ converges to $ \hat{ \eta}\equiv 0$ as $n\to\infty$, so that we see that the term $\int_{ 0}^{t} \sqrt{ p_{ n}}\Theta^{ \ast}\hat{ \eta}_{ s}^{ n} {\rm d}s$ in \eqref{eq:semimart_zeta} does not contribute to the limit when $n\to\infty$. It remains to deal with the term $\int_{ 0}^{t} \Theta^{ \ast} \varpi_{ s}^{ n, l} {\rm d}s$ that we want also to prove that it vanishes as $n\to\infty$. This is the purpose of the following proposition:
\begin{proposition}
\label{prop:varpi}
Under the hypotheses of Section~\ref{sec:local_fluct}, the process $ \left(\varpi^{ n, l}\right)$ converges to $0$ as $n\to\infty$, in $ \mathcal{ C} \left([0, T], H^{ - r_{ 1}}(\mathbb{ T}^{ 2})\right)$.
\end{proposition}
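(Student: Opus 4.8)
The strategy parallels the treatment of the global fluctuation process: we derive a weak--mild equation for $\varpi^{n,l}$ of the same shape as \eqref{eq:hat_eta_mild_semimart}, identify the vanishing terms and close the remaining linear-in-$\varpi$ terms by a Gronwall--Henry argument. A preliminary remark: one \emph{cannot} conclude directly from a Grothendieck bound at a fixed time. Indeed, copying the proof of Proposition~\ref{prop:bound Cn} (with the classical bilinear Grothendieck inequality, since only the two summation indices $i,j$ occur) gives $\sup_{t}\norm{\varpi^{n,l}_t}_{-3}\leq C_\Gamma\sqrt{np_n}\,\tilde S_n$ with $\tilde S_n:=\sup_{s,t\in\{\pm1\}^n}|n^{-2}\sum_{i,j}\hat\xi^{(n)}_{li}\hat\xi^{(n)}_{ij}s_it_j|$; writing $\sum_{i,j}\hat\xi^{(n)}_{li}\hat\xi^{(n)}_{ij}s_it_j=(\hat R s)^{\top}A t$ with $A=(\hat\xi^{(n)}_{ij})_{i,j}$, $\hat R=\mathrm{diag}(\hat\xi^{(n)}_{l\cdot})$, and using $\norm{\hat R s}_2\leq\norm{\hat\xi^{(n)}_{l\cdot}}_2\lesssim\sqrt{n/p_n}$ together with \eqref{eq:bound norm op A}, one only gets $\tilde S_n=O((\sqrt n\,p_n^{3/2})^{-1})$, hence $\sqrt{np_n}\,\tilde S_n\asymp p_n^{-1}$, which does not vanish. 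The dynamics must be used.

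The initial datum is the easy part: since $(\theta^{i,n}_0)$ is i.i.d.\ with law $\mu_0$ and independent of the graph, a second-moment computation entirely analogous to (but simpler than) the one in the proof of Proposition~\ref{prop:limit_hateta0_indep} applies. Expanding $\norm{\varpi^{n,l}_0}_{-r}^2$ in an orthonormal basis of $H^{-r}(\bbT^2)$ produces $\tfrac{np_n}{n^4}\sum_{i,j,i',j'}\mathbb E[\hat\xi^{(n)}_{li}\hat\xi^{(n)}_{ij}\hat\xi^{(n)}_{li'}\hat\xi^{(n)}_{i'j'}]\,H_{ij,i'j'}$ with $H$ bounded; in the asymmetric model $\hat\xi^{(n)}_{li}\perp\hat\xi^{(n)}_{ij}$, so each expectation vanishes unless $(i,j)=(i',j')$ (the $O(n)$ diagonal terms being negligible), which gives $\mathbb E[\norm{\varpi^{n,l}_0}_{-r_1}^2]\leq C_{r_1}/(np_n)\to0$ under $np_n\to\infty$.

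For the time evolution, apply Itô's formula to $g(\theta^{i,n}_t,\theta^{j,n}_t)$, use $\mathrm d\theta^{i,n}_t=\frac{1}{np_n}\sum_k\xi^{(n)}_{ik}\Gamma(\theta^{i,n}_t,\theta^{k,n}_t)\,\mathrm dt+\mathrm dB^i_t$, and write $\xi^{(n)}_{ik}/p_n=\hat\xi^{(n)}_{ik}+1$; composing with the heat semigroup as in \eqref{eq:hat_eta_mild_semimart} one obtains a weak--mild decomposition of $\langle\varpi^{n,l}_t,h\rangle$ with the following ingredients. First, $\langle\varpi^{n,l}_0,S_th\rangle$, controlled by the previous step. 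Second, a \emph{linear-in-$\varpi$} drift $\int_0^t\langle\varpi^{n,l}_s,\Lambda^n_s\cdot\nabla S_{t-s}h\rangle\,\mathrm ds$ (the ``$+1$'' parts together with $\tfrac12\Delta$, assembling exactly the operator $\mathcal L^{(2)}_{\mu^n_s}$ of \eqref{eq:L2}), bounded by $C\int_0^t(1+(t-s)^{-1/2})\norm{\varpi^{n,l}_s}_{-r}\,\mathrm ds$. Third, two \emph{cubic graph} drift terms,
\begin{equation*}
\frac{\sqrt{np_n}}{n^3}\int_0^t\sum_{i,j,k}\hat\xi^{(n)}_{li}\hat\xi^{(n)}_{ij}\hat\xi^{(n)}_{ik}\,[\partial_{\theta_1}S_{t-s}h](\theta^{i,n}_s,\theta^{j,n}_s)\,\Gamma(\theta^{i,n}_s,\theta^{k,n}_s)\,\mathrm ds
\end{equation*}
and its twin with $\hat\xi^{(n)}_{li}\hat\xi^{(n)}_{ij}\hat\xi^{(n)}_{jk}$, which are precisely of the type treated in Proposition~\ref{prop:bound Cn} (Grothendieck with $A=\bbZ$, $m=2$, $N=3$, $\theta\equiv1$, plus the semigroup regularity exactly as in the proof of Proposition~\ref{prop:hat_eta_n_tight}); after integrating in $s$ they are bounded by $C\sqrt{np_n}\big(S_n^{\lijik}(l)+S_n^{\lijk}(l)\big)$, which by Proposition~\ref{prop:concentration_SnT} is $\bbP$-a.s.\ $O\big(\sqrt{np_n}\,(np_n^3)^{-1}\big)=O\big((np_n^5)^{-1/2}\big)$ and therefore vanishes \emph{precisely} under the assumption $\liminf_n np_n^5=\infty$. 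Fourth, a martingale term $\hat W^{n,l}$ analogous to $\hat W^n$ in Lemma~\ref{lem:centered_emp_mes_spde}; its Doob--Meyer bracket is handled by the Grothendieck-functional estimates of Lemma~\ref{lem:Grothendieck_fonct} applied to the quadrilinear graph expressions arising there (bounding the squared weights $(\hat\xi^{(n)}_{l\cdot})^2$ crudely by $p_n^{-2}$, or replacing them by $p_n^{-1}$ via Bernstein) together with \eqref{eq:bound SnT}, giving $\mathbf{E}[\sup_t\norm{\hat W^{n,l}_t}_{-r}^2]=O((np_n^3)^{-1})\to0$ since $np_n^3\geq np_n^5\to\infty$. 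Collecting these terms, taking $\sup_{\norm h_r\leq1}$ and the power $1+\alpha$, and invoking the Gronwall--Henry inequality exactly as in the proofs of Propositions~\ref{prop:eta^n_tight} and~\ref{prop:hat_eta_n_tight}, one gets $\bbP_g$-a.s.\ $\sup_{t\in[0,T]}\mathbf{E}[\norm{\varpi^{n,l}_t}_{-r_1}^{1+\alpha}]\to0$. Since $\varpi^{n,l}$ is a finite weighted sum of Diracs it has $\bbP\otimes\mathbf{P}$-a.s.\ continuous trajectories in $H^{-r_1}(\bbT^2)$ (argue as in Lemma~\ref{lem:fluctuation_process_semimart}) and is tight in $\cC([0,T],H^{-r_1}(\bbT^2))$, the Aldous condition following from the above decomposition and estimates just as in Theorem~\ref{th:tight}; hence $\varpi^{n,l}\to0$ in law, therefore in probability, in $\cC([0,T],H^{-r_1}(\bbT^2))$, $\bbP_g$-a.s.

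The main obstacle is the point flagged in the first paragraph: $\varpi^{n,l}$ involves only \emph{two} centred factors $\hat\xi^{(n)}_{li}\hat\xi^{(n)}_{ij}$, for which the sign-optimised bound is too lossy (by $p_n^{-1}$) to give decay; differentiating in time is essential, as Itô's formula turns this pattern into the \emph{cubic} patterns $\hat\xi_{li}\hat\xi_{ij}\hat\xi_{ik}$, $\hat\xi_{li}\hat\xi_{ij}\hat\xi_{jk}$, one expansion deeper, whose far better concentration $S_n^{\mathcal T}(l)=O((np_n^3)^{-1})$ beats the $\sqrt{np_n}$ loss coming from the rescaling of $\mu^{n,l}$ — and it is exactly this balance that produces the threshold $np_n^5\to\infty$. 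The treatment of the noise bracket, carrying squared weights $(\hat\xi^{(n)}_{l\cdot})^2$, is the remaining, purely technical, difficulty.
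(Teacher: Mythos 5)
Your proposal is correct and follows essentially the same route as the paper: apply It\^o's formula, expand $\xi/p_n=\hat\xi+1$ to turn the quadratic graph weight $\hat\xi_{li}\hat\xi_{ij}$ into cubic patterns $\hat\xi_{li}\hat\xi_{ij}\hat\xi_{ik}$ and $\hat\xi_{li}\hat\xi_{ij}\hat\xi_{jk}$, control these by $S_n^{\lijik}(l)$, $S_n^{\lijk}(l)$ via Grothendieck so that the $\sqrt{np_n}$ prefactor yields the $(np_n^5)^{-1/2}$ rate, control the noise bracket similarly, and show the initial datum vanishes in $H^{-r_1}$. Your opening remark --- that a direct bilinear Grothendieck bound on $\varpi^{n,l}_t$ gives only $\sqrt{np_n}\,\tilde S_n\asymp p_n^{-1}$, so the time dynamics are genuinely needed to buy an extra centred factor --- is correct and a useful piece of intuition that the paper leaves implicit. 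Where you diverge is the closing step: you run a Gronwall--Henry estimate on the mild formulation to obtain $\sup_t\mathbf{E}\lVert\varpi^{n,l}_t\rVert_{-r_1}^{1+\alpha}\to0$ directly, whereas the paper establishes tightness, identifies any limit point as a solution to the homogeneous linear SPDE~\eqref{eq:ht}, and concludes by the uniqueness machinery of Appendix~\ref{sec:uniqueness}. Both work; yours is a bit more self-contained, the paper's mirrors the scheme used for $\hat\eta^n$. One small imprecision: for the quenched conclusion the initial datum must satisfy $\mathbb{E}_0\lVert\varpi^{n,l}_0\rVert_{-r_1}^2\to0$ $\mathbb{P}_g$-a.s., but your computation bounds the fully averaged $\mathbb{E}_g\mathbb{E}_0\lVert\varpi^{n,l}_0\rVert^2\lesssim(np_n)^{-1}$, which only gives convergence in $L^1(\mathbb{P}_g)$. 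The Bernstein--decoupling--Borel--Cantelli argument of Proposition~\ref{prop:limit_hateta0_indep} (which you cite) is in fact needed here, not just an order-of-magnitude sanity check; it applies since $np_n^5\to\infty$ forces $p_n^3n^{1-\varepsilon}\to\infty$ for small $\varepsilon$.
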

\begin{proof}[Main lines of proof of Proposition~\ref{prop:varpi}]
We follow the same strategy as for $ \hat{ \eta}^{ n}$: to write a semimartingale decomposition for $ \varpi^{ n, l}$, to prove tightness of this process and to identify its limit as the unique solution to a linear SPDE with noise and initial condition identically zero, so that, by uniqueness $\lim_{ n\to\infty} \varpi^{ n, l}\equiv 0$. We only draw the main lines of proof here. Recall the definition of $ \varpi^{ n, l}$ in \eqref{eq:hln}. By Ito's formula, for any regular $(\theta_{ 1}, \theta_{ 2}) \mapsto g(\theta_{ 1}, \theta_{ 2})$
\begin{align}
\left\langle \varpi_{ t}^{n, l}\, ,\, g\right\rangle=& \left\langle \varpi_{ 0}^{n, l}\, ,\, g\right\rangle + \int_{ 0}^{t} \left\langle \varpi_{ s}^{n, l}\, ,\, \frac{ 1}{ 2} \Delta g\right\rangle \nonumber\\
&+ \int_{ 0}^{t} \frac{ \sqrt{ np_{ n}}}{ n^{ 3}} \sum_{ i,j,k} \hat{ \xi}_{li}^{ (n)}\hat{\xi}^{(n)}_{ij} \frac{ \xi_{ik}^{ (n)}}{ p_{ n}} \partial_{ \theta_{ 1}} g(\theta_{ s}^{ i, n}, \theta_{ s}^{ j, n}) \Gamma \left( \theta_{ s}^{ i, n}, \theta_{ s}^{ k, n}\right) {\rm d}s\label{aux:hI}\\
&+ \int_{ 0}^{t} \frac{ \sqrt{ np_{ n}}}{ n^{ 3}} \sum_{ i,j,k} \hat{ \xi}_{li}^{ (n)}\hat{\xi}^{(n)}_{ij} \frac{ \xi_{jk}^{ (n)}}{ p_{ n}} \partial_{ \theta_{ 2}} g(\theta_{ s}^{ i, n}, \theta_{ s}^{ j, n}) \Gamma \left( \theta_{ s}^{ j, n}, \theta_{ s}^{ k, n}\right) {\rm d}s\label{aux:hII}\\
&+ \int_{ 0}^{t} \frac{ \sqrt{ np_{ n}}}{ n^{ 2}} \sum_{ i,j}\hat{ \xi}_{li}^{ (n)}\hat{\xi}^{(n)}_{ij} \left( \partial_{ \theta_{ 1}}g(\theta_{ s}^{ i, n}, \theta_{ s}^{ j, n}) {\rm d}B^{ i}{s} +\partial_{ \theta_{ 2}}g(\theta_{ s}^{ i, n}, \theta_{ s}^{ j, n}) {\rm d}B^{ j}{s}\right). \label{aux:noise_h}
\end{align}
Writing again $\frac{ \xi_{ik}^{ (n)}}{ p_{ n}}= 1+\hat{ \xi}_{ik}^{ (n)}$ and $\frac{ \xi_{jk}^{ (n)}}{ p_{ n}}= 1+\hat{ \xi}_{jk}^{ (n)}$, so that 
\begin{align*}
\eqref{aux:hI}=&\int_{ 0}^{t} \left\langle \varpi_{ s}^{n, l} ({\rm d}\theta_{ 1}, {\rm d}\theta_{ 2})\, ,\,  \partial_{ \theta_{ 1}} g(\theta_{ 1}, \theta_{ 2}) \left(\frac{ 1}{ n} \sum_{ k}\Gamma \left( \theta_{ 1}, \theta_{ s}^{ k, n}\right)\right)\right\rangle{\rm d}s\\&+ \int_{ 0}^{t} \frac{ \sqrt{ np_{ n}}}{ n^{ 3}} \sum_{ i,j,k} \hat{ \xi}_{li}^{ (n)}\hat{\xi}^{(n)}_{ij} \hat{ \xi}^{ (n)}_{ i, k}\partial_{ \theta_{ 1}} g(\theta_{ s}^{ i, n}, \theta_{ s}^{ j, n}) \Gamma \left( \theta_{ s}^{ i, n}, \theta_{ s}^{ k, n}\right) {\rm d}s,\\
\eqref{aux:hII}=&\int_{ 0}^{t} \left\langle \varpi_{ s}^{n, l} ({\rm d}\theta_{ 1}, {\rm d}\theta_{ 2})\, ,\,  \partial_{ \theta_{ 2}} g(\theta_{ 1}, \theta_{ 2}) \left(\frac{ 1}{ n} \sum_{ k}\Gamma \left( \theta_{ 2}, \theta_{ s}^{ k, n}\right)\right)\right\rangle{\rm d}s\\&+ \int_{ 0}^{t} \frac{ \sqrt{ np_{ n}}}{ n^{ 3}} \sum_{ i,j,k} \hat{ \xi}_{li}^{ (n)}\hat{\xi}^{(n)}_{ij} \hat{ \xi}^{ (n)}_{ j, k}\partial_{ \theta_{ 2}} g(\theta_{ s}^{ i, n}, \theta_{ s}^{ j, n}) \Gamma \left( \theta_{ s}^{ j, n}, \theta_{ s}^{ k, n}\right) {\rm d}s.
\end{align*}
Another application of Grothendieck inequality gives that the remaining terms in the last two sums are controlled in $H^{ -r_{ 1}} \left(\mathbb{ T}^{ 2}\right)$ by respectively $ \sqrt{ np_{ n}}S_{ n}^{ \lijik}(l)$ defined in \eqref{eq:Slijik} and $ \sqrt{ np_{ n}}S_{ n}^{ \lijk}(l)$ defined in \eqref{eq:Slijk}. Hence, by Proposition~\ref{prop:concentration_SnT}, this term is of order $ \frac{ 1}{ (np_{ n}^{ 5})^{ 1/2}}  $ which goes to $0$ as $n\to\infty$. Secondly, the noise term in \eqref{aux:noise_h}, that we denote by $W_{ t}^{ n, \varpi}(g)$ is again controlled as follows 
\begin{align*}
\mathbf{ E} \left[ W_{ t}^{ n, \varpi}(g)^{ 2}\right] &= \frac{ p_{ n}}{ n^{ 3}} \sum_{ i,j,k} \left(\hat{ \xi}_{li}^{ (n)}\right)^{ 2}\hat{\xi}^{(n)}_{ij} \hat{\xi}^{(n)}_{ik} \mathbf{ E}\left(\partial_{ \theta_{ 1}}g (\theta_{ s}^{ i, n}, \theta_{ s}^{ j, n}) \partial_{ \theta_{ 1}} g(\theta_{ s}^{ i, n}, \theta_{ s}^{ k, n})\right),
\end{align*}
and by the same argument as above, the $H^{ -r} \left(\mathbb{ T}^{ 2}\right)$-norm of $W^{ n \varpi}$ (for $r>3$) is of order $ \frac{ 1}{ np_{ n}^{ 3}}$ which goes to $0$ as $n\to\infty$, so that the noise term \eqref{aux:noise_h} vanishes as $n\to\infty$.
Finally, we turn to the initial condition:
\begin{align}
\label{eq:hnlg}
\left\langle \varpi_{ 0}^{l, n}\, ,\, g\right\rangle= \frac{ \sqrt{ np_{ n}}}{ n^{ 2}} \sum_{i, j=1}^n \hat{ \xi}_{li}^{ (n)}\hat{\xi}^{(n)}_{ij} g \left(\theta^{i,n}_0, \theta^{j,n}_0\right).
\end{align}
Using the same arguments as for the proof of Proposition~\ref{prop:limit_hateta0_indep}, it is straightforward to show that $\lim_{n\rightarrow\infty} \mathbb{ E}_{ 0} \left[\left\Vert \varpi_{ 0}^{n,l} \right\Vert_{-r_1}^{ 2}\right] =0, \quad \bbP_g-\text{a.s.}.$ (recall that the initial condition is supposed to be i.i.d. independent of the graph).

By the same arguments as before, one can prove that the process $( \varpi_{ t}^{ n, l})$ is tight in $H^{ -r_{ 1}} \left(\mathbb{ T}^{ 2}\right)$ and converges as $n\to\infty$ to the unique solution $ \varpi_{ s}$ in $H^{ -r_{ 2}} \left(\mathbb{ T}^{ 2}\right)$ to
\begin{equation}
\label{eq:ht}
\begin{split}
\left\langle \varpi_{ t}^{ l}\, ,\, g\right\rangle=& \int_{ 0}^{t} \left\langle \varpi_{ s}^{ l}\, ,\, \frac{ 1}{ 2} \Delta g\right\rangle {\rm d}s\\
&+ \int_{ 0}^{t} \left\langle \varpi_{ s}^{ l} ({\rm d}\theta_{ 1}, {\rm d}\theta_{ 2})\, ,\,  \partial_{ \theta_{ 1}} g(\theta_{ 1}, \theta_{ 2}) \left\langle \mu_{ s}({\rm d}u)\, ,\, \Gamma \left(\theta_{ 1}, u\right) \right\rangle\right\rangle{\rm d}s\\
&+ \int_{ 0}^{t} \left\langle \varpi_{ s}^{ l} ({\rm d}\theta_{ 1}, {\rm d}\theta_{ 2})\, ,\,  \partial_{ \theta_{ 2}} g(\theta_{ 1}, \theta_{ 2}) \left\langle \mu_{ s}({\rm d}u)\, ,\, \Gamma \left(\theta_{ 2}, u\right) \right\rangle\right\rangle{\rm d}s,
\end{split}
\end{equation}
which, by uniqueness of a solution to \eqref{eq:ht}, is necessarily $ \varpi\equiv 0$. This proves Proposition~\ref{prop:varpi}.
\end{proof}
\subsection{Identification of the noise}
\begin{proposition}
\label{prop:ident_noise_zetas}
Under the hypotheses of Section~\ref{sec:local_fluct}, the joint noise process (considered in Proposition~\ref{prop:semimart_zetas}) $ (W_{ t}^{ n, 1}, W_{ t}^{ n, 2}, W_{ t}^{ n})_{t \in [0,T]}$ converges as $n\to\infty$ in $H^{ -r_{ 1}}(\mathbb{ T})^{ 3}$ to $ (W_{ t}^{1}, W_{ t}^{2}, W_{ t})_{t \in [0,T]}$ Gaussian process with covariance, for $f_{ 1}, f_{ 2} \in H^{ r}, \ 0\leq s \leq t\leq T$,
\begin{equation}
\label{eq:cov_noise_12}
\begin{split}
\mathbf{ E} \left[ W_{ s}^{ l}(f_{ 1}) W_{ t}^{ l}(f_{ 2})\right]=\mathbf{ E} \left[ W_{ s}(f_{ 1}) W_{ t}(f_{ 2})\right]&= \int_{ 0}^{s}\left\langle \mu_{ u}\, ,\,  \partial_{ \theta} f_{ 1} \partial_{ \theta}f_{ 2}\right\rangle {\rm d}u, \ l=1, 2,\\
\mathbf{ E} \left[ W_{ s}^{ 1}(f_{ 1}) W_{ t}^{ 2}(f_{ 2})\right]&= p\int_{ 0}^{s}\left\langle \mu_{ u}\, ,\, \partial_{ \theta} f_{ 1} \partial_{ \theta}f_{ 2}\right\rangle {\rm d}u,\\
\mathbf{ E} \left[ W_{ s}^{ l}(f_{ 1}) W_{ t}(f_{ 2})\right]&= \sqrt{ p}\int_{ 0}^{s}\left\langle \mu_{ u}\, ,\,  \partial_{ \theta} \varphi \partial_{ \theta}\psi\right\rangle {\rm d}u, \ l=1, 2.
\end{split}
\end{equation}
Moreover, $ (W_{ t}^{1}, W_{ t}^{2}, W_{ t})_{t \in [0,T]}$ is independent of the initial condition $ (\zeta_{ 0}^{1}, \zeta_{ 0}^{2}, \eta_{ 0})$ given in Proposition~\ref{prop:ident_cond0}.
\end{proposition}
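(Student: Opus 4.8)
The plan is to reproduce, for the joint local noise $(W^{n,1},W^{n,2},W^{n})$, the three-step scheme already used for the global noise in Proposition~\ref{prop:limW}: first tightness of the triple, then identification of any accumulation point as the announced Gaussian process by passing to the limit in the Doob--Meyer brackets, and finally independence from the initial condition. Throughout I would fix a realisation of the graph in the $\mathbb{ P}_{ g}$-full-measure event on which the laws of large numbers of Theorem~\ref{th:conv_empmeas} hold, and work in law with respect to $\mathbf{ P}$; note that under the standing assumption $\liminf_n np_n^5=\infty$ of Section~\ref{sec:local_fluct}, both \eqref{eq:conv_munl} and \eqref{eq:conv_mun12} are available, as is the convergence $p_n\to p$.

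For tightness I would argue exactly as in Lemma~\ref{lem:Mn_prop}: applying Doob's inequality to an orthonormal system in $H^{r}$, together with the explicit expressions \eqref{eq:DM_Mn12G} for the brackets, the bound $\sup_{\theta}\norm{D_{\theta}}_{-(r-1)}<c_{r-1}$ of Lemma~\ref{lem:linear_forms}, and the boundedness (uniform in $s,n$ on the chosen event) of the masses $\langle\mu^{n,l}_{s},1\rangle$ and $\langle\mu^{n,1,2}_{s},1\rangle$, yields $\sup_{n}\mathbf{ E}[\sup_{t\le T}(\norm{W^{n,1}_{t}}_{-r}^{2}+\norm{W^{n,2}_{t}}_{-r}^{2}+\norm{W^{n}_{t}}_{-r}^{2})]<\infty$ for $r>\tfrac32$. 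Combined with the Rebolledo tightness criterion for Hilbert-valued martingales (already invoked at the end of the proof of Theorem~\ref{th:tight}, see \cite{joffe_metivier_86}), this gives relative compactness of $(W^{n,1},W^{n,2},W^{n})$ in $\mathcal{ C}([0,T],H^{-r_{1}}(\mathbb{ T})^{3})$.

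To identify the limit I would pass to the limit in the brackets. For fixed $\varphi,\psi\in H^{r_{1}}(\mathbb{ T})$, since $p_n\to p$ and, by Theorem~\ref{th:conv_empmeas}, $\mathbf{ E}[\sup_{s\le T}d_{ BL}(\mu^{n,l}_{s},\mu_{s})^{q}]\to 0$ and $\mathbf{ E}[\sup_{s\le T}d_{ BL}(\mu^{n,1,2}_{s},\mu_{s})^{q}]\to 0$ for every $q\ge1$, each entry of the $3\times 3$ bracket matrix in \eqref{eq:DM_Mn12G} converges, uniformly in $t\in[0,T]$ and in $L^{1}(\mathbf{ P})$, to the corresponding deterministic entry of the covariance structure \eqref{eq:cov_noise_12}; for instance $\llangle W^{n,1},W^{n,2}\rrangle_{t}\cdot\varphi(\psi)=p_{n}\int_{0}^{t}\langle\mu^{n,1,2}_{s},\partial_{\theta}\varphi\,\partial_{\theta}\psi\rangle\,{\rm d}s$ converges to $p\int_{0}^{t}\langle\mu_{s},\partial_{\theta}\varphi\,\partial_{\theta}\psi\rangle\,{\rm d}s$. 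Since the limiting bracket matrix is deterministic and continuous in $t$, the central limit theorem for continuous martingales (Rebolledo's theorem, see \cite{joffe_metivier_86} and \cite[Th.~5.2]{Fernandez1997}) identifies every accumulation point as the continuous centered Gaussian process in $\mathcal{ C}([0,T],H^{-r_{1}}(\mathbb{ T})^{3})$ with covariance \eqref{eq:cov_noise_12}; uniqueness of the accumulation point then upgrades relative compactness to convergence in law.

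For the independence claim I would condition on $\mathcal{ F}_{0}:=\sigma((\xi_{ij}^{(n)}),(\theta^{i,n}_{0}))$: each $W^{n,l}$ and $W^{n}$ is, conditionally on $\mathcal{ F}_{0}$, a continuous martingale driven only by the Brownian motions, with conditional brackets still given by \eqref{eq:DM_Mn12G}, which by the previous step converge to \emph{deterministic} limits that do not depend on $\mathcal{ F}_{0}$. Hence, arguing as in the proof of Proposition~\ref{prop:ident_cond0} (conditioning, there on $\mathcal{ F}_{\xi}$, and applying the martingale CLT to the conditional law), the conditional characteristic functional of $(W^{n,1},W^{n,2},W^{n})$ given $\mathcal{ F}_{0}$ converges to the deterministic characteristic functional of $(W^{1},W^{2},W)$; combining this with the joint convergence of the $\mathcal{ F}_{0}$-measurable triple $(\zeta^{n,1}_{0},\zeta^{n,2}_{0},\eta^{n}_{0})$ from Proposition~\ref{prop:ident_cond0} and a dominated convergence argument on the joint characteristic functional gives independence of the limit noise from $(\zeta^{1}_{0},\zeta^{2}_{0},\eta_{0})$. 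The step I expect to be the main obstacle is the convergence of the off-diagonal bracket $\llangle W^{n,1},W^{n,2}\rrangle$: it couples the two local noises through the auxiliary measure $\mu^{n,1,2}$ of \eqref{def:mu_n12}, so one genuinely needs the law of large numbers \eqref{eq:conv_mun12}, and hence the stronger dilution $\liminf_n np_n^5=\infty$, rather than the softer estimates that suffice for the diagonal terms; checking that all the hypotheses of the Hilbert-valued martingale central limit theorem hold uniformly in the quenched graph realisation is the other point requiring care.
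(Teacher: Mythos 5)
Your proposal is correct and follows essentially the same route the paper takes: the paper's own proof is the one-liner ``This follows directly from \eqref{eq:DM_Mn12G} and the convergence results \eqref{eq:conv_munl} and \eqref{eq:conv_mun12},'' which is exactly the scheme you spell out (tightness via Doob and the bracket bounds as in Lemma~\ref{lem:Mn_prop}, identification by Rebolledo's martingale CLT once the bracket matrix \eqref{eq:DM_Mn12G} converges to the deterministic limit \eqref{eq:cov_noise_12}, independence from the initial data by conditioning on $\mathcal{F}_0$). You have merely expanded the compressed argument, and you correctly flag the only nontrivial ingredient, namely that the off-diagonal entry $\llangle W^{n,1},W^{n,2}\rrangle$ requires the law of large numbers \eqref{eq:conv_mun12} for $\mu^{n,1,2}$ and hence the $np_n^5\to\infty$ hypothesis.
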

\begin{rem}
\label{rem:W12}
In the diluted case, $ (W_{ t}^{1}, W_{ t}^{2}, W_{ t})_{t \in [0,T]}$ are mutually independent.
\end{rem}
\begin{proof}[Proof of Proposition~\ref{prop:ident_noise_zetas}]
  This follows directly from \eqref{eq:DM_Mn12G} and the convergence results \eqref{eq:conv_munl} and \eqref{eq:conv_mun12}.
\end{proof}
\subsection{Proof of the main convergence result}
We now turn to the proof of Theorem~\ref{th:local_fluct}:
\begin{proof}[Main lines of proof of Theorem~\ref{th:local_fluct}]
Putting all the previous estimates into the semimartingale decomposition \eqref{eq:semimart_zeta} and applying the same arguments as for the process $ \eta^{ n}$ (using in particular the weak-mild formulation \eqref{eq:zeta_mild_semimart} which lead to similar estimates as for Proposition~\ref{prop:eta^n_tight}), we see that $ \zeta^{ n, l}$ is tight in $H^{ -r_{ 1}} \left(\mathbb{ T}\right)$ and that, almost surely w.r.t. the randomness of the graph, the joint process $\left(\zeta^{ n, 1}, \zeta^{ n, 2}, \eta^{ n}\right)$ converges in $ \mathcal{ C} \left([0, T], \left(H^{ - r_{ 1}}(\mathbb{ T})\right)^{ 3}\right)$ towards $ \left(\zeta^{1}, \zeta^{2}, \eta\right)$ solution to
\begin{equation*}
\begin{cases}
\begin{split}
\left\langle \zeta_{ t}^{l}\, ,\, f\right\rangle &= \left\langle \zeta_{ 0}^{l}\, ,\, f\right\rangle + \int_{ 0}^{t} \left\langle \zeta_{ s}^{l}\, ,\, \frac{ 1}{ 2} \partial_{ \theta}^{ 2} f + \partial_{ \theta}f \left(\theta\right)  \left\langle \mu_{ s}({\rm d}\theta^{ \prime})\, ,\, \Gamma \left(\theta, \theta^{ \prime}\right)\right\rangle \right\rangle{\rm d}s\\
&\quad+ \sqrt{ p}\int_{ 0}^{t} \left\langle \eta_{ s} \left({\rm d}\theta^{ \prime}\right)\, ,\, \left\langle \mu_{ s}({\rm d}\theta)\, ,\, \partial_{ \theta}f \left(\theta\right)  \, ,\, \Gamma \left(\theta, \theta^{ \prime}\right)\right\rangle\right\rangle {\rm d}s
+ W_{ t}^{ l}(f),\\
\left\langle \eta_{ t}\,,\, f\right\rangle &= \left\langle \eta_{ 0}\,,\, f\right\rangle + \int_{ 0}^{t} \left\langle \eta_{ s} \,,\,  \mathcal{ L}_{ \mu_s}^{(1)}(f)\right\rangle {\rm d}s +  W_{ t}(f).
\end{split}
\end{cases}
\end{equation*}
that is nothing else as the weak formulation of \eqref{eq:SDPEs_zetas_eta}. Note that we use here the convergence \eqref{eq:conv_munl} to identify the limit as in the proof of Proposition~\ref{prop:ident_lim}. Uniqueness of a solution to \eqref{eq:semimart_zeta} follows from the same arguments as for Section~\ref{sec:uniqueness}.
\end{proof}

\appendix

\section{Sobolev spaces and inequalities}
\label{sec:sobolev}
\subsection{From mixed $L^\infty$-$L^2$ norms to Sobolev norms}

\begin{lemma}
\label{lem:mixed_sobolev_ineq}
There exists a constant $C>0$ such that, for any regular function $v: \bbT\times \bbT \to \R$, it holds that
\begin{equation*}
\sup_{\theta_1 \in \bbT} \norm{v(\theta_1, \cdot)}^2_{L^2(\dd \theta_2)} \leq C \norm{v(\cdot, \cdot)}^2_{H^1(\dd \theta_1, \dd \theta_2)}.
\end{equation*}
\end{lemma}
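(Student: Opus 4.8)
The plan is to prove the mixed-norm inequality
\[
\sup_{\theta_1\in\bbT}\norm{v(\theta_1,\cdot)}^2_{L^2(\dd\theta_2)}\leq C\norm{v}^2_{H^1(\dd\theta_1,\dd\theta_2)}
\]
by reducing it to a one-dimensional Sobolev embedding applied to the partial function $\theta_1\mapsto g(\theta_1):=\norm{v(\theta_1,\cdot)}_{L^2(\dd\theta_2)}^2$. First I would observe that $g$ is a scalar-valued function on $\bbT$, and by the one-dimensional Sobolev embedding $H^1(\bbT)\hookrightarrow\cC(\bbT)$ (which holds since $d=1$ and $1>1/2$, see \cite[Theorem 5.4]{adams_fournier_2003}) there is a constant $C_0$ with $\norm{g}_{L^\infty(\dd\theta_1)}\leq C_0\norm{g}_{H^1(\dd\theta_1)}$. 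So it suffices to bound $\norm{g}_{H^1(\dd\theta_1)}=\norm{g}_{L^2(\dd\theta_1)}+\norm{g'}_{L^2(\dd\theta_1)}$ by $\norm{v}^2_{H^1(\dd\theta_1,\dd\theta_2)}$, up to a constant.

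For the $L^2$ part of $\norm{g}_{H^1}$, I would bound $\norm{g}_{L^2(\dd\theta_1)}\leq (2\pi)^{1/2}\norm{g}_{L^\infty(\dd\theta_1)}$ — but that reintroduces the sup, so instead I would simply use $\norm{g}_{L^1(\dd\theta_1)}=\int_{\bbT^2}|v|^2=\norm{v}^2_{L^2(\bbT^2)}$ together with the fact that on the compact torus one controls $\norm{g}_{H^1}$ by $\norm{g'}_{L^2}+\norm{g}_{L^1}$ (Poincaré–Wirtinger plus the $L^1$ mean), both of which are dominated by $\norm{v}^2_{H^1}$: the $L^1$-mean of $g$ is exactly $\norm{v}_{L^2(\bbT^2)}^2\leq\norm{v}_{H^1(\bbT^2)}^2$. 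For the derivative, differentiation under the integral sign gives $g'(\theta_1)=2\int_\bbT v(\theta_1,\theta_2)\,\partial_{\theta_1}v(\theta_1,\theta_2)\dd\theta_2$, so by Cauchy–Schwarz $|g'(\theta_1)|\leq 2\norm{v(\theta_1,\cdot)}_{L^2(\dd\theta_2)}\norm{\partial_{\theta_1}v(\theta_1,\cdot)}_{L^2(\dd\theta_2)}$, and another application of Cauchy–Schwarz in $\theta_1$ yields
\[
\norm{g'}_{L^1(\dd\theta_1)}\leq 2\norm{v}_{L^2(\bbT^2)}\norm{\partial_{\theta_1}v}_{L^2(\bbT^2)}\leq \norm{v}^2_{H^1(\bbT^2)}.
\]
This controls $\norm{g'}_{L^1(\dd\theta_1)}$; since $\bbT$ has finite measure one can pass from the $L^1$ bound on $g'$ and the $L^1$-mean bound on $g$ to an $L^\infty$ bound on $g$ directly via $g(\theta_1)=\bar g+\int$ (fundamental theorem of calculus on the circle), bypassing the $H^1(\bbT)$ embedding entirely: $\norm{g}_{L^\infty}\leq (2\pi)^{-1}\norm{g}_{L^1}+\norm{g'}_{L^1}$.

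Putting these together gives $\sup_{\theta_1}g(\theta_1)\leq (2\pi)^{-1}\norm{v}^2_{L^2(\bbT^2)}+\norm{v}^2_{H^1(\bbT^2)}\leq C\norm{v}^2_{H^1(\bbT^2)}$, which is the claim. There is no serious obstacle here; the only mild point requiring care is the differentiation under the integral sign (legitimate for $v$ regular, which is all that is needed since the statement is quantitative and then extends to all of $H^1(\bbT^2)$ by density), and keeping track of whether one works with $L^1$- or $L^2$-means on $\bbT$ — the finite-measure torus makes all these interchangeable up to constants, so the cleanest route is the $L^1$-mean plus $\norm{g'}_{L^1}$ argument sketched above rather than invoking $H^1(\bbT)\hookrightarrow\cC(\bbT)$.
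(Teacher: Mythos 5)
Your proof is correct and follows essentially the same route as the paper's: both reduce to controlling the $W^{1,1}(\bbT)$ norm of the slice function $g(\theta_1)=\norm{v(\theta_1,\cdot)}^2_{L^2}$ and then embed into $L^\infty(\bbT)$. The only cosmetic differences are that you bound $\norm{g'}_{L^1}$ by Cauchy--Schwarz (obtaining a product) where the paper uses the pointwise inequality $2ab\le a^2+b^2$, and you make the embedding $W^{1,1}(\bbT)\hookrightarrow L^\infty(\bbT)$ explicit via the averaged fundamental theorem of calculus rather than citing it from \cite{adams_fournier_2003}.
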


\begin{proof}
Let $u:\bbT \to \R$ be defined for $\theta_1 \in \bbT$ by $u(\theta_1) = \norm{v(\theta_1, \cdot)}^2_{L^2(\dd \theta_2)}$. Observe that $\norm{u}_{L^1} = \norm{v(\cdot, \cdot)}^2_{L^2(\dd \theta_1, \dd \theta_2)}$. In particular,
\begin{equation*}
\begin{split}
|\partial_{\theta_1} u(\theta_1)| \leq & 2 \int_\bbT \left|\partial_{\theta_1} v(\theta_1, \theta_2) \right| \left|v(\dd \theta_1, \dd \theta_2)\right| \dd \theta_2 \leq \\
\leq & \int_\bbT \left|\partial_{\theta_1} v(\theta_1, \theta_2) \right|^2 \dd \theta_2 + \int_\bbT \left|v(\dd \theta_1, \dd \theta_2)\right|^2 \dd \theta_2.
\end{split}
\end{equation*}
By integrating the previous expression with respect to $\theta_1$, one obtains
\begin{equation*}
\norm{\partial_{\theta_1} u}_{L^1} \leq \int_\bbT \left|\partial_{\theta_1} v(\theta_1, \theta_2) \right|^2 \dd \theta_1 \dd \theta_2 + \norm{v(\cdot, \cdot)}^2_{L^2(\dd \theta_1, \dd \theta_2)},
\end{equation*}
which implies
\begin{equation*}
\begin{split}
\norm{u}_{W^{1,1}}
&\leq \int_\bbT \left( \left|\partial_{\theta_1} v(\theta_1, \theta_2) \right|^2 +  \left|\partial_{\theta_2} v(\theta_1, \theta_2) \right|^2 \right) \dd \theta_1 \dd \theta_2 + 2 \norm{v(\cdot, \cdot)}^2_{L^2(\dd \theta_1, \dd \theta_2)}\\
&\leq 2 \norm{v(\cdot, \cdot)}^2_{W^{1,2}(\dd \theta_1, \dd \theta_2)}.
\end{split}
\end{equation*}
By using \cite[Theorem 5.4, Part I, Case (B)]{adams_fournier_2003}, there exists an universal constant $C>0$ such that
\begin{equation*}
\sup_{\theta_1 \in \bbT} |u(\theta_1)| = \norm{u}_{C^0} \leq C \norm{u}_{W^{1,1}}.
\end{equation*}
Observe that $\sup_{\theta_1 \in \bbT} |u(\theta_1)| = \sup_{\theta_1 \in \bbT} \norm{v(\theta_1, \cdot)}^2_{L^2(\dd \theta_2)}$, the proof is concluded with the constant given by $2C$.
\end{proof}

\section{More on Grothendieck inequalities and concentration estimates}\label{sec:App concentration}
We gather in this Section the definitions of some auxiliary (possibly weighted) empirical mean values concerning the centered variables $ \hat{ \xi}_{ij}^{ (n)}$ as well as concentration estimates for these quantities.

Define the following quantities (where $u=(u_{ i})_{ i=1, \ldots, n}$ and $(v_{ j})_{ j=1,\ldots, n}$ are fixed sequences such that $\left\vert u_{ i} \right\vert\leq 1,\ \left\vert v_{ j} \right\vert\leq 1$):
\begin{align}
U_{ n, 1}^{\ij}(l, v)&=\frac{ 1}{ n} \sum_{ j=1}^{ n} \hat{ \xi}_{l j}^{ (n)} v_{ j}, \ l=1,2,\label{eq:Uij1}\\
U_{ n, 1}^{\ikjk}(l, v)&=\frac{ 1}{ n} \sum_{ j=1}^{ n} \hat{ \xi}_{1, j}^{ (n)} \hat{ \xi}_{2 j}^{ (n)} v_{ j}, \ l=1,2,\label{eq:Uij12}\\
U_{ n,2}^{\ij}(u,v)&:=\frac{ 1}{ n^{ 2}} \sum_{ i,j=1}^{ n} \hat{\xi}^{ (n)}_{ij} u_{ i}v_{ j},\label{eq:Uij2}\\
V_{ n,2}^{ \ijjk}(u,v, l)&:= \frac{ 1}{ n^{ 2}} \sum_{ i,j=1}^{ n } \hat{ \xi}^{ (n)}_{ li} \hat{ \xi}^{ (n)}_{ ij} u_{ i}v_{ j},\ l=1, 2.\label{eq:Vij2}
\end{align}
In \eqref{eq:Uij1} and \eqref{eq:Uij12} (resp. \eqref{eq:Uij2} and \eqref{eq:Vij2}) the subscript $1$ (resp. $2$) stands for the fact that $U_{ n, 1}^{\ij}$ and $U_{ n, 1}^{\ikjk}$ (resp. $U_{ n,2}^{\ij}$ and $V_{ n,2}^{ \ijjk}$) are of order $1$ (resp. order $2$) in the sense that one sums over $j$ only (resp. over both $i$ and $j$).

\begin{lemma}
\label{lem:concent_U Vn}
Suppose that $np_{ n}^{ 2} \xrightarrow[ n\to\infty]{} +\infty$. Then for all $ \varepsilon\in \left(0, \frac{ 1}{ 2}\right)$,
\begin{align}
\limsup_{ n\to\infty}(np_{ n})^{ \frac{ 1}{ 2}- \varepsilon} \left\vert U_{ n, 1}^{\ij}(l, v)\right\vert &\leq 1, \mathbb{P}-a.s.,\ l=1,2 \label{eq:sum_hat_xi_lj}\\ 
\limsup_{ n\to\infty} (np_{ n}^{ 2})^{ \frac{ 1}{ 2}- \varepsilon} \left\vert U_{ n, 1}^{\ikjk}(l, v)  \right\vert&\leq 1, \mathbb{P}-a.s.\label{eq:sum_hat_xi_12j}
\end{align}
Moreover, under $np_{ n} \xrightarrow[ n\to\infty]{} +\infty$, for all $ \varepsilon\in \left(0, \frac{ 1}{ 2}\right)$,
\begin{equation}\label{eq:bound Un}
\limsup_{n\rightarrow\infty}\, (np_{ n})^{ 1- \varepsilon} \left\vert U_{ n,2}^{\ij}(u,v) \right\vert \leq 1, \, \mathbb{ P}-a.s.
\end{equation}
Finally, suppose that $ p_{ n} \geq \frac{ c}{ n^{1-\delta}}$ for some $ \delta>0$ (as $n\to\infty$). For $l\in \left\lbrace 1, 2\right\rbrace$, for any $ \varepsilon>  \frac{ 1- \delta}{ \delta}$, if $ \left\vert u_{ i} \right\vert\leq 1$, $ \left\vert v_{ j} \right\vert\leq 1$,
\begin{equation}\label{eq:bound Vn}
\limsup_{n\rightarrow \infty}\,  (np_{ n})^{ 1- \varepsilon}\left\vert V_{ n,2}^{ \ijjk}(u,v, l) \right\vert \leq 1, \, \mathbb{ P}-a.s.
\end{equation}
\end{lemma}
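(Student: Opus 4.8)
The statement to prove is Lemma~\ref{lem:concent_U Vn}, which collects four concentration bounds of Bernstein/Freedman type for weighted empirical means of the recentered variables $\hat\xi_{ij}^{(n)}$. These bounds are of the same flavour as those already established in Proposition~\ref{prop:concentration_SnT}: one produces a tail estimate by Bernstein's inequality, chooses the rate so that the bound is summable in $n$, and concludes $\mathbb{P}$-a.s. by Borel--Cantelli. The main point is that each of \eqref{eq:Uij1}--\eqref{eq:Vij2} is a sum of bounded independent (or martingale-difference) terms, so no Grothendieck machinery is needed here; only careful bookkeeping of variances and the range of the summands.

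\begin{proof}
We prove each bound by Bernstein's inequality (or its martingale version, \cite[Th.~1.6]{Freedman1975}) followed by Borel--Cantelli; recall $\mathbb{E}[\hat\xi_{ij}^{(n)}]=0$, $|\hat\xi_{ij}^{(n)}|\leq \frac1{p_n}$ and $\mathrm{Var}(\hat\xi_{ij}^{(n)})=\frac{1-p_n}{p_n}\leq \frac1{p_n}$.

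\emph{Proof of \eqref{eq:sum_hat_xi_lj}.} Fix $l\in\{1,2\}$ and $v$ with $|v_j|\leq1$. The variables $W_j:=\frac1n\hat\xi_{lj}^{(n)}v_j$ are independent, centered, $|W_j|\leq \frac1{np_n}$ and $\sum_j\mathrm{Var}(W_j)\leq \frac1{np_n}$. Bernstein gives, for $t>0$,
\begin{equation*}
\mathbb{P}\Big(\Big|\tfrac1n\sum_{j=1}^n\hat\xi_{lj}^{(n)}v_j\Big|>t\Big)\leq 2\exp\Big(-\frac12\frac{t^2}{\frac1{np_n}+\frac{t}{3np_n}}\Big)
= 2\exp\Big(-\frac{np_n t^2}{2+\frac{2t}{3}}\Big).
\end{equation*}
Taking $t=(np_n)^{-1/2+\varepsilon}$ makes the exponent of order $(np_n)^{2\varepsilon}$, which under $np_n^2\to\infty$ (hence $np_n\to\infty$) yields a summable sequence; Borel--Cantelli gives \eqref{eq:sum_hat_xi_lj}.

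\emph{Proof of \eqref{eq:sum_hat_xi_12j}.} Now $W_j:=\frac1n\hat\xi_{1j}^{(n)}\hat\xi_{2j}^{(n)}v_j$ are independent, centered, with $|W_j|\leq \frac1{np_n^2}$ and $\mathrm{Var}(W_j)\leq \frac1{n^2p_n^2}$, so $\sum_j\mathrm{Var}(W_j)\leq\frac1{np_n^2}$. Bernstein gives a bound $2\exp(-\frac{np_n^2t^2}{2+2t/3})$, and the choice $t=(np_n^2)^{-1/2+\varepsilon}$ produces an exponent of order $(np_n^2)^{2\varepsilon}\to\infty$ under the hypothesis $np_n^2\to\infty$; this is summable and Borel--Cantelli concludes.

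\emph{Proof of \eqref{eq:bound Un}.} The double sum $\frac1{n^2}\sum_{i,j}\hat\xi_{ij}^{(n)}u_iv_j$ is again a sum of $n^2$ independent centered terms $W_{ij}=\frac1{n^2}\hat\xi_{ij}^{(n)}u_iv_j$, with $|W_{ij}|\leq\frac1{n^2p_n}$ and $\sum_{i,j}\mathrm{Var}(W_{ij})\leq\frac1{n^2p_n}$. Bernstein yields $\mathbb{P}(|\cdot|>t)\leq 2\exp(-\frac{n^2p_n t^2}{2+2t/3})$, and taking $t=(np_n)^{-1+\varepsilon}$ makes the exponent of order $n^2p_n\cdot (np_n)^{-2+2\varepsilon}=(np_n)^{2\varepsilon}\cdot\frac{n^{2}p_n}{(np_n)^2}=(np_n)^{2\varepsilon}$, wait—one checks $n^2 p_n t^2=n^2p_n(np_n)^{-2+2\varepsilon}=(np_n)^{2\varepsilon}$, which $\to\infty$ under $np_n\to\infty$; summability and Borel--Cantelli conclude.

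\emph{Proof of \eqref{eq:bound Vn}.} Fix $l\in\{1,2\}$. Here the summand $\hat\xi_{li}^{(n)}\hat\xi_{ij}^{(n)}$ is not independent across $(i,j)$ because of the repeated index $i$; we instead write the sum as a martingale. List the pairs $(i,j)$ in lexicographic order and let $\mathcal{G}_{i,j}$ be the $\sigma$-field generated by $(\hat\xi_{li'}^{(n)})_{i'\leq i}$ together with $(\hat\xi_{i'j'}^{(n)})$ for $(i',j')\leq(i,j)$ in this order. Grouping first over $j$, note $M_i:=\frac1{n^2}\sum_j\hat\xi_{li}^{(n)}\hat\xi_{ij}^{(n)}u_iv_j=\frac1{n^2}\hat\xi_{li}^{(n)}u_i\sum_j\hat\xi_{ij}^{(n)}v_j$, and after conditioning on $\hat\xi_{li}^{(n)}$ the inner sum is a sum of independent centered terms. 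So $Y_m=\sum_{i\leq m}M_i$ is a martingale with increments bounded by $\frac{C}{np_n^2}$ (using $|\hat\xi^{(n)}|\le p_n^{-1}$ twice together with a Bernstein bound of order $(np_n)^{-1/2+\gep/2}$ on the inner sum, valid on an event of probability $1-o(n^{-1})$), and predictable quadratic variation controlled by a further Bernstein estimate. Applying Freedman's inequality exactly as in the proof of Proposition~\ref{prop:concentration_SnT} (the $S_n^{\lijik}$ case), with the rate $t=(np_n)^{-1+\varepsilon}$ and $\varepsilon>\frac{1-\delta}{\delta}$ (so that $p_n\geq c n^{-(1-\delta)}$ makes the relevant error exponents $\to\infty$), gives a summable tail and hence \eqref{eq:bound Vn} by Borel--Cantelli.
\end{proof}

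The routine part is the first three items: they are textbook Bernstein estimates, and the only care needed is in matching the exponent rate to the dilution hypothesis. The genuine obstacle is the fourth bound \eqref{eq:bound Vn}: the product $\hat\xi_{li}^{(n)}\hat\xi_{ij}^{(n)}$ shares the index $i$, so the naive independence structure fails, and one must set up a martingale decomposition (peeling off the outer sum over $i$, conditioning on the $\hat\xi_{li}^{(n)}$) and invoke a Freedman-type inequality, controlling both the increments and the predictable quadratic variation on a high-probability event. This is precisely the scheme already used in Proposition~\ref{prop:concentration_SnT} for the terms $S_n^{\lijik}(l)$, so I would lean on that proof for the details rather than reproving it; the restriction $\varepsilon>\frac{1-\delta}{\delta}$ in the statement is exactly what is needed to absorb the extra $p_n^{-1}$ factors coming from the repeated index into a rate that still beats the union/Borel--Cantelli step.
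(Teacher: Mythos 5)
Your outline for the first two bounds is correct and matches the paper's proof (Bernstein on the independent centered summands, then choose the rate so that the tail is summable and invoke Borel--Cantelli), with the one caveat that the summability in both cases is asserted rather than justified; the paper is equally brisk, so this is not a defect of your proposal specifically.

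There is a genuine computational gap in your treatment of \eqref{eq:bound Un}. You write
\begin{equation*}
n^{2}p_{n}\,t^{2}\Big|_{t=(np_{n})^{-1+\varepsilon}}=(np_{n})^{2\varepsilon}\cdot\frac{n^{2}p_{n}}{(np_{n})^{2}}=(np_{n})^{2\varepsilon},
\end{equation*}
but $\frac{n^{2}p_{n}}{(np_{n})^{2}}=\frac{1}{p_{n}}$, not $1$; the correct exponent is $n^{2\varepsilon}p_{n}^{2\varepsilon-1}$. This matters: under the lemma's hypothesis for this item, which is only $np_{n}\to\infty$, the quantity $\exp(-c\,(np_{n})^{2\varepsilon})$ need not be summable (think $np_{n}\approx\log n$). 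The step that rescues the argument, and which you omit, is precisely the observation made in the paper: since $p_{n}\leq 1$ and $2\varepsilon-1<0$, one has $p_{n}^{2\varepsilon-1}\geq 1$, so $n^{2\varepsilon}p_{n}^{2\varepsilon-1}\geq n^{2\varepsilon}$, and the polynomial lower bound on the exponent gives summability unconditionally. Without this the Borel--Cantelli step does not go through from $np_{n}\to\infty$ alone.

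For \eqref{eq:bound Vn} the martingale idea is the right one, and your choice of filtration (peeling off the outer index $i$ and observing that after conditioning the inner sum over $j$ is a sum of independent centered terms) is a perfectly workable variant; in fact it makes the predictable bracket deterministic, which is slightly cleaner than the paper's version. But two things in your sketch are off. First, the reference to ``the $S_n^{\lijik}$ case'' of Proposition~\ref{prop:concentration_SnT} is misplaced: that proposition does not treat $S_n^{\lijik}$ by a martingale Bernstein/Freedman argument at all---it uses the operator-norm concentration of the random matrix $(\hat\xi_{ij})$ from \cite{tao2012}. The paper's proof of \eqref{eq:bound Vn} uses a martingale Bernstein inequality \cite{Dzhaparidze2001}, but it is a separate argument, not a citation-by-analogy. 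Second, your parenthetical about bounding the increments ``using a Bernstein bound of order $(np_n)^{-1/2+\gep/2}$ on the inner sum, valid on an event of probability $1-o(n^{-1})$'' is both unnecessary and unresolved: the a.s. bound $|M_i|\leq\frac{1}{n^2}\cdot\frac{1}{p_n}\cdot\frac{n}{p_n}=\frac{1}{np_n^2}$ is already the right deterministic bound to feed into the martingale Bernstein inequality, and the paper uses exactly this. If you insisted on restricting to a high-probability event for the increments you would then have to carry the exceptional event through the martingale argument, which you do not do; better to drop that step entirely and use the elementary a.s. bound.
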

\begin{proof}[Proof of Lemma~\ref{lem:concent_U Vn}]
We first prove \eqref{eq:sum_hat_xi_lj}. By Bernstein inequality, since $ \left\vert \hat{ \xi}_{lj}^{ (n)} v_{ j}\right\vert\leq \frac{ 1}{ p_{ n}}$ and $ \mathbb{ E} \left[ \left( \hat{ \xi}_{ij}^{ (n)}v_{ j}\right)^{ 2}\right]\leq \frac{ 1}{ p_{ n}}$, we have
$\mathbb{ P} \left( \left\vert \sum_{ j=1}^{ n} \hat{ \xi}_{lj}^{ (n)} v_{ j}\right\vert > t \right)\leq 2\exp \left(- \frac{ 1}{ 2} \frac{ t^{ 2}p_{ n}}{ n + \frac{ t}{ 3}}\right)$. Choosing for $ \varepsilon\in \left(0, \frac{ 1}{ 2}\right)$, $t= n^{ \frac{ 1}{ 2} + \varepsilon} p_{ n}^{ \varepsilon- \frac{ 1}{ 2}}$, we obtain that
$\mathbb{ P} \left( \left\vert \frac{ 1}{ n}\sum_{ j=1}^{ n} \hat{ \xi}_{lj}^{ (n)} v_{ j}\right\vert > \frac{ 1}{ (np_{ n})^{\frac{ 1}{ 2}- \varepsilon}} \right)\leq 2\exp \left(- \frac{ 1}{ 2} \frac{ (np_{ n})^{2\varepsilon}}{ 1 + \frac{ 1}{ 3(np_{ n})^{\frac{ 1}{ 2}- \varepsilon}}}\right)$. Since $np_{ n}\to\infty$, for $n$ large we have $\frac{ 1}{ 3(np_{ n})^{\frac{ 1}{ 2}- \varepsilon}}\leq 1$, so that the previous quantity is further bounded by $2\exp \left(- \frac{ 1}{ 4} (np_{ n})^{2\varepsilon}\right)$, which is summable under the assumptions of the present lemma. To prove \eqref{eq:sum_hat_xi_12j}, we apply the same Bernstein inequality to the sequence of independent $\hat{ \xi}_{1, j}^{ (n)} \hat{ \xi}_{2 j}^{ (n)}v_{ j}$, $j=1, \ldots, n$: since $ \left\vert \hat{ \xi}_{1, j}^{ (n)} \hat{ \xi}_{2 j}^{ (n)}v_{ j} \right\vert \leq \frac{ 1}{ p_{ n}^{ 2}}$ and $ \mathbb{ E} \left[ \left(\hat{ \xi}_{1, j}^{ (n)} \hat{ \xi}_{2 j}^{ (n)}v_{ j}\right)^{ 2}\right]\leq \frac{ 1}{ p_{ n}^{ 2}}$, we have
$ \mathbb{ P} \left( \left\vert \sum_{ j=1}^{ n} \hat{ \xi}_{1, j}^{ (n)} \hat{ \xi}_{2 j}^{ (n)}v_{ j} \right\vert > t \right)\leq 2\exp \left(- \frac{ 1}{ 2} \frac{ t^{ 2}p_{ n}^{ 2}}{ n + \frac{ t}{ 3}}\right)$. Hence, the calculation is the same as before, replacing $p_{ n}$ by $p_{ n}^{ 2}$ and the result follows from the same calculations. Estimate \eqref{eq:bound Un} is again a simple consequence of Bernstein inequality: for all $t>0$ we have
$\mathbb{ P} \left( \left\vert \sum_{ i,j} \hat{ \xi}_{ij} \right\vert > t\right) \leq 2\exp \left( - \frac{ 1}{ 2} \frac{ t^{ 2} p_{ n}}{ n^{ 2} + \frac{ t}{ 3}}\right)$.
Choosing $t= n^{ 1+ \varepsilon}p_{ n}^{\varepsilon-1}$, the previous bound becomes $2\exp \left( - \frac{ 1}{ 2} \frac{ n^{ 2 \varepsilon} p_{ n}^{ 2 \varepsilon-1}}{ 1 + \frac{ 1}{ 3 (np_{ n})^{ 1- \varepsilon}}}\right)$. Since $n p_{ n}\to \infty$, this quantity is further bounded, for $n$ large, by  $2\exp \left( - \frac{ 1}{ 4}  n^{ 2 \varepsilon} p_{ n}^{ 2 \varepsilon-1}\right)$. Now note that $n^{ \varepsilon} p_{ n}^{ 2 \varepsilon-1} \geq 1$ when $ \varepsilon\in \left(0, \frac{ 1}{ 2}\right)$, so that the final bound becomes $2\exp \left( - \frac{ 1}{ 4}  n^{\varepsilon} \right)$. Let us now give the proof of \eqref{eq:bound Vn} for $l=1$. Fix $(u_{ i}, v_{ j})$ such that $ \left\vert u_{ i} \right\vert\leq 1$ and $ \left\vert v_{ j} \right\vert\leq 1$ and define
\begin{equation*}
Y_{ n}:= \sum_{i, j=1}^n \hat{ \xi}_{1i}^{ (n)}\hat{\xi}_{ij}^{ (n)} u_{ i} v_{ j}
\end{equation*}
Denote by $ \mathcal{ F}_{ i}= \sigma \left( \hat{ \xi}_{pq}^{ (n)},\ p,q \leq i\right)$. Then $ \left(Y_{ i}\right)_{ i=1, \ldots, n}$ is a $ \left(\mathcal{ F}_{ i}\right)$-martingale and one has, for all $k<n$
\begin{align*}
\mathbb{ E} \left[ \left(Y_{ k+1}-Y_{ k}\right)^{ 2}\vert \mathcal{ F}_{ k}\right] &\leq 2 \mathbb{ E} \left[ \left( \sum_{ q=1}^{ k+1} \hat{ \xi}_{1, k+1}^{ (n)} \hat{ \xi}_{k+1, q}^{ (n)} u_{ k+1} v_{ q}\right)^{ 2} \vert \mathcal{ F}_{ k}\right]\\
& \quad + 2 \mathbb{ E} \left[ \left( \sum_{ p=1}^{ k} \hat{ \xi}_{1, p}^{ (n)} \hat{ \xi}_{p, k+1}^{ (n)} u_{ p} v_{ k+1}\right)^{ 2} \vert \mathcal{ F}_{ k}\right]\\
&= 2 \sum_{ q=1}^{ k+1} \mathbb{ E} \left[ \left(\hat{ \xi}_{1, k+1}^{ (n)}\right)^{ 2}\right] \mathbb{ E} \left[ \left(\hat{ \xi}_{k+1, q}^{ (n)}\right)^{ 2}\right] u_{ k+1}^{ 2}v_{ q}^{ 2} \\\
&\quad+ 2 \sum_{ p=1}^{ k} \left(\hat{ \xi}_{1, p}^{ (n)}\right)^{ 2} \mathbb{ E} \left[ \left(\hat{ \xi}_{p, k+1}^{ (n)}\right)^{ 2}\right]u_{ p}^{ 2} v_{ k+1}^{ 2}.
\end{align*} 
The above quantity is a.s. bounded by $ \frac{ ck}{ p_{ n}^{ 2}}$, for some numerical constant $c$ independent of $k, n, u, v$. Hence, we deduce that, almost surely $\left\langle Y\right\rangle_{ n} \leq \frac{ c^{ \prime} n^{ 2}}{ p_{ n}^{ 2}}$.
Noting that $ \left\vert Y_{ i+1}-Y_{ i} \right\vert \leq \frac{ c^{ \prime\prime} n}{ p_{ n}^{ 2}}$ almost surely, one can apply Bernstein inequality for martingales \cite{Dzhaparidze2001}, we obtain, for all $t>0$
$\mathbb{ P} \left( \left\vert Y_{ n} \right\vert>t\right) = \mathbb{ P} \left( \left\vert Y_{ n} \right\vert> t,\ \left\langle Y\right\rangle_{ n}\leq L_{ n}\right) \leq 2 \exp \left(- \frac{ 1}{ 2} \frac{ t^{ 2} p_{ n}^{ 2}}{ c^{ \prime}n^{ 2} + \frac{ c^{ \prime\prime}n t}{ 3}}\right)$. Choosing $t= n^{ 1+ \varepsilon}p_{ n}^{\varepsilon-1}$, for $ \varepsilon>0$ (to be fixed later) the bound above becomes
$\mathbb{ P} \left( \left\vert Y_{ n} \right\vert>t\right)\leq 2 \exp \left(- \frac{ 1}{ 2} \frac{ n^{2 \varepsilon} p_{ n}^{ 2 \varepsilon}}{ c^{ \prime} + \frac{ c^{ \prime\prime}n^{\varepsilon} }{ 3p_{ n}^{ 1- \varepsilon}}}\right)$.
Since $ \frac{ n^{ \varepsilon}}{ p_{ n}^{ 1- \varepsilon}}\geq 1$, the previous quantity is bounded by $2 \exp \left(- \frac{ 1}{ 2} \frac{ n^{\varepsilon} p_{ n}^{ 1+\varepsilon}}{ c^{ \prime} + \frac{ c^{ \prime\prime}}{ 3}}\right) $. Since $p_{ n}\geq \frac{ 1}{ n^{ 1- \delta}}$ for $n$ large, we have $n^{ \varepsilon} p_{ n}^{ 1+\varepsilon}\geq n^{ \varepsilon -(1+\varepsilon)(1- \delta)}$. Choosing $ \varepsilon>\frac{ 1- \delta}{ \delta}$, we obtain that $\varepsilon -(1- 3 \varepsilon)(1- \delta)>0$, the above quantity is summable in $n$ and we conclude by Borel-Cantelli Lemma. 
\end{proof}
We now turn to quantities similar to the term $C_{ n}$ in \eqref{eq:def Cn}, for which we apply again Grothendieck inequalities. We recall here the definitions of $S_{ n}^{ \mathcal{ T}}$ in \eqref{eq:Sijik}, \eqref{eq:Sijjk} and Definition~\ref{def:Sn}.
\begin{lemma}
\label{lem:Grothendieck_fonct}
Let $k>3$, $r>0$, $ \Phi:= \left( \varphi_{ p}(\theta_{ 1}, \theta_{ 2})\right)_{ p}$ be a complete orthonormal system in $ H^{r}(\mathbb{ T}^{ 2})$, $F$ an bounded linear operator from $H^{r}$ to $H^{k}$, and $u:= (u_{ i})_{ i=1, \ldots, n}$ an arbitrary vector in $\mathbb{T}^n$. Define the quantities
\begin{align*}
&c^{\ijik}_{ n}(\Phi, F, u):= \frac{ 1}{ n^{ 3}} \sum_{ i,j,k=1}^{ n}\hat{ \xi}_{i j}^{ (n)} \hat{ \xi}_{ik}^{ (n)} \sum_{ p\geq 1} \partial_{ \theta_{ 1}} F(\varphi_{ p}) \left(u_{i}, u_{ j}\right)\partial_{ \theta_{ 1}}\bar F( \varphi_{ p}) \left(u_{i}, u_{ k}\right),\\
&c^{\ikjk}_{ n}(\Phi, F, u):= \frac{ 1}{ n^{ 3}} \sum_{ i,j,k=1}^{ n}\hat{ \xi}_{i k}^{ (n)} \hat{ \xi}_{jk}^{ (n)} \sum_{ p\geq 1} \partial_{ \theta_{ 1}}F( \varphi_{ p}) \left(u_{i}, u_{ k}\right)\partial_{ \theta_{ 1}} \bar F(\varphi_{ p}) \left(u_{j}, u_{ k}\right).
\end{align*}
Then, there exists a constant $C>0$, which is not depending on $n$, such that, for $\cT \in \{\ijik, \ikjk \}$,
\begin{equation}
\label{eq:Grothendieck_fonct}
\sup_{ u\in \mathbb{ T}^{ n}} \left\vert c^{\cT}_{ n}(\Phi, u) \right\vert \leq C S_{ n}^\cT \Vert F\Vert_{\cL(H^r,H^k)}^2.
\end{equation}
\end{lemma}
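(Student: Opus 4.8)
The plan is to reduce the estimate \eqref{eq:Grothendieck_fonct} to an application of the generalised Grothendieck inequality of Theorem~\ref{th:Grothendieck}, following very closely the structure of the proof of Proposition~\ref{prop:bound Cn}. The two quantities $c_n^{\ijik}$ and $c_n^{\ikjk}$ are of the same nature, so I would only treat one of them (say $c_n^{\ijik}$) in detail and remark that the other follows by a transposition of the roles of the indices, exactly as in Proposition~\ref{prop:bound Cn}.

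First I would fix $u\in\mathbb{T}^n$ and observe that for each fixed pair of indices $(i,j)$ the quantity $\partial_{\theta_1}F(\varphi_p)(u_i,u_j)$, viewed as a function of $p$, is just the $p$-th coefficient of the expansion of the function $(\theta_1,\theta_2)\mapsto\partial_{\theta_1}[F(\cdot)](\theta_1,\theta_2)$ evaluated at $(u_i,u_j)$ in the dual basis associated to $\Phi$; summing the product over $p\ge1$ is a Parseval-type pairing. Concretely, since $F$ is bounded from $H^r$ into $H^k$ with $k>3$, the operator $g\mapsto \partial_{\theta_1}F(g)$ is bounded from $H^r$ into $H^{2}$, and $H^{2}(\mathbb{T}^2)$ embeds into a space where point evaluations of the first derivative make sense; more precisely, using Sobolev embeddings on $\mathbb{T}^2$ (recall \eqref{eq:sobolev_ineq}), for any $k>3$ one has continuity of $g\mapsto\partial_{\theta_1}F(g)(\theta_1,\cdot)$ as an $H^{1/2+2\delta}(\dd\theta_2)$-valued map, uniformly in $\theta_1$, with operator norm controlled by $\|F\|_{\mathcal{L}(H^r,H^k)}$. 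This is the analogue of the bound \eqref{eq:norm_z1}: introducing $\ell^2(\mathbb{Z})$-vectors $x_{1,i}$, $x_{2,j}$, $x_{3,k}$ exactly as in the proof of Proposition~\ref{prop:bound Cn} (with $\partial_{\theta_1}f$ replaced by $\partial_{\theta_1}F(\cdot)$ and the role of $f$ played by an abstract test function whose $H^3$-norm is $\le\|F\|_{\mathcal{L}(H^r,H^k)}$), one writes
\begin{equation*}
c_n^{\ijik}(\Phi,F,u)=\frac1{n^3}\sum_{i,j,k=1}^n\hat\xi_{ij}^{(n)}\hat\xi_{ik}^{(n)}\sum_{a,b\in\mathbb Z}x_{1,i}(a,b)\,x_{2,j}(a)\,x_{3,k}(b),
\end{equation*}
after checking that the $\ell^2$-normalised vectors still satisfy $\|x_{2,j}\|_{\ell^2(\mathbb Z)}=\|x_{3,k}\|_{\ell^2(\mathbb Z)}=1$ and $\|x_{1,i}\|_{\ell^2(\mathbb T^2)}^2\le C\|F\|_{\mathcal{L}(H^r,H^k)}^2$ uniformly in $i$ and in $u$.

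Then I would invoke Theorem~\ref{th:Grothendieck} with $A=\mathbb Z$, $m=2$, $N=3$, $\mathcal U=\{\{1,2\},\{1\},\{2\}\}$ (which has minimal incidence $\mathcal I_{\mathcal U}=2$) and $\theta\equiv1\in\tilde{\mathcal V}_{\mathcal U}(\mathbb Z^2)$, with the scalar array $a_{ijk}=n^{-3}\hat\xi_{ij}^{(n)}\hat\xi_{ik}^{(n)}$. The conclusion of that theorem bounds the trilinear sum above by $\mathcal K_{\mathcal U}\,\|x_{1,i}\|_{\ell^2(\mathbb T^2)}^2\cdot\sup_{r,s,t\in\{\pm1\}^n}|n^{-3}\sum_{i,j,k}\hat\xi_{ij}^{(n)}\hat\xi_{ik}^{(n)}r_is_jt_k|=\mathcal K_{\mathcal U}\,\|x_{1,i}\|^2\,S_n^{\ijik}$, which is precisely \eqref{eq:Grothendieck_fonct} for this choice of $\mathcal T$ after taking the supremum over $u\in\mathbb T^n$ (the bound on $\|x_{1,i}\|$ being uniform in $u$). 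For $\mathcal T=\ikjk$ one groups the indices so that the doubly-indexed vector is attached to the index $k$ rather than $i$, and the same argument yields the bound with $S_n^{\ikjk}$.

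The only mildly delicate point, and hence the main obstacle, is bookkeeping the fractional Sobolev estimates with the operator $F$ in place of a fixed test function: one must verify that composing with $F$ and taking $\partial_{\theta_1}$ lands in $H^3(\mathbb T^2)$ with norm $\lesssim\|F\|_{\mathcal{L}(H^r,H^k)}$, using $k>3$ together with Lemma~\ref{lem:mixed_sobolev_ineq} and the Sobolev embeddings exactly as in the chain of inequalities preceding \eqref{eq:norm_z1}. Once that is in place everything else is a verbatim repetition of the proof of Proposition~\ref{prop:bound Cn}, so I would present it tersely, pointing to that proof for the details and only spelling out the substitution $f\rightsquigarrow F(\cdot)$ and the resulting uniform-in-$u$ bound on $\|x_{1,i}\|_{\ell^2(\mathbb T^2)}$.
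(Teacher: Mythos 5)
Your strategy is the right one and the overall reduction to Theorem~\ref{th:Grothendieck} with $\cU=\{\{1,2\},\{1\},\{2\}\}$ is correct, but there is a genuine gap in the way you handle the sum over $p$, and this is in fact the whole substance of this lemma compared with Proposition~\ref{prop:bound Cn}.

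In Proposition~\ref{prop:bound Cn} the quantity is built from a \emph{single} test function $f$, so the vector $x_{1,i}(a,b)$ is just the product of one Fourier coefficient of $\partial_{\theta_1}f(\theta^{i,n}_s,\cdot)$ and one of $\Gamma(\theta^{i,n}_s,\cdot)$, and the $\ell^2$-bound follows from a mixed $L^\infty$--$L^2$ Sobolev estimate in $\theta_2$ (Lemma~\ref{lem:mixed_sobolev_ineq}). Here, however, $c^{\cT}_n$ contains an \emph{additional} sum over the orthonormal system index $p\geq1$ which cannot be disposed of in the same way: if you simply set up $x_{1,i}^{(p)}(a,b)$ for each fixed $p$ ``exactly as in Proposition~\ref{prop:bound Cn}'' and bound each by $C\|F(\varphi_p)\|_{H^3}\leq C\|F\|_{\cL(H^r,H^k)}$, the resulting sum over $p$ diverges. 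Your remark that summing over $p$ is ``a Parseval-type pairing'' is exactly the observation that is needed, but it is not developed, and as a result your claimed uniform bound $\|x_{1,i}\|^2_{\ell^2}\leq C\|F\|^2_{\cL(H^r,H^k)}$ is both unjustified and incorrect (the correct exponent, compatible with \eqref{eq:Grothendieck_fonct}, is $\|x_{1,i}\|_{\ell^2}\leq C\|F\|^2_{\cL(H^r,H^k)}$, i.e. $\|x_{1,i}\|^2_{\ell^2}\leq C\|F\|^4_{\cL(H^r,H^k)}$, precisely because $x_{1,i}(a,b)$ must carry the entire $p$-sum of a bilinear expression in $F$).

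Concretely, the missing steps are: (i) absorb the full $p$-sum into the first $\ell^2$-vector, so that (before rescaling by the $((1+a^2)(1+b^2))^{1/4+\delta}$ weights)
\begin{equation*}
x_{1,i}(a,b) \propto \sum_{p\geq 1}\Bigl(\int_\bbT \partial_{\theta_1}F(\varphi_p)(u_i,\theta)\,\bar e_a(\theta)\,\dd\theta\Bigr)\Bigl(\int_\bbT \partial_{\theta_1}\bar F(\varphi_p)(u_i,\theta)\,\bar e_b(\theta)\,\dd\theta\Bigr);
\end{equation*}
(ii) introduce the linear forms $\cI_{F,a,u}(\varphi):=\int_\bbT\partial_{\theta_1}F(\varphi)(u,\theta)\,\bar e_a(\theta)\,\dd\theta$ and use Cauchy--Schwarz over $p$ together with the fact that $(\varphi_p)_p$ is orthonormal in $H^r$, so that $\sum_p|\cI_{F,a,u}(\varphi_p)|^2 = \|\cI_{F,a,u}\|_{-r}^2$; this is where the specific choice of $\Phi$ is used, and the mixed-Sobolev estimate of Lemma~\ref{lem:mixed_sobolev_ineq} alone cannot replace it; (iii) get the decay in $a$ not through the $H^{1/2+2\delta}(\dd\theta_2)$-trick but through repeated integration by parts in $\theta_2$ (using that $\partial_\theta^l[\partial_{\theta_1}F(\varphi)(u,\theta)\bar e_a(\theta)]$ has vanishing integral over $\bbT$), which yields $\sup_u\|\cI_{F,a,u}\|_{-r}\leq C|a|^{-l}\|F\|_{\cL(H^r,H^k)}$ with $l=\lfloor k\rfloor-2$; this is the point where $k>3$ is actually used. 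Only after these three steps does the $a,b$-sum in $\|x_{1,i}\|_{\ell^2}^2$ converge, yielding the $\|F\|^4$ (squared-norm) bound, and the Grothendieck step then produces the stated $S_n^{\cT}\|F\|^2$. As written, your argument would pass for a single $p$ but does not close when the sum over $p$ is reinstated.
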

\begin{proof}[Proof of Lemma~\ref{lem:Grothendieck_fonct}]
We focus on the term with $\cT = \ijik$, but the other terms can be dealt in a similar manner. Write the same decomposition as for Proposition~\ref{prop:bound Cn}: if again $(e_a)_{a \in \Z}$ is the canonical basis of $L^2(\bbT)$ then, for fixed $p\geq 1$, write $\partial_{\theta_1}F( \varphi_{ p})(\theta_1, \theta_2) = \sum_{a \in \Z} e_a(\theta_2) \int_\bbT \partial_{\theta_1} F(\varphi_{ p})(\theta_1, \theta) \bar e_a(\theta) \dd \theta$
as well as, for $a, b\in \mathbb{ Z}$,
\begin{equation*}
\begin{split}
x_{1,i}(a,b) &= \, C^2_\delta \, \big((1+a^2)(1+b^2)\big)^{1/4 + \delta} \\
&\qquad \qquad \times \sum_{ p\geq1}\left( \int_\bbT \partial_{\theta_1}  F(\varphi_{ p})(u_{ i}, \theta) \bar e_a(\theta) \dd \theta \right)  
\left(\int_\bbT \partial_{ \theta_{ 1}} F(\varphi_{ p})(u_{ i}, \theta) \bar e_b(\theta) \dd \theta\right), \\
x_{2,j}(a) &= \, C^{-1}_\delta \, (1+a^2)^{-1/4-\delta} e_a(u_{ j}),\\
x_{3,k}(b) &= \, C^{-1}_\delta \, (1+b^2)^{-1/4 -\delta} e_b(u_{ k}),
\end{split}
\end{equation*}
where $C_\delta = (\sum_{a\in \Z} (1+a^2)^{-1/2-2\delta})^{1/2}$ for some $\delta > 0$. We then have, using Cauchy-Schwarz inequality
\begin{equation}
\label{eq:z1i_gen}
\begin{split}
\norm{x_{1,i}}^2_{\ell^2} &= C^4_\delta \sum_{a,b \in \bbZ}  \big((1+a^2)(1+b^2)\big)^{1/2 + 2\delta}\\
&\qquad \quad \times\left( \sum_{ p\geq1}\left( \int_\bbT \partial_{\theta_1}  F(\varphi_{ p})(u_{ i}, \theta) \bar e_a(\theta) \dd \theta \right) \left(\int_\bbT \partial_{ \theta_{ 1}} F(\varphi_{ p})(u_{ i}, \theta) \bar e_b(\theta) \dd \theta\right) \right)^{ 2}\\
&\leq C^4_\delta \left(\sum_{a\in \bbZ}(1+a^2)^{1/2 + 2\delta} \sum_{ p\geq1}\left\vert \int_\bbT \partial_{\theta_1}  F(\varphi_{ p})(u_{ i}, \theta) \bar e_a(\theta) \dd \theta \right\vert^{ 2} \right)^{ 2}\\
&= C^4_\delta \left(\sum_{a\in \bbZ} (1+a^2)^{1/2 + 2\delta} \left\Vert \mathcal{ I}_{F,a, u_{ i}} \right\Vert_{-r}^{ 2} \right)^2,
\end{split}
\end{equation}
for the linear form
\begin{equation}
\label{eq:Iau}
\mathcal{ I}_{F, a, u}(\varphi):= \int_\bbT \partial_{\theta_1} F( \varphi)(u, \theta) \bar e_a(\theta) \dd \theta.
\end{equation}
Observe that for fixed $a$, $ \left\vert a \right\vert\geq1$, $u\in \mathbb{ T}$ and any $l\geq1$
\begin{equation*}
\begin{split}
0 &= \int_\bbT \partial^l_{\theta} \left[\partial_{\theta_1} F( \varphi)(u, \theta) \bar e_a (\theta)\right] \dd \theta \\
&= \int_{ \mathbb{ T}} [\partial_{ \theta_{ 1}} [\partial_{ \theta_2}^{ l} F(\varphi)(u, \theta)]] \, \bar e_{ a}(\theta) {\rm d}\theta + (ia)^l \int_{ \mathbb{ T}} \partial_{ \theta_{ 1}}F( \varphi)(u, \theta) \, \bar e_{ a}(\theta) {\rm d}\theta\\
&=\int_{ \mathbb{ T}} [\partial_{ \theta_{ 1}} [\partial_{ \theta_2}^{ l} F(\varphi)(u, \theta)]] \, \bar e_{ a}(\theta) {\rm d}\theta + (ia)^l \, \mathcal{ I}_{F, a, u}(\varphi).
\end{split}
\end{equation*}
Taking the absolute values in the previous expression and using Lemma \ref{lem:mixed_sobolev_ineq}, one obtains that there exists a positive constant $C$, independent of $a$ and $u$, such that
\begin{align*}
\left\vert \mathcal{ I}_{ F,a, u} (\varphi)\right\vert &= \frac{ 1}{ \left\vert a \right\vert^{ l}} \left\vert \int_{ \mathbb{ T}} \partial_{ \theta_{ 1}} \partial_{ \theta_{ 2}}^{ l}F( \varphi)(u, \theta) \bar e_{ a}(\theta) {\rm d}\theta\right\vert \leq \frac{ 1}{ \left\vert a \right\vert^{ l}} \sup_{ u\in \mathbb{ T}} \left\Vert  \partial_{ \theta_{ 1}} \partial_{ \theta_{ 2}}^{ l} \varphi \left(u, \cdot\right) \right\Vert_{ L^{ 2}({\rm d}\theta_{ 2})}\\ &\leq \frac{ C}{ \left\vert a \right\vert^{ l}} \left\Vert  \partial_{ \theta_{ 1}} \partial_{ \theta_{ 2}}^{ l} F(\varphi) \left(\cdot, \cdot\right) \right\Vert_{ H^{ 1}({\rm d}\theta_{ 1},{\rm d}\theta_{ 2})} \leq \frac{ C}{ \left\vert a \right\vert^{ l}} \left\Vert F(\varphi) \right\Vert_{ H^{ l+2}({\rm d}\theta_{ 1},{\rm d}\theta_{ 2})}.
\end{align*}
This means that, taking $l=\lfloor k\rfloor -2$,
\begin{equation*}
\sup_{u\in \bbT^n} \left\Vert \mathcal{ I}_{ a, u} \right\Vert_{ -r} \leq \frac{ C}{ \left\vert a \right\vert^{ l}}\Vert F\Vert_{\cL(H^r,H^k)}, \quad \text{for any } a \in \bbZ,\ \left\vert a \right\vert\geq1.
\end{equation*}
Going back to \eqref{eq:z1i_gen}, choosing $\gd$ small enough, this implies that 
\begin{equation*}
\sup_{ i} \norm{x_{1,i}}^2_{\ell^2} \leq C^4_\delta \, C^2 \left( \sum_{a \in \bbZ} |a|^{1+2\delta - 2l} \Vert F\Vert_{\cL(H^r,H^k)}^2\right)^2 < C(l, \delta)\Vert F\Vert_{\cL(H^r,H^k)}^4 <\infty.
\end{equation*}
We are finally in position to apply Grothendieck inequality (Theorem~\ref{th:Grothendieck}), which gives the result. 
\end{proof}

\section{Proof of Theorem~\ref{th:conv_empmeas}}
\label{sec:conv_empmeas}
The proof of convergence follows the same structure for $ \mu^{ n}$, $ \mu^{ n, l}$ and $ \mu^{ n, 1,2}$, since all of them may be written as
\begin{equation}
\label{eq:emp_m}
m_{t}^{ n}:= \frac{ 1}{ n} \sum_{ i=1}^{ n} \Xi_{ i}^{ (n)} \delta_{ \theta_{ t}^{ i, n}},\ t\in[0, T].
\end{equation}
Indeed, $ \mu^{ n}$ corresponds to \eqref{eq:emp_m} for the choice $ \Xi_{ i}^{ (n)}:=1$, $ \mu^{ n, l}$ for the choice $ \Xi_{ i}^{ (n)}:= \frac{ \xi_{li}^{ (n)}}{ p_{ n}}$ whereas $ \mu^{ n, 1, 2}$ satisfies \eqref{eq:emp_m} for $ \Xi_{ i}^{ (n)}:= \frac{ \xi_{1i}^{ (n)} \xi_{2i}^{ (n)}}{ p_{ n}^{ 2}}$. In the following we use the notation
\begin{equation}
\label{eq:Sn_Xi}
S_{ n}\left(\Xi\right):= \frac{ 1}{ n} \sum_{ k=1}^{ n} \Xi_{ k}^{ (n)}.
\end{equation}
Hence, we proceed with the calculations with a general $m^{ n}$ and detail the appropriate changes when required.
Let $ \theta_{ t}$ solution to $ {\rm d}\theta_{ t}= \int \Gamma \left(\theta_{ t}, \theta\right) \mu_{ t} \left({\rm d} \theta\right) {\rm d}t+ {\rm d}B_{ t}$, where $B_{ t}$ is a standard Brownian motion. Define then for $s\leq t$ and any test function $f$, $P_{ s,t}f(\theta):= \mathbf{ E}_{ B} \left[f \left(\Phi_{ s}^{t}(\theta)\right)\right]$, where $t \mapsto \Phi_{ s}^{ t}(\theta)$ is the solution to the previous equation with $ \Phi_{ s}^{ s}(\theta)= \theta$. Straightforward calculations (using Ito's formula and the fact that $s \mapsto P_{ s, t}f$ satisfies a Backward Kolmogorov equation, see e.g. \cite[Lemma~4.3]{lucon_stannat_2014} for more details) show that, for all $f$ regular
\begin{align*}
\big\langle & m_{ T}^{ n}- \mu_{ T}\, ,\, f\big\rangle \\
&= \left\langle m_{ 0}^{ n}- \mu_{ 0}\, ,\, P_{ 0, T}f\right\rangle + \frac{ 1}{ n}\sum_{ k=1}^{ n} \int_{ 0}^{T} \Xi_{k}^{ (n)}\partial_{ \theta}P_{ t, T}f(\theta_{ t}^{ k, n}) {\rm d}B_{t}^{ k}\\ &\quad + \int_{ 0}^{T}\frac{ 1}{ n}\sum_{ i=1}^{ n}  \Xi_{i}^{ (n)}\partial_{ \theta}P_{ t, T}f(\theta_{ t}^{ i, n}) \left( \frac{ 1}{ np_{ n}} \sum_{ j=1}^{ n} \xi_{ij}^{ (n)} \Gamma \left(\theta_{ t}^{ i, n}, \theta_{ t}^{ j, n}\right) - \int \Gamma \left( \theta_{ t}^{ i, n}, \theta\right) \mu_{ t} \left({\rm d}\theta\right)\right) {\rm d}t\\
&=\left\langle m_{ 0}^{ n}- \mu_{ 0}\, ,\, P_{ 0, T}f\right\rangle + \frac{ 1}{ n}\sum_{ k=1}^{ n} \int_{ 0}^{T} \Xi_{k}^{ (n)}\partial_{ \theta}P_{ t, T}f(\theta_{ t}^{ k, n}) {\rm d}B_{t}^{ k}\\ &\quad + \int_{ 0}^{T}\frac{ 1}{ n^{ 2}}\sum_{ i,j=1}^{ n}\Xi_{i}^{ (n)}\hat{ \xi}_{ij}^{ (n)}\partial_{ \theta}P_{ t, T}f(\theta_{ t}^{ i, n})  \Gamma \left(\theta_{ t}^{ i, n}, \theta_{ t}^{ j, n}\right) {\rm d}t\\ &\quad + \int_{ 0}^{T}\frac{ 1}{ n}\sum_{ i=1}^{ n}  \Xi_{i}^{ (n)}\partial_{ \theta}P_{ t, T}f(\theta_{ t}^{ i, n}) \left\langle \Gamma \left(\theta_{ t}^{ i, n},\cdot\right)\, ,\, \mu_{ t}^{ n}- \mu_{ t}\right\rangle {\rm d}t,
\end{align*}
so that
\begin{align}
\frac{ 1}{ 4^{ q-1}} \left\vert \left\langle  m_{ T}^{ n}- \mu_{ T}\, ,\, f\right\rangle \right\vert^{ q}&\leq \left\vert \left\langle m_{ 0}^{ n}- \mu_{ 0}\, ,\, P_{ 0, T}f\right\rangle \right\vert^{ q} + \left\vert \frac{ 1}{ n}\sum_{ k=1}^{ n} \int_{ 0}^{T} \Xi_{k}^{ (n)}\partial_{ \theta}P_{ t, T}f(\theta_{ t}^{ k, n}) {\rm d}B_{t}^{ k} \right\vert^{ q}\nonumber\\ &\quad+T^{ q-1} \int_{ 0}^{T} \left\vert \frac{ 1}{ n^{ 2}}\sum_{ i,j=1}^{ n} \Xi_{i}^{ (n)}\hat{ \xi}_{ij}^{ (n)}\partial_{ \theta}P_{ t, T}f(\theta_{ t}^{ i, n})  \Gamma \left(\theta_{ t}^{ i, n}, \theta_{ t}^{ j, n}\right) \right\vert^{ q} {\rm d}t\nonumber\\&\quad  + T^{ q-1}\int_{ 0}^{T} \left\vert \frac{ 1}{ n}\sum_{ i=1}^{ n}  \Xi_{i}^{ (n)}\partial_{ \theta}P_{ t, T}f(\theta_{ t}^{ i, n}) \left\langle \Gamma \left(\theta_{ t}^{ i, n},\cdot\right)\, ,\, \mu_{ t}^{ n}- \mu_{ t}\right\rangle \right\vert^{ q} {\rm d}t\nonumber\\&:=(A)+(B)+(C)+(D).\label{eq:dec_munl}
\end{align}
Consider the initial condition $(A)$: writing $ \frac{ \xi_{li}}{ p_{ n}}= \hat{ \xi}_{l i} +1$, we see that $\mu_{ 0}^{ n, l}= \mu_{ 0}^{ n} + \hat{ \mu}_{ 0}^{ n ,l} $ (recall the definition of $\hat{\mu}_{ t}^{ n,l}$ in \eqref{def:hat_mu_i_n}) and that $ \mu_{ 0}^{ n, 1, 2}= \mu_{ 0}^{ n} + \hat{ \mu}_{ 0}^{ n ,1}+\hat{ \mu}_{ 0}^{ n ,2} + \hat{ \mu}_{ 0}^{ n ,1,2}$ where
\begin{equation*}
\hat{ \mu}_{ 0}^{ n,1,2}= \frac{ 1}{ n} \sum_{i=1}^{ n} \hat{ \xi}_{1,i}^{ (n)} \hat{ \xi}_{2i}^{ (n)} \delta_{ \theta_{ 0}^{ i,n}}.
\end{equation*}
Thus, setting 
\begin{equation*}
\hat{ m}_{ 0}^{ n}:=\begin{cases}
0 & \text{ if } m^{ n}= \mu^{ n},\\
 \hat{ \mu}_{ 0}^{ n ,l} & \text{ if } m^{ n}= \mu^{ n, l},\\
\hat{ \mu}_{ 0}^{ n ,1}+\hat{ \mu}_{ 0}^{ n ,2} + \hat{ \mu}_{ 0}^{ n ,1,2} & \text{ if } m^{ n}= \mu^{ n,1,2}
\end{cases}
\end{equation*}
we have in all cases that (noting that if $f\in BL= BL( \mathbb{ T})$ (recall \eqref{eq:dBL}), $P_{ 0, T}f$ is also in $BL$)
\begin{align*}
(A)=\left\vert \left\langle m_{ 0}^{ n}- \mu_{ 0}\, ,\, P_{ 0, T}f\right\rangle \right\vert^{ q} & \leq 2^{ q-1} d_{ BL} \left(\mu_{ 0}^{ n}, \mu_{ 0}\right)^{ q} + 2^{ q-1} \left\vert \left\langle \hat{m}_{ 0}^{ n}\, ,\, P_{ 0, T}f\right\rangle \right\vert^{ q}
\end{align*}
Consider now the term $(B)$: note that (\cite[Lemma~4.4]{lucon_stannat_2014}), there exists a constant $C_{ 0}>0$ such that uniformly in $ \theta$, $t\leq T$ and in $f$ such that $ \left\Vert f \right\Vert_{ Lip}\leq 1$, $ \left\vert \partial_{ \theta}P_{ t, T}f(\theta) \right\vert \leq C_{ 0}$. Hence, we see that $\mathbf{ E} \left[(B)\right] \leq C \left( \frac{ 1}{ n^{ 2}} \sum_{ k=1}^{ n} \left(\Xi_{k}^{ (n)}\right)^{ 2}\right)^{ \frac{ q}{ 2}}$, which is, $ \mathbb{ P}$-a.s., uniformly in $f\in BL$, smaller than $ C\beta_{ n}^{ q}$ with 
\begin{equation*}
\beta_{ n}:= \begin{cases}
 n^{ -\frac{ 1}{ 2}} & \text{ if } m^{ n}= \mu^{ n},\\
\left(np_{ n}^{ 2}\right)^{ -\frac{ 1}{ 2}} & \text{ if } m^{ n}= \mu^{ n, l},\\
\left(np_{ n}^{ 4}\right)^{ -\frac{ 1}{ 2}} & \text{ if } m^{ n}= \mu^{ n,1,2}
\end{cases}
\end{equation*} 
Consider now the term $(C)$ in \eqref{eq:dec_munl}: using again that $ \left\vert \partial_{ \theta}P_{ t, T}f(\theta) \right\vert \leq C_{ 0}$ and since $ \frac{ \xi_{li}^{ (n)}}{ p_{ n}}\leq \frac{ 1}{ p_{ n}}$, we see by another application of Grothendieck inequality that, $ \mathbb{ P}\otimes \mathbf{ P}$-a.s., $(C)$ is uniformly controlled by $C \gamma_{ n}^{ q} (S_{ n}^{ \ij})^{ q}$, where 
\begin{equation*}
\gamma_{ n}:= \begin{cases}
1 & \text{ if } m^{ n}= \mu^{ n},\\
 \frac{ 1}{ p_{ n}} & \text{ if } m^{ n}= \mu^{ n, l},\\
\frac{ 1}{ p_{ n}^{ 2}} & \text{ if } m^{ n}= \mu^{ n,1,2}.
\end{cases}
\end{equation*} 
Concentrate now on the last term $(D)$ in \eqref{eq:dec_munl}.
Developing into Fourier series gives, for $e_{ a}, a\in \mathbb{ Z}$ the standard Fourier basis, for $ \theta_{ 1}, \theta_{ 2}\in \mathbb{ T}$, $ \Gamma \left(\theta_{ 1}, \theta_{ 2}\right) = \sum_{ a\in \mathbb{ Z}} e_{ a} \left(\theta_{ 2}\right) \int \Gamma \left(\theta_{ 1}, \theta\right) \bar e_{ a}(\theta) {\rm d}\theta$. Note that $ \left\vert  \int \Gamma \left(\theta_{ 1}, \theta\right) \bar e_{ a}(\theta) {\rm d}\theta \right\vert \leq  \frac{ C}{ (1+ \left\vert a \right\vert)^{ r}}$ for some constant $C>0$ independent of $ \theta_{ 1}$, as $ \left\Vert \partial_{ \theta_{ 2}}^{ r} \Gamma(\theta_{ 1}, \theta_{ 2}) \right\Vert_{ \infty}<+\infty$, which means that
\begin{equation*}
\left\vert \left\langle \Gamma \left(\theta_{ t}^{ i, n},\cdot\right)\, ,\, \mu_{ t}^{ n}- \mu_{ t}\right\rangle \right\vert \leq C\sum_{ a\in \mathbb{ Z}} (1+ \left\vert a \right\vert)^{ -r}\left\vert \left\langle e_{ a}  \, ,\, \mu_{ t}^{ n}- \mu_{ t}\right\rangle \right\vert.
\end{equation*}
So we obtain, by Jensen inequality, recalling that $ \left\vert \partial_{ \theta}P_{ t, T}f(\theta) \right\vert \leq C_{ 0}$ and the definition of $S_{ n} \left(\Xi\right)$ in \eqref{eq:Sn_Xi},
\begin{equation*}
(D) \leq C\left\vert S_{ n} \left(\Xi\right)\right\vert^{ q}\left(\sum_{ a\in \mathbb{ Z}} (1+ \left\vert a \right\vert)^{ -r}\right)^{ q-1} \int_{ 0}^{T} \left(\sum_{ a\in \mathbb{ Z}} (1+ \left\vert a \right\vert)^{ -r}\left\vert \left\langle e_{ a}  \, ,\, \mu_{ t}^{ n}- \mu_{ t}\right\rangle \right\vert^{ q}\right){\rm d}t.
\end{equation*}
Taking the expectation on both sides and noting that for any fixed $a\in \mathbb{ Z}$, $ e_{ a}$ is bounded and Lipschitz with constant equal to $ \left\vert a \right\vert$, we obtain
\begin{multline*}
\mathbf{ E}\left[(D)\right]\leq  C\left\vert S_{ n} \left(\Xi\right)\right\vert^{ q} \left(\sum_{ a\in \mathbb{ Z}} (1+ \left\vert a \right\vert)^{ -r}\right)^{ q-1}\left(\sum_{ a\in \mathbb{ Z}} (1+ \left\vert a \right\vert)^{ -r} \left\vert a \right\vert^{ q} \right)  \\
\times\int_{ 0}^{T} \sup_{ f\in BL}\mathbf{ E} \left[ \sup_{ s\leq t}\left\vert \left\langle f  \, ,\, \mu_{ s}^{ n}- \mu_{ s}\right\rangle \right\vert^{ q}\right] {\rm d}t.
\end{multline*}
Choosing $r= q+2$, we deduce finally that there is another constant $C>0$ such that  
\begin{equation*}
\mathbf{ E} \left[(D)\right] \leq C \left\vert S_{ n} \left(\Xi\right)\right\vert^{ q}\int_{ 0}^{T}\mathbf{ E} \left[ \sup_{ s\leq t}d_{ BL} \left(\mu_{ s}^{ n}, \mu_{ s}\right)^{ q}\right] {\rm d}t. 
\end{equation*}
Taking expectation $ \mathbf{ E} \left[\cdot\right]$ in \eqref{eq:dec_munl}, we obtain, for all $f\in BL$, for some constant $C>0$
\begin{equation}
\label{eq:munl_mu_f}
\begin{split}
\mathbf{ E} \left[ \sup_{ s\leq T}\left\vert \left\langle m^n_s- \mu_{ s}\, ,\, f\right\rangle \right\vert^{ q}\right]  \leq C \Bigg( 2^{ q-1} d_{ BL} \left(\mu_{ 0}^{ n}, \mu_{ 0}\right)^{ q} + 2^{ q-1} \left\vert \left\langle \hat{m}_{ 0}^{ n}\, ,\, P_{ 0, T}f\right\rangle \right\vert^{ q}+  \beta_{ n}^{ q}+ (\gamma_{ n}S_{ n}^{ \ij})^{ q} \\+  \left\vert S_{ n} \left(\Xi\right)\right\vert^{ q} \int_{ 0}^{T} \mathbf{ E} \left[ \sup_{ s\leq t}d_{ BL} \left(\mu_{ s}^{ n}, \mu_{ s}\right)^{ q}\right] {\rm d}t\Bigg).
\end{split}
\end{equation}
Specify first the analysis to the case $m^{ n}= \mu^{ n}$: recalling that $ \hat{ m}_{ 0}^{ n}\equiv 0$ and $S_{ n} \left(\Xi\right)=1$ in this case, one obtains
\begin{multline}\label{eq:munl_mu_f_mu}
\mathbf{ E} \left[ \sup_{ s\leq T}\left\vert \left\langle \mu^n_s- \mu_{ s}\, ,\, f\right\rangle \right\vert^{ q}\right]\\  \leq C \Bigg( 2^{ q-1} d_{ BL} \left(\mu_{ 0}^{ n}, \mu_{ 0}\right)^{ q} +  \beta_{ n}^{ q}+ (\gamma_{ n}S_{ n}^{ \ij})^{ q} +  \int_{ 0}^{T} \mathbf{ E} \left[ \sup_{ s\leq t}d_{ BL} \left(\mu_{ s}^{ n}, \mu_{ s}\right)^{ q}\right] {\rm d}t\Bigg).
\end{multline}
If we would have been able to put the supremum in $f\in BL$ inside the expectation in the lefthand side of \eqref{eq:munl_mu_f_mu}, the result would follow simply by a Gr\"onwall argument. To bypass this difficulty, we proceed by a compactness argument, which is will be useful not only to $ m^{ n}= \mu^{ n}$ but to the other cases too, so that we write it with a general $ m^{ n}$:
the set $ BL$ is compact, by Ascoli-Arzel\`a theorem. Thus, for all $ \varepsilon>0$, there exists $f_{ 1}, \ldots, f_{ k}\in BL$ such that for all $f\in BL$, there exists $j=1, \ldots, k$ such that $\sup_{ \theta\in \mathbb{ T}} \left\vert f(\theta)- f_{ j}(\theta) \right\vert\leq \varepsilon$. 
Take now $f\in BL$, $n\geq1$, we have,
\begin{align*}
\frac{ 1}{ 3^{ q-1}} \sup_{ s\leq T}\left\vert \left\langle m_{ s}^{ n}- \mu_{ s}\, ,\, f\right\rangle \right\vert^{ q} \leq & \sup_{ s\leq T}\left\vert \left\langle m_{ s}^{ n}- \mu_{ s}\, ,\, f_{ j}\right\rangle \right\vert^{ q} + \sup_{ s\leq T}\left\vert \left\langle m_{ s}^{ n}\, ,\, f-f_{ j}\right\rangle \right\vert^{ q}\\&+  \sup_{ s\leq T}\left\vert \left\langle \mu_{ s}\, ,\, f-f_{ j}\right\rangle \right\vert^{ q}.
\end{align*}
Note that since $ \mu_{ s}$ is a probability measure, $\sup_{ s\leq T}\left\vert \left\langle \mu_{ s}\, ,\, f-f_{ j}\right\rangle \right\vert^{ q} \leq \varepsilon^{ q}\leq \varepsilon$ for $ \varepsilon\leq 1$. In a same way, $\sup_{ s\leq T}\left\vert \left\langle m_{ s}^{ n}\, ,\,f- f_{ j}\right\rangle \right\vert^{ q} \leq  S_{ n} \left(\Xi\right)^{ q} \varepsilon$, $ \mathbb{ P}_{ g}$-a.s. (and this $ \mathbb{ P}_{ g}$-a.s. does not depend on $f$, $g_{ j}$, nor $ \varepsilon$). Hence, there is a universal constant $C>0$ such that, for any $t\in [0, T]$
\begin{equation}
\label{eq:dBL_vs_f}
\mathbf{ E} \left[ \sup_{ s\leq t} d_{ BL} \left( m_{ s}^{ n}, \mu_{ s}\right)^{ q}\right] \leq C \left(1+ S_{ n} \left(\Xi\right)^{ q}\right) \varepsilon + \max_{ j=1, \ldots, k} \mathbf{ E} \left[ \sup_{ s\leq t}\left\vert \left\langle m_{ s}^{ n}- \mu_{ s}\, ,\, f_{ j}\right\rangle \right\vert^{ q} \right]
\end{equation}
Apply once \eqref{eq:dBL_vs_f} to the righthand side of \eqref{eq:munl_mu_f_mu} (with $m^{ n}= \mu^{ n}$), then take $f=f_{ j}$ and finally the maximum over $j=1,\ldots, k$ in \eqref{eq:munl_mu_f} we obtain, 
\begin{equation*}
\begin{split}
\max_{ j=1, \ldots, k}\mathbf{ E} \left[ \sup_{ s\leq T}\left\vert \left\langle \mu^n_s- \mu_{ s}\, ,\, f_{ j}\right\rangle \right\vert^{ q}\right]  \leq C \Bigg( 2^{ q-1} d_{ BL} \left(\mu_{ 0}^{ n}, \mu_{ 0}\right)^{ q} +  \beta_{ n}^{ q}+ (\gamma_{ n}S_{ n}^{ \ij})^{ q} +  2CT \varepsilon \\ +  \int_{ 0}^{T} \max_{ j=1, \ldots, k} \mathbf{ E} \left[ \sup_{ s\leq t}\left\vert \left\langle \mu_{ s}^{ n}- \mu_{ s}\, ,\, f_{ j}\right\rangle \right\vert^{ q} \right]
{\rm d}t\Bigg).
\end{split}
\end{equation*}
Taking $\limsup_{ n\to\infty}$ on both sides, using \eqref{eq:conv_mu0}, the fact that both $ \beta_{ n}$ and $ \gamma_{ n}S_{ n}^{ \ij}$ go to $0$ as $n\to\infty$ under the present assumptions, we obtain, setting
\begin{equation*}
v_{ t}:= \limsup_{ n\to\infty} \max_{ j=1, \ldots, k}\mathbf{ E} \left[ \sup_{ s\leq t}\left\vert \left\langle \mu^n_s- \mu_{ s}\, ,\, f_{ j}\right\rangle \right\vert^{ q}\right]
\end{equation*} 
that  $v_{ T}  \leq 2C^{ 2}T \varepsilon  + C \int_{ 0}^{T} v_{ t}{\rm d}t$, so that by Gr\"onwall Lemma, $v_{ T}\leq C^{ \prime} \varepsilon$, for a constant $C^{ \prime}>0$ that only depends on $( \Gamma, T)$. Inserting this estimate into \eqref{eq:dBL_vs_f} gives finally that $ \limsup_{ n\to\infty}\mathbf{ E} \left[ \sup_{ s\leq t} d_{ BL} \left( \mu_{ s}^{ n}, \mu_{ s}\right)^{ q}\right] \leq (C+ C^{ \prime}) \varepsilon $ for all $ \varepsilon$, which gives the desired convergence \eqref{eq:conv_empmeas}. 

We now turn to the proof of the convergence of the local empirical measures: combining \eqref{eq:dBL_vs_f} with \eqref{eq:munl_mu_f} applied to $f=f_{ j}$ and taking now advantage that we know that \eqref{eq:conv_empmeas} is true, we obtain
\begin{align*}
\limsup_{ n\to\infty}\mathbf{ E} \left[ \sup_{ s\leq T} d_{ BL} \left( m_{ s}^{ n}, \mu_{ s}\right)^{ q}\right] &\leq C \varepsilon + \limsup_{ n\to\infty}\max_{ j=1, \ldots, k} \left\vert \left\langle \hat{ m}_{ 0}^{ n}\, ,\, P_{ 0, T}f_{ j}\right\rangle \right\vert^{ q}.
\end{align*}
Note here that it is not sufficient for us to directly apply Grothendieck inequality to the remaining term (since it is only saying that this term is bounded, not that it goes to $0$). 
The point here is to note that $ f_{ j}\in BL$ so that $ g_{ j}:=P_{ 0, T} f_{ j}\in BL$ too. Hence, there exists some $j^{ \prime}$, such that $ \left\Vert g_{ j} - f_{ j^{ \prime}}\right\Vert_{ \infty} \leq \varepsilon$. Writing again $ g_{ j}= g_{ j}- f_{ j^{ \prime}}+ f_{ j^{ \prime}}$, we obtain further that 
\begin{align}
\label{aux:dist_munl}
\limsup_{ n\to\infty}\mathbf{ E} \left[ \sup_{ s\leq T} d_{ BL} \left( m_{ s}^{ n}, \mu_{ s}\right)^{ q}\right] &\leq C \varepsilon + \limsup_{ n\to\infty}\max_{ j^{ \prime}=1, \ldots, k} \left\vert \left\langle \hat{ m}_{ 0}^{ n}\, ,\,f_{ j^{ \prime}}\right\rangle \right\vert^{ q}.
\end{align}
for another constant $C>0$.
Recall that for any test function $f$ (recall \eqref{eq:Uij1} and \eqref{eq:Uij12}), \begin{equation*}
\left\langle \hat{ \mu}_{ 0}^{ n, l}\, ,\, f\right\rangle= \frac{ 1}{ n} \sum_{ j=1}^{ n} \hat{ \xi}_{lj}^{ (n)} f(\theta_{ 0}^{ j,n})= U_{ n, 1}^{\ij}(l, v),
\end{equation*}
and
\begin{equation*}
\left\langle \hat{ \mu}_{ 0}^{ n, 1,2}\, ,\, f\right\rangle= \frac{ 1}{ n} \sum_{ j=1}^{ n} \hat{ \xi}_{1,j}^{ (n)} \hat{ \xi}_{2j}^{ (n)}f(\theta_{ 0}^{ j,n})= U_{ n, 1}^{\ikjk}(l, v),
\end{equation*}
for the choice of $v_{ j}= f \left( \theta^n_{ j,0}\right)$. Using \eqref{eq:sum_hat_xi_lj} and \eqref{eq:sum_hat_xi_12j}, we obtain that in any case $ \mathbb{P}_{ g}$-a.s., $\max_{ j^{ \prime}=1, \ldots, k} \left\vert \left\langle \hat{ m}_{ 0}^{ n }\, ,\,f_{ j^{ \prime}}\right\rangle \right\vert^{ q}\to 0$ as $n\to\infty$. Note however that this $ \mathbb{ P}_{ g}$-a.s. depends on the choice of the functions $f_{ j}$ and thus on $ \varepsilon$. Taking now $ \varepsilon$ of the form $ \varepsilon= \frac{ 1}{ p}$ with $p\geq1$, we have from \eqref{aux:dist_munl} and the previous argument that $ \mathbb{ P}_{ g}$-a.s., \begin{equation*}
\limsup_{ n\to\infty}\mathbf{ E} \left[ \sup_{ s\leq T} d_{ BL} \left( m_{ s}^{ n}, \mu_{ s}\right)^{ q}\right]\leq \frac{ C}{ p},
\end{equation*}
for any $p\geq1$, which concludes the proof.
\qedhere

\section{Uniqueness results}
\label{sec:uniqueness}
Let us introduce some notation: define 
\begin{equation*}
\begin{split}
    \Lambda_{ s}(\theta_{ 1}, \theta_{ 2}) &:= \left(\left\langle \mu_{ s}({\rm d}\theta^{ \prime})\, ,\, \Gamma (\theta_{ 1}, \theta^{ \prime})\right\rangle , \left\langle \mu_{ s}({\rm d}\theta^{ \prime})\, ,\, \Gamma (\theta_{ 2}, \theta^{ \prime})\right\rangle \right),\\
    \cL^{(1)}_s &:= \cL^{(1)}_{\mu_s}, \\
    \cL^{(2)}_s &:= \cL^{(2)}_{\mu_s},
\end{split}
\end{equation*}
so that the operator $ \mathcal{ L}_{ s}^{(2)}$ (recall its definition in \eqref{eq:L2}) may be written as
\begin{equation*}
\mathcal{ L}_{ \mu_s}^{(2)}(g)= \frac{ 1}{ 2} \Delta g + \nabla g \cdot \Lambda_{ s} (\theta_{ 1}, \theta_{ 2}).
\end{equation*}
Since $ \Gamma$ is regular with bounded derivatives, so is $ (\theta_{ 1}, \theta_{ 2}) \mapsto \Lambda_{ s}(\theta_{ 1}, \theta_{ 2})$ with derivatives that are bounded uniformly in $s\in[0, T]$. Applying \cite{MR876080}, p.227, the flow $(X_{ s,t}( \theta))_{ 0\leq s\leq t \leq T}$ is a $ \mathcal{ C}^{r_{ 2}+2}$ diffeomorphism, where $X_{ s,t}(\theta)$ is the unique solution to the Itô SDE in $ \mathbb{ T}^{ 2}$
\begin{equation*}
X_{ s,t}(\theta)= \theta + \int_{ s}^{t} \Lambda_{ r}(X_{ s,t}(\theta)) {\rm d}r + B_{ t}- B_{ s},
\end{equation*}
where $ \theta:=(\theta_{ 1}, \theta_{ 2})\in \mathbb{ T}^{ 2}$ and $B:=(B^{1}, B^{ 2})$ is a standard Brownian motion on $ \mathbb{ T}^{ 2}$. Using this and backward Itô formula \cite{MR876080}, p~256, it is possible to prove (see \cite{Jourdain1998}, p.~760 for further details) that setting
\begin{equation}
\label{eq:Uts}
U(t,s)g(\theta_{ 1}, \theta_{ 2}):= \mathbf{ E}_{ B} \left[ g(X_{ s,t}(\theta))\right],
\end{equation}
one obtains, for all $g\in \mathcal{ C}^{ 2}$, all $0\leq s \leq t \leq T$, $ \theta\in \mathbb{ T}^{ 2}$
\begin{equation*}
U(t,s)g(\theta)- g(\theta) = \int_{ s}^{t} \mathcal{ L}_{ r}^{ (2)}U(t,r)g(\theta) {\rm d}r.
\end{equation*}
The next step is to prove that the previous equality is also valid in the space $ \mathcal{ C}^{ r_{ 2}}$. This relies on the following lemma (see \cite{Jourdain1998}, Lemma~3.11 for a proof of this result)
\begin{lemma}
\label{lem:Uts}
Under the assumptions of Section~\ref{sec:hypotheses}, for any probability measure $\nu$ the operator $ \mathcal{ L}_{ \nu}^{ (2)}$ is continuous from $ \mathcal{ C}^{ r_{ 2}+2}$ to $ \mathcal{C}^{ r_{ 2}}$ and
\begin{align*}
\left\Vert \mathcal{ L}_{ \nu}^{ (2)} g \right\Vert_{ \mathcal{ C}^{ r_{ 2}}}&\leq C \left\Vert g \right\Vert_{ \mathcal{ C}^{ r_{ 2}+2}},\\
\left\Vert \mathcal{ L}_{ s}^{ (2)} g - \mathcal{ L}_{ t}^{ (2)}g\right\Vert_{ \mathcal{ C}^{ r_{ 2}}} &\leq C \left\Vert g \right\Vert_{ \mathcal{ C}^{ r_{ 2}+2}} \left\vert t-s \right\vert.
\end{align*}
as well as, for any $j\leq r_{ 2}+2$ the operator $U(t,s)$ is a linear operator from $ \mathcal{ C}^{ j}$ to $ \mathcal{ C}^{ j}$ such that
\begin{align*}
\left\Vert U(t, s) g \right\Vert_{ \mathcal{ C}^{ j}} &\leq C \left\Vert g \right\Vert_{ \mathcal{ C}^{ j}},\ 0\leq s \leq t \leq T,\\
\left\Vert U(t,s) g - U(t, s^{ \prime})g \right\Vert_{  \mathcal{ C}^{ j}} &\leq C \left\Vert g \right\Vert_{ \mathcal{ C}^{ j+1}} \sqrt{ s^{ \prime}-s},\ 0\leq s \leq s^{ \prime}\leq t \leq T.
\end{align*}
\end{lemma}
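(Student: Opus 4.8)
The plan is to prove the two groups of estimates separately: first the mapping and time-regularity properties of the generator $\cL^{(2)}_\nu$ on the Hölder scale, which are elementary, and then the corresponding properties of the evolution family $U(t,s)$, which rest on the regularity theory of the stochastic flow generated by the SDE with drift $\Lambda_r$ (Kunita, \cite{MR876080}). As the text indicates, the statement is essentially \cite[Lemma~3.11]{Jourdain1998} transcribed to the present setting in which $\Gamma$ is smooth with bounded derivatives, so I would follow that line of argument.

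For $\cL^{(2)}_\nu$: write $\cL^{(2)}_\nu g = \frac12\Delta g + \Lambda\cdot\nabla g$ with $\Lambda(\theta_1,\theta_2)=(\langle\nu,\Gamma(\theta_1,\cdot)\rangle,\langle\nu,\Gamma(\theta_2,\cdot)\rangle)$. Since $\Gamma\in\cC^\infty(\bbT^2)$ with bounded derivatives (Assumption~\ref{ass:Gamma}) and $\nu$ is a probability measure, each derivative $\partial^\alpha_\theta\Lambda$ equals $\langle\nu,\partial^\alpha\Gamma\rangle$ in the appropriate slot and is therefore bounded by $\|\partial^\alpha\Gamma\|_\infty$, uniformly in $\nu$; hence $\sup_\nu\|\Lambda\|_{\cC^{r_2}}<\infty$. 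The Laplacian maps $\cC^{r_2+2}$ into $\cC^{r_2}$ with norm at most $\frac12$, and the product $\Lambda\cdot\nabla g$ is controlled by the Leibniz/Banach-algebra inequality for Hölder spaces, $\|\Lambda\cdot\nabla g\|_{\cC^{r_2}}\le C\|\Lambda\|_{\cC^{r_2}}\|g\|_{\cC^{r_2+1}}$; this gives the first bound. For the time-increment bound, with $\Lambda_s,\Lambda_t$ associated to $\mu_s,\mu_t$ one has $\partial^\alpha_\theta(\Lambda_s-\Lambda_t)=\langle\mu_s-\mu_t,\partial^\alpha\Gamma\rangle$, bounded by $C\,d_{BL}(\mu_s,\mu_t)$ because $\partial^\alpha\Gamma$ is bounded and Lipschitz, and since $\mu$ solves the Fokker--Planck equation \eqref{eq:limit PDE} with smooth coefficients the map $t\mapsto\mu_t$ is Lipschitz for $d_{BL}$, so $\|\Lambda_s-\Lambda_t\|_{\cC^{r_2}}\le C|t-s|$; then $\|\cL^{(2)}_s g-\cL^{(2)}_t g\|_{\cC^{r_2}}=\|(\Lambda_s-\Lambda_t)\cdot\nabla g\|_{\cC^{r_2}}\le C\|g\|_{\cC^{r_2+2}}|t-s|$.

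For $U(t,s)$: recall $U(t,s)g(\theta)=\mathbf{E}_B[g(X_{s,t}(\theta))]$. The body of the appendix already records, from \cite{MR876080}, that $\theta\mapsto X_{s,t}(\theta)$ is a $\cC^{r_2+2}$-diffeomorphism; Kunita's estimates also give, for every multi-index $\beta$ with $|\beta|\le r_2+2$ and every $p\ge1$, $\sup_{0\le s\le t\le T}\sup_\theta\mathbf{E}_B[|\partial^\beta_\theta X_{s,t}(\theta)|^p]<\infty$, uniformly because $\Lambda_r$ is bounded in $\cC^{r_2+2}$ uniformly in $r$. Differentiating under $\mathbf{E}_B$ and using the Fa\`a di Bruno formula, $\partial^\gamma_\theta[U(t,s)g]=\mathbf{E}_B\big[\sum(\partial^{\gamma'}g)(X_{s,t})\prod_i\partial^{\beta_i}_\theta X_{s,t}\big]$ with $|\gamma'|\le|\gamma|\le j$, and Hölder's inequality over the flow-derivative factors gives $\|U(t,s)g\|_{\cC^j}\le C\|g\|_{\cC^j}$ (the Hölder seminorm of the top-order term being handled by estimating the corresponding difference quotients of $X_{s,t}$, again with uniform moments). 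For the time modulus, the cocycle identity $X_{s,t}=X_{s',t}\circ X_{s,s'}$ yields $U(t,s)g(\theta)-U(t,s')g(\theta)=\mathbf{E}_B[(g\circ X_{s',t})(X_{s,s'}(\theta))-(g\circ X_{s',t})(\theta)]$; a first-order Taylor expansion bounds this in $\cC^j$ by $\|g\circ X_{s',t}\|_{\cC^{j+1}}\sup_\theta\mathbf{E}_B|X_{s,s'}(\theta)-\theta|$, and since $X_{s,s'}(\theta)-\theta=\int_s^{s'}\Lambda_r(X_{s,r}(\theta))\,dr+(B_{s'}-B_s)$ one has $\mathbf{E}_B|X_{s,s'}(\theta)-\theta|\le C(s'-s)+C\sqrt{s'-s}\le C\sqrt{s'-s}$; together with $\|g\circ X_{s',t}\|_{\cC^{j+1}}\le C\|g\|_{\cC^{j+1}}$ (valid since $j+1\le r_2+2$) this gives the claimed $\sqrt{s'-s}$ bound.

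The main obstacle is the uniform-in-$(s,t)$ control of all moments of the spatial derivatives of the stochastic flow $X_{s,t}$ (and, where needed, of its inverse): this is precisely Kunita's flow regularity theory, and the uniformity is what forces one first to establish $\sup_r\|\Lambda_r\|_{\cC^{r_2+2}}<\infty$. Because the $\cC^{r_2+2}$-diffeomorphism property is already imported from \cite{MR876080}, the remaining work is essentially bookkeeping with the chain rule and Hölder's inequality. A secondary, purely technical point is propagating everything through the fractional Hölder norm $\cC^{r_2}=\cC^{\lfloor r_2\rfloor,\,r_2-\lfloor r_2\rfloor}$ rather than an integer one, which costs one extra derivative of slack and is already reflected in the statement; alternatively, as noted in the text, the lemma may simply be quoted verbatim from \cite[Lemma~3.11]{Jourdain1998}.
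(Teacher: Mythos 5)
The paper does not actually give a proof of this lemma; it is quoted verbatim from \cite[Lemma~3.11]{Jourdain1998}, so your reconstruction via Kunita's stochastic-flow estimates is on the right track and follows essentially the same route as the cited source. The decomposition $\cL^{(2)}_\nu g = \tfrac12\Delta g + \Lambda\cdot\nabla g$, the uniform $\cC^{r_2}$-bound on $\Lambda$ from the smoothness of $\Gamma$, the Banach-algebra inequality for Hölder spaces, the Fa\`a di Bruno bookkeeping for $\partial^\gamma_\theta[U(t,s)g]$ together with moment bounds on $\partial^\beta X_{s,t}$, and the cocycle identity $X_{s,t}=X_{s',t}\circ X_{s,s'}$ for the time increment are all the correct ingredients.

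There is, however, one genuine error in the argument for the second inequality. You assert that because $\mu$ solves \eqref{eq:limit PDE} with smooth coefficients, the map $t\mapsto\mu_t$ is Lipschitz for $d_{BL}$, and then deduce $\|\Lambda_s-\Lambda_t\|_{\cC^{r_2}}\le C|t-s|$ from $|\langle\mu_s-\mu_t,\partial^\alpha\Gamma\rangle|\le C\,d_{BL}(\mu_s,\mu_t)$. The Lipschitz-in-$d_{BL}$ claim is false in general: Assumption~\ref{ass:initial} only requires weak convergence of $\mu_0^n$, so $\mu_0$ can be any probability measure, and for $\mu_0=\delta_0$ the Brownian part alone gives $d_{BL}(\mu_t,\mu_0)\asymp\sqrt{t}$ as $t\downarrow 0$. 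Testing against merely Lipschitz $f$ only produces the Hölder-$\tfrac12$ modulus. The correct (and in fact more direct) route is to bypass $d_{BL}$ entirely: each $\partial^\alpha\Gamma(\theta,\cdot)$ is a $\cC^2$ function of $\theta'$, so the weak formulation of \eqref{eq:limit PDE} gives
\begin{equation*}
\langle\mu_t-\mu_s,\partial^\alpha\Gamma(\theta,\cdot)\rangle
= \int_s^t \left\langle \mu_r\,,\, \tfrac12\,\partial^2_{\theta'}\partial^\alpha\Gamma(\theta,\theta')
 + (\Gamma*\mu_r)(\theta')\,\partial_{\theta'}\partial^\alpha\Gamma(\theta,\theta')\right\rangle \dd r,
\end{equation*}
whose integrand is uniformly bounded (in $r$, $\theta$ and $\alpha\le r_2$) by Assumption~\ref{ass:Gamma} and the fact that $\mu_r\in\cP(\bbT)$, hence $|\langle\mu_t-\mu_s,\partial^\alpha\Gamma(\theta,\cdot)\rangle|\le C|t-s|$ uniformly in $\theta$. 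This immediately yields $\|\Lambda_s-\Lambda_t\|_{\cC^{r_2}}\le C|t-s|$ without any time-regularity assertion on $\mu$ in the $d_{BL}$ metric. With that substitution the proof is sound; the remaining steps for $U(t,s)$ are indeed, as you say, chain-rule bookkeeping once the flow moments are in hand.
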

In particular, we know that for $g\in \mathcal{ C}^{r_{ 2}+3}$, $ s \mapsto \mathcal{ L}_{ s}^{ (2)} \left(U(t,s) g\right)$ is continuous in $ \mathcal{ C}^{r_{ 2}}$ and hence that $ \int_{ 0}^{t} \mathcal{ L}_{ s}^{ (2)} (U(t,s)g) {\rm d}s$ makes sense as a Bochner integral in $ \mathcal{ C}^{ r_{ 2}}$. In particular, we obtain, for every $g\in \mathcal{ C}^{ r_{ 2}+3}$ that 
\begin{equation}
\label{eq:backward_U}
U(t,s)(g)- g = \int_{ s}^{t} \mathcal{ L}_{ r}^{ (2)}U(t,r)g {\rm d}r,\ \text{ in } \mathcal{ C}^{ r_{ 2}}.
\end{equation}
With this at hand, we are ready to state the first uniqueness result:
\begin{proposition}
\label{prop:SPDE_L2}
Under the assumptions of Section~\ref{sec:hypotheses}, for any functional $R$ belonging to $\mathcal{ C} \left([0, T], H^{ -r_{ 2}}\left(\mathbb{ T}^{ 2}\right)\right)$, there is at most one solution in $ \mathcal{ C} \left([0, T], H^{ -r_{ 1}}\left(\mathbb{ T}^{ 2}\right)\right)$ to the equation
\begin{equation}
\label{eq:SPDE_L2}
\mathcal{ E}_{ t}= \int_{ 0}^{t} \mathcal{ L}_{ s}^{ (2), \ast} \mathcal{ E}_{ s} {\rm d}s + \int_{ 0}^{t} R_{ s} {\rm d}s,\ t\in [0, T], \text{ in }H^{ -r_{ 2}}\left(\mathbb{ T}^{ 2}\right).
\end{equation}
Moreover, one has the representation
\begin{equation*}
\mathcal{ E}_{ t}= \int_{ 0}^{t} U(t,s)^{ \ast} R_{ s} {\rm d}s, \text{ in } \mathcal{ C}^{ -r_{ 2}},
\end{equation*}
where $U$ is given in \eqref{eq:Uts}.
\end{proposition}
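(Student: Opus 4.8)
The plan is to prove the representation formula directly; uniqueness then comes for free, since the right-hand side of the representation does not involve the solution. The only real input is the backward evolution family $U(t,s)$ from \eqref{eq:Uts} together with its generator identity \eqref{eq:backward_U}, plus the mapping properties gathered in Lemma~\ref{lem:Uts} and Lemma~\ref{lem:continuity_Ln12}.

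First I would fix $t\in[0,T]$ and a test function $g\in\mathcal{C}^{r_2+3}(\mathbb{T}^2)$, and study the scalar function $s\mapsto F(s):=\langle\mathcal{E}_s,U(t,s)g\rangle$ on $[0,t]$. The claim is that $F$ is absolutely continuous with $F'(s)=\langle R_s,U(t,s)g\rangle$. Indeed, since $\mathcal{E}$ solves \eqref{eq:SPDE_L2} and, by Lemma~\ref{lem:continuity_Ln12}, $\mathcal{L}_s^{(2),\ast}$ maps $H^{-r_1}$ continuously into $H^{-(r_1+2)}=H^{-r_2}$, the curve $s\mapsto\mathcal{E}_s$ is differentiable in $H^{-r_2}$ with $\dot{\mathcal{E}}_s=\mathcal{L}_s^{(2),\ast}\mathcal{E}_s+R_s$; while by Lemma~\ref{lem:Uts} the curve $s\mapsto U(t,s)g$ is differentiable in $\mathcal{C}^{r_2+2}(\mathbb{T}^2)$ with $\partial_sU(t,s)g=-\mathcal{L}_s^{(2)}U(t,s)g$, thanks to \eqref{eq:backward_U} (here $g$ is chosen three derivatives smoother than strictly needed precisely so that $\mathcal{L}_r^{(2)}U(t,r)g$ remains in $\mathcal{C}^{r_2}$ and the identity \eqref{eq:backward_U} holds in $\mathcal{C}^{r_2}$). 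Applying the product rule — legitimate because $\mathcal{C}^{r_2+2}(\mathbb{T}^2)\hookrightarrow H^{r_2}(\mathbb{T}^2)$ continuously, so the duality bracket is jointly continuous on $H^{-r_2}\times\mathcal{C}^{r_2+2}$ — gives
\[
F'(s)=\langle\mathcal{L}_s^{(2),\ast}\mathcal{E}_s,U(t,s)g\rangle+\langle R_s,U(t,s)g\rangle-\langle\mathcal{E}_s,\mathcal{L}_s^{(2)}U(t,s)g\rangle ,
\]
and the first and third terms cancel by definition of the adjoint, this being legitimate since $U(t,s)g\in\mathcal{C}^{r_2+2}$ so that $\mathcal{L}_s^{(2)}U(t,s)g\in\mathcal{C}^{r_2}\subset H^{r_2}$ pairs with $\mathcal{E}_s\in H^{-r_1}\subset H^{-r_2}$.

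Integrating $F'$ over $[0,t]$ and using that \eqref{eq:SPDE_L2} forces $\mathcal{E}_0=0$, hence $F(0)=0$, and that $U(t,t)$ is the identity, I obtain
\[
\langle\mathcal{E}_t,g\rangle=F(t)=\int_0^t\langle R_s,U(t,s)g\rangle\,\mathrm{d}s=\Big\langle\int_0^tU(t,s)^\ast R_s\,\mathrm{d}s\,,\,g\Big\rangle ,
\]
where the Bochner integral $\int_0^tU(t,s)^\ast R_s\,\mathrm{d}s$ is well defined in $H^{-r_2}(\mathbb{T}^2)$ because $s\mapsto R_s$ is continuous there and $U(t,s)^\ast$ is uniformly bounded on $H^{-r_2}$, by duality from the $\mathcal{C}^{r_2}$-bound in Lemma~\ref{lem:Uts}. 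Since $\mathcal{C}^{r_2+3}(\mathbb{T}^2)$ is dense in $H^{r_2}(\mathbb{T}^2)$, this identifies $\mathcal{E}_t=\int_0^tU(t,s)^\ast R_s\,\mathrm{d}s$ in $H^{-r_2}(\mathbb{T}^2)$, for every $t\in[0,T]$, which is the asserted representation. Uniqueness in $\mathcal{C}([0,T],H^{-r_1}(\mathbb{T}^2))$ follows at once: the difference of two solutions solves \eqref{eq:SPDE_L2} with $R\equiv0$, so it vanishes in $H^{-r_2}$, hence in $H^{-r_1}$ since that embedding is injective.

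The main obstacle is of a purely technical, bookkeeping nature: making the differentiation of $F$ rigorous while juggling the three scales $H^{-r_1}$, $H^{-r_2}$ and $\mathcal{C}^{r_2+2}$, and verifying that every invocation of the adjoint identity $\langle\mathcal{L}^{(2),\ast}\nu,\varphi\rangle=\langle\nu,\mathcal{L}^{(2)}\varphi\rangle$ is applied to a test function $\varphi$ regular enough for $\mathcal{L}^{(2)}\varphi$ to still be an admissible test function. An alternative that avoids pointwise differentiation is to substitute the integral form of $\mathcal{E}_s$ (from \eqref{eq:SPDE_L2}) and that of $U(t,s)g$ (from \eqref{eq:backward_U}) directly into $F(s)$, expand into an iterated time integral and apply Fubini, turning the cancellation into an algebraic identity between double integrals; this needs no regularity beyond what Lemma~\ref{lem:Uts} already supplies. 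Either route uses only results established in Section~\ref{sec:hypotheses}, Lemma~\ref{lem:continuity_Ln12} and Lemma~\ref{lem:Uts}.
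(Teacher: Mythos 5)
Your proof is correct and rests on exactly the same two ingredients as the paper's: the dual semigroup identity \eqref{eq:backward_U} and the mapping properties of $\mathcal{L}^{(2)}$ and $U(t,s)$ from Lemmas~\ref{lem:Uts} and \ref{lem:continuity_Ln12}. The route is nonetheless presented differently: you differentiate the scalar map $s\mapsto F(s)=\langle\mathcal{E}_s,U(t,s)g\rangle$ and let the two drift terms cancel pointwise, whereas the paper stays entirely at the level of Bochner integrals: it substitutes $g = U(t,s)g - \int_s^t \mathcal{L}_r^{(2)}U(t,r)g\,\mathrm{d}r$ into $\langle\mathcal{E}_t,g\rangle$, applies Fubini to swap $\int_0^t\int_s^t$ into $\int_0^t\int_0^r$, and then recognizes the inner integral as $\langle\mathcal{E}_r,\mathcal{L}_r^{(2)}U(t,r)g\rangle$ via \eqref{eq:SPDE_L2} applied to the test function $\mathcal{L}_r^{(2)}U(t,r)g$. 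Your differentiation route is slightly more transparent conceptually but needs the integrand $s\mapsto\mathcal{L}_s^{(2),\ast}\mathcal{E}_s+R_s$ to be at least integrable in $H^{-r_2}$ to justify the a.e.\ derivative — which holds here by Lemma~\ref{lem:continuity_Ln12} and $\mathcal{E}\in\mathcal{C}([0,T],H^{-r_1})$, so nothing is lost; the paper's Fubini route avoids this verification entirely. You in fact flag the Fubini alternative at the end, which is essentially verbatim the paper's argument.

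One small imprecision: you state that $s\mapsto U(t,s)g$ is differentiable \emph{in $\mathcal{C}^{r_2+2}(\mathbb{T}^2)$}. The curve does take values in $\mathcal{C}^{r_2+2}$ (indeed $\mathcal{C}^{r_2+3}$), but its derivative $-\mathcal{L}_s^{(2)}U(t,s)g$ lives only in $\mathcal{C}^{r_2}$, so the correct statement is that the curve is $C^1$ with values in $\mathcal{C}^{r_2}$. This is harmless for the product rule since $\mathcal{C}^{r_2}(\mathbb{T}^2)\hookrightarrow H^{r_2}(\mathbb{T}^2)$ and the duality pairing $H^{-r_2}\times H^{r_2}\to\mathbb{R}$ is what you actually use, but the phrasing should be fixed.
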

\begin{proof}
Let $ \mathcal{ E}$ a solution to \eqref{eq:SPDE_L2} in $ \mathcal{ C} \left([0, T], H^{ -r_{ 1}}\left(\mathbb{ T}^{ 2}\right)\right)$. Since $H^{ -r_{ 1}} \hookrightarrow H^{ -r_{ 2}}$, we have for all $g\in H^{ r_{ 2}} \left(\mathbb{ T}^{ 2}\right)$
\begin{equation*}
\left\langle \mathcal{ E}_{ t}\, ,\, g\right\rangle = \int_{ 0}^{t} \left\langle \mathcal{ E}_{ s}\, ,\, \mathcal{ L}_{ s}^{ (2)} g\right\rangle {\rm d}s + \int_{ 0}^{t} \left\langle R_{ s}\, ,\, g\right\rangle {\rm d}s.
\end{equation*}
Both relations are in particular true for every $g\in \mathcal{ C}^{r_{ 2}+3} \hookrightarrow H^{ r_{ 2}}$. Combining \eqref{eq:backward_U} and \eqref{eq:SPDE_L2}, we obtain for $g\in \mathcal{ C}^{r_{ 2}+3}$,
\begin{align*}
\left\langle \mathcal{ E}_{ t}\, ,\, g\right\rangle &=\int_{ 0}^{t} \left\lbrace \left\langle \mathcal{ E}_{ s}\, ,\, \mathcal{ L}_{ s}^{ (2)} U(t,s)g\right\rangle + \left\langle R_{ s}\, ,\, U(t,s)g \right\rangle\right\rbrace {\rm d}s \\&\quad - \int_{ 0}^{t} \int_{ s}^{t} \left\langle \mathcal{ E}_{ s}\, ,\, \mathcal{ L}_{ s}^{ (2)} \mathcal{ L}_{ r}^{ (2)}U(t,r)g\right\rangle {\rm d}r {\rm d}s - \int_{ 0}^{t} \int_{ s}^{t} \left\langle R_{ s}\, ,\, \mathcal{ L}_{ r}^{ (2)}U(t,r)g \right\rangle {\rm d}r{\rm d}s\\
&=\int_{ 0}^{t} \left\lbrace \left\langle \mathcal{ E}_{ s}\, ,\, \mathcal{ L}_{ s}^{ (2)} U(t,s)g\right\rangle + \left\langle R_{ s}\, ,\, U(t,s)g \right\rangle\right\rbrace {\rm d}s \\&\quad - \int_{ 0}^{t} \left\lbrace\int_{ 0}^{r} \left\langle \mathcal{ E}_{ s}\, ,\, \mathcal{ L}_{ s}^{ (2)} \mathcal{ L}_{ r}^{ (2)}U(t,r)g\right\rangle {\rm d}s  + \int_{ 0}^{r} \left\langle R_{ s}\, ,\, \mathcal{ L}_{ r}^{ (2)}U(t,r)g \right\rangle {\rm d}s\right\rbrace {\rm d}r\\
&=\int_{ 0}^{t} \left\lbrace \left\langle \mathcal{ E}_{ s}\, ,\, \mathcal{ L}_{ s}^{ (2)} U(t,s)g\right\rangle + \left\langle R_{ s}\, ,\, U(t,s)g \right\rangle\right\rbrace {\rm d}s - \int_{ 0}^{t} \left\langle \mathcal{ E}_{ r}\, ,\, \mathcal{ L}_{ r}^{ (2)}U(t,r)g\right\rangle {\rm d}r\\
&=\int_{ 0}^{t}  \left\langle R_{ s}\, ,\, U(t,s)g \right\rangle {\rm d}s. 
\end{align*}
Since $ \mathcal{ C}^{r_{ 2}+3}$ is dense in $ \mathcal{ C}^{ r_{ 2}}$, the identity $ \mathcal{ E}_{ t}= \int_{ 0}^{t} U(t,s)^{ \ast} R_{ s} {\rm d}s$ holds in $ \mathcal{ C}^{ -r_{ 2}}$. Since $ \mathcal{ C}^{ r_{ 2}}$ is dense in $ H^{ r_{ 1}}$, uniqueness for \eqref{eq:SPDE_L2} holds in $ \mathcal{ C} \left([0, T], H^{ -r_{ 1}}(\mathbb{ T}^{ 2})\right)$.
\end{proof}
In an identical way, one can state a similar result concerning $ \mathcal{ L}_{ s}^{ (1)}$, the only difference being that $ \mathcal{ L}_{ s}^{ (1)}$ is not the generator of a diffusion, due to the nonstandard nonlocal last term in \eqref{eq:L1}. Hence, we decompose $ \mathcal{ L}_{ s}^{ (1)}$ into
\begin{equation}
\label{eq:decomp_L1}
\mathcal{ L}_{ s}^{ (1)}:= L_{ s}^{(1)} + K_{ s}^{ (1)},
\end{equation}
where
\begin{align*}
L_{ s}^{ (1)}f&:=  \frac{ 1}{ 2} \partial_{ \theta}^{ 2} f(\theta) + \partial_{ \theta}f(\theta)\left\langle \mu_{ s}({\rm d} \theta^{ \prime})\, ,\, \Gamma(\theta, \theta^{ \prime}) \right\rangle,\\
K_{ s}^{ (1)}f&:= \left\langle \mu_{ s}({\rm d}\theta^{ \prime})\, ,\, \Gamma \left(\theta^{ \prime}, \theta\right)\partial_{ \theta}f(\theta^{ \prime})\right\rangle .
\end{align*}
Setting $v_{ s}(\theta):= \left\langle \mu_{ s}({\rm d} \theta^{ \prime})\, ,\, \Gamma(\theta, \theta^{ \prime}) \right\rangle$ define in a similar way the flow $(X_{ s,t}( \theta))_{ 0\leq s\leq t \leq T}$ as the unique solution to the Itô SDE in $ \mathbb{ T}$
\begin{equation*}
X_{ s,t}(\theta)= \theta + \int_{ s}^{t} v_{ r}(X_{ s,t}(\theta)) {\rm d}r + B_{ t}- B_{ s},
\end{equation*}
where $ \theta\in \mathbb{ T}$ and $B$ is a standard Brownian motion on $ \mathbb{ T}$. Define also
\begin{equation}
\label{eq:Vts}
V(t,s)f(\theta):= \mathbf{ E}_{ B} \left[ f(X_{ s,t}(\theta))\right],
\end{equation}
one obtains, in a same way as before the following result (whose proof is left to the reader)
\begin{proposition}
\label{prop:SPDE_L1}
Under the assumptions of Section~\ref{sec:hypotheses}, for any functional $R$ element of $ \mathcal{ C} \left([0, T], H^{ -r_{ 1}}\left(\mathbb{ T}\right)\right)$, there is at most one solution in $ \mathcal{ C} \left([0, T], H^{ -r_{ 1}}\left(\mathbb{ T}\right)\right)$ to the equation
\begin{equation}
\label{eq:SPDE_L1}
\mathcal{ E}_{ t}= \int_{ 0}^{t}L_{ s}^{ (1), \ast} \mathcal{ E}_{ s} {\rm d}s + \int_{ 0}^{t} R_{ s} {\rm d}s,\ t\in [0, T], \text{ in }H^{ -r_{ 2}}\left(\mathbb{ T}\right).
\end{equation}
Moreover, one has the representation
\begin{equation*}
\mathcal{ E}_{ t}= \int_{ 0}^{t} V(t,s)^{ \ast} R_{ s} {\rm d}s, \text{ in } \mathcal{ C}^{ -r_{ 1}},
\end{equation*}
where $V$ is given in \eqref{eq:Vts}.
\end{proposition}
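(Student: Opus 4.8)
The plan is to mimic verbatim the argument used for Proposition~\ref{prop:SPDE_L2}, only replacing the operator $\mathcal{L}_s^{(2)}$ by $L_s^{(1)}$ and the diffeomorphism flow $U(t,s)$ by $V(t,s)$ defined in \eqref{eq:Vts}. First I would observe that $L_s^{(1)}$ is exactly the generator of the (time-inhomogeneous) diffusion $X_{s,t}$ on $\mathbb{T}$ driven by the drift $v_s(\theta)=\left\langle \mu_s(\dd\theta'),\Gamma(\theta,\theta')\right\rangle$; since $\Gamma$ is regular with bounded derivatives and $\mu_s$ is a probability measure, $v_s$ is $\mathcal{C}^\infty$ with all derivatives bounded uniformly in $s\in[0,T]$, so by \cite{MR876080} the flow $(X_{s,t})_{0\le s\le t\le T}$ is a $\mathcal{C}^{r_1+2}$ diffeomorphism. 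This gives us, exactly as in Lemma~\ref{lem:Uts}, the continuity bounds $\left\Vert L_s^{(1)} f\right\Vert_{\mathcal{C}^{r_1}}\le C\left\Vert f\right\Vert_{\mathcal{C}^{r_1+2}}$, the Hölder-in-time estimate $\left\Vert L_s^{(1)}f - L_t^{(1)}f\right\Vert_{\mathcal{C}^{r_1}}\le C\left\Vert f\right\Vert_{\mathcal{C}^{r_1+2}}|t-s|$, together with $\left\Vert V(t,s)f\right\Vert_{\mathcal{C}^{j}}\le C\left\Vert f\right\Vert_{\mathcal{C}^{j}}$ for $j\le r_1+2$ and $\left\Vert V(t,s)f - V(t,s')f\right\Vert_{\mathcal{C}^{j}}\le C\left\Vert f\right\Vert_{\mathcal{C}^{j+1}}\sqrt{s'-s}$.

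Next, from the backward Itô formula applied to $V(t,s)g$ (again following \cite{MR876080}, p.~256, and \cite{Jourdain1998}, p.~760), one obtains for $g\in\mathcal{C}^2$ the identity $V(t,s)g-g=\int_s^t L_r^{(1)}V(t,r)g\,\dd r$ pointwise, and then, using the above regularity estimates to justify that $r\mapsto L_r^{(1)}(V(t,r)g)$ is continuous in $\mathcal{C}^{r_1}$ for $g\in\mathcal{C}^{r_1+3}$, one upgrades this to an identity in $\mathcal{C}^{r_1}$ with the integral understood as a Bochner integral. With this in hand, the uniqueness argument is the Fubini-type computation carried out in the proof of Proposition~\ref{prop:SPDE_L2}: given a solution $\mathcal{E}$ of \eqref{eq:SPDE_L1} in $\mathcal{C}([0,T],H^{-r_1}(\mathbb{T}))$, one tests against $V(t,s)g$ for $g\in\mathcal{C}^{r_1+3}$, substitutes the backward identity, swaps the order of the two time integrals, and finds that the two double integrals cancel the drift term, leaving $\left\langle\mathcal{E}_t,g\right\rangle=\int_0^t\left\langle R_s,V(t,s)g\right\rangle\dd s$; density of $\mathcal{C}^{r_1+3}$ in $\mathcal{C}^{r_1}$ and of $\mathcal{C}^{r_1}$ in $H^{r_1}$ then yields the representation $\mathcal{E}_t=\int_0^t V(t,s)^\ast R_s\,\dd s$ in $\mathcal{C}^{-r_1}$ and hence uniqueness in $\mathcal{C}([0,T],H^{-r_1}(\mathbb{T}))$.

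The only genuinely new point compared with Proposition~\ref{prop:SPDE_L2} is that Proposition~\ref{prop:SPDE_L1} is stated for the \emph{truncated} operator $L_s^{(1)}$ and not the full $\mathcal{L}_s^{(1)}=L_s^{(1)}+K_s^{(1)}$; so one must be careful that the nonlocal term $K_s^{(1)}$ plays no role here — indeed it does not, since \eqref{eq:SPDE_L1} is written purely in terms of $L_s^{(1),\ast}$, and the decomposition \eqref{eq:decomp_L1} is only recorded for later use when one deduces uniqueness for the \emph{coupled} system \eqref{eq:limit_etas} (Proposition~\ref{prop:unique_SPDE}), where $K_s^{(1),\ast}\eta_s$ and $\Theta^\ast\hat\eta_s$ are treated as part of the source term $R$ via a Gronwall argument. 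I expect the main (and really only) obstacle to be the careful bookkeeping of function-space indices — checking that $r_1$ is large enough that every bracket $\left\langle\cdot,\cdot\right\rangle$ makes sense, that $V(t,s)$ loses at most the expected number of derivatives, and that the Bochner integrals are well-defined — but all of this is identical in spirit to Lemma~\ref{lem:Uts} and Proposition~\ref{prop:SPDE_L2}, so I would simply say "the proof is identical to that of Proposition~\ref{prop:SPDE_L2}, replacing $\mathcal{L}_s^{(2)}$, $U(t,s)$ by $L_s^{(1)}$, $V(t,s)$" and leave the details to the reader, as the statement already indicates.
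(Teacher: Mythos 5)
Your proposal is correct and matches the paper's approach exactly: the paper explicitly leaves the proof of Proposition~\ref{prop:SPDE_L1} to the reader after noting that, once $\mathcal{L}_s^{(1)}$ is split into the diffusion generator $L_s^{(1)}$ and the nonlocal remainder $K_s^{(1)}$ via \eqref{eq:decomp_L1}, the argument is a verbatim repetition of the proof of Proposition~\ref{prop:SPDE_L2} with $\mathcal{L}_s^{(2)}$, $U(t,s)$ replaced by $L_s^{(1)}$, $V(t,s)$. You also correctly observe that $K_s^{(1)}$ is absent from \eqref{eq:SPDE_L1} and only enters later, in the Picard-iteration step of the proof of Proposition~\ref{prop:unique_SPDE}, where it is folded into the source term.
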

We are now in position to prove Proposition~\ref{prop:unique_SPDE}:
\begin{proof}[Proof of Proposition~\ref{prop:unique_SPDE}]
The proof follows arguments similar to \cite{mitoma85} (see also \cite{lucon_stannat_2016}). Pathwise uniqueness of a solution to the second equation of \eqref{eq:limit_etas} is easy: let $\hat{ \eta}_{ 1},\hat{ \eta}_{ 2}\in \mathcal{ C} \left([0, T], H^{-r_{ 1}}(\mathbb{ T}^{ 2})\right)$ be two solutions in $H^{-r_{ 2}}(\mathbb{ T}^{ 2})$. Then, the difference $ \hat{ \eta}:= \hat{ \eta}_{ 1}- \hat{ \eta}_{ 2}$ satisfies \eqref{eq:SPDE_L2} in the case $R\equiv 0$, so that Proposition~\ref{prop:SPDE_L2} gives $ \hat{ \eta}\equiv 0$. Now turn to the pathwise uniqueness of a solution to the first equation of \eqref{eq:limit_etas}: let $ \eta\in \mathcal{ C} \left([0, T], H^{ -r_{ 1}} \left(\mathbb{ T}\right)\right)$ solution in $H^{ -r_{ 2}}(\mathbb{ T})$. Setting $h(t):= \int_{ 0}^{t} \mathcal{ L}_{ s}^{ (1), \ast} \eta_{ s} {\rm d}s= \eta_{ t}- \eta_{ 0} - \int_{ 0}^{t} \Theta^{ \ast} \hat{ \eta}_{ s} {\rm d}s- W_{ t}\in \mathcal{ C} \left([0, T], H^{ -r_{ 1}}\left(\mathbb{ T}\right)\right)$ and differentiating this quantity w.r.t. $t$, one obtains that (almost surely w.r.t. the randomness)
\begin{equation*}
\frac{ {\rm d}}{ {\rm d}t} h(t) = L_{ t}^{ (1), \ast} h(t) + K_{ t}^{ (1), \ast} h(t) + \mathcal{ L}_{ t}^{ (1), \ast} \left(\eta_{ 0} + \int_{ 0}^{t} \Theta^{ \ast} \hat{ \eta}_{ s}{\rm d}s+ W_{ t}\right),
\end{equation*}
where we recall the decomposition \eqref{eq:decomp_L1}. 

Set $R_{ t}:= K_{ t}^{ (1), \ast} h(t) + \mathcal{ L}_{ t}^{ (1), \ast} \left(\eta_{ 0} + \int_{ 0}^{t}  \Theta^{ \ast} \hat{ \eta}_{ s}{\rm d}s+W_{ t}\right)$. Since $\mathcal{ L}_{ t}^{ (1), \ast} \left(\eta_{ 0} + \int_{ 0}^{t} \Theta^{ \ast} \hat{ \eta}_{ s} {\rm d}s+ W_{ t}\right)$ belongs to $\mathcal{ C}([0, T], H^{ -r_{ 1}}(\mathbb{ T}))$ we focus on the regularity of the first term: we have for $0<s<t$, 
\begin{equation*}
\left\vert \left\langle h(t)-h(s)\, ,\, (K_{ t}^{ (1)}-  K_{ s}^{ (1)})f\right\rangle \right\vert \leq \left\Vert h(t)-h(s) \right\Vert_{ -r_{ 1}} \left\Vert (K_{ t}^{ (1)}-  K_{ s}^{ (1)})f \right\Vert_{ r_{ 1}},
\end{equation*}
and using the fact that $ \mu_{ t}$ has for $t>0$ a smooth density $ \mu_{ t}({\rm d}\theta)= p_{ t}(\theta) {\rm d}\theta$ (see e.g. \cite{daiPra96} or \cite[Prop.~7.1]{MR3207725})
\begin{align}
\left\Vert (K_{ t}^{ (1)}-  K_{ s}^{ (1)})f \right\Vert_{ r_{ 1}}^{ 2}&= \sum_{ k\leq r_{ 1}} \int_{ \mathbb{ T}} \left\vert \int_{ \mathbb{ T}} \partial_{ \theta}^{ k}\Gamma \left(\theta^{ \prime}, \theta\right)\partial_{ \theta}f(\theta^{ \prime}) (p_{ t}(\theta^{ \prime})-p_{ s}(\theta^{ \prime})) {\rm d}\theta^{ \prime}\right\vert^{ 2}{\rm d}\theta \nonumber\\
& \leq \left\Vert \partial_{ \theta}f \right\Vert_{ \infty} \sum_{ k\leq r_{ 1}} \int_{ \mathbb{ T}} \left( \int_{ \mathbb{ T}} \left\vert \partial_{ \theta}^{ k}\Gamma \left(\theta^{ \prime}, \theta\right) \right\vert \left\vert p_{ t}(\theta^{ \prime})-p_{ s}(\theta^{ \prime})) \right\vert {\rm d}\theta^{ \prime}\right)^{ 2}{\rm d}\theta. \label{aux:delta_p}
\end{align}
Since $ \left\Vert \partial_{ \theta}f \right\Vert_{ \infty}\leq \left\Vert f \right\Vert_{ \mathcal{ C}^{ 1}} \leq C \left\Vert f \right\Vert_{ 2}\leq C \left\Vert f \right\Vert_{ r_{ 1}}$ and since the quantity in \eqref{aux:delta_p} goes to $0$ as $t-s\to 0$, we conclude from this that $R\in \mathcal{ C}\left([0, T], H^{ -r_{ 1}}\left(\mathbb{ T}\right)\right)$. Remark also that for all $j\leq r_{ 1}$
\begin{align*}
\left\Vert \partial_{ \theta}^{ j}K_{ t}^{ (1)}f \right\Vert_{ \infty}&= \left\Vert \int_{ \mathbb{ T}} \partial_{ \theta}^{ j}\Gamma \left(\theta^{ \prime}, \theta\right)\partial_{ \theta}f(\theta^{ \prime}) \mu_{ t}({\rm d}\theta^{ \prime})\right\Vert_{ \infty}\leq C\left\Vert \partial_{ \theta}f \right\Vert_{ \infty},
\end{align*}
for a constant $C$ that only depends on $ \Gamma$. In particular, $ \left\Vert K_{ t}^{ (1)}f \right\Vert_{ \mathcal{ C}^{ r_{ 1}}} \leq C \left\Vert f \right\Vert_{ \mathcal{ C}^{ 1}} \leq C \left\Vert f \right\Vert_{ \mathcal{ C}^{ r_{ 1}}}$ and hence 
\begin{equation}
\label{eq:Kbound}
\left\Vert K_{ t}^{ (1)} \right\Vert_{ \mathcal{ C}^{ -r_{ 1}}}\leq C.
\end{equation}
We are now in position to apply Proposition~\ref{prop:SPDE_L1} to the case $ \mathcal{ E}=h$: $h$ is solution to
\begin{equation}
\label{eq:eq_ht}
h(t) = \int_{ 0}^{t} V^{ \ast}(t,u) \left( K_{ u}^{ (1), \ast} h(u) + \mathcal{ L}_{ u}^{ (1), \ast} \left(\eta_{ 0} + \int_{ 0}^{u} \Theta^{ \ast} \hat{ \eta}_{ v} {\rm d}v+ W_{ u}\right)\right) {\rm d}u,\ \text{ in } \mathcal{ C}^{ -r_{ 1}}.
\end{equation}
The main point of the proof is to see that $h$ solution of \eqref{eq:eq_ht} can be approximated by the converging sequence $(h_{ n})_{ n\geq1}$ defined recursively as follows
\begin{equation}
\label{eq:recurs_hn}
\begin{cases}
h_{ 1}(t)&= \int_{0}^{t} V^{ \ast}(t, u) \mathcal{ L}_{ u}^{ (1), \ast} ( \eta_{ 0} + \int_{0}^{u} \Theta^{ \ast} \hat{ \eta}_{ v} {\rm d}v + W_{ u})\dd u,\\
h_{ n}(t)&= \int_{0}^{ t}V^{ \ast}(t, u)( K^{ (1),\ast}_{ u} h_{ n-1}(u) + \mathcal{ L}_{ u}^{ (1), \ast} \left(\eta_{ 0} + \int_{ 0}^{u} \Theta^{ \ast} \hat{ \eta}_{ v} {\rm d}v+W_{ u}\right))\dd u,\ n\geq 2.
\end{cases} 
\end{equation}
Indeed, by the boundedness of the semigroup $V(t, u)$ and by \eqref{eq:Kbound}, we obtain that for all $0<u<t<T$, for all $f\in \mathcal{C}^{ r_{ 1}}$, for all $h\in \mathcal{C}^{ -r_{ 1}}$
\begin{align*}
\left\vert \left\langle h\, ,\, K_{ u}^{ (1)} V(t, u) f\right\rangle  \right\vert &\leq \left\Vert h \right\Vert_{ \mathcal{C}^{ -r_{ 1}}} \left\Vert {K_{ u}^{ (1)} V(t, u) f} \right\Vert_{  \mathcal{C}^{ r_{ 1}}}\leq C \left\Vert h \right\Vert_{  \mathcal{C}^{ -r_{ 1}}} \left\Vert {V(t, u) f} \right\Vert_{  \mathcal{C}^{ r_{ 1}}}\\
&\leq C \left\Vert h \right\Vert_{  \mathcal{C}^{ -r_{ 1}}} \left\Vert {f} \right\Vert_{  \mathcal{C}^{ r_{ 1}}}.
\end{align*}
Thus, the sequence $(h_{ n})_{ n\geq1}$ defined in \eqref{eq:recurs_hn} satisfies, for all $n\geq 2$
\begin{align*}
\left\Vert {h_{ n+1}(t) - h_{ n}(t)} \right\Vert_{ \mathcal{C}^{ -r_{ 1}}}&\leq C \int_{0}^{t} \left\Vert {h_{ n}(u)- h_{ n-1}(u)} \right\Vert_{ \mathcal{C}^{ -r_{ 1}}}\dd u.
\end{align*}
By an immediate recursion, for all $k\geq 1$,  $ \left\Vert {h_{ n+1+k}(t) - h_{ n+k}(t)} \right\Vert_{ \mathcal{C}^{ -r_{ 1}}}\leq C^{ k} \frac{ T^{ k}}{ k!}$, so that $(h_{ n})_{ n\geq1}$ is a Cauchy sequence in $\mathcal{C}([0, T], \mathcal{C}^{ -r_{ 1}})$ and thus converges to $h$, solution of \eqref{eq:eq_ht}. Turning back to $ \eta$ and writing $S(t):= \eta_{ 0} + \int_{ 0}^{t} \Theta^{ \ast} \hat{ \eta}_{ s} {\rm d}s+W_{ t}$, we obtain that $ \eta$ is uniquely written as
\begin{equation}
\label{eq:repr_eta}
\begin{split}
\eta_{ t} = \lim_{ n\to \infty} \Bigg\lbrace \eta_{ 0} + &\int_{ 0}^{t} \Theta^{ \ast} \hat{ \eta}_{ s} {\rm d}s+ W_{ t} + \int_{0}^{t} V^{ \ast}(t, t_{ 1})\mathcal{L}^{ (1),\ast}_{ t_{ 1}} S(t_{ 1}) \dd t_{ 1}\\ &+ \int_{0}^{t} \int_{0}^{t_{ 1}} V^{ \ast}(t, t_{ 1}) K_{ t_{ 1}}^{ (1), \ast} V^{ \ast}(t, t_{ 2}) \mathcal{L}^{ (1),\ast}_{ t_{ 2}} S(t_{ 2}) \dd t_{ 2} \dd t_{ 1} + \ldots \\
&+\int_{0}^{t} \int_{0}^{t_{ 1}} \cdots \int_{0}^{t_{ n-1}} V^{ \ast}(t, t_{ 1}) K_{ t_{ 1}}^{ (1), \ast}\cdots V^{ \ast}(t, t_{ n}) \mathcal{L}^{ (1),\ast}_{ t_{ n}} S(t_{ n})\dd t_{ n}\ldots\dd t_{ 2}\dd t_{ 1}\Bigg\rbrace.
\end{split}
\end{equation}
This proves pathwise uniqueness. But if one chooses another solution $ \tilde \eta$ defined on another probability space, with initial condition $\tilde \eta_{ 0}$ and noise $\tilde{W}$ with the same law as $(\eta_{ 0}, W)$, we obtain the same expression as above with $S$ replaced by $\tilde S(t) = \tilde\eta_{ 0} + \int_{ 0}^{t} \Theta^{ \ast} \hat{ \eta}_{ s}{\rm d}s+\tilde{W}_{ t}$. Since $S$ and $\tilde S$ have then the same law, uniqueness in law follows from \eqref{eq:repr_eta}. Proposition~\ref{prop:unique_SPDE} is proven.
\end{proof}

\section{On the choice of renormalisation}
\label{sec:renorm}
The question we ask here concerns the influence of a different choice of renormalisation in \eqref{eq:wips}: namely, it would also make sense to renormalise the interaction in \eqref{eq:wips} by the exact degree
\begin{equation}
\label{eq:dni}
d_{ n,i}:= \sum_{ j=1}^{ n} \xi_{ij}^{ (n)},\ i=1,\ldots, n,
\end{equation}
rather than the expected degree $ \mathbb{ E} \left[d_{ n,i}\right]=np_{ n}$ as we have done in \eqref{eq:wips}. Hence, define
\begin{equation}
\label{eq:wips_d}
\dd \theta^{i,n}_{ d,t} = \frac{1}{d_{ n,i}}\sum_{j=1}^n \xi_{ij}^{(n)}\Gamma \left(\theta^{i,n}_{ d,t}, \theta^{j,n}_{ d,t} \right)\dd t+\dd B^i_t, \quad 0 < t \leq T, \quad i= 1, \dots, n,
\end{equation}
The subscript $d$ is here to specify this choice of renormalisation by the degree. Define accordingly $ \mu^{ n}_{ d}$, $ \eta^{ n}_{ d}$ and $ \hat{ \eta}^{ n}_{ d}$ the respective global empirical measure and fluctuation processes. The corresponding local empirical and fluctuation processes become naturally
\begin{align}
\mu^{ n,l}_{ d}&:= \frac{ 1}{ d_{ n, l}} \sum_{ k=1}^{ n} \xi_{lk}^{ (n)} \delta_{ \theta^{ k,n}_{ d}},\ l=1,\ldots, n \label{def:munl_d}\\
\zeta^{ n,l}_{ d}&:= \sqrt{ d_{ n,l}}\left( \mu^{ n,l}_{ d} - \mu\right). \label{eq:zetas_d}
\end{align}
It is quite clear that this different choice of renormalisation does not change anything concerning the Law of Large Number on the global and local empirical measures, as $ \mathbb{ P}$-a.s. $ \frac{ d_{ n,i}}{ np_{ n}} \to 1$ as $n\to\infty$, for all $i=1,\ldots, n$: Theorem~\ref{th:conv_empmeas}  remains identical for $ \mu^{ n}_{ d}$ and $ \mu^{ n,l}_{ d}$. It is a priori not clear if this change in renormalisation would influence the fluctuation results. We suppose here for simplicity that the initial condition is chosen to be independent from the graph. The main result of this paragraph is the following.
\begin{proposition}
\label{prop:renorm}
Theorem~\ref{th: limit eta independent} remains unchanged when one replaces \eqref{eq:wips} by \eqref{eq:wips_d}: under the same hypotheses, the process $ \eta^{ n}_{ d}$ converges to the same limit $ \eta$ given by \eqref{eq:limit_eta_only}. Moreover, under the same hypotheses as Theorem~\ref{th:local_fluct}, the joint process $(\zeta^{ n,1}_{ d}, \zeta^{ n,2}_{ d})$ converges in law w.r.t. $ \mathbf{ P} \otimes \mathbb{ P}$ as. $n\to\infty$ towards $ \left(\zeta^{ 1}+ D^{ 1} \mu, \zeta^{ 2}+D^{ 2}\mu\right)$, where $ \left(\zeta^{ 1}, \zeta^{ 2}\right)$ are given in \eqref{eq:SDPEs_zetas_eta} and $(D^{ 1}, D^{ 2})$ are independent real Gaussian variables $D^{ l}\sim \mathcal{ N}(0, 1-p)$, with $\left(\zeta^{ 1}, \zeta^{ 2}\right)$ and $(D^{ 1}, D^{ 2})$ independent.
\end{proposition}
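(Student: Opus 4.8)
The plan is to compare the dynamics \eqref{eq:wips_d} renormalised by the exact degree $d_{n,i}$ with the dynamics \eqref{eq:wips} renormalised by $np_n$, and to show that the difference produces, at the level of fluctuations, only an extra deterministic term driven by the degree fluctuations $\sqrt{np_n}\bigl(\tfrac{d_{n,i}}{np_n}-1\bigr)$. First I would write $\tfrac{1}{d_{n,i}} = \tfrac{1}{np_n}\cdot\tfrac{np_n}{d_{n,i}} = \tfrac{1}{np_n}\bigl(1 - \tfrac{d_{n,i}-np_n}{np_n} + O((\tfrac{d_{n,i}-np_n}{np_n})^2)\bigr)$, using that $\max_i |d_{n,i}-np_n| = O(\sqrt{np_n\log n})$ almost surely by Bernstein and a union bound, so that $\tfrac{np_n}{d_{n,i}}\to 1$ uniformly. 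Plugging this expansion into the Itô formula for $\langle \mu^{n}_{d,t},f\rangle$ (as in Lemma~\ref{lem:mu^n_t}), the leading correction to the drift is $-\tfrac{1}{np_n}\sum_i \tfrac{d_{n,i}-np_n}{np_n}\cdot\tfrac{1}{np_n}\sum_j\xi_{ij}^{(n)}\Gamma(\theta^{i,n}_{d,s},\theta^{j,n}_{d,s})\partial_\theta f(\theta^{i,n}_{d,s})$, whose prefactor $\tfrac{d_{n,i}-np_n}{np_n}$ is of size $n^{-1/2}p_n^{-1/2}$; after multiplication by $\sqrt n$ this is of size $(np_n)^{-1/2}\to 0$ in the global case (and of size $(np_n)^{-1/2}$ after multiplication by $\sqrt{np_n}$ in the local case, hence still vanishing). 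The quadratic remainder terms are an order smaller and also vanish. Consequently $\eta^n_d$ satisfies the same semimartingale decomposition as $\eta^n$ up to a remainder going to zero in $\mathcal C([0,T],H^{-r_1}(\bbT))$, and by the tightness argument of Theorem~\ref{th:tight}, the identification of Proposition~\ref{prop:ident_lim} and the uniqueness Proposition~\ref{prop:unique_SPDE}, $\eta^n_d$ converges to the same $\eta$ solving \eqref{eq:limit_eta_only}. (Note $\hat\eta^n_d$ still converges to $0$ by Proposition~\ref{prop:limit_hateta0_indep}, since the initial condition is independent of the graph.)

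For the local fluctuations the situation is genuinely different, and this is where the extra term $D^l\mu$ appears. Here I would write, for $l=1,2$,
\begin{equation*}
\zeta^{n,l}_{d,t} = \sqrt{d_{n,l}}\bigl(\mu^{n,l}_{d,t}-\mu_t\bigr), \qquad \mu^{n,l}_{d,t} = \frac{np_n}{d_{n,l}}\cdot\frac{1}{np_n}\sum_{k}\xi_{lk}^{(n)}\delta_{\theta^{k,n}_{d,t}},
\end{equation*}
and compare with $\mu^{n,l}_t = \tfrac{1}{np_n}\sum_k\xi_{lk}^{(n)}\delta_{\theta^{k,n}_t}$. Writing $\mu^{n,l}_{d,t} = \mu^{n,l}_t + (\tfrac{np_n}{d_{n,l}}-1)\mu^{n,l}_t + (\text{dynamics difference})$ and using $\sqrt{d_{n,l}}/\sqrt{np_n}\to 1$ a.s., one gets
\begin{equation*}
\zeta^{n,l}_{d,t} = \zeta^{n,l}_t + \sqrt{np_n}\Bigl(\frac{np_n}{d_{n,l}}-1\Bigr)\mu^{n,l}_t + o(1) = \zeta^{n,l}_t - \sqrt{np_n}\,\frac{d_{n,l}-np_n}{np_n}\,\mu^{n,l}_t + o(1),
\end{equation*}
where the dynamics difference (the same computation as above but without the extra $\sqrt n$, i.e. with $\sqrt{np_n}$) is $O((np_n)^{-1/2})$ and hence negligible, exactly as for $\eta^n_d$. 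Now $D^{n,l}:=-\tfrac{1}{\sqrt{np_n}}\sum_{j=1}^n(\xi_{lj}^{(n)}-p_n)/p_n \cdot p_n = -\tfrac{1}{\sqrt{np_n}}\bigl(d_{n,l}-np_n\bigr)/\sqrt{np_n}\cdot\sqrt{np_n}$; more simply $-\sqrt{np_n}\tfrac{d_{n,l}-np_n}{np_n} = -\tfrac{1}{\sqrt{np_n}}\sum_{j}(\xi_{lj}^{(n)}-p_n)/\sqrt{p_n}\cdot\sqrt{p_n}$, which by the CLT (Lyapunov, as in Proposition~\ref{prop:ident_cond0}) converges in law to $D^l\sim\mathcal N(0,1-p)$, and since $\mu^{n,l}_t\to\mu_t$ in $H^{-r_1}(\bbT)$ by Theorem~\ref{th:conv_empmeas}(2a), the product converges to $D^l\mu$. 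The key point for the joint statement is to identify the full limiting distribution of $\bigl(\zeta^{n,1}_t,\zeta^{n,2}_t, D^{n,1}, D^{n,2}\bigr)$: here I would run the same characteristic-function argument as in Proposition~\ref{prop:ident_cond0} (conditioning on $\mathcal F_\xi$), noting that $D^{n,l}$ depends only on the graph row $(\xi_{lj}^{(n)})_j$ and $l=1,2$ involve disjoint rows (by symmetry one must be slightly careful, but the covariance $\mathbf E[(d_{n,1}-np_n)(d_{n,2}-np_n)]/(np_n)$ is $O(1/n)\to 0$), so $D^1,D^2$ are independent of each other; and since $D^{n,l}$ is $\mathcal F_\xi$-measurable while $(\zeta^{n,l}_t,\eta^n_t)$ conditionally on $\mathcal F_\xi$ converge to Gaussians whose limiting covariance does not see the degree fluctuations, $(D^1,D^2)$ are independent of $(\zeta^1,\zeta^2,\eta)$.

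The main obstacle, I expect, will be justifying rigorously that the "dynamics difference" term — i.e. the effect of having $\theta^{i,n}_{d,t}$ instead of $\theta^{i,n}_t$ inside the empirical measures throughout $[0,T]$, not just at $t=0$ — is negligible at the fluctuation scale. A clean way around this is to avoid coupling the two particle systems trajectorially and instead argue intrinsically: derive the Itô semimartingale decomposition for $\zeta^{n,l}_{d}$ directly from \eqref{eq:wips_d}, expanding $1/d_{n,i}$ as above, and show that all the graph-dependent remainder terms are controlled by the same $S_n^{\mathcal T}$-type quantities of Definition~\ref{def:Sn} and Proposition~\ref{prop:concentration_SnT} (times possibly an extra $\max_i|d_{n,i}-np_n|/(np_n) = O(\sqrt{\log n/(np_n)})$ factor), so they vanish under $\liminf_n np_n^5=\infty$; then tightness and identification proceed verbatim as in Section~\ref{sec:proof_local_fluct}, and uniqueness of the limiting SPDE system (with the forcing now including the extra deterministic-in-$\omega$ but random-overall drift) follows from the arguments of Section~\ref{sec:uniqueness}. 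The extra term $D^l\mu_t$ then enters only through the initial condition $\zeta^{n,l}_{d,0} = \zeta^{n,l}_0 + D^{n,l}\mu^{n,l}_0 + o(1)$, and since $\mu_t$ solves the deterministic Fokker--Planck equation \eqref{eq:limit PDE} with $\mu_0$ as initial datum, $D^l\mu_t$ is exactly the solution of the $\zeta^l$-equation in \eqref{eq:SDPEs_zetas_eta} with zero noise, zero $\eta$-coupling, and initial datum $D^l\mu_0$ — so by linearity $\zeta^l + D^l\mu$ solves \eqref{eq:SDPEs_zetas_eta} with the shifted initial condition, which is the claimed limit. I would double-check the sign and the precise variance $1-p$ against the computation $\mathrm{Var}\bigl(\tfrac{1}{\sqrt{np_n}}\sum_j(\xi_{lj}^{(n)}-p_n)/\sqrt{p_n}\bigr) = \tfrac{1}{np_n}\cdot n\cdot p_n(1-p_n)/p_n = 1-p_n \to 1-p$.
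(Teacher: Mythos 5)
Your high-level strategy is the same as the paper's: apply Itô's formula to $\mu^n_d$ directly from \eqref{eq:wips_d}, isolate the extra drift coming from replacing $np_n$ by $d_{n,i}$, and show it vanishes at the fluctuation scale, together with the observation that for the local processes the change of normalisation produces an extra degree-fluctuation factor $D^{n,l}\mu$ whose variance is $1-p$. You also correctly recognize the trajectory-coupling obstacle for the local case, which the paper neutralizes by introducing the intermediate processes $\tilde\mu^{n,l}_d,\tilde\zeta^{n,l}_d$ built on the $\theta_d$-system with the $np_n$ renormalisation, so that the relation $\zeta^{n,l}_d=\sqrt{np_n/d_{n,l}}\,(\tilde\zeta^{n,l}_d-D^{n,l}\mu)$ is an algebraic identity \emph{at all times}; your alternative of propagating $D^l\mu_0$ through the linear SPDE is equivalent (via $\partial_t\mu_t=\cU_t^*\mu_t$) but entails more bookkeeping. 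The variance and independence computations for $(D^1,D^2)$ are also sound.

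However, the rate argument for the vanishing of the global drift correction $D_{n,t}$ has a genuine gap. You assert that since each factor $\tfrac{d_{n,i}-np_n}{np_n}$ is $O((np_n)^{-1/2})$, the correction ``after multiplication by $\sqrt n$ ... is of size $(np_n)^{-1/2}\to 0$''. This is not a per-term estimate but a claim about a sum of $n$ strongly graph-correlated quantities, and without cancellation the trivial bound gives $\sqrt n\cdot(np_n)^{-1/2}=p_n^{-1/2}$, which \emph{diverges} when $p_n\to 0$. The cancellations are delicate precisely because the summands depend on $\xi^{(n)}$ through the particle trajectories (the same obstruction that motivates the Grothendieck machinery in Section~\ref{sec:grothendieck}). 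The paper handles this by writing the \emph{exact} identity \eqref{eq:ident_dni_hatxi} (no Taylor remainder to control) and splitting $D_{n,t}=D_{n,t}^{(1)}+D_{n,t}^{(2)}+D_{n,t}^{(3)}$; crucially, these three terms are controlled by three \emph{different} mechanisms: $D^{(1)}_{n,t}$ by the Grothendieck quantity $S_n^{\ijik}$ together with $\chi_n=\sup_i np_n/d_{n,i}$ (not by a separate $\max_i|d_{n,i}-np_n|/(np_n)$ factor), $D^{(2)}_{n,t}$ by rough bounds on $\chi_n$ and $\rho_n$, and $D^{(3)}_{n,t}$ is \emph{not} a Grothendieck-small term at all --- $\sqrt n D^{(3)}_{n,t}$ is precisely $-\int_0^t\langle\hat\eta^n_s,\partial_\theta f\,(\Gamma*\mu_s^n)\rangle\,{\rm d}s$, which vanishes only because the initial condition is chosen independent of the graph so that $\hat\eta^n\to 0$. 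Your proposed fix, to control all remainders by $S_n^{\mathcal T}$-type quantities, would therefore fail on the $D^{(3)}_{n,t}$ piece; one needs the $\hat\eta^n$-level argument there. This is the key step your outline would have to add to become a proof.
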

\begin{proof}[Proof of Proposition~\ref{prop:renorm}]
Let us first consider the convergence of the global fluctuation process $ \eta^{ n}_{ d}$. Perform the same Ito decomposition for $ \mu^{ n}_{ d}$ as in the proof of Lemma~\ref{lem:mu^n_t}: for $f$ regular, one obtains
\begin{align*}
\left\langle \mu^n_{ d,t}\, ,\, f\right\rangle =& \left\langle \mu^n_{ d,0}\, ,\, f\right\rangle + \int_0^t \left\langle \mu^{ n}_{ d,s}\, ,\, \frac 12 \partial_\theta^2 f + (\mu^{ n}_{ d,s} * \Gamma)  \partial_\theta f \right\rangle  \dd s + \left\langle M^{ n}_{ d,t}\, ,\, f\right\rangle \\
&+ \int_0^t \frac{1}{n^2}\sum_{i,j=1}^n \hat{ \xi}_{ij}^{ (n)} \Gamma\left(\theta^{i,n}_{ d,s}, \theta^{j,n}_{ d,s}\right)\partial_\theta f\left(\theta^{i,n}_{ d,s}\right) \dd s + D_{ n,t}(f).
\end{align*}
The only thing that changes from Lemma~\ref{lem:mu^n_t} is the apparition of the supplementary drift term
\begin{equation}
\label{eq:Dnt}
D_{ n,t}(f):= \int_0^t \frac{1}{n^{ 2}}\sum_{i,j=1}^n \xi_{ij}^{ (n)}\left(\frac{ n }{ d_{ n,i}}- \frac{ 1}{ p_{ n}}\right)\Gamma\left(\theta^{i,n}_s, \theta^{j,n}_s\right)\partial_\theta f\left(\theta^{i,n}_s\right) \dd s .
\end{equation}
All that remains is to show that $ \sqrt{ n} D_{ n,t}$ goes to $0$ as $n\to\infty$.
Some auxiliary results first: introduce the random variables $ \chi_{n}:= \sup_{ i=1,\ldots, n} \frac{ np_{ n}}{ d_{ n,i}}$ and $\rho_{ n}:= \sup_{ i=1, \ldots, n} \left\vert \frac{ 1}{ n} \sum_{ k=1}^{ n} \hat{ \xi}_{ik}^{ (n)} \right\vert$. For all $i=1, \ldots, n$, an application of Bernstein inequality leads to 
\begin{align*}
 \mathbb{ P} \left( \frac{ np_{ n}}{ d_{ n,i}} \geq 2\right)&= \mathbb{ P} \left(\frac{ 1}{ n} \sum_{ j=1}^{ n} \hat{ \xi}_{ij}^{ (n)}\leq - \frac{ 1}{ 2}\right)\leq \exp \left( - \frac{ 1}{ 8} \frac{ n p_{ n}}{7/6}\right)\leq \exp( - n^{ 3/4}),
\end{align*}
under the dilution condition \eqref{eq:dilution_cond}. A union bound on $i=1,\ldots, n$ together with Borel Cantelli Lemma gives that
\begin{equation}
\label{eq:bound_Chin}
\limsup_{ n\to\infty} \chi_{ n} \leq 2, \ \mathbb{ P}-\text{ a.s.}
\end{equation}
 In a same way, it is easy to prove that under \eqref{eq:dilution_cond} that, $ \mathbb{ P}$-a.s., for all $ \varepsilon\in \left(0, \frac{ 1}{ 2}\right)$,
\begin{equation}
\label{eq:bound_rhon}
\limsup_{ n\to\infty} (np_{ n})^{ \frac{ 1}{ 2}- \varepsilon} \rho_{ n} \leq 1.
\end{equation}
Notice that
\begin{equation}
\label{eq:ident_dni_hatxi}
\left(\frac{ n}{ d_{ n,i}}- \frac{ 1}{ p_{ n}}\right)= -\frac{ n}{ d_{ n,i}} \left(\frac{1}{ n} \sum_{ k=1}^{ n} \hat{ \xi}_{ik}^{ (n)}\right),
\end{equation}
so that
\begin{equation}
\label{eq:Dnt_expand1}
D_{ n,t}= -\int_0^t \frac{1}{n^{ 3}}\sum_{i,j,k=1}^n \xi_{ij}^{ (n)} \frac{ n}{ d_{ n,i}} \hat{ \xi}_{ik}^{ (n)}\Gamma\left(\theta^{i,n}_s, \theta^{j,n}_s\right)\partial_\theta f\left(\theta^{i,n}_s\right) \dd s .
\end{equation}
Write $ \frac{ \xi_{ij}^{ (n)}}{ p_{ n}}= \hat{ \xi}_{ij}^{ (n)} +1$, so that one obtains
\begin{multline*}
D_{ n,t}= -\int_0^t \frac{1}{n^{ 3}}\sum_{i,j,k=1}^n  \frac{ np_{ n}}{ d_{ n,i}} \hat{ \xi}_{ij}^{ (n)} \hat{ \xi}_{ik}^{ (n)}\Gamma\left(\theta^{i,n}_s, \theta^{j,n}_s\right)\partial_\theta f\left(\theta^{i,n}_s\right) \dd s \\
-\int_0^t \frac{1}{n^{ 3}}\sum_{i,j,k=1}^n \frac{ np_{ n}}{ d_{ n,i}} \hat{ \xi}_{ik}^{ (n)}\Gamma\left(\theta^{i,n}_s, \theta^{j,n}_s\right)\partial_\theta f\left(\theta^{i,n}_s\right) \dd s.
\end{multline*}
Using again \eqref{eq:ident_dni_hatxi} in the second term above, we obtain
\begin{align*}
D_{ n,t}&= -\int_0^t \frac{1}{n^{ 3}}\sum_{i,j,k=1}^n  \frac{ np_{ n}}{ d_{ n,i}} \hat{ \xi}_{ij}^{ (n)} \hat{ \xi}_{ik}^{ (n)}\Gamma\left(\theta^{i,n}_s, \theta^{j,n}_s\right)\partial_\theta f\left(\theta^{i,n}_s\right) \dd s \nonumber\\
&\quad +\int_0^t \frac{1}{n}\sum_{i=1}^n \frac{ np_{ n}}{ d_{ n,i}} \left( \frac{ 1}{ n} \sum_{ k=1}^{ n}\hat{ \xi}_{ik}^{ (n)} \right)^{ 2}\left( \frac{ 1}{ n} \sum_{ j=1}^{ n} \Gamma\left(\theta^{i,n}_s, \theta^{j,n}_s\right)\right)\partial_\theta f\left(\theta^{i,n}_s\right) \dd s \nonumber\\
&\quad -\int_0^t \frac{1}{n^{ 2}}\sum_{i,k=1}^n \hat{ \xi}_{ik}^{ (n)} \left( \frac{ 1}{ n} \sum_{ j=1}^{ n} \Gamma\left(\theta^{i,n}_s, \theta^{j,n}_s\right) \right)\partial_\theta f\left(\theta^{i,n}_s\right) \dd s, \nonumber\\
&:= D_{ n,t}^{ (1)}+ D_{ n,t}^{ (2)}+ D_{ n,t}^{ (3)}.
\end{align*}
Concentrate on the first term above:
\begin{align*}
D_{ n,t}^{ (1)}=-\int_0^t \frac{1}{n^{ 2}}\sum_{i,j=1}^n  \hat{ \xi}_{ij}^{ (n)} \left( \frac{ 1}{ n}\sum_{ k=1}^{ n}\hat{ \xi}_{ik}^{ (n)}\right) \frac{ np_{ n}}{ d_{ n,i}}\Gamma\left(\theta^{i,n}_s, \theta^{j,n}_s\right)\partial_\theta f\left(\theta^{i,n}_s\right) \dd s.
\end{align*}
Informally, one expects that $\frac{ np_{ n}}{ d_{ n,i}} \approx 1$. Nonetheless, this term is still random and one cannot rule out the unlikely possibility that it takes extreme values. The point is to include this term within the Grothendieck estimate: applying \eqref{eq:classical_grothendieck} to $a_{ij}:= \hat{ \xi}_{ij}^{ (n)} \left( \frac{ 1}{ n}\sum_{ k=1}^{ n}\hat{ \xi}_{ik}^{ (n)}\right)$, we obtain that, for some constant that depends on $(\Gamma, f)$, $ \mathbf{ P}\otimes \mathbb{ P}$-a.s.
\begin{align*}
\left\vert \frac{ D_{ n,t}^{ (1)}}{ \chi_{ n}} \right\vert \leq K \sup_{ s_{ i}, t_{ j}} \frac{ 1}{ n^{ 2}} \sum_{ i,j=1}^{ n} \hat{ \xi}_{ij}^{ (n)} \left( \frac{ 1}{ n}\sum_{ k=1}^{ n}\hat{ \xi}_{ik}^{ (n)}\right) s_{ i} t_{ j} \leq K \sup_{ s_{ i}, t_{ j}, r_{ k}} \frac{ 1}{ n^{ 3}} \sum_{ i,j,k=1}^{ n} \hat{ \xi}_{ij}^{ (n)} \hat{ \xi}_{ik}^{ (n)} s_{ i} t_{ j} r_{ k}.
\end{align*}
The main conclusion of this calculation is that one has $ \mathbf{ P}\otimes \mathbb{ P}$-a.s.
\begin{equation}
\label{eq:Dn_first_bound}
\left\vert D_{ n,t}^{ (1)} \right\vert \leq K S_{ n}^{ \ijik} \chi_{ n}.
\end{equation}
Combining \eqref{eq:Dn_first_bound}, \eqref{eq:bound SnT} and \eqref{eq:bound_Chin}, we see that $D_{ n, t}^{ (1)}$ is $ \mathbf{ P}\otimes \mathbb{ P}$-a.s. of order $ \frac{ C}{ np_{ n}^{ 2}}$, so that $ \sqrt{ n} D_{ n, t}^{ (1)} \to 0$ as $n\to\infty$, under \eqref{eq:dilution_cond}. For the term $D_{ n,t}^{ (2)}$, we have the rough bound $\left\vert D_{ n, t}^{ (2)} \right\vert \leq \chi_{ n} \rho_{ n}^{ 2} \left\Vert \Gamma \right\Vert_{ \infty} \left\Vert \partial_{ \theta}f \right\Vert_{ \infty}$ which, by \eqref{eq:bound_Chin} and \eqref{eq:bound_rhon}, is $ \mathbf{ P}\otimes \mathbb{ P}$-a.s. of order $(np_{n})^{ -1+2 \varepsilon}$, so that $ \sqrt{ n} D_{ n, t}^{ (2)}$ is of order $ n^{ -1/4 + 3 \varepsilon/2}$ under \eqref{eq:dilution_cond}. Choosing $ \varepsilon< 1/6$ gives that $\sqrt{ n} D_{ n, t}^{ (2)}\to 0$ as $n\to\infty$. The last term gives nothing else than 
\begin{equation*}
\sqrt{ n} D_{ n, t}^{ (3)}= - \int_{ 0}^{t} \left\langle \hat{ \eta}_{ s}^{ n} \left({\rm d}\theta_{ 1}, {\rm d}\theta_{ 2}\right)\, ,\, \partial_{ \theta}f(\theta_{ 1}) \left(\Gamma \ast \mu_{ s}^{ n}\right)(\theta_{ 2})\right\rangle {\rm d}s,
\end{equation*}
which converges to $0$ under the present hypotheses that the initial condition is chosen to be independent on the graph (as one can prove along the same lines as what has been done before that the limit $ \hat{ \eta}\equiv 0$ in this case). Note that all the previous calculations depend on the specific choice of test function $f$. It suffices to take a dense set of such test functions to realise that $D_{ n,t}\equiv 0$, $ \mathbf{ P}\otimes \mathbb{ P}$-a.s.

\medskip

Turn now to the case of local fluctuations: we only sketch the proof and leave the main details to the reader. Define first some auxiliary local processes, for $l=1, \ldots, n$
\begin{align}
\label{def:zetas_mix}
\tilde{ \mu}^{ n, l}_{ d}:= \frac{ 1}{ np_{ n}} \sum_{ k=1}^{ n} \xi_{ik}^{ (n)} \delta_{ \theta^{ k, n}_{ d}},\\
\tilde{ \zeta}^{ n,l}_{ d}:= \sqrt{ n p_{ n}} \left( \tilde{\mu}^{ n, l}_{ d} - \mu \right).
\end{align}
Think of $(\tilde{ \mu}^{ n, l}_{ d}, \tilde{ \zeta}^{ n,l}_{ d})$ as intermediate steps between \eqref{def:mu_nl} and \eqref{def:munl_d} (resp. \eqref{def:zetas} and \eqref{eq:zetas_d}): they are empirical measures on the system \eqref{eq:wips_d} but we are still conserving the same expected renormalisation $np_{ n}$. The same analysis as for $ \eta^{ n}_{ d}$ (based on Ito decomposition and proof that the remaining terms vanish as $n\to\infty$) gives that, under the hypotheses of Theorem~\ref{th:local_fluct}, the limit of $ \left(\tilde{ \zeta}^{ n,1}_{ d}, \tilde{ \zeta}^{ n,2}_{ d}, \eta^{ n}_{ d}\right)$ remains the same, i.e. is given by \eqref{eq:SDPEs_zetas_eta}. An easy calculation relates the two processes $ \zeta^{ n,l}_{ d}$ and $ \tilde{ \zeta}^{ n,l}_{ d}$: for $l=1,2$ and $t\in [0, T]$,
\begin{equation}
\label{eq:zetatilde_zeta}
\zeta^{ n, l}_{ d}= \sqrt{ \frac{ np_{ n}}{ d_{ n, l}}} \left( \tilde{ \zeta}^{ n,l}_{ d} - \sqrt{ np_{ n}}\left( \frac{ 1}{ n}\sum_{ j=1}^{ n} \hat{\xi}_{lj}^{ (n)}\right) \mu\right):=\sqrt{ \frac{ np_{ n}}{ d_{ n, l}}} \left( \tilde{ \zeta}^{ n,l}_{ d} - D^{ n, l} \mu\right).
\end{equation}
One can interpret the decomposition \eqref{eq:zetatilde_zeta} as the sum of a fluctuation term coming from the dynamics and a term coming from the fluctuation of the degree itself. From the easy convergence of the joint renormalised degrees $(D^{ n,1}, D^{ n, 2})$ (in law w.r.t. $ \mathbb{ P}_{ g}$) towards some independent $(D^{ 1}, D^{ 2})$ with $D^{ l}\sim \mathcal{ N}(0, 1-p)$, the fact that $ \sqrt{ \frac{ np_{ n}}{ d_{ n, l}}} \to 1$ as $n\to\infty$, $ \mathbb{ P}_{ g}$-a.s., and the convergence of $(\tilde{ \zeta}^{ n,1}_{ d}, \tilde{ \zeta}^{ n,2}_{ d})$ (in law w.r.t. $ \mathbf{ P}$, almost-surely w.r.t. $ \mathbb{ P}$), we deduce immediately the result of Proposition~\ref{prop:renorm}.
\end{proof}

\section*{Acknowledgements}
C.P. thanks Guillaume Aubrun for fruitful discussions and in particular for his help for the proof of Proposition~\ref{prop:concentration_SnT}. E.L. would like to thank warmly J\'er\^ome Dedecker and Nathael Gozlan for useful discussions and acknowledges the support of ANR-19-CE40-002 (ChaMaNe). C.P acknowledges the support of ANR-17-CE40-0030 (EFI). E.L and C.P. both acknowledge the support of ANR–19–CE40–0023 (PERISTOCH).

\end{document}